\def\namedlabel#1#2{\begingroup
    #2%
    \def\@currentlabel{#2}%
    \phantomsection\label{#1}\endgroup
}
\tikzset{->-/.style={decoration={  markings,  mark=at position #1 with
			{\arrow{>}}},postaction={decorate}}}
\tikzset{-<-/.style={decoration={  markings,  mark=at position #1 with
			{\arrow{<}}},postaction={decorate}}}
\theoremstyle{plain}
\newtheorem{theorem}{Theorem}[section]
\newtheorem{thmx}{Theorem}
\newtheorem{lemma}[theorem]{Lemma}
\newtheorem{corollary}[theorem]{Corollary}
\newtheorem{proposition}[theorem]{Proposition}
\theoremstyle{definition}
\newtheorem{definition}[theorem]{Definition}
\newtheorem{construction}[theorem]{Construction}
\newtheorem{example}[theorem]{Example}
\newtheorem{remark}[theorem]{Remark}
\newtheorem{notations}[theorem]{Notations}
\numberwithin{equation}{section}
\def\surf{\mathbf{S}}                       
\def\TT{\mathbf{T}}
\def\PP{\mathbf{P}}
\def\MM{\mathbf{M}}
\def\PPS{\mathbb{PP}(\surf)}
\def\PS{\mathbb{P}(\surf)}
\def\PTS{\mathbb{P}^{\times}(\surf)}
\def\ad{\mathbf{Ad}}
\def\GPS{\overline{\mathbb{P}}(\surf)}
\def\GPTS{\overline{\mathbb{P}}^{\times}(\surf)}
\def\<{\langle}
\def\>{\rangle}
\renewcommand{\k}{\mathbf{k}}
\newcommand{\m}{\mathfrak{m}}
\newcommand{\n}{\mathfrak{n}}
\renewcommand{\l}{\mathfrak{l}}
\newcommand{\x}{\mathfrak{x}}
\newcommand{\y}{\mathfrak{y}}
\newcommand{\M}{\mathfrak{M}}
\renewcommand{\mod}{\operatorname{mod}}
\newcommand{\Int}{\operatorname{Int}}
\newcommand{\Hom}{\operatorname{Hom}}
\newcommand{\Ext}{\operatorname{Ext}}
\newcommand{\aaa}{\operatorname{(I)}}
\newcommand{\bb}{\operatorname{(II)}}
\newcommand{\cc}{\operatorname{(III)}}
\newcommand{\dd}{\operatorname{(IV)}}
\newcommand{\PTGPD}{\operatorname{PGD}}
\newcommand{\TGPD}{\operatorname{GD}}
\newcommand{\PTPD}{\operatorname{PD}}
\newcommand{\TPD}{\operatorname{D}}
\newcommand{\st}{\mathrm{s}\tau\operatorname{-tilt}}
\newcommand{\stp}{s\tau\operatorname{-tiltp}}
\newcommand{\ir}{\mathrm{ind}\tau\operatorname{-rigid}}
\newcommand{\tr}{\tau\operatorname{-rigid}}
\newcommand{\trp}{\tau\operatorname{-rigidp}}
\newcommand{\ttt}{\tau\operatorname{-tilt}}
\begin{document}
	
\title{A geometric model for the module category of a skew-gentle algebra}
	
\date{}
	
\author{Ping He}
\address{PH:
	YanQi Lake Beijing Institute of Mathematical Sciences and Applications, 101408,
	Beijing, China}
\email{pinghe@bimsa.cn}

\author{Yu Zhou}
\address{YZ:
	Yau Mathematical Sciences Center,
	Tsinghua University, 100084,
	Beijing, China}
\email{yuzhoumath@gmail.com}

\author{Bin Zhu}
\address{BZ:
	Department of Mathematical Sciences,
	Tsinghua University, 100084,
	Beijing, China}
\email{zhu-b@tsinghua.edu.cn}

\thanks{The work was supported by National Natural Science Foundation of China (Grants No. 12271279,  12031007).}

\begin{abstract}
    In this article, we realize skew-gentle algebras as skew-tiling algebras associated to admissible partial triangulations of punctured marked surfaces. Based on this, we establish a bijection between tagged permissible curves and certain indecomposable modules, interpret the Auslander-Reiten translation via the tagged rotation, and show intersection-dimension formulas. As an application, we classify support $\tau$-tilting modules via maximal collections of non-crossing tagged generalized permissible curves.
\end{abstract}

\keywords{skew-gentle algebra; punctured marked surface; skew-tiling algebra; Auslander-Reiten sequence; tagged rotation; intersection number; $\tau$-titling theory}

\maketitle

\tableofcontents

\section*{Introduction}

Cluster algebras were introduced by Fomin and Zelevinsky around 2000. During the last decade, the cluster phenomenon was spotted in various areas in mathematics, as well as in physics, including geometric topology and representation theory. On the one hand, the geometric aspect of cluster theory was explored by Fomin, Shapiro and Thurston \cite{FST}, where flips of triangulations model mutations of clusters. They constructed
a quiver (and later, Labardini-Fragoso \cite{LF} gave a corresponding potential) from any  triangulation of a marked surface. On the other hand, the categorification of cluster algebras leads to cluster categories of acyclic quivers due to Buan, Marsh, Reineke, Reiten and Todorov \cite{BMRRT} (see also \cite{CCS} for type $A$) and later to generalized cluster categories of quivers with potential due to Amiot \cite{A}, where mutations of cluster tilting objects model mutations of clusters. In \cite{QZ}, Qiu and Zhou established directly a bijection from triangulations of a punctured marked surface with non-empty boundary to cluster tilting objects in the corresponding generalized cluster category, which is compatible with flips and mutations. The endomorphism algebras of certain cluster tilting objects in such categories are skew-gentle algebras \cite{GLS,QZ}. 

Skew-gentle algebras were introduced by Gei{\ss} and de la Pe\~{n}a \cite{GP}, as an important class of representation-tame finite dimensional algebras. The indecomposable modules of a skew-gentle algebra (or more generally, a clannish algebra) were classified by Crawley-Boevey \cite{CB}, Deng \cite{De} and Bondarenko \cite{B}. A basis of the space of morphisms between (certain) indecomposable modules was given by Gei{\ss} \cite{G}. 

Not every skew-gentle algebra is the endomorphism algebra of a cluster tilting object in a generalized cluster category (or more generally, a 2-Calabi-Yau triangulated category). However, a skew-gentle algebra, or more generally, an arbitrary finite dimensional algebra admits a cluster phenomenon via considering the so-called support $\tau$-tilting modules, which were introduced by Adachi, Iyama and Reiten \cite{AIR}. The aim of this paper is to give a geometric model of the module categories of skew-gentle algebras via punctured marked surfaces with non-empty boundary, which is applied to get a complete classification of support $\tau$-tilting modules of skew-gentle algebras.

The geometric interpretations of gentle algebras, as a special class of skew-gentle algebras, have been fully investigated (not only for cluster theory but also for module categories and topological/derived Fukaya categories), cf. \cite{APS,ABCP,BS,BrS,BZ,CCS,CS,DRS,HKK,MP,O,OPS,QQ,QQ2,QZ2,S,ZhZZ}. However, skew-gentle algebras are much more complicated. There are two different approaches to cluster categories from punctured marked surfaces with non-empty boundary: one is using the tagging technology after \cite{FST}, e.g. \cite{BQ,QZ,Sch}; the other is using orbifold model, e.g. \cite{AP}. The orbifold approach was generalized later to give a geometric model of derived categories of skew-gentle algebras \cite{AB,LSV}. In contrast, in this paper, we follow the tagging technology to give a geometric model for the module categories of skew-gentle algebras. The realization of gentle algebras as tiling algebras and the classification of indecomposable objects in the module category of a gentle algebra via permissible curves have been already given in \cite{BS}, which we generalize in the paper to the case of skew-gentle algebras. One important feature in our approach is that we further get an intersection-dimension formula (precisely, an equality between the tagged intersection number of two tagged permissible curves and the sum of dimensions of homomorphism spaces from one corresponding object to the Auslander-Reiten translation of the other), which is applied to classify the support $\tau$-tilting modules of the skew-gentle algebra. We note that it is claimed in \cite{LSV} that a formula connecting morphisms between two indecomposable objects in the derived category of a skew-gentle algebra and well-graded intersections of the corresponding curves is currently in preparation.

A punctured marked surface is a compact oriented surface with a finite set of marked points on its non-empty boundary and with a finite set of punctures in its interior. To any admissible partial triangulation $\TT$ (see Definition~\ref{def:part tri}) of a punctured surface $\surf$, we construct an associated algebra $\Lambda^\TT$, called a skew-tiling algebra (see Definition~\ref{def:st}). The main result in the paper is the following.

\begin{thmx}[Theorems~\ref{prop:st to sg}, \ref{thm:curve and mod}, \ref{thm:tau} and \ref{thm:int-dim}]\label{thm:A}
	Any skew-tiling algebra is a skew-gentle algebra. Conversely, for any skew-gentle algebra $A$, there is a punctured marked surface $\surf$ with an admissible partial triangulation $\TT$ such that $\Lambda^\TT\cong A$ and the following hold.
	\begin{enumerate}
		\item There is a bijection 
		$$\begin{array}{cccc}
			M:&\PTS&\to&\mathcal{S}\\
			&(\gamma,\kappa)&\mapsto&M(\gamma,\kappa)
		\end{array}$$
		where $\PTS$ is the set of tagged permissible curves (up to inverse) on $\surf$ (see Definition~\ref{def:permissible}), and $\mathcal{S}$ is a certain class of indecomposable $A$-modules.
		\item For any pair of tagged permissible curve  $(\gamma,\kappa)$, if $M(\gamma,\kappa)$ is not projective, we have $$\tau M(\gamma,\kappa)=M(\rho(\gamma,\kappa)),$$
		where $\rho$ is the tagged rotation (see Definition~\ref{def:ro}) and $\tau$ is the Auslander-Reiten translation.
		\item For any tagged permissible curves $(\gamma_1,\kappa_1)$ and $(\gamma_2,\kappa_2)$ (not necessarily distinct), we have
		$$\Int((\gamma_1,\kappa_1),(\gamma_2,\kappa_2))=\dim_\k\Hom(M_1,\tau M_2)+\dim_\k\Hom(M_2,\tau M_1)$$
		where $M_i=M(\gamma_i,\kappa_i)$, $i=1,2$, 
		and $\Int$ stands for the intersection number (see Definition~\ref{def:tag int}).
	\end{enumerate}
\end{thmx}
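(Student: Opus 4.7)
The plan is to split Theorem~A into its four constituent parts and treat them in the order suggested by the excerpt. First, for the algebraic identification, I would verify directly from Definition~\ref{def:st} that the quiver with relations read off an admissible partial triangulation $\TT$ satisfies the combinatorial axioms of a skew-gentle presentation: at each vertex at most two arrows in and two out, specified zero-relations and commutativity-relations corresponding to consecutive/non-consecutive arcs around each tile, and a set of loops $\ell$ with $\ell^2=\ell$ (modulo radical) in bijection with the punctures of $\surf$ carrying a one-vertex tile. For the converse, I would run a ribbon-graph construction: take the gentle part $A^\circ$ of $A$, build its ribbon surface along the lines of \cite{BS,OPS}, and then reinsert a puncture inside each face corresponding to a special idempotent. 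Admissibility of the resulting $\TT$ is forced by the skew-gentle axioms on $A$.

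Once $\Lambda^\TT\cong A$ is in hand, part (1) is built by associating to $(\gamma,\kappa)$ the Crawley-Boevey/Deng/Bondarenko string or band obtained from the sequence of tiles that $\gamma$ traverses, where $\kappa$ at each puncture endpoint selects one of the two ``twin'' idempotents. Well-definedness on equivalence classes of curves (including inverses) and indecomposability of $M(\gamma,\kappa)$ are routine from the classification in \cite{CB,De,B}; surjectivity onto $\mathcal{S}$ is established by reversing the recipe, reading each string/band as a sequence of tile crossings.

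For part (2), I would first identify the tagged permissible curves whose endpoints already lie on the boundary of $\surf$ in a way that blocks the rotation $\rho$ with the projective $A$-modules, so that both sides of $\tau M(\gamma,\kappa) = M(\rho(\gamma,\kappa))$ are defined on the same domain. For the remaining curves I would compute $\tau M$ via the standard string-module recipe (add cohooks on one end, delete hooks on the other; for bands $\tau$ fixes the underlying word) and match this with $\rho$ rotating each endpoint of $\gamma$ one notch along its boundary component or flipping the tag at a punctured endpoint. The argument naturally splits into cases according to whether each endpoint of $\gamma$ is boundary or puncture, and the band case is then essentially formal.

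The main obstacle is part (3), the intersection-dimension formula. My plan is to use Gei{\ss}'s basis \cite{G} for $\Hom$-spaces between indecomposable modules of clannish/skew-gentle type, which is indexed by admissible pairs of substrings of the two words. Via part (1), such a substring pair translates into a local piece of the pair $(\gamma_1,\gamma_2)$ that starts and ends at a common tile, i.e.\ corresponds either to an interior transversal crossing (after passing to $\tau M_2$, which combinatorially just rotates one word by one step, so that the crossing becomes the ``graph-map condition'') or to a tagged endpoint coincidence at a puncture. Summing over both sides of the duality yields the symmetric pair $\Hom(M_1,\tau M_2)\oplus\Hom(M_2,\tau M_1)$ on the right, which after comparison matches $\Int$ of Definition~\ref{def:tag int}. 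The two points I expect to be delicate are (a) making sure the tagging conventions at punctured endpoints line up so that punctured intersections are counted with the correct multiplicity on each side (and not doubled), and (b) the self-intersection case $(\gamma_1,\kappa_1)=(\gamma_2,\kappa_2)$, especially for band modules, where the automorphism group of $M$ interacts nontrivially with the free endpoints of graph maps and requires a careful combinatorial accounting to avoid overcounting.
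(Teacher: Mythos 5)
Your proposal follows essentially the same route as the paper: the algebra identification is reduced to the gentle/tiling correspondence of \cite{BS} by trading punctures for unmarked boundary components, part (1) rests on the Bondarenko--Crawley-Boevey--Deng classification via a curve-to-word dictionary, part (2) matches Gei{\ss}'s successor/hook--cohook description of $\tau$ against the rotation $\rho$, and part (3) evaluates Gei{\ss}'s $\Hom$-basis of common substring pairs against the rotated curve $\rho(\gamma_2)$, exactly as in the paper's reduction to the one-sided ``black intersection'' count. The delicate points you flag (tag conventions at punctures and the self-intersection/band case) are precisely the ones the paper handles via the sets $H^{\m,\n'}_1$, $H^{\m,\n'}_2$ and the restriction of $\mathcal{S}$ to bands at the bottom of rank-two tubes.
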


Moreover, the Auslander-Reiten sequence ending at a non-projective string module is described via geometric methods (see Theorem~\ref{thm:tau}), and the dimension of $\Hom(M_1,\tau M_2)$ is also interpreted via the so-called black intersection number (see Definition~\ref{def:black} and Theorem~\ref{thm:bint-dim}). In the case that $A$ is gentle (or equivalently, $\surf$ has no punctures), Theorem~\ref{thm:A} recovers \cite[Theorem~2]{BS} and moreover gives an int-dim formula for their model, where the intersection number $\Int$ is the usual one (see Corollary~\ref{cor:gentle}).

As an application of our main result, we classify the support $\tau$-tilting modules for skew-gentle algebras.

\begin{thmx}[Corollary~\ref{cor:supp}]\label{thm:tau-tilting}
	Under the notion and notations in Theorem~\ref{thm:A}, there is a bijection
	$$\begin{array}{cccc}
		&\TGPD(\surf)&\to&\st A\\
		&R&\mapsto& \bigoplus\limits_{(\gamma,\kappa)\in R}M(\gamma,\kappa)
	\end{array}
	$$
	from the set $\TGPD(\surf)$ of generalized dissections on $\surf$ (see  Definition~\ref{def:t.p.d}), i.e. maximal collections of non-crossing tagged generalized permissible curves (see Definition~\ref{def:gen-perm}), to the set $\st A$ of isoclasses of basic support $\tau$-tilting $A$-modules.
\end{thmx}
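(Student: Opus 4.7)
The plan is to deduce the classification by combining the three items of Theorem~\ref{thm:A} with the Adachi--Iyama--Reiten characterization of support $\tau$-tilting modules. The key observation is that parts~(2) and~(3) together identify $\tau M(\gamma,\kappa)$ with $M(\rho(\gamma,\kappa))$ and express the tagged intersection number as $\Int((\gamma_1,\kappa_1),(\gamma_2,\kappa_2))=\dim_\k\Hom(M_1,\tau M_2)+\dim_\k\Hom(M_2,\tau M_1)$. Consequently, two tagged permissible curves are non-crossing if and only if $M_1\oplus M_2$ is $\tau$-rigid; applied diagonally (to a single curve taken twice) this also yields the $\tau$-rigidity of each indecomposable summand. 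Thus any non-crossing collection $R$ of tagged permissible curves produces a $\tau$-rigid $A$-module $\bigoplus_{(\gamma,\kappa)\in R}M(\gamma,\kappa)$, and conversely every $\tau$-rigid direct sum of modules from $\mathcal{S}$ comes from such a collection.

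Second, I would account for the ``generalized'' curves that are not permissible. Following \cite{AIR}, a basic support $\tau$-tilting module corresponds to a basic pair $(M,P)$ where $M$ is $\tau$-rigid and $P$ is a direct summand of a projective with $\Hom(P,M)=0$, subject to the total number of indecomposable summands of $M\oplus P$ equalling the rank of $K_0(A)$. I expect the extra generalized permissible curves (beyond the permissible ones of Definition~\ref{def:permissible}) to parametrize exactly the indecomposable summands of such shifted-projective parts $P[1]$, with the non-crossing condition between a ``projective'' curve and a permissible $(\gamma',\kappa')$ translating to $\Hom(P,M(\gamma',\kappa'))=0$. Verifying this interpretation uses the explicit geometric description of indecomposable projectives from Theorem~\ref{thm:A}(1) and the behaviour of the tagged rotation $\rho$ near boundary segments.

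Third, I would match cardinalities. The arcs of the admissible partial triangulation $\TT$ are in bijection with the simple $A$-modules, so the rank of $K_0(A)$ equals $|\TT|$. By the definition of maximality, every element of $\TGPD(\surf)$ has exactly this cardinality, which forces the $\tau$-rigid pair obtained from $R$ to be support $\tau$-tilting rather than merely $\tau$-rigid. Injectivity of the correspondence follows from the injectivity of $M$ in Theorem~\ref{thm:A}(1) together with injectivity on the projective component. For surjectivity, given a basic support $\tau$-tilting pair $(M,P)$, decomposing $M\oplus P$ into indecomposables and pulling each summand back to a tagged generalized permissible curve yields, by the rigidity-to-non-crossing translation above, a non-crossing collection of the maximal size $|\TT|$, hence an element of $\TGPD(\surf)$.

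The main obstacle I anticipate is the careful treatment of the generalized/projective part: ensuring that the int-dim formula and the tagged rotation $\rho$ interact correctly with the shifted-projective summands sitting on the boundary of the support $\tau$-tilting theory, and proving that geometric maximality of a non-crossing collection of tagged generalized permissible curves is realized by a collection of exactly $|\TT|$ curves. Once this combinatorial ``maximality equals rank'' statement is in hand, the bijection of Theorem~\ref{thm:tau-tilting} follows formally from the three items of Theorem~\ref{thm:A}.
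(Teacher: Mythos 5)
Your overall strategy coincides with the paper's: combine the bijection of Theorem~\ref{thm:A}(1), the rotation formula of (2), and the intersection--dimension formula of (3) with Adachi--Iyama--Reiten theory, treating the non-permissible generalized curves as the shifted-projective part of a support $\tau$-tilting pair. However, there are three concrete problems. First, your count of the rank of $K_0(A)$ is wrong: the arcs of $\TT$ are in bijection with the vertices of $Q^{sp}$, but a skew-gentle algebra has $|Q_0|+|Sp|$ simple modules, since at each special vertex the idempotent splits as $\epsilon_i$ and $e_i-\epsilon_i$. The correct count is $|\TT^\times|=|\TT|+|\PP|$ (each once-punctured monogon contributes a curve carrying two taggings), which is what Corollary~\ref{cor:1} asserts; a maximal collection has $|\TT|+|\PP|$ elements, not $|\TT|$.

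Second, the ``main obstacle'' you identify --- proving that geometric maximality of a non-crossing collection forces it to have exactly rank-many elements --- is not an input to the paper's argument, and it is not true ``by the definition of maximality'' as you claim; a priori different maximal collections could have different sizes. The paper instead invokes \cite[Corollary~2.13]{AIR} (Lemma~\ref{lem:air1}): a $\tau$-rigid pair is support $\tau$-tilting if and only if it is maximal as a $\tau$-rigid pair. Once the bijection between partial generalized dissections and basic $\tau$-rigid pairs is established (Theorem~\ref{thm:tau-rigid}, whose key point is that $\Int((\gamma,\kappa),\rho(\gamma',\kappa'))=0$ for $\rho(\gamma',\kappa')\in\TT^\times$ if and only if $\Hom_A(M(\gamma',\kappa'),M(\gamma,\kappa))=0$, seen via composition series), geometric maximality translates directly into maximality of the $\tau$-rigid pair and hence into the support $\tau$-tilting property; the cardinality statement then falls out as a corollary rather than being needed as an input. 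Third, your surjectivity argument pulls each indecomposable summand of $M\oplus P$ back to a curve, but Theorem~\ref{thm:A}(1) only parametrizes the class $\mathcal{S}$; you need the additional fact (Lemma~\ref{lem:tau}, resting on the description of band modules in homogeneous and rank-two tubes) that every indecomposable module $N$ with $\Hom(N,\tau N)=0$ already lies in $\mathcal{S}$. Without this, an indecomposable $\tau$-rigid summand could a priori fail to correspond to any tagged permissible curve.
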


The paper is organized as follows. In $\S 1$ we recall the definition of and set the notation for skew-gentle algebras, introduce the notion of skew-tiling algebras associated to admissible partial triangulations of punctured marked surfaces and show that a finite dimensional algebra is skew-gentle if and only if it is skew-tiling. In $\S 2$, we introduce the notion of tagged permissible curves and show that they are in bijection with isoclasses of certain indecomposable modules. In $\S 3$, we recall a description of the Auslander-Reiten translation for skew-gentle algebras and realize it via the tagged rotation. In $\S 4$, we interpret the dimension of the Hom space between two indecomposable modules in $\mathcal{S}$ via the intersection numbers of tagged permissible curves. In $\S 5$, we apply our results to classify support $\tau$-tilting modules of skew-gentle algebras. In $\S 6$, we give an example to illustrate our results.

Throughout this paper, we assume that $\k$ is an algebraically closed field. For any $\k$-algebra $A$, an $A$-module is always a finitely generated right $A$-module. We denote by $\mod A$ the category of finitely generated right $A$-modules.

In Tables~\ref{table1} and \ref{table2}, we list some notations used throughout the paper.
\begin{table}[t]
    \caption{Notations of geometric items}\label{table1}
    \begin{tabular}{|c|cp{3in}|c|}
    \hline
    $\surf=(S,\MM,\PP)$ && a punctured marked surface & Section~\ref{subsec:st} \\ \hline
    $\gamma^{-1}$ && the inverse of a curve $\gamma$ & Definition~\ref{def:curve}\\ \hline
    $\TT$ && an admissible partial triangulation of $\surf$ & Definition~\ref{def:part tri} \\ \hline
		$\Lambda^\TT$ && the skew-tiling algebra arising from $\TT$ & Definition~\ref{def:st}\\ \hline
		$\overline{\gamma}$ && the completion of a curve $\gamma$ & Definition~\ref{def:completion} \\ \hline
		$\PS$ && the set of permissible curves on $\surf$ & Definition~\ref{def:permissible} \\ \hline
		PAS && permissible curve segment & Definition~\ref{def:PAS}\\ \hline
		PPC && prepermissible curve & Definition~\ref{def:prepermissible} \\ \hline
		$\PPS$ && the set of prepermissible curves on $\surf$ & Definition~\ref{def:prepermissible} \\ \hline
		PPCS && prepermissible curve segment & Definition~\ref{def:prepermissible} \\ \hline
        $\GPS$ && the set of generalized permissible curves on $\surf$ & Definition~\ref{def:gen-perm}\\ \hline
        \multirow{2}{*}{$\GPTS$} && the set of tagged generalized permissible curves on $\surf$ & \multirow{2}{*}{Definition~\ref{def:gen-perm}}\\ \hline
		\multirow{3}{*}{$[1]\gamma$ (resp. $\gamma[1]$)} && the curve obtained from $\gamma$ by moving $\gamma(1)$ (resp. $\gamma(0)$) along the boundary clockwise to the next marked point & \multirow{3}{*}{Definition~\ref{def:ro}} \\ \hline
		$\rho(\gamma,\kappa)$ && the tagged rotation of a tagged curve $(\gamma,\kappa)$ & Definition~\ref{def:ro} \\ \hline
        $\gamma_1\sim\gamma_2$ && $\gamma_1$ is homotopic to $\gamma_2$ & Section~\ref{sec:int-dim}\\ \hline
        \multirow{2}{*}{$\gamma_1\cap^\circ\gamma_2$} && the set of interior intersections between $\gamma_1$ and $\gamma_2$ & \multirow{2}{*}{Definition~\ref{def:tag int}} \\ \hline
        \multirow{2}{*}{$\Int^\bullet(\gamma_1,\gamma_2)$} && the number of black interior intersections from $\gamma_1$ to $\gamma_2$ & \multirow{2}{*}{Definition~\ref{def:black}}\\ \hline
		\multirow{2}{*}{$\mathfrak{T}((\gamma_1,\kappa_1),(\gamma_2,\kappa_2))$} && the set of tagged intersections between $(\gamma_1,\kappa_1)$ and $(\gamma_2,\kappa_2)$ & \multirow{2}{*}{Definition~\ref{def:tag int}}\\ \hline
        \multirow{2}{*}{$\mathfrak{T}^\bullet((\gamma_1,\kappa_1),(\gamma_2,\kappa_2))$} && the set of black tagged intersections from $(\gamma_1,\kappa_1)$ to $(\gamma_2,\kappa_2)$  & \multirow{2}{*}{Definition~\ref{def:black}}\\ \hline
		\multirow{2}{*}{$\Int((\gamma_1,\kappa_1),(\gamma_2,\kappa_2))$} && the number of intersections between $(\gamma_1,\kappa_1)$ and $(\gamma_2,\kappa_2)$ & \multirow{2}{*}{Definition~\ref{def:tag int}}\\ \hline
        \multirow{2}{*}{$\Int^\bullet((\gamma_1,\kappa_1),(\gamma_2,\kappa_2))$} && the number of black intersections from $(\gamma_1,\kappa_1)$ to $(\gamma_2,\kappa_2)$ & \multirow{2}{*}{Definition~\ref{def:black}}\\ \hline
		\multirow{2}{*}{$\mathfrak{P}(\gamma_1,\gamma_2)$} && the set of punctured intersections between $\gamma_1$ and $\gamma_2$ & \multirow{2}{*}{Section~\ref{subsec:4.2}}\\\hline
        \multirow{2}{*}{$\mathfrak{P}^\bullet(\gamma_1,\gamma_2)$} && the set of black punctured intersections from $\gamma_1$ to $\gamma_2$ & \multirow{2}{*}{Definition~\ref{def:black}}\\\hline
		GD$(\surf)$ && the set of generalized dissections on $\surf$ & Definition~\ref{def:t.p.d}\\\hline
        \end{tabular}
\end{table}
\begin{table}[t]
    \caption{Notations of quiver representations}\label{table2}
    \begin{tabular}{|c|cp{3in}|c|}
    \hline
    $(Q,Sp,I)$ && a skew-gentle triple & Definition~\ref{def:sg} \\ \hline
    $\sigma$, $\tau$ && functions from $Q_1$ to $\{\pm 1\}$ & Section~\ref{subsec:words}\\ \hline
    $\hat{Q}$ && an extended quiver obtained from $Q$ & Section~\ref{subsec:words} \\ \hline
		\multirow{2}{*}{$\l(-)$} && a bijection from the set of PASs to the set of letters in $\hat{Q}_1$ & \multirow{2}{*}{Section~\ref{subsec:words}} \\ \hline
		$\mathbf{IE}$ && the set of inextensible words & Section~\ref{subsec:preper} \\ \hline
		$\M(-)$ && a map from PPCSs to the set of words & Section~\ref{subsec:preper}\\ \hline
		$\Gamma(-)$ && a map from the set of words to PPCSs & Section~\ref{subsec:preper}\\ \hline
		\multirow{2}{*}{$\hat{Q}(i,\theta)$} && the set of letters in $\hat{Q}_1$ that starts at $i$ with the sign $\theta$ on its tail & \multirow{2}{*}{Section~\ref{subsec:ad}} \\ \hline
		\multirow{2}{*}{$\m(i,\theta)$} && the set of inextensible words that starts at $i$ with the sign $\theta$ on its tail & \multirow{2}{*}{Section~\ref{subsec:ad}}\\ \hline
		$\m_{(i,j)}$ && the subword of a word $\m$ between $i$ and $j$ & Notation~\ref{not:()}\\ \hline
		$F(\m)$ && the completion of a word $\m$ & Section~\ref{subsec:ad}\\ \hline
		$\ad$ && the set of admissible words & Definition~\ref{def:admissible word} \\ \hline
		$A_\m$ && a $\k$-algebra associated to a word $\m\in\ad$ & Section~\ref{subsec:mod and curve} \\ \hline
		\multirow{2}{*}{$M(\m,N)$} && a module associated to an admissible word $\m$ with a one-dimensional $A_\m$-module $N$ & \multirow{2}{*}{Construction~\ref{construct:M(m,N)}}\\ \hline
		$\mathcal{S}$ && the set of all modules $M(\m,N)$ & Section~\ref{subsec:mod and curve}\\ \hline
		$[1]\m$ && the successor of a word $\m$ in $\m(i,\theta)$ & Section~\ref{subsec:ar} \\ \hline 
		$H^{\m,\n}$ && the set of common pairs from $\m$ to $\n$ & Definition~\ref{def:common pair}\\\hline	
	\end{tabular}
\end{table}

\subsection*{Acknowledgments}
We would like to thank Yu Qiu for helpful discussions.  The authors would also like to thank the anonymous referees for a very careful reading and for many comments and suggestions that improved the presentation of the paper.

\section{Skew-gentle algebras and their geometric realization}
\subsection{Skew-gentle algebras}\label{subsec:sg}

In this subsection, we recall the notion of skew-gentle algebras from \cite{G,GP,QZ}.

A \emph{quiver} $Q=(Q_0,Q_1,s,t)$ is a directed graph, which we always assume to be finite, with $Q_0$ the set of vertices, $Q_1$ the set of arrows, and $s,t:Q_1\to Q_0$ two functions, sending an arrow to its start and target respectively. A \emph{loop} (at a vertex $v\in Q_0$) is an arrow $\epsilon\in Q_1$ with $s(\epsilon)=t(\epsilon)$($=v$). A \emph{path} in $Q$ of \emph{length} $n\geq 1$ is a sequence $\alpha_n\cdots\alpha_1$ of arrows such that $t(\alpha_i)=s(\alpha_{i+1})$ for $1\leq i\leq n-1$. In addition, to each vertex $v\in Q_0$, we associate a \emph{trivial} path $e_v$ of length 0 with $s(e_v)=v=t(e_v)$. By $\k Q$ we denote the \emph{path algebra} of $Q$. An ideal $\mathcal{I}$ of $\k Q$ is called \emph{admissible} if $R_Q^m\subseteq \mathcal{I}\subseteq R_Q^2$ for some $m\geq 2$, where $R_Q$ denotes the ideal of $\k Q$ generated by $Q_1$. The quotient algebra $\k Q/\mathcal{I}$ is finite dimensional whenever $\mathcal{I}$ is admissible.

\begin{definition}\label{def:gentle}
	A pair $(Q,I)$ where $Q$ is a quiver and $I$ is a set of paths in $Q$ is called a \emph{gentle pair} if the following hold.
	\begin{enumerate}
		\item[(G1)] Any vertex of $Q$ is the start of at most two arrows and the target of at most two arrows.
		\item[(G2)] The length of any path in $I$ is 2.
		\item[(G3)] For any arrow $\alpha$, there is at most one arrow $\beta$ (resp. $\gamma$) such that $\beta\alpha\in I$ (resp. $\gamma\alpha\notin I$).
		\item[(G4)] For any arrow $\alpha$, there is at most one arrow $\beta$ (resp. $\gamma$) such that $\alpha\beta\in I$ (resp. $\alpha\gamma\notin I$).
	\end{enumerate}
	A finite dimensional basic algebra $A$ is called a \emph{gentle algebra} if it is isomorphic to $\k Q/\langle I\rangle$ with $(Q,I)$ a gentle pair, where $\langle I\rangle$ denotes the ideal of $\k Q$ generated by $I$.
\end{definition}

Let $(Q,I)$  be a gentle pair. It is known that $\k Q/\langle I\rangle$ is finite dimensional if and only if $\langle I\rangle$ is admissible. We have the following observation.

\begin{lemma}\label{lem:gen fin}
	Let $(Q,I)$ be a gentle pair with $\langle I\rangle$ an admissible ideal of $\k Q$. Then $\epsilon^2\in I$ for any loop $\epsilon$, and there is at most one loop at each vertex. 
\end{lemma}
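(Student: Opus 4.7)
The plan rests on one basic structural fact about gentle (or indeed any monomial) ideals: since the generators of $\langle I\rangle$ are the paths in $I$, a path $p$ of $\k Q$ lies in $\langle I\rangle$ if and only if $p$ contains some element of $I$ as a (consecutive) subpath. I will use this throughout without further comment.

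\textbf{Step 1 (every loop squares into $I$).} Let $\epsilon$ be a loop at a vertex $v$. Since $\langle I\rangle$ is admissible, there is some $m\ge 2$ with $R_Q^m\subseteq \langle I\rangle$; in particular $\epsilon^m\in\langle I\rangle$. By the observation above, $\epsilon^m$ must contain a length-$2$ path from $I$ as a subpath. But the only length-$2$ subpath appearing in $\epsilon^m$ is $\epsilon^2$, so $\epsilon^2\in I$.

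\textbf{Step 2 (uniqueness of a loop at each vertex).} Suppose for contradiction that $\epsilon_1\neq \epsilon_2$ are two distinct loops at a single vertex $v$. By (G1), $\{\epsilon_1,\epsilon_2\}$ is exactly the set of arrows starting at $v$ and also exactly the set of arrows ending at $v$. By Step~1 we have $\epsilon_1^2,\epsilon_2^2\in I$. I will then apply (G3) to the arrow $\epsilon_1$: the hypothesis allows at most one arrow $\beta$ with $\beta\epsilon_1\in I$, and $\beta=\epsilon_1$ already works, so $\epsilon_2\epsilon_1\notin I$. Symmetrically (G3) applied to $\epsilon_2$ gives $\epsilon_1\epsilon_2\notin I$.

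\textbf{Step 3 (contradiction with admissibility).} Now consider the path $p_n=(\epsilon_1\epsilon_2)^n$ for arbitrary $n$. Its consecutive length-$2$ subpaths are all of the form $\epsilon_1\epsilon_2$ or $\epsilon_2\epsilon_1$, neither of which lies in $I$, so $p_n\notin\langle I\rangle$. Hence $R_Q^m\not\subseteq\langle I\rangle$ for any $m$, contradicting admissibility. The argument is entirely formal once Step~1 is in hand; the only thing to be careful about is the direction of composition in (G3), which is why I single out (G3) rather than (G4) in Step~2. There is no serious obstacle.
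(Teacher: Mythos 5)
Your proof is correct and follows essentially the same route as the paper: admissibility plus the monomial structure of $\langle I\rangle$ forces $\epsilon^2\in I$, and the contradiction for two loops comes from showing $(\epsilon_1\epsilon_2)^n\notin\langle I\rangle$. The only (harmless) difference is that you feed Step~1 back into (G3) to pin down directly that $\epsilon_1\epsilon_2,\epsilon_2\epsilon_1\notin I$, whereas the paper runs a slightly more redundant two-case analysis via (G3) and (G4).
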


\begin{proof}
	Since $\langle I\rangle$ is admissible, for any loop $\epsilon$ in $Q_1$, there is an integer $m\geq 2$ such that $\epsilon^m\in\langle I\rangle$. The ideal $\langle I\rangle$ consists of the linear combinations of paths which contain a sub-path in $I$. So we have $\epsilon^2\in I$ by (G2) in Definition~\ref{def:gentle}. 
	
	If there are two distinct loops $\epsilon_1,\epsilon_2$ at a vertex $v$, then by (G3) and (G4) in Definition~\ref{def:gentle}, either $\epsilon_1^2,\epsilon_1^2\notin I$ or $\epsilon_1\epsilon_2,\epsilon_2\epsilon_1\notin I$. In the former case, we have $\epsilon_1^m\notin \langle I\rangle$ for any $m\geq 1$, and in the latter case, we have $(\epsilon_1\epsilon_2)^m\notin \langle I\rangle$ for any $m\geq 1$, both of which contradict that $\langle I\rangle$ is admissible.
\end{proof}

\begin{definition}\label{def:sg}
    A triple $(Q,Sp,I)$ consisting of a quiver $Q$, a subset $Sp$ of $Q_0$ and a set $I$ of paths in $Q$ is called \emph{skew-gentle} if $(Q^{sp},I^{sp})$ is a gentle pair, where $Q_0^{sp}=Q_0$, $Q_1^{sp}=Q_1\cup\{\epsilon_i\mid i\in Sp \}$ with $\epsilon_i$ a loop at $i$, and $I^{sp}=I\cup\{\epsilon_i^2\mid i\in Sp \}$. We call  $\epsilon_i$, where $i\in Sp$, a \emph{special loop} and the elements in $Q_1$ \emph{ordinary arrows}.
	
    A finite dimensional basic algebra $A$ is called \emph{skew-gentle} if it is isomorphic to $\k Q^{sp}/\<I^{sg}\>$ for a skew-gentle triple $(Q,Sp,I)$, where $I^{sg}=I\cup \{\epsilon_i^2-\epsilon_i\mid i\in Sp\}$. 
\end{definition}

By definition, for a skew-gentle triple $(Q,Sp,I)$, the algebra $\k Q^{sp}/\<I^{sg}\>$ is obtained from the algebra $\k Q^{sp}/\<I^{sp}\>$ by specializing the nilpotent loops $\epsilon_i, i\in Sp$, to be idempotents, i.e. $\epsilon_i^2=\epsilon_i$.

\begin{remark}\label{rmk:QZ}
    Let $A=\k Q^{sp}/\<I^{sg}\>$ be a skew-gentle algebra for a skew-gentle triple $(Q,Sp,I)$. Then the set $\{e_j\mid j\in Q^\TT_0\setminus Sp\}\cup\{e_i-\epsilon_i,\epsilon_i|i\in Sp\}$ is a complete set of primitive orthogonal idempotents, where $\epsilon_i,i\in Sp$ is the special loop at $i$.
\end{remark}

\begin{lemma}\label{lem:f.d.}
    Let $(Q,Sp,I)$ be a skew-gentle triple. Then there is an isomorphism of vector spaces $\k Q^{sp}/\<I^{sg}\>\cong\k Q^{sp}/\<I^{sp}\>$. In particular, the quotient algebra $\k Q^{sp}/\<I^{sg}\>$ is finite dimensional if and only if $\k Q^{sp}/\<I^{sp}\>$ is finite dimensional.
\end{lemma}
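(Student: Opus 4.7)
The plan is to exhibit a common $\k$-basis for the two quotient spaces, namely the set $B$ of paths in $Q^{sp}$ (including the trivial paths $e_v$) that avoid every sub-path in $I^{sp}=I\cup\{\epsilon_i^{2}\mid i\in Sp\}$. Since $I^{sp}$ consists of paths of length two, the ideal $\<I^{sp}\>$ is spanned as a $\k$-vector space by the paths containing some element of $I^{sp}$ as a sub-path, so $B$ is immediately a $\k$-basis of $\k Q^{sp}/\<I^{sp}\>$.

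To prove that $B$ is also a $\k$-basis of $\k Q^{sp}/\<I^{sg}\>$ I would separate spanning from linear independence. Spanning is direct: any path $p$ in $Q^{sp}$ either contains a sub-path in $I$, in which case $p\equiv 0$ modulo $\<I^{sg}\>$, or it avoids $I$; in the latter case I would iteratively collapse each maximal consecutive run $\epsilon_i^{k}$ with $k\geq 1$ to a single $\epsilon_i$ using $\epsilon_i^{2}=\epsilon_i$, obtaining a path that still avoids $I$ (because every consecutive pair of arrows in it was already consecutive in $p$) and now also avoids $\epsilon_i^{2}$, hence lies in $B$. For linear independence I would invoke Bergman's Diamond Lemma applied to the reduction system consisting of $\beta\alpha\to 0$ for $\beta\alpha\in I$ together with $\epsilon_i^{2}\to \epsilon_i$ for $i\in Sp$. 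Termination is clear, since each rewrite either annihilates the monomial or strictly decreases its length. For local confluence the only overlaps are $\gamma\beta\alpha$ arising from two relations in $I$, which reduces to $0$ either way, and the self-overlap $\epsilon_i^{3}$ of $\epsilon_i^{2}\to\epsilon_i$, which reduces to $\epsilon_i$ either way; the key structural point is that no cross-overlap between the two kinds of rules occurs, because the elements of $I$ are paths in $Q$ and therefore contain no special loop $\epsilon_i$.

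Confluence then forces the normal forms to be precisely the elements of $B$, which are therefore $\k$-linearly independent in $\k Q^{sp}/\<I^{sg}\>$; combined with the spanning step this produces the asserted isomorphism of vector spaces, and the equivalence of finite-dimensionality follows at once. I expect the main obstacle to be the local confluence verification in the Diamond Lemma, where the definitional convention that $I\subseteq \k Q$ rather than $\k Q^{sp}$ is used in a crucial way: a collision between the rule $\epsilon_i^{2}\to\epsilon_i$ and a length-two relation passing through $\epsilon_i$ would spoil the length-preserving bijection, and checking that (G3)--(G4) of the gentle pair $(Q^{sp},I^{sp})$ prevents any such collision is exactly what makes the argument go through.
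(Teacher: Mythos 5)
Your proposal is correct and follows essentially the same route as the paper: both exhibit the set of paths avoiding every sub-path in $I^{sp}$ as a common $\k$-basis of the two quotient spaces. The paper merely asserts that the residue classes of these paths form a basis of $\k Q^{sp}/\<I^{sg}\>$ ``by the construction of $I^{sg}$'', whereas you supply the justification via the Diamond Lemma; your overlap analysis (in particular, the absence of cross-overlaps because $I\subseteq\k Q$ contains no special loops) is sound.
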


\begin{proof}
	The vector space $\k Q^{sp}/\<I^{sp}\>$ has a basis consisting of the residue classes (modulo $I^{sp}$) of paths in $Q^{sp}$ which do not contain any sub-path in $I^{sp}$. By the construction of $I^{sg}$, the residue classes (modulo $I^{sg}$) of the same paths form a basis of $\k Q^{sp}/\<I^{sg}\>$. Thus, we get an isomorphism of the vector spaces.
\end{proof}

Combining Lemma~\ref{lem:gen fin} and Lemma~\ref{lem:f.d.}, we have the following.

\begin{lemma}\label{lem:sg fin}
	Let $(Q,Sp,I)$ be a skew-gentle triple with $\k Q^{sp}/\<I^{sg}\>$ finite dimensional. Then there is at most one loop at each vertex, and for each loop $\epsilon$, either $\epsilon^2$ or $\epsilon^2-\epsilon$ is in $I^{sg}$, or equivalently, $\epsilon^2$ is in $I^{sp}$.
\end{lemma}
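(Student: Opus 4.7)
The plan is to apply Lemma~\ref{lem:gen fin} to the gentle pair $(Q^{sp}, I^{sp})$, after using Lemma~\ref{lem:f.d.} to transfer the finite-dimensionality hypothesis across.

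First, by Lemma~\ref{lem:f.d.} the algebra $\k Q^{sp}/\<I^{sp}\>$ is also finite dimensional, and the observation recorded just after Definition~\ref{def:gentle} then forces $\<I^{sp}\>$ to be an admissible ideal of $\k Q^{sp}$. Consequently Lemma~\ref{lem:gen fin} applies to $(Q^{sp}, I^{sp})$ and supplies two facts: each vertex of $Q^{sp}$ carries at most one loop, and $\epsilon^2 \in I^{sp}$ for every loop $\epsilon$ of $Q^{sp}$.

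Next I would transfer this information back to $Q$. Every loop of $Q$ is \emph{a fortiori} a loop of $Q^{sp}$, so uniqueness of loops at each vertex of $Q$ is immediate; moreover a vertex $i \in Sp$ cannot simultaneously carry an ordinary loop of $Q$, because otherwise two distinct loops would sit at $i$ in $Q^{sp}$. For the promised dichotomy I would use the explicit description $I^{sp} = I \cup \{\epsilon_j^2 \mid j \in Sp\}$: a special loop $\epsilon_i$ satisfies $\epsilon_i^2 - \epsilon_i \in I^{sg}$ by definition, while for an ordinary loop $\epsilon$ the element $\epsilon^2$ cannot coincide with any $\epsilon_j^2$, so the membership $\epsilon^2 \in I^{sp}$ forces $\epsilon^2 \in I \subseteq I^{sg}$. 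Reading these cases in reverse supplies the converse and hence the equivalence with $\epsilon^2 \in I^{sp}$.

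No serious obstacle is anticipated: the lemma amounts to bookkeeping on top of the two preceding results, and the only care required is in separating ordinary from special loops and invoking the explicit descriptions of $I^{sp}$ and $I^{sg}$.
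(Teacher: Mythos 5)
Your proposal is correct and matches the paper's intent exactly: the paper gives no separate proof, stating only that the lemma follows by ``combining Lemma~\ref{lem:gen fin} and Lemma~\ref{lem:f.d.}'', which is precisely the route you take (transfer finite-dimensionality via Lemma~\ref{lem:f.d.}, deduce admissibility of $\<I^{sp}\>$, apply Lemma~\ref{lem:gen fin} to the gentle pair $(Q^{sp},I^{sp})$, then sort ordinary from special loops using the explicit descriptions of $I^{sp}$ and $I^{sg}$). Your write-up simply makes the implicit bookkeeping explicit.
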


\subsection{Skew-tiling algebras}\label{subsec:st}

A \emph{punctured marked surface} in the sense of \cite{FST} is a triple $\surf=(S,\MM,\PP)$, where $S$ is a compact oriented surface with nonempty boundary $\partial S$, $\MM\subset \partial S$ is a finite set of marked points on the boundary and $\PP\subset S\setminus\partial S$ is a finite set of punctures in the interior of $S$.

A connected component of $\partial S$ is called a \emph{boundary component} of $\surf$. A boundary component $B$ of $\surf$ is called \emph{unmarked} if $\MM\cap B=\emptyset$. A \emph{boundary segment} is the closure of a component of $\partial S\setminus \MM$. 

\begin{definition}\label{def:curve}
    A \emph{curve} on a punctured marked surface $\surf$ is a continuous map $\gamma:[0,1]\longrightarrow S$ such that
    \begin{enumerate}
        \item $\gamma(0), \gamma(1)\in \MM\cup \PP$ and $\gamma(t)\in S\setminus(\partial S\cup \PP)$ for $0<t<1$; and
        \item $\gamma$ is neither null-homotopic nor homotopic to a boundary segment. 
    \end{enumerate}
    The \emph{inverse} $\gamma^{-1}$ of a curve $\gamma$ is defined as $\gamma^{-1}(t)=\gamma(1-t), t\in[0,1]$. We always consider curves on $\surf$ up to homotopy relative to their endpoints and up to inverse.
\end{definition}


\begin{definition}\label{def:part tri}
	A \emph{partial triangulation} $\TT$ of $\surf$ is a collection of curves such that they do not intersect themselves or each other in $S\setminus(\MM\cup \PP)$. An \emph{admissible partial triangulation} $\TT$ is a partial triangulation where each puncture lies in the interior of a monogon of $\TT$ such as in Figure~\ref{fig:self-fold}. Note in particular that in an admissible partial triangulation, the endpoints of each curve lie in $\MM$.
	\begin{figure}[htpb]\centering
		\begin{tikzpicture}[xscale=1,yscale=1]
			\draw[ultra thick]plot[smooth,tension=2](-.5,1)to(.5,1);
			\draw[red,thick](0,1)to[out=-135,in=60](-1,0)to[out=-120,in=180](0,-1)to[out=0,in=-60](1,0)to[out=120,in=-45](0,1);
			\draw(0,1)node{$\bullet$};
			\draw[thick](0,0)node{$\bullet$};
		\end{tikzpicture}
		\caption{A once-punctured monogon}
		\label{fig:self-fold}
	\end{figure}
\end{definition}

Let $\TT$ be an admissible partial triangulation of a punctured marked surface $\surf$. Then $\surf$ is divided by $\TT$ into a collection of regions. The following types of regions are specially important to us:
\begin{enumerate}
	\item[$\aaa$] $m$-gons ($m\geq 3$) without any unmarked boundary components nor punctures in their interiors;
	\item[$\bb$] monogons with exactly one unmarked boundary component but no punctures in their interiors;
	\item[$\cc$] digons with exactly one unmarked boundary component but no punctures in their interiors;
	\item[$\dd$] once-punctured monogons, that is, monogons with exactly one puncture but no unmarked boundary components in their interiors;
\end{enumerate}
Note that not every region is necessarily of one of the types above (e.g. when $\surf$ is an unpunctured torus and $\TT$ contains exactly one curve, see Figure~\ref{fig:other region}) and that the number of regions of type (IV) is $|\PP|$.

\begin{figure}[htpb]\centering
	\begin{tikzpicture}[scale=.5]
		\draw(0,0) ellipse (5 and 3);
		\draw[bend left=50](-2.5,-.2)to(2.5,-.2);
		\draw[bend right=50](-2.2,.05)to(2.2,.05);
		\draw[fill=gray!20](1,-2)to[out=80,in=140](2,-1)to[out=-40,in=10](1,-2);
		
		\draw[red,thick,bend left=50](.45,-.9)to(.55,-3);
		\draw[red,thick,dashed,bend right=40](.3,-.95)to(.3,-3);
		\draw(1,-2)node[black]{$\bullet$}(1.3,-2.5)node[red]{$\TT$}(2.3,-.7)node{$\partial S$};
	\end{tikzpicture}
	\caption{A region of an admissible partial triangulation, which is not of one of the types (I)-(IV)}
	\label{fig:other region}
\end{figure}   

\begin{definition}\label{def:st}
	Let $\TT$ be an admissible partial triangulation of a punctured marked surface $\surf$. The \emph{skew-tiling algebra} of $\TT$ is defined to be $\Lambda^\TT=\k Q^\TT/\langle R^\TT\rangle$, where the quiver $Q^\TT$ and the relation set $R^\TT$ are given by the following.
	\begin{itemize}
		\item The vertices in $Q^\TT_0$ are (indexed by) the curves in $\TT$.
		\item There is an arrow $\alpha\in Q^\TT_1$ from $i$ to $j$ whenever the corresponding curves $i$ and $j$ share an endpoint $p_{\alpha}\in \MM$ such that $j$ follows $i$ when going around $p_\alpha$ in the anticlockwise direction. Note that by this construction, each loop in $Q^\TT$ is at (the vertex indexed by) a curve in $\TT$ whose endpoints coincide. 
		\item The relation set $R^\TT$ consists of
		\begin{itemize}
			\item[(R1)] $\epsilon^2-\epsilon$ (resp. $\epsilon^2$) if $\epsilon$ is a loop such that the curve (corresponding to) $s(\epsilon)=t(\epsilon)$ does (resp. does not) cut out a region of type $\dd$; and
			\item[(R2)] $\alpha\beta$ if $p_{\beta}\neq p_{\alpha}$ (see Figure~\ref{fig:diff vertex}), or the endpoints of the curve (corresponding to) $t(\beta)=s(\alpha)$ coincide and we are in one of the situations in Figure~\ref{fig:R2 loop}.
			\begin{figure}[htbp]\centering
				\begin{tikzpicture}[xscale=1,yscale=.5]
					\draw[ultra thick]plot[smooth,tension=1](-2,2)to(2,2);
					\draw[ultra thick]plot[smooth,tension=1](-2,-2)to(2,-2);
					\draw[red,thick,bend left=10](0,2)to(0,-2);
					\draw[red,thick,bend right=10](0,-2)to(1,-.5);
					\draw[red,thick,bend left=10](0,2)to(1,.5);
					\draw[thick,bend right=20,->-=.7,>=stealth](.75,-1)to(.15,-1);
					\draw[thick,bend right=20,->-=.7,>=stealth](.15,1)to(.75,1);
					\draw(0,2)node[above]{$p_{\alpha}$}(0,-2)node[below]{$p_{\beta}$}(.5,1)node[below]{$\alpha$}(.5,-1)node[above]{$\beta$};
					\draw (0,2)node{$\bullet$} (0,-2)node{$\bullet$};
				\end{tikzpicture}
				\caption{Relations in (R2), the case $p_{\beta}\neq p_{\alpha}$}
				\label{fig:diff vertex}
			\end{figure}
			
			\begin{figure}[htpb]\centering
				\begin{tikzpicture}[xscale=1,yscale=.8]
					\draw[ultra thick,bend right=20](-.5,2.1)to(.5,2.1);
					\draw[red,thick](0,2)to[out=-140,in=90](-2,0);
					\draw[red,thick](2,0)to[out=90,in=-40](0,2);
					\draw[red,dashed](-2,0)to[out=-90,in=180](-.5,-1);
					\draw[red,dashed](.5,-1)to[out=0,in=-90](2,0); 
					\draw(0,2)node{$\bullet$}(0,.8);
					\draw[red,thick,bend right=5](0,2)to(-.8,0);
					\draw[red,thick,bend left=5](0,2)to(.8,0);
					\draw[thick,->-=.6,>=stealth,bend right=5](-.78,1.5)to(-.21,1.5);
					\draw[thick,-<-=.6,>=stealth,bend left=5](.78,1.5)to(.21,1.5);
					\draw(-.5,1.3)node{$\alpha$}(.5,1.3)node{$\beta$}; 
					\draw(0,2)node{$\bullet$};
					\draw(0,2)node[above]{$p_\alpha=p_\beta$};
				\end{tikzpicture}
				\qquad
				\begin{tikzpicture}[xscale=1,yscale=1]
					\draw[ultra thick,bend right=20](-.5,2.1)to(.5,2.1);
					\draw[red,thick](0,2)to[out=-120,in=80](-.8,0);
					\draw[red,dashed](-.8,0)to[out=-70,in=180](-.5,-.5);
					\draw[red,dashed](.5,-.5)to[out=0,in=-110](.8,0);
					\draw[red,thick](.8,0)to[out=100,in=-60](0,2);
					\draw[red,thick,bend right=20](0,2)to(-2,0);
					\draw[red,thick,bend left=20](0,2)to(2,0);
					\draw[thick,->-=.6,>=stealth,bend right=5](-.81,1.5)to(-.29,1.5);
					\draw[thick,-<-=.6,>=stealth,bend left=5](.81,1.5)to(.29,1.5);
					\draw(-.8,1.3)node{$\beta$}(.8,1.3)node{$\alpha$};
					\draw(0,2)node{$\bullet$};
					\draw(0,2)node[above]{$p_\alpha=p_\beta$};
				\end{tikzpicture}
				\caption{Relations in (R2), the case $p_\beta=p_\alpha$ and $s(\alpha)=t(\beta)$ is a loop}
				\label{fig:R2 loop}
			\end{figure}
		\end{itemize}
	\end{itemize}
\end{definition}

Note that by the above construction, each region of type (II) or (IV) gives rise to a loop while any region of type (I) or (III) does not. In particular, we have the following.

\begin{lemma}\label{lem:loop}
	If each region of $\TT$ is of one of the types $\aaa$-$\dd$, then there is a bijection between the loops in $Q^\TT$ and the regions of type $\bb$ or $\dd$.
\end{lemma}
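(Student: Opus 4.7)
The plan is to establish the bijection by tracking how loops in $Q^\TT$ arise from the local picture at marked points, and then matching this with the possible region types.

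First I would reinterpret loops combinatorially. By Definition~\ref{def:st}, an arrow from $i$ to $j$ in $Q^\TT$ is recorded whenever the curves $i$ and $j$ share an endpoint $p\in\MM$ with $j$ following $i$ immediately in the anticlockwise cyclic order of curve-ends at $p$. Therefore a loop at vertex $i$ arises precisely from a pair of curve-ends of $i$ which both lie at the same marked point $p$ and are consecutive in the anticlockwise cyclic order of all curve-ends incident to $p$. Equivalently, loops at $i$ are in bijection with the \emph{wedges} at marked points (the angular sectors between two consecutive curve-ends) whose two bounding curve-ends are both ends of the curve $i$.

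Second, I would match such wedges with monogon regions. Every wedge at $p$ lies in exactly one region of $\TT$, and conversely every region of $\TT$ contains at least one wedge at each marked point on its boundary. If a wedge at $p$ has both bounding curve-ends equal to the curve $i$, then following the boundary of the containing region away from $p$ along one edge returns to $p$ along the other end of the same curve $i$; hence the region is a monogon whose entire boundary in $\TT$ consists of the single curve $i$. Conversely, any monogon region is bounded by a single curve $i\in\TT$ with both endpoints at some marked point $p$, and its unique wedge at $p$ is bounded by the two ends of $i$ and thus produces a loop at $i$. This exhibits a bijection between loops of $Q^\TT$ and monogon regions of $\TT$.

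Third, I would invoke the hypothesis. Assuming every region of $\TT$ is of type $\aaa$, $\bb$, $\cc$, or $\dd$, the monogon regions are exactly those of type $\bb$ or $\dd$, because type $\aaa$ regions are $m$-gons with $m\geq 3$ and type $\cc$ regions are digons — both have at least two distinct bounding curves of $\TT$ and hence cannot be monogons. Combining this with the correspondence of the previous step yields the stated bijection.

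The only subtle point I would pay attention to is verifying cleanly that a wedge giving a loop forces its ambient region to be a genuine monogon bounded by the single curve $i$; this needs the fact that $\TT$ is a partial triangulation, so its curves meet only at marked points and bound regions with well-defined edge sequences. Beyond this local check, the argument is a direct combinatorial matching, and no further structural input is required.
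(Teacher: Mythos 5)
Your argument is correct and matches the paper's (implicit) reasoning: the paper states this lemma as an immediate consequence of the construction --- each region of type $\bb$ or $\dd$ gives a loop while regions of type $\aaa$ or $\cc$ do not --- without writing out a proof. Your wedge-to-monogon correspondence supplies exactly the missing details, including the one genuinely subtle point (that a wedge bounded by two consecutive ends of the same arc forces its ambient region to close up as a monogon, which ultimately rests on the orientation of the surface together with the hypothesis that every region has one of the four listed types).
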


For a curve $i\in\TT$ which cuts out a region of type (II) or (IV), we denote by $\epsilon_i$ the loop in $Q^\TT$ at $i$. See Section~\ref{sec:example} for an example of a skew-tiling algebra.

Note that, for the case when $\PP$ is empty, any partial triangulation is admissible. In this case, skew-tiling algebras are tiling algebras introduced in \cite{BS}. So we have the following result.

\begin{theorem}[{\cite{BS}}]\label{prop:bau}
	Let $\TT$ be an admissible partial triangulation of a punctured marked surface $(S,\MM,\PP)$ with $\PP=\emptyset$. Then the algebra $\Lambda^\TT$ is a gentle algebra. Conversely, for any gentle algebra $A$, there is an admissible partial triangulation $\TT$ of a punctured marked surface $(S,\MM,\PP)$ with $\PP=\emptyset$ such that $\Lambda^\TT\cong A$ and $\TT$ divides $(S,\MM,\PP)$ into a collection of regions of types $\aaa$-$\cc$.
\end{theorem}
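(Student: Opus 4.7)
The plan is to verify the two implications separately, working directly from the combinatorial definition of $\Lambda^\TT$ and the gentle conditions (G1)--(G4).

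\emph{Forward direction.} Assuming $\PP=\emptyset$, I would set $I:=R^\TT$. Since no region of $\TT$ is of type $\dd$, every element of (R1) has the form $\epsilon^2$ and every element of (R2) has the form $\alpha\beta$, so every generator of $R^\TT$ is a path of length $2$, which gives (G2). Conditions (G1), (G3), (G4) are local at each marked point $p\in\MM$: the curves of $\TT$ incident to $p$ inherit an anticlockwise linear order $i_1,\ldots,i_k$ (linear because $p$ lies on $\partial S$), and the arrows at $p$ are exactly the consecutive pairs $i_j\to i_{j+1}$. Since each curve has two endpoints, it is the source of at most two arrows and the target of at most two, giving (G1). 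For (G3), an arrow $\alpha:i\to j$ with $p_\alpha=:p$ admits at most two candidates for an arrow $\beta$ with $s(\beta)=j$: a successor $\beta_1$ of $j$ in the ordering at $p$ and a unique $\beta_2$ issuing from $j$ at its other endpoint $q$. When $j$ is not a loop-curve, (R2) puts $\beta_2\alpha\in I$ (since $p_{\beta_2}=q\neq p$) and leaves $\beta_1\alpha\notin I$; when $j$ is a loop-curve, one of the local pictures in Figure~\ref{fig:R2 loop} occurs and a direct inspection shows that exactly one of the candidates lands in $I$ and the other does not. In every case at most one $\beta$ satisfies $\beta\alpha\in I$ and at most one satisfies $\beta\alpha\notin I$, proving (G3); (G4) follows by the symmetric analysis of compositions $\alpha\beta$.

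\emph{Converse direction.} Given a gentle pair $(Q,I)$ presenting $A=\k Q/\<I\>$, I would construct $(\surf,\TT)$ by the standard ribbon-graph procedure. At each vertex $v$ of $Q$, pair an outgoing arrow $\alpha$ (with $s(\alpha)=v$) to an incoming arrow $\beta$ (with $t(\beta)=v$) whenever $\alpha\beta\notin I$. Conditions (G3)--(G4) guarantee that each arrow-end has at most one partner, so the arrows of $Q$ assemble into a disjoint union of maximal ``antipath walks.'' Each such walk will trace out a piece of the future boundary $\partial S$, with marked points of $\MM$ inserted exactly where a relation of $I$ forces a turn. Loops $\epsilon$ of $Q$ with $\epsilon^2\in I$ (all loops, by Lemma~\ref{lem:gen fin}) produce self-matched walks that close off \emph{unmarked} boundary components, giving rise to the type-$\bb$ monogons and type-$\cc$ digons. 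Thickening this ribbon datum into an oriented surface with $\PP=\emptyset$ yields a punctured marked surface $\surf$ together with an admissible partial triangulation $\TT$ whose regions are of types $\aaa$--$\cc$, and a direct inspection of $Q^\TT$ and $R^\TT$ against $(Q,I)$ then gives $\Lambda^\TT\cong A$.

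The main obstacle is the global step in the converse: showing that the local ribbon data glue into a connected oriented compact surface with non-empty boundary of the desired combinatorial type, with unmarked boundary components appearing precisely at the loops of $Q$. The forward implication reduces to a finite case analysis at a single marked point, but the converse requires producing a genuine topological surface from algebraic data and matching the induced relations $R^\TT$ on the nose with $I$; this is where the uniqueness clauses of (G3)--(G4) do their main work.
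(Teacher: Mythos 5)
The first thing to say is that the paper itself offers no proof of this statement: it is imported directly from \cite{BS}, the only justification being the preceding remark that when $\PP=\emptyset$ every partial triangulation is admissible and $\Lambda^\TT$ is precisely a tiling algebra in the sense of that reference. So there is no in-paper argument to compare against; your proposal is a reconstruction of the \cite{BS} proof, and in outline it follows the standard route — a local analysis at each marked point for the forward direction, and a ribbon-graph/polygon-gluing construction for the converse.

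Within that outline there is one concrete error and one acknowledged incompleteness. The error: type-$\cc$ digons do \emph{not} come from loops of $Q$. By the remark before Lemma~\ref{lem:loop} (and the lemma itself), loops of $Q^\TT$ biject with regions of type $\bb$ or $\dd$, and $\dd$ is excluded when $\PP=\emptyset$; so a loop $\epsilon$ with $\epsilon^2\in I$ accounts only for a type-$\bb$ monogon. A type-$\cc$ digon with both sides in $\TT$ encloses an unmarked boundary component between two \emph{distinct} arcs $i,j$ and corresponds to a $2$-cycle $\alpha\colon i\to j$, $\beta\colon j\to i$ with both $\alpha\beta$ and $\beta\alpha$ in $I$ (each composition is a relation by the first clause of (R2), the two corners of the digon being distinct marked points). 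In your ribbon construction these arise from cyclic relation-sequences of length $2$, just as type-$\aaa$ $m$-gons arise from relation-sequences of length $\geq 3$; the unmarked boundary component is forced in the monogon and digon cases because otherwise the enclosed arc would be contractible, respectively the two arcs homotopic. The incompleteness: the converse stops exactly where the content lies, namely verifying that after gluing the polygons along arcs the induced relation set $R^\TT$ coincides with $I$ on the nose — in particular that the Figure~\ref{fig:R2 loop} configurations reproduce precisely the relations through a loop vertex — and that the regions obtained are only of types $\aaa$--$\cc$. You assert this rather than check it, so as written the converse is an outline of the \cite{BS} argument rather than a proof.
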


We generalize this to the case when $\PP$ might not be empty.

\begin{theorem}\label{prop:st to sg}
	Let $\TT$ be an admissible partial triangulation of a punctured marked surface $\surf$. Then the skew-tiling algebra $\Lambda^\TT$ is skew-gentle. Conversely, for any skew-gentle algebra $A$, there is a punctured marked surface $\surf$ with an admissible partial triangulation $\TT$ such that $\Lambda^{\TT}\cong A$ and $\surf$ is divided by $\TT$ into a collection of regions of types $\aaa$-$\dd$.	
\end{theorem}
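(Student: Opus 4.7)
The plan is to prove both directions by explicitly passing back and forth between a skew-gentle triple $(Q,Sp,I)$ and a surface-with-partial-triangulation picture, using Lemma~\ref{lem:loop} to identify the ``special'' loops with the type $\dd$ regions and Theorem~\ref{prop:bau} to handle the gentle part.

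For the forward direction, given an admissible partial triangulation $\TT$ of $\surf$, I first reduce to the situation where every region of $\TT$ has one of the types $\aaa$--$\dd$; this holds automatically because admissibility forces every puncture to sit in a type $\dd$ region while the remaining regions are cut out by $\TT$ into $m$-gons/monogons/digons as in $\aaa$--$\cc$ (in particular no curves end at punctures). Next I extract a skew-gentle triple: let $Sp\subset Q^\TT_0$ be the set of curves cutting out a type $\dd$ region; let $Q$ be $Q^\TT$ with the loops $\epsilon_i$ for $i\in Sp$ deleted; and let $I\subset I^{sp}$ consist of the (R2) relations together with the $\epsilon^2$ relations coming from type $\bb$ regions. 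Then $Q^{sp}=Q^\TT$ and $I^{sp}=I\cup\{\epsilon_i^2:i\in Sp\}$ consists exactly of (R2) together with all $\epsilon^2$ for loops of $Q^\TT$, so that $\Lambda^\TT\cong\k Q^{sp}/\<I^{sg}\>$ by construction of (R1). It remains to verify that $(Q^{sp},I^{sp})$ is a gentle pair: (G1) follows because each endpoint $p\in\MM$ of a curve $i\in\TT$ contributes at most one incoming and one outgoing arrow at $i$ (the curve immediately following or preceding $i$ anticlockwise around $p$), and $i$ has at most two endpoints; (G2) is built into (R2) and the $\epsilon^2$ part; and (G3)--(G4) amount to a local check at each endpoint showing that among the (at most two) arrows composable with a given $\alpha$, exactly one composition is a relation and the other is not. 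I would carry this out by splitting into the cases $p_\beta\neq p_\alpha$ versus $p_\beta=p_\alpha$ and, for loop-curves, reading off from Figures~\ref{fig:diff vertex} and \ref{fig:R2 loop}.

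For the converse, given $A\cong\k Q^{sp}/\<I^{sg}\>$ with skew-gentle triple $(Q,Sp,I)$, I consider the gentle algebra $\k Q^{sp}/\<I^{sp}\>$, which is finite dimensional by Lemma~\ref{lem:f.d.}. Applying Theorem~\ref{prop:bau} produces a punctured marked surface $\surf'$ with $\PP'=\emptyset$ and an admissible partial triangulation $\TT'$ whose skew-tiling algebra (a tiling algebra, in this case) is isomorphic to $\k Q^{sp}/\<I^{sp}\>$, with $\surf'$ divided into regions of types $\aaa$--$\cc$. By Lemma~\ref{lem:loop}, the loops of $Q^{sp}$ are in bijection with the type $\bb$ regions of $\TT'$; in particular each $i\in Sp$ corresponds to a unique type $\bb$ monogon $D_i$ cut out of $\surf'$ by the curve $i$. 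I now modify $\surf'$ by, for each $i\in Sp$, removing the unmarked boundary component sitting inside $D_i$ and inserting in its place a puncture; call the result $\surf$ and keep the partial triangulation $\TT=\TT'$ under the obvious identification. This $\TT$ remains admissible (each new puncture is lonely inside the monogon cut out by $i$), $\surf$ is divided by $\TT$ into regions of types $\aaa$--$\dd$, and the construction of $\Lambda^\TT$ differs from that of $\Lambda^{\TT'}$ exactly in replacing the (R1)-relation $\epsilon_i^2$ by $\epsilon_i^2-\epsilon_i$ for each $i\in Sp$, yielding $\Lambda^\TT\cong\k Q^{sp}/\<I^{sg}\>\cong A$.

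The main obstacle I anticipate is the verification of (G3)--(G4) in the forward direction when the intermediate curve $j$ is a loop-curve (endpoints coincide). In that case an arrow $\alpha$ ending at $j$ and an arrow $\beta$ starting at $j$ can share or not share a marked-point endpoint in subtle ways, and one must carefully match the local configurations to the two pictures of Figure~\ref{fig:R2 loop} to see that exactly one of the two possible compositions lies in $I^{sp}$. A secondary (but lighter) technical point is to make sure that the cosmetic surgery in the converse direction—excising an unmarked boundary component and replacing it with a puncture inside a type $\bb$ monogon—does not alter the combinatorics of $\TT'$ away from that monogon; this is clear topologically but should be explicitly noted so that admissibility, the homotopy classes of curves in $\TT$, and the cyclic order of arrows around each $p\in\MM$ are all preserved.
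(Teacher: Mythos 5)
Your converse direction is essentially identical to the paper's: apply Theorem~\ref{prop:bau} to the gentle algebra $\k Q^{sp}/\<I^{sp}\>$, use Lemma~\ref{lem:loop} to match the loops $\epsilon_i$, $i\in Sp$, with unmarked boundary components sitting in type $\bb$ monogons, and replace those components by punctures. That half is fine. The forward direction, however, has a genuine gap, in two respects. First, your opening reduction is based on a false statement: it is \emph{not} true that every region of an admissible partial triangulation automatically has one of the types $\aaa$--$\dd$. Admissibility only controls the regions containing punctures; a puncture-free region can be, say, an annulus with marked points on both boundary circles, contain two unmarked boundary components, or have positive genus. The paper says this explicitly (``not any region necessarily has one of the above types''), and Lemma~\ref{lem:loop} is stated only under the hypothesis that all regions are of types $\aaa$--$\dd$, so your identification of the $\epsilon^2$-relations with type $\bb$ regions does not hold in general (the definition of (R1) assigns $\epsilon^2$ to \emph{every} loop not cutting out a type $\dd$ region, whatever its region looks like). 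Second, and more importantly, the substance of the forward direction --- the verification of (G3) and (G4) for $(Q^\TT, R^{\TT}\text{ with }\epsilon_i^2\text{ in place of }\epsilon_i^2-\epsilon_i)$ --- is only announced as a case analysis you ``would carry out,'' and you yourself flag it as the main obstacle. As written, the forward direction is a plan, not a proof.

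The paper closes this gap with the reverse of the surgery you use in the converse: replace every puncture of $\surf$ by an unmarked boundary component to obtain an unpunctured surface $\widetilde{\surf}$, on which $\TT$ is still an admissible partial triangulation $\TT'$ (every partial triangulation of an unpunctured surface is admissible). Then $Q^{\TT}=Q^{\TT'}$, each curve cutting out a type $\dd$ region of $\surf$ now cuts out a type $\bb$ region of $\widetilde{\surf}$, and $R^\TT$ differs from $R^{\TT'}$ only by replacing $\epsilon_i^2$ with $\epsilon_i^2-\epsilon_i$ for those curves. Theorem~\ref{prop:bau} (the gentle case, quoted from \cite{BS}) then says $(Q^{\TT'},R^{\TT'})$ is a gentle pair, and Definition~\ref{def:sg} together with Lemma~\ref{lem:f.d.} gives that $\Lambda^\TT$ is skew-gentle --- no local check of (G1)--(G4) and no assumption on region types is needed. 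If you prefer to keep a self-contained verification of (G1)--(G4), you must actually carry out the local analysis around each marked point and drop the appeal to region types; otherwise, adopt the reduction above.
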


\begin{proof}
	Let $\TT$ be an admissible partial triangulation of a punctured marked surface $\surf=(S,\MM,\PP)$. Let $\widetilde{S}$ be the surface obtained from $S$ by replacing each puncture $p\in \PP$ with an unmarked boundary component $\partial_p$ and denote by $\widetilde{\surf}=(\widetilde{S},M,\emptyset)$. Then the set $\TT$ becomes an admissible partial triangulation of $(\widetilde{S},M,\emptyset)$, which is denoted by $\TT'$ to avoid misunderstanding. By Theorem~\ref{prop:bau}, $\Lambda^{\TT'}$ is a gentle algebra, i.e. $(Q^{\TT'},R^{\TT'})$ is a gentle pair and $\Lambda^{\TT'}$ is finite dimensional. Let $\TT^{sp}$ be the subset of $\TT$ consisting of the curves that cut out regions of type $\dd$ in $\surf$. Then each curve in $\TT^{sp}$ cuts out a region of type $\bb$ in $\widetilde{\surf}$. So we have $Q^{\TT}=Q^{\TT'}$ and $R^\TT=(R^{\TT'}\setminus\{\varepsilon_i^2\mid i\in \TT^{sp}\})\cup\{\varepsilon_i^2-\varepsilon_i\mid i\in \TT^{sp}\}$. By Definition~\ref{def:sg} and Lemma~\ref{lem:f.d.}, $\Lambda^\TT$ is a skew-gentle algebra.
	
	Conversely, let $A=\k Q^{sp}/I^{sg}$ be a skew-gentle algebra for a skew-gentle triple $(Q,Sp,I)$. By Definition~\ref{def:sg} and Lemma~\ref{lem:f.d.}, the corresponding algebra $\k Q^{sp}/I^{sp}$ is a gentle algebra. So by Theorem~\ref{prop:bau}, there is a punctured marked surface $\surf'=(S',\MM',\PP')$ with $\PP'=\emptyset$ and an admissible partial triangulation $\TT'$ which divides $\surf'$ into a collection of regions of types (I)-(III) such that $Q^{\TT'}=Q^{sp}$ and $R^{\TT'}=I^{sp}$. Then by Lemma~\ref{lem:loop}, the loops in $Q^{sp}$ are in bijection with the regions of type $\bb$ on $\surf'$. Denote by $\mathcal{B}^{sp}$ the set of unmarked boundary components in regions of type $\bb$ whose corresponding loops are $\epsilon_i$, with $i\in Sp$.  Let $\surf$ be the punctured marked surface obtained from $\surf'$ by replacing each unmarked boundary component in $\mathcal{B}^{sp}$ with a puncture. Then $\TT'$ becomes an admissible partial triangulation of $\surf$, denoted by $\TT$, which divides $\surf$ into a collection of regions of types (I)-(IV). So we have $Q^{\TT}=Q^{\TT'}=Q^{sp}$ and $R^{\TT}=R^{\TT'}\setminus\{\varepsilon_i^2\mid i\in Sp \}\cup\{\varepsilon_i^2-\varepsilon_i\mid i\in Sp \}=I^{sg}$. Hence the skew-tiling algebra $\Lambda^{\TT}$ is isomorphic to $A$.
	
\end{proof}

An immediate consequence is the following.

\begin{corollary}
	Let $A$ be a finite dimensional algebra. Then $A$ is skew-gentle if and only if it is skew-tiling.
\end{corollary}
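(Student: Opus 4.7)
The plan is to observe that this corollary is essentially a direct restatement of Theorem~\ref{prop:st to sg} once one unwinds the definitions. By definition, an algebra $A$ is skew-tiling precisely when $A\cong\Lambda^{\TT}$ for some admissible partial triangulation $\TT$ of some punctured marked surface $\surf$. So there is nothing new to prove beyond invoking the two directions of Theorem~\ref{prop:st to sg}.

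For the forward direction, suppose $A$ is skew-tiling, so $A\cong\Lambda^{\TT}$ for some admissible partial triangulation $\TT$ of some $\surf$. The first assertion of Theorem~\ref{prop:st to sg} says that every such $\Lambda^{\TT}$ is skew-gentle, hence so is $A$. For the converse direction, suppose $A$ is a skew-gentle algebra. The second assertion of Theorem~\ref{prop:st to sg} produces an admissible partial triangulation $\TT$ of a punctured marked surface $\surf$ (in fact one whose regions are all of types $\aaa$--$\dd$) together with an isomorphism $\Lambda^{\TT}\cong A$. By definition, this exhibits $A$ as a skew-tiling algebra.

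I do not expect any obstacle here: the statement is purely a matter of matching terminology. The only minor point to make explicit is that finite-dimensionality is already built into both definitions (a skew-gentle algebra is finite-dimensional by Definition~\ref{def:sg}, and a skew-tiling algebra $\Lambda^{\TT}$ is finite-dimensional by Lemma~\ref{lem:f.d.} combined with Theorem~\ref{prop:bau} applied to the auxiliary surface $\widetilde{\surf}$ used in the proof of Theorem~\ref{prop:st to sg}), so no finite-dimensionality hypothesis needs to be checked separately. The corollary is therefore just a convenient repackaging of Theorem~\ref{prop:st to sg}.
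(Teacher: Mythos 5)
Your proposal is correct and matches the paper exactly: the paper states this corollary as an immediate consequence of Theorem~\ref{prop:st to sg} with no further argument, which is precisely the unwinding of definitions you give. The extra remark about finite-dimensionality being built into both notions is a harmless and accurate clarification.
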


\section{Geometric interpretation of modules}\label{sec:module}

Throughout the rest of the paper, let $A=\k Q^{sp}/I^{sg}$ be a skew-gentle algebra for a skew-gentle triple $(Q,Sp,I)$. By Theorem~\ref{prop:st to sg}, there is a punctured marked surface $\surf=(S,\MM,\PP)$ with an admissible partial triangulation $\TT$ satisfying $Q^\TT=Q^{sp}$ and $R^\TT=I^{sg}$ (in particular, the skew-tiling algebra $\Lambda^{\TT}$ is isomorphic to $A$), and such that $\surf$ is divided by $\TT$ into a collection of regions of types (I)-(IV). For any curve $\gamma$ on $\surf$, we always assume that $\gamma$ has a minimal intersection number with $\TT$. 

\begin{definition}[Completions of curves, {\cite[Definition~3.1]{QZ}}]\label{def:completion}
	For a curve $\gamma$ on $\surf$, we define its \emph{completion} $\overline{\gamma}$ as shown in Figure~\ref{fig:completion of curve}, where $\overline{\gamma}=\gamma$ if and only if both of the endpoints of $\gamma$ are in $\MM$ (i.e. the first case), and where the endpoints of $\gamma$ could coincide (for the first/third case).
	\begin{figure}[h]\centering
		\begin{tikzpicture}[xscale=2, yscale=.5,rotate=90]
			\draw[ultra thick]plot [smooth, tension=1] coordinates {(170:4.5) (180:4) (190:4.5)}; 
			\draw[ultra thick]plot [smooth, tension=1] coordinates {(20:1.5) (0:1) (-20:1.5)};
			\draw[blue, thick,->-=.5,>=stealth](0:1)to(180:4);
			\draw[blue](-.5,.2)node{$\gamma$} (-.5,-.2)node{$\overline{\gamma}$};
			\draw[thick](180:4)node{$\bullet$}(0:1)node{$\bullet$};
		\end{tikzpicture}\qquad
		\begin{tikzpicture}[xscale=2, yscale=.5,rotate=90]
			\draw[ultra thick]plot [smooth, tension=1] coordinates {(170:4.5) (180:4) (190:4.5)};  
			\draw[blue, thick,->-=.5,>=stealth]plot [smooth, tension=1] coordinates {(180:4) (90:.5) (0:2) (-90:.5) (180:4)};
			\draw[blue, thick,->-=.5,>=stealth](0:1)to(180:4);
			\draw[blue](-.5,.2)node{$\gamma$};
			\draw[blue](-.5,-.5)node[right]{$\overline{\gamma}$};
			\draw[thick](180:4)node{$\bullet$}(0:1)node{$\bullet$};
		\end{tikzpicture}\qquad
		\begin{tikzpicture}[xscale=.7, yscale=.3]
			\draw[blue, thick,-<-=.5,>=stealth] (0,0) ellipse (1.5 and 5.5);
			\draw[blue, thick,->-=.5,>=stealth](0,2.5)to(0,-2.5);
			\draw[blue](-.5,.2)node{$\gamma$};
			\draw[blue](1.6,0)node[right]{$\overline{\gamma}$};
			\draw[thick](0,2.5)node{$\bullet$}(0,-2.5)node{$\bullet$};
		\end{tikzpicture}
		\caption{Completions of curves}
		\label{fig:completion of curve}
	\end{figure}
\end{definition}

The notion of completion of curves will be used to define permissible curves (Definition~\ref{def:permissible}) and to describe the middle terms of certain Auslander-Reiten sequences (Theorem~\ref{thm:tau} and Remark~\ref{rmk:AR}).

\begin{definition}[Tagged permissible curves]\label{def:permissible}
	A curve $\gamma$ on $\surf$ is called \emph{permissible} with respect to $\TT$ if the following conditions hold.
	\begin{enumerate}
		\item[\namedlabel{itm:t1}{(T1)}] The starting/ending segment of $\gamma$ has one of the forms shown in Figure~\ref{fig:ro}, where the red curves lie in $\TT$ or are boundary segments.
		\item[\namedlabel{itm:t2}{(T2)}] If $\gamma$ crosses consecutively two sides $x,y$ of a region $\Delta$ of $\TT$, then $x,y$ are neighboring sides and $\gamma$ cuts out an angle of $\Delta$ as shown in Figure~\ref{fig:local tri}.
		\item[\namedlabel{itm:p1}{(P1)}] The curve $\gamma$ does not cut out a once-punctured monogon by its self-intersection, see Figure~\ref{fig:permissible};
		\item[\namedlabel{itm:p2}{(P2)}] If $\gamma(0),\gamma(1)\in\PP$ then the completion $\overline{\gamma}$ is not a proper power of a closed curve in the sense of the multiplication in the quotient of the fundamental group of $\surf$ by the squares of simple closed curves that encloses a puncture.	
	\end{enumerate}
	We denote by $\PS$ the set of permissible curves.
	\begin{figure}[htpb]
		\begin{tikzpicture}[scale=1.25]
			\clip(-1.2,-1.2) rectangle (1.2,1.2);
			\draw[red, thick, bend left=10](0,1)to(0,-1);
			\draw[red, thick](0,1)to(1,0);
			\draw[red,thick,dashed,bend right=10](0,-1)to(1,0);
			\draw[blue, thick, bend right=10](-1,0)to(1,0);
			\draw[blue,thick](-1,0)node[below]{$\gamma$};
			\draw[thick](0,1)node{$\bullet$}(0,-1)node{$\bullet$}(1,0)node{$\bullet$};
		\end{tikzpicture}\qquad
		\begin{tikzpicture}[scale=1.25]
			\clip(-1.2,-1.2) rectangle (1.2,1.2);
			\draw[red,thick,bend right=60](0,1)to(0,-1);
			\draw[red,thick,bend left=60](0,1)to(0,-1);
			\draw[ultra thick, fill=gray!20](0,0)circle(.1);
			\draw[blue,thick,smooth](-1,.4)to[out=-5,in=170](0,.3)to[out=-10,in=90](.3,0)to[out=-90,in=70](0,-1);
			\draw[blue](-1,0.1)node{$\gamma$};
			\draw[thick](0,1)node{$\bullet$}(0,-1)node{$\bullet$};
		\end{tikzpicture}\qquad
		\begin{tikzpicture}[scale=1.25]
			\clip(-1.2,-1.2) rectangle (1.2,1.2);
			\draw[ultra thick, fill=gray!20](0,0) circle (.1);
			\draw[red,thick](0,1)to[out=-135,in=80](-.6,0)to[out=-100,in=180](0,-1)to[out=0,in=-80](.6,0)to[out=100,in=-45](0,1);
			\draw[blue,thick](-1.2,.3)to[bend left=5](0,.3)to[out=-10,in=90](.3,0)to[out=-90,in=0](0,-.3)to[out=180,in=-90](-.3,0)to[out=90,in=-100](0,1);
			\draw[blue] (-1,0)node{$\gamma$};
			\draw(0,1)node{$\bullet$};
		\end{tikzpicture}\qquad
		\begin{tikzpicture}[scale=1.25]
			\clip(-1.2,-1.2) rectangle (1.2,1.2);
			\draw[red,thick](0,1)to[out=-135,in=80](-.6,0)to[out=-100,in=180](0,-1)to[out=0,in=-80](.6,0)to[out=100,in=-45](0,1);
			\draw[blue,thick](0,0)to(0,-1.2);
			\draw[blue] (-.1,-.4)node[right]{$\gamma$};
			\draw(0,1)node{$\bullet$};
			\draw(0,0)node{$\bullet$};
		\end{tikzpicture}
		\caption{Condition \ref{itm:t1}}
		\label{fig:ro}
	\end{figure}
	\begin{figure}[htpb]\centering
		\begin{tikzpicture}[xscale=1,yscale=1]
			\draw[ultra thick]plot[smooth,tension=2](-1,1)to(1,1);
			\draw[red,thick,bend right=10](0,1)to(-1,0);
			\draw[red,thick,bend left=10](0,1)to(1,0);
			\draw[blue,thick,bend right](-1,.3)to(1,.3);
			\draw[red,thick](-1,0)node[below]{$x$}(1,0)node[below]{$y$};
			\draw[blue,thick](0,0)[below]node{$\gamma$};
		\end{tikzpicture}
		\caption{Condition \ref{itm:t2}}
		\label{fig:local tri}
	\end{figure}
	\begin{figure}[htpb]\centering
		\begin{tikzpicture}[xscale=2, yscale=.4,rotate=90]
			\draw[blue, thick]plot [smooth, tension=1] coordinates {(180:4) (90:.5) (0:2) (-90:.5) (180:4)};
			\draw[thick](180:4)node{$\bullet$}(0:1)node{$\bullet$};
		\end{tikzpicture}\qquad
		\begin{tikzpicture}[xscale=2, yscale=.4,rotate=90]
			\draw[blue, thick]plot [smooth, tension=1] coordinates {(185:4) (90:.5) (0:2) (-90:.5) (175:4)};
			\draw[thick](0:1)node{$\bullet$};
		\end{tikzpicture}
		\caption{Curves not satisfying condition \ref{itm:p1}}\label{fig:permissible}
	\end{figure}
	
	A \emph{tagged permissible curve} on $\surf$ is a pair $(\gamma,\kappa)$, where $\gamma\in\PS$ and $$\kappa:\{t|\gamma(t)\in \PP\}\to\{0,1\}$$ is a map. The \emph{inverse} of a tagged permissible curve $(\gamma,\kappa)$ is $(\gamma^{-1},\kappa^{-1})$, where $\gamma^{-1}(t)=\gamma(1-t)$ and $\kappa^{-1}(t)=1-\kappa(t)$. Let $\PTS$ denote the set of (representatives of) equivalence classes of tagged permissible curves on $\surf$ under taking inverse.
\end{definition}

The conditions \ref{itm:t1} and \ref{itm:t2} are basically from \cite{BS} (although we modified \ref{itm:t1} a little in order to pick up a suitable representative in each equivalence class) while the conditions \ref{itm:p1} and \ref{itm:p2} are from \cite{QZ} (but note that we fixed \ref{itm:p2}).

\begin{example}\label{example:P2}
    Let $c$ be a simple closed curve winding around two punctures $p,q$ clockwise, and let $a$ (resp. $b$) be the simple closed curve winding around the puncture $p$ (resp. $q$) clockwise. See the left picture of Figure~\ref{fig:P2}. Then we have $c=ab$ in the fundamental group of $\surf$. Take a curve $\gamma$ connecting $p$ and $q$ in the way shown in the left picture. The completion $\overline{\gamma}$ of $\gamma$ is shown in the right picture. We have $\overline{\gamma}=a^{-1}b^{-1}abab=(ab)^3=c^3$ in the quotient group of the fundamental group of $\surf$ by $a^2$ and $b^2$. So $\gamma$ does not satisfy \ref{itm:p2}. 
	\begin{figure}[htpb]\centering
		\begin{tikzpicture}[scale=1.5]
			\draw(-.8,.2)node{$\bullet$}(.8,.2)node{$\bullet$};
			\draw[red,thick,-<-=.5,>=stealth](-.8,.2) circle (.4);
			\draw[red,thick,-<-=.0,>=stealth](.8,.2) circle (.4);
			\draw[blue,thick,->-=.5,>=stealth](.8,.2)to[out=170,in=0](-.8,.4)to[out=180,in=90](-1,.2)to[out=-90,in=175](-.8,0)to[out=-5,in=-175](.8,0)to[out=5,in=-90](1,.2)to[out=90,in=0](.8,.4)to[out=180,in=10](-.8,.2);
			
			\draw[blue,thick,-<-=.5,>=stealth](0,.1)ellipse(1.5 and .8);
			\draw[blue](0,-.2)node{$\gamma$}(1.7,.1)node{$c$}(-.55,.12)node[black]{$p$}(-.6,-.3)node[red]{$a$}(.55,.12)node[black]{$q$}(.6,-.3)node[red]{$b$}(0,-.6)node{};
		\end{tikzpicture}
		\qquad
		\begin{tikzpicture}[scale=1.5]
			\draw(-.8,.2)node{$\bullet$}(.8,.2)node{$\bullet$}(1.65,.2)node[cyan]{$\overline{\gamma}$}(.65,.3)node{$q$}(-.65,.3)node{$p$};
			
			\draw[cyan,thick,-<-=.62,>=stealth](.8,.71)to[out=0,in=90](1.21,.35)to[out=-90,in=0](0,-.23)to[out=180,in=-90](-1.21,.35)to[out=90,in=180](-.8,.71)to[out=0,in=153](0,.45)to[out=-27,in=153](.65,.09)to[out=-27,in=-140](.92,.07)to[out=40,in=-30](.91,.34)to[out=150,in=-24](0,.8)to[out=156,in=0](-.7,1)to[out=180,in=90](-1.5,.35)to[out=-90,in=180](0,-.5)to[out=0,in=-90](1.5,.35)to[out=90,in=0](.7,1)to[out=180,in=24](0,.8)to[out=-156,in=30](-.91,.35)to[out=-150,in=140](-.92,.07)to[out=-40,in=-153](-.65,.09)to[out=27,in=-153](0,.45)to[out=27,in=180](.8,.71);
		\end{tikzpicture}
		\caption{A curve not satisfying \ref{itm:p2}}
		\label{fig:P2}
	\end{figure}
\end{example}

In this section, we show that a certain class $\mathcal{S}$ of indecomposable $A$-modules can be realized as tagged permissible curves on the surface $\surf$.

\subsection{Letters and permissible arc segments}\label{subsec:words}

As in \cite{G,QZ}, let $\sigma,\tau: Q^{sp}_1\rightarrow \{\pm1\}$ be two maps defined in such a way that for any path $\alpha\beta$ in $Q^{sp}$ of length 2, we have that $\alpha\beta\in I^{sp}$ if and only if $\sigma(\alpha)=\tau(\beta)$. These two maps can be regarded as labeling signs on the head and tail of an arrow, respectively. Note that if $s(\alpha)=s(\alpha')$ (resp. $t(\beta)=t(\beta')$) and $\alpha\neq\alpha'$ (resp. $\beta\neq\beta'$) then $\sigma(\alpha)=-\sigma(\alpha')$ (resp. $\tau(\beta)=-\tau(\beta')$), and also note that for any loop $\epsilon$ in $Q^{sp}$, we always have $\sigma(\epsilon)=\tau(\epsilon)$, since $\epsilon^2\in I^{sp}$ by Lemma~\ref{lem:sg fin}.
\begin{example}
    In the following quiver $Q^{sp}$ with functions $\sigma$ and $\tau$, we have $\sigma(\alpha)=1$, $\tau(\alpha)=1$, $\sigma(\alpha')=-1$, $\tau(\alpha')=1$, $\sigma(\beta)=-1$, $\tau(\beta)=1$, $\sigma(\epsilon)=\tau(\epsilon)=1$, which imply $I^{sp}=\{\alpha\beta,\epsilon^2\}$.
    $$\xymatrix@R=1.5em@C=4em{
		&&\cdot\\
        \cdot\ar@(ul,dl)_{\epsilon}_(.2){+}_(.8){+}\ar[r]^{\beta}^(.2){-}^(.8){+}&\cdot\ar[ur]^{\alpha}^(.2){+}^(.8){+}\ar[dr]_{\alpha'}_(.2){-}_(.8){+}&\\
		&&\cdot
	}$$
    Note that different signs may give the same relation set, e.g. if we take $\tau(\alpha)=-1$, $\sigma(\alpha)=-1$, $\sigma(\alpha')=1$ and $\tau(\beta)=-1$ instead, the relations set $I^{sp}$ does not change.
\end{example}

We construct a new quiver $\hat{Q}=(\hat{Q}_0, \hat{Q}_1)$, which is obtained from $Q^{sp}$ by adding, for each $i\in Q_0^{sp}$ and $\theta\in\{+,-\}$, a new vertex $i_\theta$ and a new arrow $z_{i,{\theta}}:i\rightarrow i_{\theta}$, and by adding a formal inverse $\alpha^{-1}:t(\alpha)\to s(\alpha)$ for each arrow $\alpha$ in $Q_1^{sp}\cup\{z_{i,{\theta}}|i\in Q_0^{sp},\theta\in\{+,-\} \}$. That is, we have
\begin{itemize}
	\item $\hat{Q}_0=Q^{sp}_0\cup \{i_{\theta}|i\in Q_0^{sp},\theta\in\{+,-\}\}$;
	\item $\hat{Q}_1= \{\alpha^{\pm1}|\alpha\in Q^{sp}_1\}\cup \{z_{i,{\theta}}^{\pm 1}|i\in Q^{sp}_0,\theta\in\{+,-\}\}$.
\end{itemize}
The maps $\sigma,\tau$ can be extended to $\hat{Q}$ so that
$$\sigma(z_{i,{\theta}})=\theta 1,\ \tau(z_{i,{\theta}})=1$$
for any $i\in Q_0^{sp}$ and $\theta\in\{+,-\}$, and 
$$\sigma(\alpha^{-1})=\tau(\alpha),\  \tau(\alpha^{-1})=\sigma(\alpha)$$ 
for any $\alpha\in Q_1^{sp}\cup\{z_{i,{\theta}}|i\in Q_0^{sp},\theta\in\{+,-\}\}$. 

A \emph{letter} is an arrow $\omega\in\hat{Q}_1$. A letter in $ \{z_{i,{\theta}}|i\in Q_0^{sp},\theta\in\{+,-\}\}$ is called a \emph{left end} letter; a letter in $ \{z_{i,{\theta}}^{-1}|i\in Q_0^{sp},\theta\in\{+,-\}\}$ is called a \emph{right end} letter. Both left end letters and right end letters are called \emph{end letters}. An end letter $z_{i,\rho}^{\pm1}$ is called \emph{special} provided $i\in Sp$ and $\rho=\sigma(\epsilon_i)=\tau(\epsilon_i)$.

For any letter $l\in\hat{Q}_1$, its inverse is defined to be
$$l^{-1}=\begin{cases}
	\alpha^{-1}&\text{if }l=\alpha\in Q_1^{sp}\cup\{z_{i,{\theta}}|i\in Q_0^{sp},\theta\in\{+,-\}\};\\
	\alpha&\text{if $l=\alpha^{-1}$ for $\alpha\in Q_1^{sp}\cup\{z_{i,{\theta}}|i\in Q_0^{sp},\theta\in\{+,-\}\}$}.
\end{cases}$$

\begin{definition}\label{def:PAS}
	A segment of a permissible curve in a region of $\TT$ is called a \emph{permissible arc segment} (=PAS). A PAS whose endpoints are not in $\MM\cup\PP$ is called \emph{interior} (cf. Figure~\ref{fig:local tri} for interior PASs and Figure~\ref{fig:ro} for non-interior PASs).
\end{definition}

By the construction of $Q^\TT$, each interior PAS $\eta$ gives rise to an arrow $\alpha$ in $Q^\TT$ or its formal inverse $\alpha^{-1}$ (depending on the orientation of $\eta$), which is defined to be the letter $\l(\eta)$ associated to $\eta$. The function $\sigma$ has the following interpretation.

\begin{lemma}\label{lem:wd}
	For any two interior PASs $\eta_1$ and $\eta_2$, if $\eta_1(0)$ and $\eta_2(0)$ are in the same curve $i$ in $\TT$, then $\eta_1$ and $\eta_2$ are on the same side of $i$ if and only if $\sigma(\l(\eta_1))=\sigma(\l(\eta_2))$.
\end{lemma}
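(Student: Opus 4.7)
The plan is to give a geometric recipe for $\sigma$ and $\tau$ and show it both satisfies the required algebraic property $\alpha\beta\in I^{sp}\Leftrightarrow\sigma(\alpha)=\tau(\beta)$ and unwinds to the statement of the lemma. Since any two choices of $\sigma,\tau$ satisfying that property differ only by simultaneous sign changes at each vertex (which leave the equation $\sigma(\l(\eta_1))=\sigma(\l(\eta_2))$ in the lemma invariant), no generality is lost in making a single geometric choice.

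First I would fix, for each curve $i\in\TT$, a labeling of the two local sides of $i$ by $\{+1,-1\}$. For each arrow $\alpha\in Q^\TT_1$, the interior PAS $\eta_\alpha$ realizing $\alpha$ lives in the region at $p_\alpha$ between $s(\alpha)$ and $t(\alpha)$; by the construction of $Q^\TT$ in Definition~\ref{def:st} (arrows correspond to the anticlockwise-next curve at a shared marked point) this region lies on the anticlockwise side of $s(\alpha)$ at $p_\alpha$ and on the clockwise side of $t(\alpha)$ at $p_\alpha$. I then set $\sigma(\alpha)$ to be the side label of $s(\alpha)$ containing $\eta_\alpha$ and $\tau(\alpha)$ to be the side label of $t(\alpha)$ containing $\eta_\alpha$.

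To verify this satisfies the algebraic property, I consider a length-two path $\alpha\beta$ in $Q^{sp}$ with $j=t(\beta)=s(\alpha)$. By (R2) in Definition~\ref{def:st}, translated to $I^{sp}$ via Theorem~\ref{prop:st to sg}, and apart from the loop configurations of Figure~\ref{fig:R2 loop}, $\alpha\beta\in I^{sp}$ is equivalent to $p_\alpha\ne p_\beta$. A direct local computation using the orientation of $S$ shows that at the two endpoints of any simple arc $j$ with distinct endpoints, the ``anticlockwise side of $j$'' at one endpoint coincides with the ``clockwise side of $j$'' at the other endpoint, relative to the global side structure of $j$. Hence the PAS of $\beta$ (on the clockwise side of $j$ at $p_\beta$) and the PAS of $\alpha$ (on the anticlockwise side of $j$ at $p_\alpha$) sit on the same side of $j$ exactly when $p_\alpha\ne p_\beta$, so $\sigma(\alpha)=\tau(\beta)\Leftrightarrow \alpha\beta\in I^{sp}$. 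For a loop $\epsilon$ at $j$, the identity $s(\epsilon)=t(\epsilon)$ forces $\sigma(\epsilon)=\tau(\epsilon)$, in agreement with $\epsilon^2\in I^{sp}$ from Lemma~\ref{lem:sg fin}, and the exceptional loop configurations of Figure~\ref{fig:R2 loop} are handled by the same local inspection of the region structure at the shared marked point.

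The lemma is then essentially definitional. Given an interior PAS $\eta$ with $\eta(0)\in i$: if $\l(\eta)=\alpha$ is a forward arrow, then $s(\alpha)=i$ and $\sigma(\l(\eta))=\sigma(\alpha)$ is the side label of $i$ containing $\eta$; if $\l(\eta)=\alpha^{-1}$ for an arrow $\alpha$ with $t(\alpha)=i$, then $\sigma(\l(\eta))=\sigma(\alpha^{-1})=\tau(\alpha)$ is again the side label of $i$ containing $\eta$. Thus $\sigma\circ\l$ records precisely the side of $i$ on which the PAS emanates, and the equivalence ``$\eta_1,\eta_2$ lie on the same side of $i$ iff $\sigma(\l(\eta_1))=\sigma(\l(\eta_2))$'' follows at once. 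The hard part will be the orientation book-keeping in the compatibility step, together with the careful handling of loop-vertices (curves in $\TT$ with coincident endpoints) and of the special-loop vertices arising from once-punctured monogons of type $\dd$, where the global side structure degenerates and one must argue locally at the shared marked point.
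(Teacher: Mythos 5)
Your proof is correct in outline but runs in the opposite logical direction from the paper's: you \emph{construct} $\sigma,\tau$ geometrically (as side labels of the curves of $\TT$), verify the composition axiom against (R2), and then observe that the lemma becomes a tautology for that choice; the paper instead takes $\sigma,\tau$ as given and argues combinatorially that two interior PASs starting on the same curve $i$ lie on the same side of $i$ exactly when their letters are equal, or are of the form $\alpha$ and $\beta^{-1}$ with $\alpha\beta\in I^{sp}$, which translates into $\sigma(\l(\eta_1))=\sigma(\l(\eta_2))$ by the defining property of $\sigma,\tau$. Your version is longer but makes explicit the orientation bookkeeping (anticlockwise side at one endpoint of an arc equals the clockwise side at the other) that the paper leaves implicit, and it isolates cleanly where the loop/once-punctured-monogon cases enter.

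There is, however, one step that does not hold as you state it. You claim that any two pairs $(\sigma,\tau)$ satisfying only the property ``$\alpha\beta\in I^{sp}\Leftrightarrow\sigma(\alpha)=\tau(\beta)$'' differ by simultaneous sign changes at each vertex, so that your geometric choice is generic. This is false under that single axiom: at a vertex with two outgoing arrows $\alpha_1,\alpha_2$ and no incoming arrows, the axiom imposes no relation between $\sigma(\alpha_1)$ and $\sigma(\alpha_2)$, so one valid choice can have them equal and another unequal, and these are not related by a vertex sign flip --- indeed the lemma itself would fail for one of the two choices, since the corresponding PASs lie on definite (opposite) sides of $i$. The reduction is rescued only because the maps $\sigma,\tau$ of \cite{G,QZ}, which the paper is implicitly using, also satisfy $\sigma(\alpha_1)\neq\sigma(\alpha_2)$ for distinct arrows with common source and $\tau(\beta_1)\neq\tau(\beta_2)$ for distinct arrows with common target; with those conditions the sign data at each vertex is determined up to a single swap and your argument goes through. (The paper's own proof silently relies on the same extra conditions in the cases where both letters are arrows out of $i$, or both are inverses of arrows into $i$.) You should either invoke the full list of conditions on $\sigma,\tau$ when making the uniqueness claim, or simply prove the lemma for your geometric $(\sigma,\tau)$ and note that this is the choice fixed once and for all.
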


\begin{proof}
	By the construction of $Q^\TT$ and $R^\TT$, $\eta_1$ and $\eta_2$ are on the same side of $i$ if and only if either $\l(\eta_1)$ and $\l(\eta_2)$ are the same, or one of them is an arrow $\alpha\in Q_1^{sp}$, the other is $\beta^{-1}$ with $\beta\in Q_1^{sp}$, and $\alpha\beta\in I^{sp}$. So by the definition of the function $\sigma$, this is equivalent to $\sigma(\l(\eta_1))=\sigma(\l(\eta_2))$.
\end{proof}

For any interior PAS $\eta$, we define $\theta(\eta)=\sigma(\l(\eta))$. By Lemma~\ref{lem:wd}, the map $\theta$ can be extended to the set of all PASs, such that for any PASs $\eta_1,\eta_2$ satisfying $\eta_1(0)$ and $\eta_2(0)$ in the same curve $i$ in $\TT$, $\eta_1$ and $\eta_2$ are on the same side of $i$ if and only if $\theta(\eta_1)=\theta(\eta_2)$. So the map $\theta$ can be regarded as labeling the sign at the start of each PAS, such that the starts of PASs starting from the same side of a curve in $\TT$ have the same sign, see Figure~\ref{fig: add}.

\begin{figure}[htpb]
    \centering
    \begin{tikzpicture}[xscale=2.5,yscale=2]
		\draw[red,thick](-.8,1.4)node[black]{$\bullet$}to(0,1)node[black]{$\bullet$}to(.8,1.4)node[black]{$\bullet$} (0,1)to(0,0) (-.8,-.4)node[black]{$\bullet$}to(0,0)node[black]{$\bullet$}to(.8,-.4)node[black]{$\bullet$};
		\draw[blue,thick,bend right=30,-<-=.5,>=stealth](-.4,1.2)to(0,.8);
		\draw[blue,thick,bend right=30,->-=.5,>=stealth](0,.8)to(.4,1.2);
		\draw[blue,thick,bend left=30,-<-=.5,>=stealth](-.4,-.2)to(0,.2);
		\draw[blue,thick,bend left=30,->-=.5,>=stealth](0,.2)to(.4,-.2);
		\draw[blue,thick,-<-=.5,>=stealth,bend left=30](.8,1.4)to(0,.5);
		\draw[blue,thick,-<-=.5,>=stealth,bend left=30](-.8,-.4)to(0,.5);
		\draw[blue](-.1,.75)node{$+$}(.1,.75)node{$-$}(-.1,.25)node{$+$}(.1,.25)node{$-$}(-.2,.5)node{$+$}(.2,.5)node{$-$}(.5,.6)node{$z_{i,-}$}(-.5,.35)node{$z_{i,+}$}(.05,.4)node[red]{$i$};
	\end{tikzpicture}\qquad
    \begin{tikzpicture}[xscale=2,yscale=2.5]
    	\draw[red,thick](0,1)node[black]{$\bullet$}to[out=-140,in=90](-.4,.1)to[out=-90,in=180](0,-.4)to[out=0,in=-90](.4,0.1)to[out=90,in=-40](0,1);
    	\draw[red](0,0)node[black]{$\bullet$};
    	\draw[red,thick,bend right=20](0,1)to(-1,-.2)node[black]{$\bullet$};
    	\draw[red,thick,bend left=20](0,1)to(1,-.2)node[black]{$\bullet$};
    	
    	\draw[blue,thick](-.3,.6)to(.3,.6);
        \draw[blue,thick,-<-=.5,>=stealth](0,0)to(0,-.4);
        \draw[blue,thick,->-=.5,>=stealth](-.4,.3)to[bend left=10](-.7,.4);
        \draw[blue,thick,->-=.5,>=stealth](.4,.3)to[bend right=10](.7,.4);
        \draw[blue,thick,-<-=.5,>=stealth](1,-.2)to[bend left=10](.37,-.1);
    	\draw[blue](-.5,.25)node{$-$}(.5,.25)node{$-$}(-.2,.5)node{$+$}(.2,.5)node{$+$}(-.1,-.3)node{$+$}(-.17,-.1)node{$z_{i,+}$}(.5,-.05)node{$-$}(.7,-.3)node{$z_{i,-}$}(0,-.5)node[red]{$i$};
	\end{tikzpicture}

    \caption{The sign $\theta$ on PAS.}
    \label{fig: add}
\end{figure}

To each non-interior PAS $\eta$, denote by $i$ the curve in $\TT$ whose interior contains an endpoint of $\eta$. Define the associated letter
$$\l(\eta)=\begin{cases}z_{i,\theta(\eta)}&\text{if $\eta(1)\in \PP\cup \MM$,}\\	z_{i,\theta(\eta^{-1})}^{-1}&\text{if $\eta(0)\in \PP\cup \MM$.}\end{cases}$$

\begin{proposition}\label{prop:bi}
	The map $\eta\mapsto\l(\eta)$ is a bijection from the set of PASs to the set $\hat{Q}_1$. 	Moreover, the following hold.
	\begin{itemize}
		\item[(1)] Any PAS $\eta$ is non-interior if and only if $\l(\eta)$ is an end letter. In this case, 
		\begin{itemize}
			\item[(i)] $\eta(0)\in\PP\cup\MM$ if and only if $\l(\eta)$ is a right end letter;
			\item[(ii)] $\eta(1)\in\PP\cup\MM$ if and only if $\l(\eta)$ is a left end letter;
			\item[(iii)] $\eta$ has an endpoint in $\PP$ if and only if $\l(\eta)$ is special.
		\end{itemize}
		\item[(2)] For any PAS $\eta$,  $\l(\eta)^{-1}=\l(\eta^{-1})$.
		\item[(3)] For any PASs $\eta_1$ and $\eta_2$, $s(\l(\eta_1))=s(\l(\eta_2))$ if and only if $\eta_1(0)$ and $\eta_2(0)$ are in the same curve $i$ of $\TT$, and in this case, $\sigma(\l(\eta_1))=\sigma(\l(\eta_2))$ if and only if $\eta_1$ and $\eta_2$ are on the same side of $i$.
	\end{itemize}
\end{proposition}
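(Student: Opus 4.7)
The plan is to establish the bijection by splitting PASs into two kinds according to the nature of their endpoints, pair each kind with the relevant subfamily of $\hat Q_1$, and then verify the three additional properties along the way.

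For interior PASs I would use condition (P2): each such $\eta$ lies in a single region and cuts an angle of that region at some marked point $p\in\MM$, with the endpoints $\eta(0),\eta(1)$ in the interiors of two $\TT$-curves $i,j$ meeting anticlockwise at $p$. The construction of $Q^\TT$ in Definition~\ref{def:st} assigns exactly one arrow $\alpha:i\to j$ to each such anticlockwise angle, and reversing the orientation of $\eta$ trades $\alpha$ for $\alpha^{-1}$. This gives a bijection between oriented interior PASs and the non-end letters $\{\alpha^{\pm 1}:\alpha\in Q^{sp}_1\}$, and immediately yields part (2) together with the interior case of (3), since $s(\alpha)=i$ is the curve containing $\eta(0)$. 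In a region of type $\dd$ the unique interior PAS up to homotopy and orientation is the one encircling the puncture once, which is precisely what realizes the loop $\epsilon_i$ supplied by Lemma~\ref{lem:loop}.

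For non-interior PASs I would run through the four local pictures of Figure~\ref{fig:ro} case by case. By (P1), such a PAS $\eta$ has exactly one endpoint in $\MM\cup\PP$ and the other on the interior of a unique $\TT$-curve $i$, so the assignment $\eta\mapsto z_{i,\theta}^{\pm 1}$ lands in the end letters; the sign is controlled by whether $\eta(0)$ or $\eta(1)$ lies in $\MM\cup\PP$, which gives parts (1)(i) and (1)(ii), while $\theta$ records the side of $i$ on which $\eta$ sits, well-defined thanks to Lemma~\ref{lem:wd}. The inverse map sends $z_{i,\theta}^{\pm 1}$ back to the unique PAS determined by $i$, the chosen side of $i$, and the shape of the region on that side: in types $\aaa$--$\cc$ the local picture forces the PAS to terminate at the unique marked endpoint described in Figure~\ref{fig:ro}, and in type $\dd$ it terminates at the puncture. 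This also proves (2) and the remaining case of (3), since $s(z_{i,\theta})=i$ remembers which $\TT$-curve is the starting side of $\eta$.

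The last piece is (1)(iii), which amounts to checking that $\eta$ has an endpoint in $\PP$ if and only if the region on the $\theta$-side of $i$ is of type $\dd$; by Lemma~\ref{lem:loop} this occurs exactly when $i\in\TT^{sp}=Sp$, and then comparing $\theta$ to the $\sigma$-value of the interior PAS wrapping the puncture (which realizes the loop $\epsilon_i$) identifies $\theta$ with $\sigma(\epsilon_i)=\tau(\epsilon_i)$, matching the definition of a special end letter. The main obstacle I expect is precisely this last compatibility: the $\theta$ attached to the non-interior PAS and the $\sigma$-value of the interior PAS encircling the puncture both live in the same type $\dd$ region and must be compared via Lemma~\ref{lem:wd}, and this bookkeeping is the place where the orientation conventions at the loop $\epsilon_i$ are most easily gotten wrong.
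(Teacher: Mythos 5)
Your proposal is correct and follows essentially the same route as the paper's proof: interior PASs are matched with the arrows $\alpha^{\pm1}$ via the angles of the regions, non-interior PASs on each side of a curve $i$ are matched with the end letters $z_{i,\theta}^{\pm1}$ with the sign recording which endpoint lies in $\MM\cup\PP$ and $\theta$ recording the side via Lemma~\ref{lem:wd}, and (1)(iii) is obtained exactly as in the paper by comparing $\theta$ with $\sigma(\epsilon_i)$ through the interior PAS realizing the loop in the once-punctured monogon. The only difference is cosmetic: you spell out the inverse map and the angle--arrow correspondence a bit more explicitly, while the paper simply asserts injectivity and surjectivity from the construction.
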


\begin{proof}
	By the construction of the map $\l(-)$,  two PASs $\eta_1$ and $\eta_2$ are the same if and only if $\l(\eta_1)=\l(\eta_2)$. So the map $\l(-)$ is injective. Any letter in $\{\alpha^{\pm1}\mid\in Q_1^{sp}\}$ corresponds to an interior PAS; for any vertex $i\in Q_0^{sp}$, the letters $z_{i,\pm 1}$ (resp. $z_{i,\pm 1}^{-1}$) respectively correspond to the two non-interior PASs which start (resp. end) at a point in the curve $i$. Hence $\l$ is a bijection and a PAS $\eta$ is non-interior if and only if $\l(\eta)$ is an end letter.
	
	For a non-interior PAS $\eta$, $\eta(0)\in\PP\cup\MM$ if and only if $\l(\eta)=z^{-1}_{i,\theta}$ is a right end letter; $\eta(1)\in\PP\cup\MM$ if and only if $\l(\eta)=z_{i,\theta}$ is a left end letter. Note that a non-interior PAS $\eta$ has an endpoint in $\PP$ if and only if it is in a once-punctured monogon. Let $\eta'$ be an interior PAS in this once-punctured monogon. Then $\l(\eta')$ is a special loop $\epsilon_i$ or its inverse $\epsilon_i^{-1}$, and hence $\sigma(\l(\eta'))=\sigma(\epsilon_i)=\sigma(\epsilon_i^{-1})$ which is independent of the orientation of $\eta'$. Thus $\l(\eta)\in\{z_{i,\sigma(\epsilon_i)}^{\pm 1}\}$, which implies that
	$\l(\eta)$ is special. Thus, we have the assertion (1). The assertions (2) and (3) follows from the construction of $\l(-)$ and Lemma~\ref{lem:wd}.
\end{proof}

\subsection{Words and prepermissible curves}\label{subsec:preper}

A \emph{word} of \emph{length} $m$ is a sequence $\m=\omega_m\cdots\omega_1$ of letters in $\hat{Q}_1$ satisfying $t(\omega_{i})=s(\omega_{i+1})$ and $\tau(\omega_{i})=-\sigma(\omega_{i+1})$ for any $1\leq i\leq m-1$. We define $s(\m)=s(\omega_1), \sigma(\m)=\sigma(\omega_1), t(\m)=t(\omega_m)$ and $\tau(\m)=\tau(\omega_m)$. Define the \emph{inverse} of $\m$ to be $\m^{-1}=\omega_1^{-1}\dots\omega_m^{-1}$. The \emph{product} of two words $\m=\omega_m\cdots\omega_1$ and $\n=\nu_n\cdots\nu_1$ is $\m\n=\omega_m\cdots\omega_1\nu_n\cdots\nu_1$ if it is again a word, that is, $t(\n)=s(\m)$ and $\tau(\n)=-\sigma(\m)$. For a word $\m$ with $t(\m)=s(\m)$ and $\tau(\m)=-\sigma(\m)$, define $\m^1=\m$ and $\m^r=\m\m^{r-1}$ for any $r\geq2$. We call $\m^r$ a proper power of $\m$ when $r\geq 2$. By Proposition~\ref{prop:bi}, we have the following.

\begin{lemma}\label{lem:word}
	For any PASs $\eta_1$ and $\eta_2$, we have that $\l(\eta_2)\l(\eta_1)$ is a word if and only if $\eta_1(1)$ and $\eta_2(0)$ are in the same curve $i$ in $\TT$ with $\eta_1$ and $\eta_2$ on the different sides of $i$.
\end{lemma}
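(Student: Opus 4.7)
My plan is to reduce the lemma to a direct application of Proposition~\ref{prop:bi}, using the extension of the functions $\sigma$ and $\tau$ to $\hat{Q}$. The key identity to keep in mind is that for any letter $\ell$, the extensions satisfy $\sigma(\ell^{-1})=\tau(\ell)$ and $\tau(\ell^{-1})=\sigma(\ell)$, so source/sign data for $\ell(\eta)$ at its target-end can always be rewritten as source/sign data for $\ell(\eta^{-1})=\ell(\eta)^{-1}$ at its source-end.

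First, I unpack the definition: $\ell(\eta_2)\ell(\eta_1)$ is a word of length two if and only if (a) $t(\ell(\eta_1))=s(\ell(\eta_2))$ and (b) $\tau(\ell(\eta_1))=-\sigma(\ell(\eta_2))$. I will translate each condition into geometry. For (a), using $\ell(\eta_1^{-1})=\ell(\eta_1)^{-1}$ from Proposition~\ref{prop:bi}(2), I have $t(\ell(\eta_1))=s(\ell(\eta_1^{-1}))$. Applying Proposition~\ref{prop:bi}(3) to the pair $(\eta_1^{-1},\eta_2)$, condition (a) is equivalent to saying $\eta_1^{-1}(0)=\eta_1(1)$ and $\eta_2(0)$ lie in the same curve $i$ of $\TT$. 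So (a) encodes precisely the first geometric condition of the lemma.

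Next, assuming (a) holds, I handle (b). Again via Proposition~\ref{prop:bi}(2) and the extension rules, $\tau(\ell(\eta_1))=\sigma(\ell(\eta_1)^{-1})=\sigma(\ell(\eta_1^{-1}))$, so (b) becomes $\sigma(\ell(\eta_1^{-1}))=-\sigma(\ell(\eta_2))$, i.e.\ the two signs differ. By the second half of Proposition~\ref{prop:bi}(3), this is equivalent to $\eta_1^{-1}$ and $\eta_2$ lying on different sides of the common curve $i$. A PAS and its formal inverse occupy the same side of any curve of $\TT$ they meet (they are the same subset of $S$ with opposite parametrization), so this condition is the same as $\eta_1$ and $\eta_2$ being on different sides of $i$, which is the second geometric condition.

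Combining the two equivalences yields the lemma in both directions. There is no real obstacle: the argument is essentially a translation via Proposition~\ref{prop:bi}, and the only subtle point to verify cleanly is the harmless observation that passing from $\eta_1$ to $\eta_1^{-1}$ does not change the side of $i$ occupied by the PAS, so that the ``different side'' condition is invariant under inversion of $\eta_1$. I would state this observation explicitly (possibly as a one-line remark) before the computation to keep the proof transparent.
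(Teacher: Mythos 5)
Your proof is correct and matches the paper's intent exactly: the paper offers no argument beyond ``By Proposition~\ref{prop:bi}, we have the following,'' and your unwinding of the two defining conditions of a word via $\l(\eta_1^{-1})=\l(\eta_1)^{-1}$, the identities $\sigma(\ell^{-1})=\tau(\ell)$, $\tau(\ell^{-1})=\sigma(\ell)$, and part (3) of that proposition is precisely the intended reduction. Your flagged remark about sides is the right thing to make explicit, with the small caveat that ``the side of $i$'' should be read locally at the point $\eta_1(1)=\eta_1^{-1}(0)$ (a PAS in a monogon-type region can meet the two local occurrences of the same curve $i$ on different sides), which is exactly the reading both Proposition~\ref{prop:bi}(3) and the lemma's statement require.
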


A word $\m$ is said to be \emph{left} (resp. \emph{right}) \emph{inextensible} if there is no letter $\l\in\hat{Q}_1$ such that $\l\m$ (resp. $\m \l$) is again a word. A word is said to be \emph{inextensible} if it is both left inextensible and right inextensible. We denote by $\mathbf{IE}$ the set of inextensible words.

\begin{lemma}\label{lem:li+ri}
	Let $\m=\omega_m\cdots\omega_1$ be a word. The following are equivalent.
	\begin{itemize}
		\item[(1)] $\m$ is right (resp. left) inextensible.
		\item[(2)] $s(\m)$ (resp. $t(\m)$) is in $\hat{Q}_0\setminus Q_0^{sp}$.
		\item[(3)] $\omega_1$ (resp. $\omega_m$) is a right (resp. left) end letter.
	\end{itemize}
\end{lemma}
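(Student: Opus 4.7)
The plan is to prove the three conditions equivalent in a short cycle (3) $\Rightarrow$ (2) $\Rightarrow$ (1) $\Rightarrow$ (3), treating only the ``right'' case since the ``left'' case is symmetric (apply the right case to the inverse word $\m^{-1}$, using that inversion swaps $s/t$ and $\sigma/\tau$ and swaps left end letters with right end letters).

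First I would record a structural observation about $\hat{Q}$ that carries the whole argument. By the very construction of $\hat{Q}$, the only vertices outside $Q_0^{sp}$ are the new vertices $i_\theta$, and the only letters incident to $i_\theta$ are the pair $z_{i,\theta}$ (entering $i_\theta$) and $z_{i,\theta}^{-1}$ (leaving $i_\theta$). In particular, a letter $\omega$ satisfies $s(\omega)\in\hat{Q}_0\setminus Q_0^{sp}$ iff $\omega$ is a right end letter $z_{i,\theta}^{-1}$, which immediately gives the equivalence (2) $\Leftrightarrow$ (3).

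For (3) $\Rightarrow$ (1), suppose $\omega_1=z_{i,\theta}^{-1}$. If a letter $\l$ were to give an extension $\m\l$, then we would need $t(\l)=s(\omega_1)=i_\theta$, which forces $\l=z_{i,\theta}$ by the observation above. But then $\tau(\l)=\tau(z_{i,\theta})=1$ while $-\sigma(\omega_1)=-\sigma(z_{i,\theta}^{-1})=-\tau(z_{i,\theta})=-1$, violating the compatibility $\tau(\l)=-\sigma(\m)$ required of a word. Hence no such $\l$ exists and $\m$ is right inextensible.

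For (1) $\Rightarrow$ (3), I would argue by constructing an explicit extension when (3) fails. If $\omega_1$ is not a right end letter, then $s(\omega_1)=i\in Q_0^{sp}$. Set $\l=z_{i,-\sigma(\omega_1)}^{-1}$; this is a letter with $t(\l)=i=s(\m)$ and $\tau(\l)=\sigma(z_{i,-\sigma(\omega_1)})=-\sigma(\omega_1)=-\sigma(\m)$, so $\m\l$ is again a word, contradicting right inextensibility. I do not expect any real obstacle here; the only thing to watch is bookkeeping of the extension of $\sigma,\tau$ to formal inverses and to the letters $z_{i,\theta}^{\pm 1}$, which is handled entirely by unpacking the definitions given just before the lemma.
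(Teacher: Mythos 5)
Your proof is correct and takes essentially the same route as the paper: the paper likewise gets (2)$\Leftrightarrow$(3) directly from the definition of end letters, produces the explicit extension $z_{t(\m),-\tau(\m)}\m$ when $t(\m)\in Q_0^{sp}$ (the mirror of your $\m z_{i,-\sigma(\omega_1)}^{-1}$), and uses the same sign obstruction $\sigma=\tau=1$ on the letters incident to $i_\theta$ to rule out extensions in the other direction.
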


\begin{proof}
	The equivalence between (2) and (3) follows from the definition of left/right end letters. To show the equivalence between (1) and (2), suppose that $\m$ is left inextensible and $t(\m)\in Q_0^{sp}$. Then we have $z_{t(\m),-\tau(\m)}\m$ is a word, a contradiction. So we have that (1) implies (2). Conversely, suppose that $t(\m)\in\hat{Q}_0\setminus Q_0^{sp}$. Then $\tau(\m)=1$. But for any letter $\l$ with $s(\l)\in\hat{Q}_0\setminus Q_0^{sp}$, we have $\sigma(\l)=1$. So $\l\m$ is not a word. This implies that $\m$ is left inextensible. 
\end{proof}

\begin{definition}[Prepermissible curves]\label{def:prepermissible}
	Let $\gamma$ be a curve on $\surf$. We call $\gamma$ a \emph{prepermissible curve} (=PPC) if it satisfies the conditions \ref{itm:t1} and \ref{itm:t2} in Definition~\ref{def:permissible}. Denote by $\PPS$ the set of PPCs on $\surf$. A segment of a PPC with both endpoints in the interior of curves in $\TT$ is called a \emph{prepermissible curve segment} (PPCS).
\end{definition}

For any PPCS $\gamma$, denote by $\gamma^1,\cdots,\gamma^m$ the arc segments of $\gamma$ divided by $\TT$, which are ordered with respect to the orientation of $\gamma$. Here, $\gamma^1$ and $\gamma^m$ are called \emph{end segments} of $\gamma$. Define $\M(\gamma)=\l(\gamma^m)\cdots\l(\gamma^1)$. Conversely, for a word $\m=\omega_m\cdots\omega_1$, define $\Gamma(\m)$ to be the class of compositions of representatives in $\l^{-1}(\omega_1),\cdots,\l^{-1}(\omega_m)$ in order.

By Lemma~\ref{lem:word}, $\M(\gamma)$ is a word for any PPCS $\gamma$, and $\Gamma(\m)$ is a PPCS for any word $\m$. Hence by Proposition~\ref{prop:bi}, we have the following bijections.

\begin{lemma}\label{lem:bi}
	The maps $\M(-)$ and $\Gamma(-)$ are mutually inverse bijections between the set of PPCSs and the set of words.
\end{lemma}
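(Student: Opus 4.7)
The plan is to lift Proposition~\ref{prop:bi}, which identifies individual PASs with letters, up to the level of concatenations and words. The discussion immediately preceding the lemma already records the two well-definedness statements needed: that $\M(\gamma)$ is a word for every PPCS $\gamma$ (since by Lemma~\ref{lem:word} consecutive arc segments $\gamma^i$, $\gamma^{i+1}$ meet on a common curve of $\TT$ which $\gamma$ transversely crosses, and so $\l(\gamma^{i+1})\l(\gamma^i)$ is a word), and that $\Gamma(\m)$ is a PPCS for every word $\m$ (consecutive chosen representatives $\eta_i \in \l^{-1}(\omega_i)$ can be concatenated by the same lemma, and each $\eta_i$ already satisfies the local incarnations of (P1) and (P2) because it arises as a PAS of a genuine permissible curve). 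Hence only mutual inverseness remains to be checked.

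For $\M \circ \Gamma = \mathrm{id}$, note that by construction the PPCS $\Gamma(\m)$ admits a PAS decomposition $\eta_1, \ldots, \eta_m$ with $\l(\eta_i) = \omega_i$, whence $\M(\Gamma(\m)) = \l(\eta_m) \cdots \l(\eta_1) = \omega_m \cdots \omega_1 = \m$. For $\Gamma \circ \M = \mathrm{id}$, let $\gamma$ be a PPCS with PAS decomposition $\gamma^1, \ldots, \gamma^m$. Proposition~\ref{prop:bi} asserts that $\eta \mapsto \l(\eta)$ is a bijection at the level of PASs, so each $\gamma^i$ is a legitimate representative of $\l^{-1}(\l(\gamma^i))$. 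By definition $\Gamma(\M(\gamma))$ is the class of compositions of such representatives in order, so it coincides with the class of $\gamma$.

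The main place where care is warranted is homotopy-theoretic well-definedness: a PAS decomposition of $\gamma$ is determined only up to sliding intersection points along the curves of $\TT$, and $\Gamma(\m)$ is defined as the class of compositions over choices of representatives of each $\l^{-1}(\omega_i)$. The obstacle is that one must verify different choices yield homotopic PPCSs; this is essentially immediate, however, since moving an intersection point along the corresponding curve of $\TT$ (which is allowed because representatives of a given PAS class differ precisely by such a slide of their endpoints) produces a homotopy of the composite curve. Combined with the bijectivity in Proposition~\ref{prop:bi}, this yields the asserted mutually inverse bijections between PPCSs and words.
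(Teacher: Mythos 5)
Your argument is correct and follows essentially the same route as the paper, which justifies the lemma in one line by combining Lemma~\ref{lem:word} (well-definedness of $\M$ and $\Gamma$) with the PAS--letter bijection of Proposition~\ref{prop:bi}. The extra details you supply on mutual inverseness and on homotopy well-definedness of $\Gamma(\m)$ are consistent with, and merely spell out, what the paper leaves implicit.
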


Note that by Proposition~\ref{prop:bi}, the right (resp. left) end letters correspond to the non-interior PASs which start (resp. end) at $\PP\cup\MM$. So by Lemma~\ref{lem:li+ri}, we have the following.

\begin{lemma}\label{lem:li+ri2}
	Let $\gamma$ be a PPCS. Then $\M(\gamma)$ is left (resp. right) inextensible if and only if $\gamma(1)$ (resp. $\gamma(0)$) is in $\PP\cup \MM$. In particular, $\M(\gamma)$ is  inextensible if and only if $\gamma$ is a PPC. 
\end{lemma}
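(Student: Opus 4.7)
The plan is to recognize the statement as an immediate combination of Lemma~\ref{lem:li+ri} and Proposition~\ref{prop:bi}(1), once $\M(\gamma)$ is unwound in terms of the end segments of $\gamma$.

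First I would write $\M(\gamma)=\l(\gamma^m)\cdots\l(\gamma^1)$, where $\gamma^1,\ldots,\gamma^m$ are the PASs into which $\TT$ cuts $\gamma$, and use the identifications $\gamma^1(0)=\gamma(0)$ and $\gamma^m(1)=\gamma(1)$, which tell us that the two endpoints of $\gamma$ are precisely the ``outer'' endpoints of the end segments $\gamma^1$ and $\gamma^m$. Then by Lemma~\ref{lem:li+ri}, $\M(\gamma)$ is left (resp. right) inextensible iff its leading letter $\l(\gamma^m)$ (resp. trailing letter $\l(\gamma^1)$) is a left (resp. right) end letter. Proposition~\ref{prop:bi}(1)(ii) (resp. Proposition~\ref{prop:bi}(1)(i)) translates this into $\gamma^m$ (resp. $\gamma^1$) being a non-interior PAS with $\gamma^m(1)\in\PP\cup\MM$ (resp. $\gamma^1(0)\in\PP\cup\MM$). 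Combining with the two identifications yields the two asserted equivalences.

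For the ``in particular'' clause, taking the conjunction of both halves gives that $\M(\gamma)$ is inextensible iff $\gamma(0),\gamma(1)\in\PP\cup\MM$. A PPC is by Definition~\ref{def:permissible} a curve on $\surf$ obeying (P1) and (P2), and any curve has its endpoints in $\MM\cup\PP$ by Definition~\ref{def:curve}; conversely, a PPCS whose two endpoints lie in $\PP\cup\MM$ meets the endpoint condition of Definition~\ref{def:curve}, while (P1) and (P2) are built into the definition of a PPCS, so it is a PPC. (Non-triviality of the homotopy class is inherited from the ambient PPC.)

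I do not foresee a real obstacle: the argument is essentially bookkeeping, with all substantive content already packaged in the two preceding results — the correspondence between end letters and non-interior PASs touching $\PP\cup\MM$, and the characterization of inextensibility via the source/target vertices of a word. The only mild care needed is ensuring that ``PPC'' and ``PPCS whose endpoints both lie in $\PP\cup\MM$'' are identified correctly using Definition~\ref{def:curve}.
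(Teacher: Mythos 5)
Your argument is correct and is essentially the paper's own proof: the paper derives this lemma precisely by combining Lemma~\ref{lem:li+ri} (inextensibility detected by the first/last letter being an end letter) with Proposition~\ref{prop:bi}(1) (end letters correspond to non-interior PASs meeting $\PP\cup\MM$). Your bookkeeping with the end segments $\gamma^1,\gamma^m$ and the ``in particular'' clause matches the intended reasoning.
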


Combining Lemmas~\ref{lem:bi}, \ref{lem:li+ri2} and Proposition~\ref{prop:bi}, we have the following result.

\begin{proposition}\label{prop:bis}
	The maps $\M(-):\PPS\to\mathbf{IE}$ and $\Gamma(-):\mathbf{IE}\to \PPS$ are mutually inverse. Moreover, for any PPC $\eta$, we have that
	\begin{enumerate}
		\item the right end letter in $\M(\gamma)$ is special if and only if $\gamma(0)\in \PP$,
		\item the left end letter in $\M(\gamma)$ is special if and only if $\gamma(1)\in \PP$, and
		\item $\M(\gamma^{-1})=\M(\gamma)^{-1}$.
	\end{enumerate}
\end{proposition}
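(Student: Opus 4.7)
The plan is to obtain Proposition~\ref{prop:bis} as a direct consequence of the three results immediately preceding it, without any fresh construction. The content is really just a restriction/packaging step.

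First, for the claim that $\M(-)$ and $\Gamma(-)$ restrict to mutually inverse bijections $\PPS \leftrightarrow \mathbf{IE}$, I would combine Lemma~\ref{lem:bi} with Lemma~\ref{lem:li+ri2}. Lemma~\ref{lem:bi} already gives mutually inverse bijections between the set of PPCSs and the set of words; Lemma~\ref{lem:li+ri2} characterizes the subset of PPCs (inside all PPCSs) as exactly those $\gamma$ whose word $\M(\gamma)$ is inextensible. So the bijection from Lemma~\ref{lem:bi} restricts to the required bijection; conversely, $\Gamma(\m)$ for $\m \in \mathbf{IE}$ has both endpoints in $\PP \cup \MM$ (again by Lemma~\ref{lem:li+ri2}), hence is a PPC. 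The only small verification is that the restriction is well-defined on both sides, which the two lemmas together supply.

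For parts (1) and (2), I would unwind the definitions using Proposition~\ref{prop:bi}(1). Write $\M(\gamma) = \l(\gamma^m) \cdots \l(\gamma^1)$, so the right end letter is $\l(\gamma^1)$ and the left end letter is $\l(\gamma^m)$. By Lemma~\ref{lem:li+ri2} these are indeed end letters (not interior arrows), and by Proposition~\ref{prop:bi}(1)(i) the right end letter $\l(\gamma^1)$ has $\gamma^1(0) = \gamma(0) \in \PP \cup \MM$; by Proposition~\ref{prop:bi}(1)(iii) this end letter is special precisely when that endpoint lies in $\PP$, i.e.\ $\gamma(0) \in \PP$. The argument for (2) is symmetric, using (1)(ii) and (1)(iii) on $\l(\gamma^m)$ and $\gamma(1)$.

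Part (3) is immediate from Proposition~\ref{prop:bi}(2): if $\gamma^1, \dots, \gamma^m$ are the PASs of $\gamma$ in order, then the PASs of $\gamma^{-1}$ in order are $(\gamma^m)^{-1}, \dots, (\gamma^1)^{-1}$, so
\[
\M(\gamma^{-1}) = \l((\gamma^1)^{-1}) \cdots \l((\gamma^m)^{-1}) = \l(\gamma^1)^{-1} \cdots \l(\gamma^m)^{-1} = \bigl(\l(\gamma^m) \cdots \l(\gamma^1)\bigr)^{-1} = \M(\gamma)^{-1},
\]
where the last equality is just the definition of the inverse of a word.

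There is really no main obstacle here, since no new geometric or combinatorial argument is required; the only care needed is in the bookkeeping of orientations (which PAS is the ``first'' one after inversion and which end of the word it produces) so that ``right end'' and ``left end'' are paired correctly with $\gamma(0)$ and $\gamma(1)$. If anything is subtle, it is checking in (1)--(2) that ``special end letter'' is exactly the condition of Proposition~\ref{prop:bi}(1)(iii) rather than a strictly stronger one, but Definition of special end letter in \S\ref{subsec:words} matches Proposition~\ref{prop:bi}(1)(iii) verbatim, so no extra work is needed.
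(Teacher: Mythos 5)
Your proof is correct and follows exactly the route the paper takes: the paper derives this proposition by the single sentence ``Combining Lemmas~\ref{lem:bi} and \ref{lem:li+ri2} and Proposition~\ref{prop:bi}, we have the following result,'' which is precisely the combination you spell out. Your unwinding of the bookkeeping (which end letter pairs with which endpoint, and the reversal of PAS order under inversion) is accurate and just makes explicit what the paper leaves implicit.
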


\subsection{Admissible words and permissible curves}\label{subsec:ad}
For any $i\in Q_0^{sp}$ and $\theta\in\{+,-\}$, let 
$$\hat{Q}(i, \theta):=\{\l\in\hat{Q}_1|s(\l)=i, \sigma(\l)=\theta\},$$
be the set of letters in $\hat{Q}_1$ that starts at $i$ with the sign $\theta$ on its tail. There is a linear order on the set $\hat{Q}(i, \theta)$ where the formal inverse of an arrow in $Q_1$ is greater than an end letter, and an end letter is greater than an arrow in $Q_1$. So $\hat{Q}(i, \theta)$ has one of the following forms, i.e.,
\begin{itemize}
	\item $\hat{Q}(i, \theta)=\{z_{i,{\theta}}\}$,
	\item $\hat{Q}(i, \theta)=\{z_{i,\theta}>\alpha\}$, where $\alpha\in Q_1$ with $s(\alpha)=i$ and $\sigma(\alpha)=\theta$,
	\item $\hat{Q}(i, \theta)=\{\beta^{-1}>z_{i,{\theta}}\}$, where $\beta\in Q_1$ with $t(\beta)=i$ and $\tau(\beta)=\theta$,
	\item $\hat{Q}(i, \theta)=\{\beta^{-1}>z_{i,\theta}>\alpha\}$, where $\alpha\in Q_1$ with $s(\alpha)=i$ and $\sigma(\alpha)=\theta$, $\beta\in Q_1$ with $t(\beta)=i$ and $\tau(\beta)=\theta$, and $\alpha\beta\in I^{sp}$.
\end{itemize}

Using directly the correspondence between letters and PASs in Proposition~\ref{prop:bi}, we have a geometric interpretation of the orders via the relative positions of PASs as follows.

\begin{lemma}\label{lem:order}
	Let $\eta_1,\eta_2$ be PASs such that they are in the same region $\Delta$ of $\TT$ and $\eta_1(0)$ and $\eta_2(0)$ are in the same edge of $\Delta$. Then $\l(\eta_1)> \l(\eta_2)$ if and only if $\eta_1$ is to right of $\eta_2$, i.e. we are in one of the situations presented in  Figure~\ref{fig:order}, where $\eta_i$, $i=1,2$, are respectively the arc segments of $\gamma_i$ in the region. 
\end{lemma}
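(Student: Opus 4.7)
The plan is to do a direct case analysis on the four possible shapes of $\hat{Q}(i,\theta)$ listed just before the lemma. By Proposition~\ref{prop:bi}(3), the hypothesis of the lemma is equivalent to $s(\l(\eta_1)) = s(\l(\eta_2)) = i$ and $\sigma(\l(\eta_1)) = \sigma(\l(\eta_2)) = \theta$, so both $\l(\eta_1)$ and $\l(\eta_2)$ lie in $\hat{Q}(i,\theta)$. It then suffices to identify, for each of the (at most three) letters of $\hat{Q}(i,\theta)$, exactly which PAS in $\Delta$ it corresponds to, and to check that the linear order on letters matches the ``to the right of'' order in Figure~\ref{fig:order}.

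First I would match the three possible kinds of letter with the three possible kinds of PAS leaving the chosen edge of $\Delta$. By Proposition~\ref{prop:bi}(1), the letter $z_{i,\theta}$ corresponds to the unique non-interior PAS on that edge, ending at an endpoint of $\Delta$. By the construction of $Q^\TT$, for an outgoing arrow $\alpha \in Q_1^{sp}$ with $s(\alpha)=i$ and $\sigma(\alpha)=\theta$, the curve $t(\alpha)$ follows $i$ anticlockwise at the shared marked point $p_\alpha$, so the corresponding interior PAS hugs the corner of $\Delta$ at one specific endpoint of that edge; symmetrically, the incoming $\beta^{-1}$ with $t(\beta)=i$ and $\tau(\beta)=\theta$ hugs the corner at the opposite endpoint (with the gentle relation $\alpha\beta \in I^{sp}$ guaranteeing that the two interior PASs live on the same side $\theta$ of $i$). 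Hence the relevant PASs occupy the two extremes and the middle of the edge, and a direct comparison with Figure~\ref{fig:order} reads off the order $\beta^{-1} > z_{i,\theta} > \alpha$.

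The main obstacle is bookkeeping in the cases where $\Delta$ has type $\bb$, $\cc$, or $\dd$, since then the curve $i$ can appear as more than one side of $\Delta$ or can coincide with an adjacent side, and some of the labels are loops governed by (R1). In a type $\dd$ region, one must apply Proposition~\ref{prop:bi}(1)(iii) to check that the non-interior PAS ending at the puncture carries precisely the special end letter $z_{i,\sigma(\epsilon_i)}^{\pm1}$, and one must use the two sub-cases of (R2) depicted in Figure~\ref{fig:R2 loop} to correctly place the loop $\epsilon_i$ or its inverse in the order. A similar but easier bookkeeping handles the type $\bb$ and $\cc$ cases. Once these checks are completed, the identification of letters with PASs and the inspection against Figure~\ref{fig:order} conclude the proof in every region type.
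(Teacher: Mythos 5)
Your proposal is correct and is essentially the argument the paper intends: the paper gives no written proof, merely asserting that the lemma follows ``directly'' from the letter--PAS correspondence of Proposition~\ref{prop:bi}, and your case analysis over the four shapes of $\hat{Q}(i,\theta)$, matching $z_{i,\theta}$ to the non-interior PAS and $\alpha$, $\beta^{-1}$ to the interior PASs cutting the two corners of the edge, is exactly that verification spelled out. The extra bookkeeping you flag for regions of types $\bb$--$\dd$ (loops, special end letters at the puncture, the sub-cases of Figure~\ref{fig:R2 loop}) is the right list of things to check against Figure~\ref{fig:order}, and is if anything more detail than the paper provides.
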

\begin{figure}[htpb]\centering
	\begin{tikzpicture}[scale=1.25]
		\clip(-1.1,-1.5) rectangle (1.9,1.5);
		\draw[red, thick, bend left=10](0,1)to(0,-1);
		\draw[red, thick](0,-1)to(.8,-.2);
		\draw[red, thick](0,1)to(.8,.2);
		\draw[red,thick,dashed,bend left=5](.8,.2)node[black]{$\bullet$}to(.8,-.2)node[black]{$\bullet$};
		\draw[blue, thick, bend right=10,->-=.5,>=stealth](-1,.5)to(.8,.2);
		\draw[blue, thick, bend left=10,->-=.5, >=stealth](-1,-.5)to(1,-.5);
		\draw[blue,thick](-.8,.5)node[above]{$\gamma_2$};
		\draw[blue,thick](-.8,-.5)node[below]{$\gamma_1$};
		\draw[bend right,->-=.6,>=stealth](.2,-.8)to(0,-.8);
		\draw(.2,-.6)node{$\beta$};
		\draw[thick](0,1)node{$\bullet$}(0,-1)node{$\bullet$};
	\end{tikzpicture}
	\qquad
	\begin{tikzpicture}[scale=1.25]
		\clip(-1.1,-1.5) rectangle (1.9,1.5);
		\draw[red, thick, bend left=10](0,1)to(0,-1);
		\draw[red, thick](0,1)to(1,0);
		\draw[red,thick,dashed,bend right=10](0,-1)to(1,0);
		\draw[blue, thick, bend right=10,->-=.5,>=stealth](-1,0)to(1,0);
		\draw[blue, thick, bend right=10,->-=.5, >=stealth](-1,.5)to(1,.5);
		\draw[blue,thick](-.8,.5)node[above]{$\gamma_2$};
		\draw[blue,thick](-.8,0)node[below]{$\gamma_1$};
		\draw[bend left,-<-=.7,>=stealth](.2,.8)to(0,.8);
		\draw(.2,.6)node{$\alpha$};
		\draw[thick](0,1)node{$\bullet$}(0,-1)node{$\bullet$}(1,0)node{$\bullet$};
	\end{tikzpicture}
	\qquad
	\begin{tikzpicture}[scale=1.25]
		\clip(-1.1,-1.5) rectangle (1.9,1.5);
		\draw[red, thick, bend left=10](0,1)to(0,-1);
		\draw[red, thick](0,-1)to(.8,-.2);
		\draw[red, thick](0,1)to(.8,.2);
		\draw[red,thick,dashed,bend left=5](.8,.2)to(.8,-.2);
		\draw[blue, thick, bend right=10,->-=.5,>=stealth](-1,.5)to(1,.5);
		\draw[blue, thick, bend left=10,->-=.5, >=stealth](-1,-.5)to(1,-.5);
		\draw[blue,thick](-.8,.5)node[above]{$\gamma_2$};
		\draw[blue,thick](-.8,-.5)node[below]{$\gamma_1$};
		\draw[bend right,->-=.6,>=stealth](.2,-.8)to(0,-.8);
		\draw(.2,-.6)node{$\beta$};
		\draw[bend left,-<-=.7,>=stealth](.2,.8)to(0,.8);
		\draw(.2,.6)node{$\alpha$};
		\draw[thick](0,1)node{$\bullet$}(0,-1)node{$\bullet$};
	\end{tikzpicture}\\
	\begin{tikzpicture}[scale=1.25]
		\clip(-1.5,-1.5) rectangle (1.5,1.5);
		\draw[ultra thick]plot[smooth,tension=1](-.5,1)to(.5,1);
		\draw[ultra thick]plot[smooth,tension=1](-.5,-1)to(.5,-1);
		\draw[red,thick,bend right=60](0,1)to(0,-1);
		\draw[red,thick,bend left=60](0,1)to(0,-1);
		\draw[ultra thick,fill=gray!20](0,0)circle(.1);
		\draw[blue,thick,bend right=10,->-=.5,>=stealth](-1,.5)to(1,.5);
		\draw[blue,thick,bend left=10,->-=.5,>=stealth](-1,-.5)to(1,-.5);
		\draw[thick,->-=.6,>=stealth](-.2,.8)to(.2,.8);
		\draw[thick,-<-=.6,>=stealth](-.2,-.8)to(.2,-.8);
		\draw(0,.6)node{$\alpha$}(0,-.6)node{$\beta$};
		\draw[blue](-1,.5)node[left]{$\gamma_2$}(-1,-.5)node[left]{$\gamma_1$};
		\draw[thick](0,1)node{$\bullet$}(0,-1)node{$\bullet$};
	\end{tikzpicture}
	\qquad
	\begin{tikzpicture}[scale=1.25]
		\clip(-1.5,-1.5) rectangle (1.5,1.5);
		\draw[ultra thick]plot[smooth,tension=1](-.5,1)to(.5,1);
		\draw[ultra thick]plot[smooth,tension=1](-.5,-1)to(.5,-1);
		\draw[red,thick,bend right=60](0,1)to(0,-1);
		\draw[red,thick,bend left=60](0,1)to(0,-1);
		\draw[ultra thick, fill=gray!20](0,0)circle(.1);
		\draw[blue,thick,bend right=10,->-=.5,>=stealth](-1,.5)to(1,.5);
		\draw[blue,thick,smooth,->-=.5,>=stealth](-1,.3)to[out=0,in=170](0,.2)to[out=-10,in=110](.3,0)to[out=-110,in=70](0,-1);
		\draw[thick,->-=.6,>=stealth](-.2,.8)to(.2,.8);
		\draw[thick,-<-=.6,>=stealth](-.2,-.8)to(.2,-.8);
		\draw(0,.6)node{$\alpha$}(0,-.6)node{$\beta$};
		\draw[blue](-1,.6)node{$\gamma_2$}(-1,.2)node{$\gamma_1$};
		\draw[thick](0,1)node{$\bullet$}(0,-1)node{$\bullet$};
	\end{tikzpicture}
	\qquad
	\begin{tikzpicture}[scale=1.25]
		\clip(-1.5,-1.5) rectangle (1.5,1.5);
		\draw[ultra thick]plot[smooth,tension=1](-.5,1)to(.5,1);
		\draw[ultra thick]plot[smooth,tension=1](-.5,-1)to(.5,-1);
		\draw[red,thick,bend right=60](0,1)to(0,-1);
		\draw[red,thick,bend left=60](0,1)to(0,-1);
		\draw[ultra thick, fill=gray!20](0,0)circle(.1);
		\draw[blue,thick,bend left=10,->-=.5,>=stealth](-1,-.5)to(1,-.5);
		\draw[blue,thick,smooth,->-=.5,>=stealth](-1,.3)to[out=0,in=170](0,.2)to[out=-10,in=110](.3,0)to[out=-110,in=70](0,-1);
		\draw[thick,->-=.6,>=stealth](-.2,.8)to(.2,.8);
		\draw[thick,-<-=.6,>=stealth](-.2,-.8)to(.2,-.8);
		\draw(0,.6)node{$\alpha$}(0,-.6)node{$\beta$};
		\draw[blue](-1,-.6)node[below]{$\gamma_1$}(-1,-.2)node[above]{$\gamma_2$};
		\draw[thick](0,1)node{$\bullet$}(0,-1)node{$\bullet$};
	\end{tikzpicture}\\
	\begin{tikzpicture}[scale=1.25]
		\clip(-1.5,-1.5) rectangle (1.5,1.5);
		\draw[ultra thick]plot[smooth,tension=2](-.5,1)to(.5,1);
		\draw[ultra thick,fill=gray!20](0,0) circle (.1);
		\draw[thick,->-=.7,>=stealth](-.2,.8)to(.2,.8);
		\draw(0,.8)node[below]{$\epsilon$};
		\draw[red,thick](0,1)to[out=-135,in=60](-.8,0)to[out=-120,in=180](0,-1)to[out=0,in=-60](.8,0)to[out=120,in=-45](0,1);
		\draw[blue,thick,->-=.2,>=stealth](-1.2,-.5)to[bend right=5](0,-.5)to[out=10,in=-90](.45,0)to[out=90,in=-10](0,.4)to[bend right=5](-1,.4);
		\draw[blue,thick,->-=.25,>=stealth](-1.2,.3)to[bend left=5](0,.3)to[out=-10,in=90](.3,0)to[out=-90,in=0](0,-.3)to[out=180,in=-90](-.3,0)to[out=90,in=-100](0,1);
		\draw[blue](-1.2,-.5)node[below]{$\gamma_1$}(-1.2,-.1)node[above]{$\gamma_2$};
		\draw(0,1)node{$\bullet$};
	\end{tikzpicture}
	\qquad
	\begin{tikzpicture}[scale=1.25]
		\clip(-1.5,-1.5) rectangle (1.5,1.5);
		\draw[ultra thick]plot[smooth,tension=2](-.5,1)to(.5,1);
		\draw[ultra thick, fill=gray!20](0,0) circle (.1);
		\draw[thick,->-=.7,>=stealth](-.2,.8)to(.2,.8);
		\draw(0,.8)node[below]{$\epsilon$};
		\draw[red,thick](0,1)to[out=-135,in=60](-.8,0)to[out=-120,in=180](0,-1)to[out=0,in=-60](.8,0)to[out=120,in=-45](0,1);
		\draw[blue,thick,bend right=10,->-=.7,>=stealth](-1,.5)to(1,.5);
		\draw[blue,thick,->-=.25,>=stealth](-1.2,.3)to[bend left=5](0,.3)to[out=-10,in=90](.3,0)to[out=-90,in=0](0,-.3)to[out=180,in=-90](-.3,0)to[out=90,in=-100](0,1);
		\draw[blue](-1,.6)node[left]{$\gamma_2$}(-1,-.1)node[left]{$\gamma_1$};
		\draw(0,1)node{$\bullet$};
	\end{tikzpicture}
	\qquad
	\begin{tikzpicture}[scale=1.25]
		\clip(-1.5,-1.5) rectangle (1.5,1.5);
		\draw[ultra thick]plot[smooth,tension=2](-.5,1)to(.5,1);
		\draw[ultra thick, fill=gray!20](0,0) circle (.1);
		\draw[thick,->-=.7,>=stealth](-.2,.8)to(.2,.8);
		\draw(0,.8)node[below]{$\epsilon$};
		\draw[red,thick](0,1)to[out=-135,in=60](-.8,0)to[out=-120,in=180](0,-1)to[out=0,in=-60](.8,0)to[out=120,in=-45](0,1);
		\draw[blue,thick,->-=.5,>=stealth](-1.2,-.5)to[bend right=5](0,-.5)to[out=10,in=-90](.45,0)to[out=90,in=-10](0,.4)to[bend right=5](-1,.45);
		\draw[blue,thick,bend right=10,->-=.5,>=stealth](-1,.3)to(1,.3)node[right]{$\gamma_2$};
		\draw[blue](-1,-.5)node[left]{$\gamma_1$};
		\draw(0,1)node{$\bullet$};
	\end{tikzpicture}\\
	\begin{tikzpicture}[scale=1.25]
		\clip(-1.5,-1.5) rectangle (1.5,1.5);
		\draw[ultra thick]plot[smooth,tension=2](-.5,1)to(.5,1);
		\draw[thick,->-=.7,>=stealth](-.2,.8)to(.2,.8);
		\draw(0,.8)node[below]{$\epsilon$};
		\draw[red,thick](0,1)to[out=-135,in=60](-.8,0)to[out=-120,in=180](0,-1)to[out=0,in=-60](.8,0)to[out=120,in=-45](0,1);
		\draw[blue,thick,->-=.3,>=stealth](-1.2,-.5)to[bend right=5](0,-.5)to[out=10,in=-90](.45,0)to[out=90,in=-10](0,.4)to[bend right=5](-1,.45);
		\draw[blue,thick,bend right=5,->-=.5,>=stealth](-1.2,0)to(0,0);
		\draw[blue](-1.2,-.5)node[below]{$\gamma_1$}(-1.2,-.1)node[above]{$\gamma_2$};
		\draw(0,1)node{$\bullet$};
		\draw(0,0)node{$\bullet$};
	\end{tikzpicture}
	\qquad
	\begin{tikzpicture}[scale=1.25]
		\clip(-1.5,-1.5) rectangle (1.5,1.5);
		\draw[ultra thick]plot[smooth,tension=2](-.5,1)to(.5,1);
		\draw[thick,->-=.7,>=stealth](-.2,.8)to(.2,.8);
		\draw(0,.8)node[below]{$\epsilon$};
		\draw[red,thick](0,1)to[out=-135,in=60](-.8,0)to[out=-120,in=180](0,-1)to[out=0,in=-60](.8,0)to[out=120,in=-45](0,1);
		\draw[blue,thick,bend right=10,->-=.5,>=stealth](-1,.5)to(1,.5);
		\draw[blue,thick,bend left=5,->-=.5,>=stealth](-1.2,0)to(0,0);
		\draw[blue](-1,.5)node[left]{$\gamma_2$}(-1,-.3)node[left]{$\gamma_1$};
		\draw(0,1)node{$\bullet$};
		\draw(0,0)node{$\bullet$};
	\end{tikzpicture}
	\qquad
	\begin{tikzpicture}[scale=1.25]
		\clip(-1.5,-1.5) rectangle (1.5,1.5);
		\draw[ultra thick]plot[smooth,tension=2](-.5,1)to(.5,1);
		\draw[thick,->-=.5,>=stealth](-.2,.8)to(.2,.8);
		\draw(0,.8)node[below]{$\epsilon$};
		\draw[red,thick](0,1)to[out=-135,in=60](-.8,0)to[out=-120,in=180](0,-1)to[out=0,in=-60](.8,0)to[out=120,in=-45](0,1);
		\draw[blue,thick,->-=.3,>=stealth](-1.2,-.5)to[bend right=5](0,-.5)to[out=10,in=-90](.45,0)to[out=90,in=-10](0,.4)to[bend right=5](-1,.45);
		\draw[blue,thick,bend right=10,->-=.7,>=stealth](-1,.3)node[left]{$\gamma_2$}to(1,.3);
		\draw[blue](-1,-.5)node[left]{$\gamma_1$};
		\draw(0,1)node{$\bullet$};
		\draw(0,0)node{$\bullet$};
	\end{tikzpicture}
	\caption{Orders via relative positions of PASs}
	\label{fig:order}
\end{figure}

For any $i\in Q_0^{sp}$ and $\theta\in\{+,-\}$, let 
\[\mathfrak{w}(i, \theta)=\{\m\mbox{ left inextensible word}|s(\m)=i, \sigma(\m)=\theta\},\] 
be the set of left inextensible words that starts at $i$ with the sign $\theta$ on the tail of its first word (from the right). Then the order on $\hat{Q}(i,\theta)$ can be extended to the set $\m(i,\theta)$ by the dictionary order, i.e., $\m>\n$ if $\m=\mathfrak{y}u\mathfrak{x}, \n=\mathfrak{z}v\mathfrak{x}$ with $\mathfrak{x},\mathfrak{y},\mathfrak{z}$ words, $u,v$ letters and $u>v$ in $\hat{Q}(t(\mathfrak{x}),-\tau(\mathfrak{x}))$. Note that this is a linear order. By Lemma~\ref{lem:order}, we have the following.

\begin{lemma}\label{lem:order2}
	Let $\gamma_1,\gamma_2$ be two PPCSs such that $\M(\gamma_1),\M(\gamma_2)\in\mathfrak{w}(i,\theta)$ for some $i\in Q^{sp}_0$ and $\theta\in\{+,-\}$. Then 
	$\M(\gamma_1)>\M(\gamma_2)$ if and only if $\gamma_1$ and $\gamma_2$ separate as in one of the situations presented in  Figure~\ref{fig:order}, after sharing a common starting part.
\end{lemma}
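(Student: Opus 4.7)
The plan is to reduce the statement to the already-proved Lemma~\ref{lem:order}, which handles a single pair of PASs, by combining it with the bijection between PPCSs and words from Lemma~\ref{lem:bi}. The definition of the extended order on $\mathfrak{w}(i,\theta)$ is engineered precisely to mirror Figure~\ref{fig:order} one letter at a time, so this reduction should be quite clean.

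First I would unfold the definition of the order: $\M(\gamma_1) > \M(\gamma_2)$ means there is a factorization $\M(\gamma_1) = \mathfrak{y} u \mathfrak{x}$ and $\M(\gamma_2) = \mathfrak{z} v \mathfrak{x}$ with $u,v$ letters satisfying $u > v$ in $\hat{Q}(t(\mathfrak{x}), -\tau(\mathfrak{x}))$. Because $\M$ records PASs from the start of the curve, with the rightmost letter corresponding to the starting PAS, the common right factor $\mathfrak{x}$ translates, via Lemma~\ref{lem:bi}, into a common initial sequence of PASs of $\gamma_1$ and $\gamma_2$ — exactly the ``common starting part'' of the statement.

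Next I would examine the first letters appearing after this common factor: $u$ for $\gamma_1$ and $v$ for $\gamma_2$, corresponding respectively to PASs $\eta_1$ and $\eta_2$. Using Proposition~\ref{prop:bi}(3), the conditions $s(u) = s(v) = t(\mathfrak{x})$ and $\sigma(u) = \sigma(v) = -\tau(\mathfrak{x})$ force $\eta_1$ and $\eta_2$ to start on the same side of the same curve of $\TT$, and hence to lie in the same region (the one just entered through that side). Lemma~\ref{lem:order} then translates $u > v$ into ``$\eta_1$ is to the right of $\eta_2$'', which is exactly one of the configurations depicted in Figure~\ref{fig:order}. The converse direction follows by running the same chain of equivalences backwards, starting from a geometric picture in Figure~\ref{fig:order}, reading off the common initial sequence of PASs and the next diverging PASs, and invoking Lemma~\ref{lem:order} to get $u > v$.

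The main obstacle, though a minor one, is verifying that the factorization $\mathfrak{y} u \mathfrak{x}$ versus $\mathfrak{z} v \mathfrak{x}$ really does correspond, up to homotopy, to a genuine common initial arc shared by $\gamma_1$ and $\gamma_2$. This relies on the injectivity in Lemma~\ref{lem:bi}, so that two homotopy classes of PPCSs recording the same sequence of letters must coincide; once this is granted, the lemma is essentially a pointwise application of Lemma~\ref{lem:order} at the first point of divergence.
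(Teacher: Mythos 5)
Your proposal is correct and takes the same route as the paper, which proves this lemma with the single remark ``By Lemma~\ref{lem:order}'' placed just before the statement; your write-up merely fills in the intended reduction (maximal common right factor $\leftrightarrow$ common starting part via Lemma~\ref{lem:bi}, then Lemma~\ref{lem:order} applied to the first diverging letters, located in a common region by Proposition~\ref{prop:bi}(3)).
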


For technical reasons, we also consider a trivial word $1_i$ associated to each vertex $i\in \hat{Q}_0$ and define $1_i\m=\m$ and $\n 1_i=\n$ for any word $\m$ with $t(\m)=i$ and any word $\n$ with $s(\n)=i$. 

\begin{notations}\label{not:()}
	Let $\m=\omega_m\cdots\omega_1$ be a  word. For any integers $i,j$ with $0\leq i<j\leq m+1$, define the \emph{subword} of $\m$ between $i$ and $j$ as
	$$\m_{(i,j)}=\begin{cases}
		\omega_{j-1}\cdots\omega_{i+1}&\text{if $i<j-1$,}\\
		1_{t(\omega_i)}&\text{if $i=j-1$.}
	\end{cases}$$
\end{notations}

For a word $\m=\omega_m\cdots\omega_1$ with $\m\m$ a word, a \emph{rotation} of $\m$ is a word of the form $\m_{(0,i+1)}\m_{(i,m+1)}$. For an inextensible word $\m=\omega_m\cdots\omega_1$, define its \emph{completion} to be
\[F(\m)=\begin{cases}
	\m&\text{if neither $\omega_1$ nor $\omega_m$ is special;}\\
	(\m_{(0,m)})^{-1}\epsilon_{s(\omega_m)}\m_{(0,m)}&\text{if $\omega_m$ is special and $\omega_1$ is not special;}\\
	\m_{(1,m+1)}\epsilon_{t(\omega_1)}(\m_{(1,m+1)})^{-1}&\text{if $\omega_1$ is special and $\omega_m$ is not special;}\\
	\epsilon_{s(\omega_m)}\m_{(1,m)}\epsilon_{t(\omega_1)}(\m_{(1,m)})^{-1}&\text{if both $\omega_1$ and $\omega_m$ are special.}
\end{cases}\]
Note that in the case that both $\omega_1$ and $\omega_m$ are special, the completion $F(\m)$ does not contain any end letters, and $F(\m)F(\m)$ is a word.

\begin{definition}\label{def:admissible word}
	An inextensible word $\m=\omega_m\cdots\omega_1$ is called \emph{admissible} if the following conditions hold.
	\begin{enumerate}
		\item[\namedlabel{itm:a1}{(A1)}] If $\omega_i=\epsilon$ for $\epsilon\in Q_1$ a special loop, then we have $(\m_{(0,i)})^{-1}>\m_{(i,m+1)}$; and if $\omega_i=\epsilon^{-1}$ for $\epsilon\in Q_1$ a special loop, then we have $(\m_{(0,i)})^{-1}<\m_{(i,m+1)}$.
		\item[\namedlabel{itm:a2}{(A2)}] If both $\omega_1$ and $\omega_m$ are special then its completion $F(\m)$ is not equivalent to a rotation of a proper power of any word, where two words $\omega_m\cdots\omega_1$ and $\nu_n\cdots\nu_1$ are called equivalent if $m=n$ and for any $1\leq i\leq m$, either $\omega_i=\nu_i$ or one of $\omega_i$ and $\nu_i$ is a special loop $\varepsilon$ and the other is $\varepsilon^{-1}$.
	\end{enumerate}
	We denote by $\ad$ the set of admissible words.
\end{definition}

Note that \ref{itm:a1} is well-defined since the words $\m_{(0,i)}$ and $\m_{(i,m+1)}$ are not trivial words as $\m$ is inextensible.

Recall that a PPC is a permissible curve if and only if the conditions \ref{itm:p1} and \ref{itm:p2} in Definition~\ref{def:permissible} hold. Under the bijections in Proposition~\ref{prop:bis}, the following lemmas give the equivalences between condition \ref{itm:a1} and condition \ref{itm:p1} and between condition \ref{itm:a2} and condition \ref{itm:p2}, respectively.

\begin{lemma}\label{lem:A1T1}
	An inextensible word $\m$ does not satisfy condition \ref{itm:a1} if and only if the curve $\Gamma(\m)$ cuts out a once-punctured monogon by its self-intersection. 
\end{lemma}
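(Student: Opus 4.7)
The plan is to set up a geometric dictionary between a PAS labeled by a special loop and the behavior of the curve $\Gamma(\m)$ inside the corresponding once-punctured monogon. By Proposition~\ref{prop:bi} and the construction of $Q^\TT$, if $\omega_i\in\{\epsilon,\epsilon^{-1}\}$ for $\epsilon$ a special loop based at a vertex $j\in\TT$, then the PAS $\gamma^i$ is the interior arc segment going around the puncture inside the type $\dd$ region bounded by $j$, with both endpoints on $j$ lying on its interior (monogon) side. Consequently the adjacent PASs $\gamma^{i-1}$ and $\gamma^{i+1}$ emerge from $j$ on the same exterior side, and the two arms $\gamma^{i-1}\cdots\gamma^1$ and $\gamma^{i+1}\cdots\gamma^m$, read as left inextensible words starting from a point on $j$, are precisely $(\m_{(0,i)})^{-1}$ and $\m_{(i,m+1)}$, both belonging to the same ordered set $\mathfrak{w}(s(\epsilon),-\sigma(\epsilon))$.

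The key geometric observation is that a curve in $\Gamma(\m)$ cuts out a once-punctured monogon by a self-intersection if and only if these two arms meet at a transverse point in the exterior of the once-punctured monogon, so that the loop they form together with $\gamma^i$ encloses only the puncture. Using Lemma~\ref{lem:order2}, which interprets the linear order on $\mathfrak{w}(s(\epsilon),-\sigma(\epsilon))$ as left/right relative position of PPCSs after a common prefix, this crossing condition translates into an explicit comparison between $(\m_{(0,i)})^{-1}$ and $\m_{(i,m+1)}$. A short case check on whether $\omega_i$ equals $\epsilon$ or $\epsilon^{-1}$, equivalently the orientation in which $\gamma^i$ traverses the puncture, shows that the inequality giving a crossing is exactly the negation of condition (A1) at position $i$.

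With this dictionary both directions are immediate. If (A1) fails at some $i$, I trace the two arms from their common starting point on $j$ until the first divergence, apply Lemma~\ref{lem:order2} to locate a transverse crossing, and observe that the inner loop together with $\gamma^i$ bounds a once-punctured monogon. Conversely, any self-intersection enclosing a puncture must have its enclosed region traversed by a PAS going around the puncture, hence labeled by a special loop or its inverse (by Definition~\ref{def:st} and Proposition~\ref{prop:bi}), and the dictionary reads off the failure of (A1).

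The main obstacle is justifying the dictionary itself, especially when the two arms share a long common prefix before diverging, and one must follow $\gamma$ through many regions of $\TT$ before locating the crossing. This is handled by inductive application of Lemma~\ref{lem:order} along the common prefix. A further subtle point is checking that the region cut out really is a once-punctured monogon with no extra punctures or unmarked boundary components inside, which follows by taking a minimally intersecting representative of $\gamma$ and using that the type $\dd$ region has $j$ as its unique edge.
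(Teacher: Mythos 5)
Your approach is essentially the paper's: identify the PAS labelled by the special loop with the segment going around the puncture in the type $\dd$ region, view $(\m_{(0,i)})^{-1}$ and $\m_{(i,m+1)}$ as the two arms of $\Gamma(\m)$ emanating from the edge of that region, and use Lemma~\ref{lem:order2} to translate the order comparison into the relative position of the two arms, so that the failure of (A1) is exactly the configuration in which the arms close up around the puncture. The paper's proof is precisely this, organized around its Figure of the two arms with a ``right part'' taken from the order figure.

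One point your write-up does not cover and the paper treats explicitly: the negation of (A1) includes the \emph{equality} $(\m_{(0,i)})^{-1}=\m_{(i,m+1)}$, not only the strict inequality. In that case the two arms are the same word, there is no ``first divergence,'' and your recipe ``trace the arms until they diverge and locate a transverse crossing'' produces nothing — the self-intersection is instead the coincidence of the two endpoints of $\Gamma(\m)$ (the first configuration in the paper's figure of curves violating (T1)), and the once-punctured monogon is bounded by the two homotopic copies of the arm together with the loop around the puncture. This subcase also matters for your converse, where the enclosing ``loop'' need not come from a transverse interior crossing. It is an easy repair, but as stated that step of your argument would fail on this boundary case.
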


\begin{proof} 
	Write $\m=\omega_m\cdots\omega_1$. Then the failure of \ref{itm:a1} is that there is (after taking the inverse of $\m$ if necessary) $\omega_i=\epsilon$ such that either $\m^{-1}_{(0,i)}=\m_{(i,m+1)}$ or $\m^{-1}_{(0,i)}<\m_{(i,m+1)}$. Denote $\gamma_1=\Gamma(\m_{(i,m+1)})$, $\gamma_2=\Gamma(\m^{-1}_{(0,i)})$ and $\gamma=\Gamma(\omega_i)$. Then we are in the situation presented in Figure~\ref{fig:order and punc-monogon}, with the right part being one of the forms in  Figure~\ref{fig:order} for the case $\m^{-1}_{(0,i)}<\m_{(i,m+1)}$ by Lemma~\ref{lem:order2}, or being the form that the last arc segments of $\gamma_1$ and $\gamma_2$ coincide for the case $\m^{-1}_{(0,i)}=\m_{(i,m+1)}$. 
	\begin{figure}[htpb]\centering
		\begin{tikzpicture}[scale=0.56]
			\draw(-6,0)node{$\bullet$};
			\draw[red,thick]plot[smooth,tension=1.5] coordinates
			{(-8,0) (-6,1) (-6,-1)  (-8,0)};
			\draw[blue,thick,bend right=75,-<-=.5,>=stealth](-6,1)to(-6,-1);
			\draw[dashed] (8,0) circle (2)node{right part}
			(-3,1)node[above,blue]{$\gamma_{1}$}(-3,-1)node[below,blue]{$\gamma_{2}$};
			\draw[dashed] (5,2)to(-5,2) (5,-2)to(-5,-2);
			\draw[red, thick] (5,2)to(5,-2) (-5,-2)to(-5,2);
			\draw[red, thick] (4,2)to(4,-2) (-4,-2)to(-4,2);
			\draw[blue, thick,->-=.4,>=stealth] (-5,1).. controls +(0:3.9) and +(180:3.9) .. (5,-1);
			\draw[blue, thick,->-=.4,>=stealth] (-5,-1).. controls +(0:3.9) and +(180:3.9) .. (5,1);
			\draw(-8,0)node{$\bullet$};
			\draw[blue](-7,0)node{$\gamma$};
		\end{tikzpicture}
		\caption{A curve whose corresponding word does not satisfy condition \ref{itm:a1}}
		\label{fig:order and punc-monogon}
	\end{figure}
	In both cases, the curve $\Gamma(\m)$ cuts out a once-punctured monogon by its self-intersection. All arguments above are invertible. Hence we complete the proof.
\end{proof}

\begin{lemma}\label{lem:A2T2}
	An inextensible word $\m$ satisfies condition \ref{itm:a2} if and only if $\Gamma(\m)$ satisfies condition \ref{itm:p2}. 
\end{lemma}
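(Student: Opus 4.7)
By Proposition~\ref{prop:bis}(1)--(2), the right (resp.\ left) end letter of $\m$ is special if and only if $\gamma(0)\in\PP$ (resp.\ $\gamma(1)\in\PP$), where $\gamma=\Gamma(\m)$. Hence the hypotheses of (A2) and (T2) coincide, and if they fail both conditions hold vacuously. So I may assume both $\omega_1$ and $\omega_m$ are special, equivalently $\gamma(0),\gamma(1)\in\PP$. In this case the completion reads
\[F(\m)=\epsilon_{s(\omega_m)}\m_{(1,m)}\epsilon_{t(\omega_1)}(\m_{(1,m)})^{-1},\]
which contains no end letter and satisfies $F(\m)F(\m)$ is again a word, so $F(\m)$ may be iterated arbitrarily.

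The core step is to extend the bijection $\Gamma$ of Lemma~\ref{lem:bi} to a bijection between rotation-equivalence classes of words $\n$ with $\n\n$ a word, and free-homotopy classes of closed curves on $\surf$ avoiding $\PP$. By Proposition~\ref{prop:bi}(1)(iii), a special letter $\epsilon_i$ corresponds to the PAS that wraps around the puncture in the once-punctured monogon cut out by $i$. Comparing this with the third picture in Figure~\ref{fig:completion of curve}, I would check directly that $\Gamma(F(\m))$ is freely homotopic to $\overline{\gamma}$: the factor $(\m_{(1,m)})^{-1}$ retraces the interior of $\gamma$, the letter $\epsilon_{t(\omega_1)}$ wraps once around the puncture $\gamma(1)$, the factor $\m_{(1,m)}$ then traces $\gamma$ forward, and $\epsilon_{s(\omega_m)}$ wraps once around $\gamma(0)$, closing up exactly into $\overline{\gamma}$.

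Under this extended correspondence, rotating a word amounts to relocating the basepoint of the closed curve, so a rotation-class of words matches a single free-homotopy class. Moreover, $\n^r$ represents the $r$-fold iterate of the loop $\Gamma(\n)$ in $\pi_1(S)$. Consequently, $F(\m)$ is a rotation of $\n^r$ for some word $\n$ and some $r\geq 2$ if and only if $\overline{\gamma}$ is freely homotopic to the $r$-th power of a closed curve, which is precisely the negation of (T2). The main obstacle will be verifying the extension rigorously: one has to show that distinct cyclic classes of such words yield non-freely-homotopic closed curves, and that the word-theoretic and topological notions of proper power agree. The latter point uses that $\partial S\neq\emptyset$, so $\pi_1(S)$ is free and in particular torsion-free, making the power structure unambiguous and detectable on any cyclic word representing the free-homotopy class.
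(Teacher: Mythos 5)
Your proof is correct and follows essentially the same route as the paper's own (much terser) argument, which simply notes that the hypotheses of (A2) and (T2) coincide and that $\Gamma(F(\m))$ is the completion $\overline{\gamma}$ of $\Gamma(\m)$, leaving the rotation-class/free-homotopy correspondence and the torsion-freeness of $\pi_1(S)$ implicit; you are just spelling out the details the paper omits. One small slip in your narration: since $\omega_1=\l(\gamma^1)$ encodes the end segment at $\gamma(0)$ and $\omega_m=\l(\gamma^m)$ the one at $\gamma(1)$, the loop $\epsilon_{t(\omega_1)}$ wraps around the puncture $\gamma(0)$ and $\epsilon_{s(\omega_m)}$ around $\gamma(1)$ (the opposite of what you wrote), but this does not affect the argument.
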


\begin{proof}
	Both of the endpoints of $\Gamma(\m)$ are punctures if and only if $\m$ has two special end letters. In this case, $\Gamma(F(\m))$ (after gluing its endpoints which must be in the same arc in $\TT$) is the completion of $\Gamma(\m)$. Thus, $\Gamma(F(\m))$ is a proper power. Hence $\Gamma(\m)$ satisfies \ref{itm:p2} if and only if $\m$ satisfies \ref{itm:a2}.
\end{proof}

The following correspondence follows directly from the above two lemmas, together with Proposition~\ref{prop:bis}.

\begin{proposition}\label{prop:bi3}
	The maps $\M(-):\PS\to\ad$ and $\Gamma(-):\ad\to \PS$ are mutually inverse. Moreover, for any permissible curve $\gamma$, we have that
	\begin{enumerate}
		\item the right end letter in $\M(\gamma)$ is special if and only if $\gamma(0)\in \PP$,
		\item the left end letter in $\M(\gamma)$ is special if and only if $\gamma(1)\in \PP$, 
		\item $\M(\gamma^{-1})=\M(\gamma)^{-1}$, and
		\item $\M(\overline{\gamma})=F(\M(\gamma))$.
	\end{enumerate}
\end{proposition}

\subsection{Modules and permissible tagged curves}\label{subsec:mod and curve}

For any admissible word $\m\in\ad$, we associate an indeterminate $x$ to each special end letter in $\m$ and let $A_\m$ be the $\k$-algebra generated by these indeterminates modulo relations $x^2-x$. Let $N$ be a one-dimensional $A_\m$-module. Then the module $N$ is determined (up to isomorphism) by the values $N(x)\in\{0, 1\}$. More specifically,
\begin{itemize}
	\item if $\m$ contains no special end letters, then $A_\m=\k$, which has only one one-dimensional module $N=\k$;
	\item if $\m$ has exactly one special end letter, then $A_\m=\k[x]/(x-x^2)$, which has exactly two one-dimensional modules $\k_a, a\in\{0,1\}$, with $\k_a(x)=a$. We have
	\begin{equation}\label{eq:1}
		\dim_\k\mbox{Hom}_{A_\m}(\k_a, \k_b)=\delta_{a,b}, \ a, b\in\{0,1\},
	\end{equation}
	where $\delta_{a,b}$ is the Kronecker symbol, i.e. $\delta_{a,b}=1$ when $a=b$ and $\delta_{a,b}=0$ when $a\neq b$;
	\item if $\m$ has two special end letters, then $A_\m=\k\langle x,y\rangle/(x-x^2,y-y^2)$, which has four one-dimensional modules $\k_{a,b}$, $a, b\in\{0,1\}$, with $\k_{a,b}(x)=a$ and $\k_{a,b}(y)=b$. We have
	\begin{equation}\label{eq:2}
		\dim_\k\mbox{Hom}_{A_\m}(\k_{a,b}, \k_{a',b'})=\delta_{a,a'}\delta_{b,b'}, \ a,a',b,b'\in\{0,1\}.
	\end{equation}
\end{itemize}

\begin{construction}\label{construct:M(m,N)}
	For each pair $(\m,N)$ with $\m=\omega_m\cdots\omega_1\in\ad$ and $N$ one-dimensional $A_\m$-module, a representation $M(\m, N)=(\{M_i\}_{i\in Q^{sp}_0},\{M_a\}_{a\in Q^{sp}_1})$ of $Q^{sp}$ bounded by $I^{sg}$ (so $M(\m,N)$ is also an $A$-module) is constructed as follows.
	\begin{itemize}
		\item For each vertex $i\in Q^{sp}_0$, let $I_i=\{1\leq j\leq m-1|t(\omega_j)=i\}$;
		\item Let $M_i$ be a vector space of dimension $|I_i|$, say with base vectors $z_j, j\in I_i$;
		\item For $\alpha\in Q^{sp}_1$ an ordinary arrow, define $$M_{\alpha}(z_j)=\begin{cases}
			z_{j+1}&\text{if $\omega_{j+1}=\alpha$,}\\
			z_{j-1}&\text{if $\omega_j=\alpha^{-1}$,}\\
			0&\text{otherwise;}
		\end{cases}$$
		\item For $\epsilon\in Q_1^{sp}$ a special loop, define $$M_{\epsilon}(z_j)=\begin{cases}
			z_{j+1}&\text{if $\omega_{j+1}=\epsilon$,}\\
			z_{j-1}&\text{if $\omega_{j}=\epsilon^{-1}$,}\\
			z_j&\text{if $\omega_{j+1}=\epsilon^{-1}$ or $\omega_{j}=\epsilon$,}\\
			N(x_1)z_1&\text{if $j=1$,  $\omega_1$ is special and $t(\epsilon)=t(\omega_1)$,}\\
			N(x_m)z_{m-1}&\text{if $j=m-1$,  $\omega_m$ is special and $s(\epsilon)=s(\omega_m)$,}
		\end{cases}$$
		where $x_1$ (resp. $x_m$) is the indeterminate corresponding to $\omega_1$ (resp. $\omega_m$).
	\end{itemize}
\end{construction}

\begin{remark}\label{rmk:proj}
    A word is said to be \emph{direct} if each of its letters is in $Q^{sp}_1\cup\{z_{j,\theta}|j\in Q^{sp}_0,\theta\in\{+,-\}\}$. For any $i\in Q^{sp}_0$ and $\theta\in\{+,-\}$, the least element in the linearly ordered set $\mathfrak{w}(i,\theta)$ is the direct word whose length is maximal among those of direct words in $\mathfrak{w}(i,\theta)$. Note that the maximality implies that the left end letter of the least element is not special.

    Recall from Remark~\ref{rmk:QZ} that $\{e_j\mid j\in Q^{sp}_0\setminus Sp\}\cup\{e_i-\epsilon_i,\epsilon_i|i\in Sp\}$ is a complete set of primitive orthogonal idempotents of $A$. We have the following description of indecomposable projective modules.
    \begin{enumerate}
        \item For any $j\in Q^{sp}_0\setminus Sp$, the indecomposable projective $A$-module $e_jA=M(\m,\k)$, where $\m=\x\y^{-1}$ with $\x$ and $\y$ the least elements in $\mathfrak{w}(j,+)$ and $\mathfrak{w}(j,-)$, respectively.
        \item For any $i\in Sp$, the indecomposable projective $A$-modules $(e_i-\epsilon_i)A=M(\m,\k_0)$ and $\epsilon_iA=M(\m,\k_1)$, where $\m=\x z^{-1}_{i,\sigma(\epsilon_i)}$ with $\x$ the least element in $\mathfrak{w}(i,-\sigma(\epsilon_i))$.
    \end{enumerate}
\end{remark}

Denote by $\overline{\ad}$ the equivalence class of admissible words under taking inverse. An element in $\overline{\ad}$ is always presented by a representative in it. For any $\m=\omega_m\cdots\omega_1\in\overline{\ad}$ which contains exactly one special end letter, we always assume that $\omega_1$ is the special end letter. 

We shall use the following classification of indecomposable modules of a skew-gentle algebra.

\begin{theorem}[\cite{B,CB,De}]\label{thm:Deng}
	Let $A=\k Q^{sp}/I^{sg}$ be a skew-gentle algebra for a skew-gentle triple $(Q,Sp,I)$. Then the map $(\m,N)\mapsto M(\m,N)$ is an injective map from the set of pairs $(\m, N)$ with $\m\in\overline{\ad}$ and $N$ one-dimensional $A_\m$-module (up to  isomorphism), to the set of indecomposable $A$-modules (up to isomorphism).
\end{theorem}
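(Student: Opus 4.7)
My strategy is to reduce the assertion to the Butler--Ringel classification of string modules over a gentle algebra via an idempotent-refinement Morita equivalence. The key observation is that for every $i \in Sp$ the special loop $\epsilon_i$ is idempotent in $e_i A e_i$ (by the relation $\epsilon_i^2 = \epsilon_i$ in $I^{sg}$), so one obtains an orthogonal decomposition $e_i = e_{i^+} + e_{i^-}$ with $e_{i^+} := \epsilon_i$ and $e_{i^-} := e_i - \epsilon_i$. Replacing the standard primitive idempotents at each $i \in Sp$ by $\{e_{i^\pm}\}$ produces an algebra $\hat A$ Morita equivalent to $A$, and the first substantive task is to show that $\hat A$ is itself gentle. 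Its quiver $\hat Q$ is obtained from $Q^{sp}$ by splitting each $i \in Sp$ into two vertices $i^+, i^-$, deleting the loop $\epsilon_i$, and routing each non-loop arrow incident to $i$ to one of $i^\pm$ according to the sign $\sigma(\epsilon_i) = \tau(\epsilon_i)$ (cf.\ Lemma \ref{lem:sg fin}); the relations of $I^{sg}$ involving $\epsilon_i$ are absorbed by the splitting, and the rest restrict to zero-relations of length two on $\hat Q$. Conditions (G1)--(G4) of Definition \ref{def:gentle} can then be verified by a local case analysis at each special vertex.

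Next I would set up an explicit dictionary, well defined up to inversion, between pairs $(\m, N)$ with $\m \in \overline{\ad}$ and $N$ a one-dimensional $A_\m$-module, and Butler--Ringel strings in $\hat Q$ (up to inversion). The rule is: interior letters of $\m$ become the naturally corresponding arrows of $\hat Q$, non-special end letters simply mark endpoints, and a special end letter $z_{i,\rho}^{\pm 1}$ becomes an endpoint placed at the vertex $i^{N(x)} \in \hat Q_0$, where $x$ is the indeterminate of $A_\m$ attached to that letter and $N(x) \in \{0,1\}$. Thus $N$ records precisely which of $i^+, i^-$ is hit at each puncture endpoint. Condition (A1) in Definition \ref{def:admissible word} rules out the backtracks forbidden by the gentle structure at the splitting, and condition (A2) excludes the cyclic (band) patterns, so the resulting object is a genuine string in $\hat Q$.

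A direct side-by-side comparison of Construction \ref{construct:M(m,N)} with the Butler--Ringel string-module construction over $\hat A$ then shows that, under the Morita equivalence $\mod A \simeq \mod \hat A$, the module $M(\m, N)$ corresponds to the string module associated to this string: the boundary clauses $N(x_1) z_1$ and $N(x_m) z_{m-1}$ in Construction \ref{construct:M(m,N)} are precisely the identities needed to place the endpoint vectors in the $\epsilon_i$-eigenspace $e_{i^{N(x)}}\hat A$. Indecomposability of $M(\m, N)$ and injectivity of $(\m, N) \mapsto M(\m, N)$ on isomorphism classes then both follow from the Butler--Ringel theorem applied to $\hat A$, namely that string modules are indecomposable and that distinct strings (up to inversion) yield non-isomorphic modules.

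The main obstacle will be the first step: rigorously setting up the splitting so that $\hat A$ is visibly gentle. The local arrow configuration at each $i \in Sp$ may contain up to four non-loop arrows whose signs $\sigma, \tau$ must be distributed correctly between $i^+$ and $i^-$, and one has to check that the induced two-cycle relations on $\hat Q$ still satisfy (G3)--(G4) and do not create unexpected admissibility failures. Once that local analysis is done cleanly, the dictionary in the second paragraph and the comparison in the third are essentially bookkeeping, and the theorem drops out of the classical Butler--Ringel classification.
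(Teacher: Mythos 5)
Your reduction breaks down at the step you yourself flag as the main obstacle, and the failure is not a matter of careful bookkeeping: the idempotent-refined algebra $\hat A$ is in general not gentle, and not even a monomial (string) algebra, so Butler--Ringel cannot be invoked. The problem occurs at any special vertex $i$ carrying both an ordinary incoming arrow $b$ and an ordinary outgoing arrow $a$. The gentle axioms for $(Q^{sp},I^{sp})$ force $\sigma(a)=-\sigma(\epsilon_i)=-\tau(\epsilon_i)=\tau(b)$, hence $ab\in I$, while $a\epsilon_i$ and $\epsilon_i b$ do not lie in $I^{sp}$, so $a\epsilon_i b\neq 0$ in $A$. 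Writing $e_{i^+}=\epsilon_i$ and $e_{i^-}=e_i-\epsilon_i$, the arrow $a$ splits into $a^{\pm}=ae_{i^{\pm}}$ and $b$ into $b^{\pm}=e_{i^{\pm}}b$ (both spaces $e_{t(a)}(\operatorname{rad}/\operatorname{rad}^2)e_{i^{\pm}}$ are one-dimensional, so $a$ genuinely doubles rather than being ``routed to one of $i^{\pm}$'' as in your description of $\hat Q$), and one computes
\[
a^+b^+=a\epsilon_i b,\qquad a^-b^-=ab-a\epsilon_i b=-a\epsilon_i b,\qquad a^+b^-=a^-b^+=0 .
\]
Thus the zero-relation $ab$ does not restrict to zero-relations of length two: it produces the non-monomial relation $a^+b^++a^-b^-=0$ with $a^+b^+\neq 0$, and since each arrow space is one-dimensional this relation cannot be made monomial by any rescaling or rechoice of arrow representatives. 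A minimal finite-dimensional instance is the quiver $2\xrightarrow{\ b\ }1\xrightarrow{\ a\ }3$ with $Sp=\{1\}$ and $I=\{ab\}$, and the same configuration occurs at the vertex $1$ of the running example in Section 6. This obstruction is precisely why skew-gentle algebras are clannish but not special biserial, and why the classification of their indecomposables requires the semichained-set and functorial-filtration methods of Bondarenko, Crawley--Boevey and Deng rather than string-algebra theory; note that the paper itself does not prove Theorem~\ref{thm:Deng} but imports it from exactly those references.

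If you want a covering-theoretic substitute for your $\hat A$, the correct object is not a Morita-equivalent gentle algebra but a gentle algebra equipped with a $\mathbb{Z}/2$-action whose skew group algebra is Morita equivalent to $A$ (the orbifold approach of Amiot--Plamondon and Amiot--Br\"ustle cited in the introduction). There the passage between indecomposables is governed by Clifford theory of the group action rather than by a direct bijection of strings, and it is the strings fixed by the involution that split into the two (or four) modules indexed by the one-dimensional $A_{\m}$-modules $N$; recovering the precise statement of Theorem~\ref{thm:Deng} along these lines is substantially more work than the bookkeeping you describe.
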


\begin{remark}
	Indeed, Bondarenko \cite{B}, Crawley-Boevey \cite{CB}, and Deng \cite{De} proved the result above for general clannish algebras, where the map can be upgraded to a bijection by enlarging the set $\overline{\ad}$ and taking $N$ to be an arbitrary indecomposable $A_\m$-module. The modules $M(\m,N)$ with $\m$ containing at most one special end letter (in this case, any indecomposable $A_\m$-module is one-dimensional) are called \emph{string} modules. Any other indecomposable module is called a \emph{band} modules. Note that not all band modules are of the form $M(\m,N)$ with $\m$ containing two special end letters. 
\end{remark}

The description in \cite{G} of the Auslander-Reiten components consisting of band modules gives the following.

\begin{lemma}[\cite{G}]\label{lem:band}
	Any band module is either in a homogeneous tube or in a tube of rank 2. The band modules which are on the bottom of tubes of rank 2 are exactly the modules $M(\m,N)$ with $\m$ containing two special end letters and $N$ one-dimensional.
\end{lemma}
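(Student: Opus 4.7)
The plan is to deduce the lemma from Gei{\ss}'s \cite{G} description of the Auslander-Reiten sequences in the module category of a skew-gentle algebra, together with the general theory of stable AR-components of tame algebras.

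First I would argue that every band module lies in a stable tube. Indeed, skew-gentle algebras are tame by \cite{GP}, and the classification of indecomposables in \cite{CB,De,B} recalled in Theorem~\ref{thm:Deng} separates string modules (which occupy the preprojective and preinjective components) from band modules (which occupy the remaining components). Standard AR-theory for tame algebras then forces the band-containing components to be stable tubes.

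Second, I would determine the rank of each tube by computing the AR-translate $\tau$ on the quasi-simple band modules, splitting according to whether the word $\m$ has special end letters. If $\m$ has no special end letters, then $A_\m\cong\k[x,x^{-1}]$ and these bands behave exactly as bands in the underlying gentle algebra $\k Q^{sp}/\langle I^{sp}\rangle$; the classical string-and-band theory yields $\tau M(\m,N)\cong M(\m,N)$ for one-dimensional $N$, so the tube is homogeneous. If $\m$ has two special end letters, then $A_\m\cong\k\langle x,y\rangle/(x^2-x,y^2-y)$ and the four one-dimensional modules $\k_{a,b}$, $(a,b)\in\{0,1\}^2$, correspond to the four specializations of the two special loops at the endpoints of $\m$. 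Gei{\ss}'s explicit AR-calculation \cite{G} shows that $\tau$ acts on this set via a fixed-point-free involution (possibly composed with a rotation of $\m$), so the four modules split into two AR-orbits of length 2, producing two stable tubes of rank 2.

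For the second sentence, I would show that the quasi-simple modules at the mouth of such a rank-2 tube are precisely the one-dimensional $N$ cases: any $M(\m,N)$ with $N$ of higher $A_\m$-dimension arises at strictly higher quasi-length within the same tube, built up from the quasi-simples by iterated AR-sequences (the analogue of the classical picture for bands in gentle algebras). Hence the bottoms of the rank-2 tubes are exactly the modules $M(\m,N)$ with $\m$ containing two special end letters and $\dim_\k N=1$. The main obstacle is the explicit $\tau$-calculation in the skew case, which involves constructing the almost-split sequences ending at each $M(\m,\k_{a,b})$ and tracking how the specializations $a,b$ transform; this is carried out in \cite{G}.
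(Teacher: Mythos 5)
The paper offers no proof of this lemma at all: it is stated as a direct quotation of Gei{\ss}'s description in \cite{G} of the Auslander--Reiten components containing band modules, which is exactly what your sketch does, since every essential step in it (band modules lying in stable tubes, the $\tau$-computation on the quasi-simples, the build-up of higher quasi-length from higher-dimensional $N$) is explicitly deferred to \cite{G}. Your surrounding reasoning is consistent with the paper — in particular, the fixed-point-free involution you invoke on the four one-dimensional modules $\k_{a,b}$ is precisely the formula $\tau M(\m,\k_{a,b})=M(\m,\k_{1-a,1-b})$ recorded later in Theorem~\ref{thm:ref}, giving the two orbits $\{\k_{0,0},\k_{1,1}\}$ and $\{\k_{0,1},\k_{1,0}\}$ and hence two rank-$2$ tubes per such word. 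One side remark is inaccurate, though immaterial here: string modules of a skew-gentle algebra do not occupy ``preprojective and preinjective components'' (the algebra is not hereditary, and string modules also populate components of type $\mathbb{Z}A_\infty^\infty$ and the non-stable parts of tubes), so that phrasing should not be used to justify why the remaining components are tubes.
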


\begin{remark}
	More generally, the band modules $M(\m,N)$ for an $\m$ containing two special end letters and for all indecomposable $A_\m$-modules $N$ form two tubes of rank 2 and a family of tubes of rank 1. In this case, if $M(\m,N)$ is on the $r$-th level of a tube of rank 2 (resp. 1), then $\dim N=r$ ($\dim N=2r$).
\end{remark}

Denote by $\mathcal{S}$ the set of (isoclasses of) indecomposable modules $M(\m,N)$ with $\m\in\overline{\ad}$ and $N$ one-dimensional $A_\m$-module (so $\mathcal{S}$ is the image of the injective map in Theorem~\ref{thm:Deng}). 

Recall that a tagged permissible curve is a pair $(\gamma,\kappa)$, where $\gamma$ is a permissible curve and $\kappa:\{t\mid\gamma(t)\in\PP\}\to\{0,1\}$ is a map, and that $\PTS$ denotes the set of tagged permissible curves up to inverse. For any tagged permissible curve $(\gamma,\kappa)\in\PTS$, if exactly one of the endpoints of $\gamma$ is in $\PP$, we always assume $\gamma(0)\in \PP$.

\begin{remark}\label{rmk:underlying}
	For any $\gamma\in\PS$, there are the following three cases for possible tagged permissible curves $(\gamma,\kappa)\in\PTS$.
	\begin{itemize}
		\item If both $\gamma(0)$ and $\gamma(1)$ are in $\MM$, then by definition (Definition~\ref{def:permissible}), the domain of $\kappa$ is empty. So in this case, there is only one tagged permissible curve $(\gamma,\emptyset)$.
		\item If $\gamma(0)\in \PP$ and $\gamma(1)\in \MM$, then there are two tagged permissible curves $(\gamma,\kappa_a), a\in\{0,1\}$ with $\kappa_a(0)=a$.
		\item If both $\gamma(0)$ and $\gamma(1)$ are in $\PP$, then there are four tagged permissible curves $(\gamma,\kappa_{a,b}), a,b\in\{0,1\}$ with $\kappa_{a,b}(0)=a$ and $\kappa_{a,b}(1)=b$.
	\end{itemize}
\end{remark}

\begin{construction}\label{cons:mod}
	Let  $(\gamma,\kappa)$  be a tagged permissible curve. To any  $t\in\{0,1\}$ with $\gamma(t)\in \PP$, by Proposition~\ref{prop:bi3}, there is an associated special end letter of $\M(\gamma)$. Let $x_t$ denote the corresponding indeterminate. Define $N_{\kappa}$ to be the one-dimensional $A_{\M(\gamma)}$-module satisfying $N_{\kappa}(x_t)=\kappa(t)$.  We denote $M(\gamma,\kappa)=M(\M(\gamma),N_{\kappa})$.
\end{construction}

We have the following main result in this section.

\begin{theorem}\label{thm:curve and mod}
	The map $(\gamma,\kappa)\mapsto M(\gamma,\kappa)$ is a bijection from the set  $\PTS$ of tagged permissible curves to the set $\mathcal{S}$.
\end{theorem}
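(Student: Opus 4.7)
My strategy is to combine Proposition~\ref{prop:bi3} with Theorem~\ref{thm:Deng}, matching taggings on the curve side with choices of one-dimensional modules on the algebraic side. By Proposition~\ref{prop:bi3}, the map $\gamma\mapsto\M(\gamma)$ is a bijection $\PS\to\ad$; moreover, parts (1) and (2) of that proposition imply that $\gamma$ has $k\in\{0,1,2\}$ endpoints in $\PP$ if and only if $\M(\gamma)$ has exactly $k$ special end letters. By Remark~\ref{rmk:underlying} together with the discussion preceding Construction~\ref{construct:M(m,N)}, the set of taggings of a fixed $\gamma$ and the set of one-dimensional $A_{\M(\gamma)}$-modules are each in bijection with $\{0,1\}^k$, and the assignment $\kappa\mapsto N_\kappa$ from Construction~\ref{cons:mod} realizes this bijection tautologically. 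Hence I obtain a bijection between pairs $(\gamma,\kappa)$ with $\gamma\in\PS$ and pairs $(\m,N)$ with $\m\in\ad$ and $N$ a one-dimensional $A_\m$-module.

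Next I would descend this bijection to the quotients under taking inverse. On the word side, two pairs $(\m,N)$ and $(\m^{-1},N^{\ast})$ should be identified precisely when $M(\m,N)\cong M(\m^{-1},N^{\ast})$. Analyzing Construction~\ref{construct:M(m,N)} under the basis-reversal $z_j\leftrightarrow z'_{m-j}$, one sees that $N^{\ast}$ is uniquely determined by the rule that it assigns to each special end letter of $\m^{-1}$ the eigenvalue that $N$ assigns to the corresponding special end letter of $\m$. On the curve side, $(\gamma,\kappa)\sim(\gamma^{-1},\kappa^{-1})$, and by Proposition~\ref{prop:bi3}(3) we already have $\M(\gamma^{-1})=\M(\gamma)^{-1}$, so the underlying words match. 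The remaining task is to verify $N_{\kappa^{-1}}=(N_\kappa)^{\ast}$ under the convention for $\kappa^{-1}$ in Definition~\ref{def:permissible}; this is a direct unravelling of the construction.

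Finally, Theorem~\ref{thm:Deng} together with the definition of $\mathcal{S}$ gives that the map $(\m,N)\mapsto M(\m,N)$ induces a bijection from the quotient of pairs (by the equivalence above) onto $\mathcal{S}$. Composing with the bijection from the first two steps yields the desired bijection $\PTS\to\mathcal{S}$.

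The principal obstacle is the compatibility check in the second step: one must trace carefully how the numerical tag $\kappa(t)\in\{0,1\}$ at a puncture endpoint $\gamma(t)\in\PP$ encodes the eigenvalue of the corresponding special loop on an end basis vector of $M(\gamma,\kappa)$, and confirm that the two inversion conventions—on tagged curves and on pairs $(\m,N)$—are compatible. This is routine but delicate bookkeeping; once it is in place, the rest of the argument is a direct assembly of Proposition~\ref{prop:bi3} and Theorem~\ref{thm:Deng}.
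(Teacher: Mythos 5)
Your argument is essentially the paper's own proof, which for a fixed $\gamma$ observes that $\kappa\mapsto N_\kappa$ is a bijection onto the one-dimensional $A_{\M(\gamma)}$-modules and then combines this with Proposition~\ref{prop:bi3} and Theorem~\ref{thm:Deng}; the descent to equivalence classes under inversion, which you rightly isolate as the delicate point, is left entirely implicit there. One warning about the bookkeeping you defer: the basis reversal $z_j\leftrightarrow z'_{m-j}$ shows that the eigenvalue $N(x)$ attached to a special end letter of $\m$ must be \emph{equal} to the eigenvalue attached to the corresponding special end letter of $\m^{-1}$ for $M(\m,N)\cong M(\m^{-1},N^{\ast})$ to hold, so your verification $N_{\kappa^{-1}}=(N_\kappa)^{\ast}$ goes through only if the formula $\kappa^{-1}(t)=1-\kappa(t)$ in Definition~\ref{def:permissible} is read as reversing the parameter, i.e. $\kappa^{-1}(t)=\kappa(1-t)$, rather than flipping the tag value --- taken literally it would make the two sides differ.
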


\begin{proof}
    For a fixed permissible curve $\gamma$, the map $\kappa\mapsto N_{\kappa}$ is a bijection from the set of tagged curves whose underlying permissible curve is $\gamma$ (cf. Remark~\ref{rmk:underlying}) to the set of one-dimensional modules of $A_{\M(\gamma)}$. Hence by Proposition~\ref{prop:bi3} and Theorem~\ref{thm:Deng}, we have this theorem.
\end{proof}

\section{Auslander-Reiten translation via tagged rotation}

In this section, we introduce the notion of tagged rotation of tagged generalized permissible curves, and show that this gives a geometric realization of the Auslander-Reiten translation.

\subsection{The tagged rotation}\label{subsec:ro}

In this subsection, we introduce the notion of tagged rotation, which is a generalization of the pivotal elementary move in \cite{BS} and the tagged rotation in \cite{BQ,QZ}.

\begin{definition}[Tagged rotation]\label{def:ro}
	The tagged rotation $\rho(\gamma,\kappa)$ of a tagged permissible curve $(\gamma,\kappa)$ is $(\rho(\gamma),\kappa')$ where
	\begin{itemize}
		\item $\rho(\gamma)$ is the rotation of $\gamma$ obtained from $\gamma$ by moving every endpoint of $\gamma$ that is in $\MM$ along the boundary clockwise to the next marked point, and
		\item $\kappa'(t)=1-\kappa(t)$ for any $t$ with $\gamma(t)\in\PP$.
	\end{itemize}
\end{definition}

For a curve $\gamma$ on $\surf$ with $\gamma(1)\in \MM$, denote by $[1]\gamma$ the curve obtained from $\gamma$ by moving $\gamma(1)$ along the boundary clockwise to the next marked point, see Figure~\ref{fig:rot}; dually, for a curve $\gamma$ on $\surf$ with $\gamma(0)\in \MM$, denote by $\gamma[1]$ the curve obtained from $\gamma$ by moving $\gamma(0)$ along the boundary clockwise to the next marked point. Then we have
$$\rho(\gamma)=\begin{cases}
	[1]\gamma[1]&\text{if $\gamma(0),\gamma(1)\in \MM$,}\\
	[1]\gamma&\text{if $\gamma(0)\in \PP$ and $\gamma(1)\in \MM$,}\\
	\gamma&\text{if $\gamma(0),\gamma(1)\in \PP$.}
\end{cases}$$

\begin{figure}[htpb]
	\begin{tikzpicture}
		\draw[ultra thick] (0,0)circle(1);
		\draw[blue,thick,->-=.5,>=stealth] (0,3)to(0,1);
		\draw[blue,thick,->-=.5,>=stealth] (0.2,3)to[out=-90,in=90](1,0);
		\draw[blue,thick,->-=.5,>=stealth] (-0.2,3)to[out=-90,in=90](-1,0);
		\draw[blue] (0.1,1.5)node{$\gamma$};
		\draw[blue] (1.1,1.5)node{$[1]\gamma$};
		\draw[blue] (-1.2,1.5)node{$[-1]\gamma$};
		\draw(0,1)node{$\bullet$} (1,0)node{$\bullet$} (-1,0)node{$\bullet$};
	\end{tikzpicture}
	\caption{Rotations of curves}\label{fig:rot}
\end{figure}

Note that for a permissible curve $\gamma$, it is possible that none of $\gamma[1],[1]\gamma$ and $\rho(\gamma)$ crosses any curve in $\TT$ in the interior of $\surf$. So we need the following notion.

\begin{definition}[Trivial curves]
	A curve $\gamma$ on $\surf$ which does not transversally cross any curve in $\TT$ in the interior of $\surf$ is called \emph{trivial} (with respect to $\TT$). For any trivial curve $\gamma$, we define $M(\gamma,\kappa)$ to be a zero $A$-module for any map $\kappa:\{t|\gamma(t)\in \PP\}\to\{0,1\}$.
\end{definition}

Note also that, it is possible that one of $\gamma[1],[1]\gamma$ and $\rho(\gamma)$ is not permissible even if it is non-trivial. So we introduce the following notion.

\begin{definition}[Almost permissible curves]
    A curve $\gamma$ on $\surf$ is called an \emph{almost permissible curve} (resp. \emph{almost PPC}) provided that its starting/ending segment has one of the forms in Figure~\ref{fig:ex} and after moving these segments to equivalent segments in condition \ref{itm:t1}, it becomes a permissible curve (resp. PPC).
\end{definition}

Although a PPC is a permissible curve if and only if it satisfies \ref{itm:p1} and \ref{itm:p2}, an almost PPC satisfying \ref{itm:p1} and \ref{itm:p2} is possibly not an almost permissible curve because after changing starting/ending segments, \ref{itm:p1} may fail, see Figure~\ref{fig:curve and punc monogon}.

\begin{figure}[htpb]\centering
	\begin{tikzpicture}[xscale=1,yscale=1]
		\draw[red,thick,dashed](0,-1)to(-1,-1);
		\draw[red,thick,bend left=50,dashed](-1,-1)to(-2,1);
		\draw[red,thick,dashed](-2,1)to(-1,1.5);
		\draw[red,thick](0,-1)to(-1,1.5);
		\draw[blue,thick,bend left=10](-1,-1)to(-.4,0);
		\draw[thick] (1,0.2)node{$\simeq$};
		\draw(0,-1)node{$\bullet$}(-1,-1)node{$\bullet$}(-2,1)node{$\bullet$}(-1,1.5)node{$\bullet$};
	\end{tikzpicture}
	\qquad
	\begin{tikzpicture}[xscale=1,yscale=1]
		\draw[red,thick,dashed](0,-1)to(-1,-1);
		\draw[red,thick,bend left=50,dashed](-1,-1)to(-2,1);
		\draw[red,thick,dashed](-2,1)to(-1,1.5);
		\draw[red,thick](0,-1)to(-1,1.5);	\draw[blue,thick,bend right=10](-2,1)to(-.6,.5);
		\draw(0,-1)node{$\bullet$}(-1,-1)node{$\bullet$}(-2,1)node{$\bullet$}(-1,1.5)node{$\bullet$};
	\end{tikzpicture}\\
	\begin{tikzpicture}[scale=1.25]
		\draw[red,thick,bend right=60](0,1)to(0,-1);
		\draw[red,thick,bend left=60](0,1)to(0,-1);
		\draw[ultra thick, fill=gray!20](0,0)circle(.1);
		\draw[blue,thick,smooth](0,1)to[out=-60,in=90] (.3,0)to[out=-90,in=10] (-.45,-.4);
		\draw[thick] (1.5,0)node{$\simeq$};
		\draw[thick](0,1)node{$\bullet$}(0,-1)node{$\bullet$};
	\end{tikzpicture}
	\qquad
	\begin{tikzpicture}[scale=1.25]
		\draw[red,thick,bend right=60](0,1)to(0,-1);
		\draw[red,thick,bend left=60](0,1)to(0,-1);
		\draw[ultra thick, fill=gray!20](0,0)circle(.1);
		\draw[blue,thick,smooth](0,-1)to[out=60,in=-90] (.3,0)to[out=90,in=-10] (-.45,.4);
		\draw[thick] (1.5,0)node{$\simeq$};
		\draw[thick](0,1)node{$\bullet$}(0,-1)node{$\bullet$};
	\end{tikzpicture}\qquad
	\begin{tikzpicture}[scale=1.25]
		\draw[red,thick,bend right=60](0,1)to(0,-1);
		\draw[red,thick,bend left=60](0,1)to(0,-1);
		\draw[ultra thick, fill=gray!20](0,0)circle(.1);
		\draw[blue,thick,smooth](0,1)to[out=-115,in=90](-.3,0)to[out=-90,in=180](0,-.3)to[out=0,in=-90](.3,0)to[out=90,in=-10] (-.45,.4);
		\draw[thick] (1.5,0)node{$\simeq$};
		\draw[thick](0,1)node{$\bullet$}(0,-1)node{$\bullet$};
	\end{tikzpicture}\qquad
	\begin{tikzpicture}[scale=1.25]
		\draw[red,thick,bend right=60](0,1)to(0,-1);
		\draw[red,thick,bend left=60](0,1)to(0,-1);
		\draw[ultra thick, fill=gray!20](0,0)circle(.1);
		\draw[blue,thick,smooth](0,-1)to[out=115,in=-90](-.3,0)to[out=90,in=180](0,.3)to[out=0,in=90](.3,0)to[out=-90,in=10] (-.45,-.4);
		\draw[thick](0,1)node{$\bullet$}(0,-1)node{$\bullet$};
	\end{tikzpicture}
	\\
	\begin{tikzpicture}[scale=1.25]
		\draw[ultra thick, fill=gray!20](0,0) circle (.1);
		\draw[red,thick](0,1)to[out=-135,in=60](-.8,0)to[out=-120,in=180](0,-1)to[out=0,in=-60](.8,0)to[out=120,in=-45](0,1);
		\draw[blue,thick,smooth](0,1)to[out=-75,in=90](.3,0)to[out=-90,in=0](0,-.3)to[out=180,in=-90](-.3,0)to[out=90,in=-170] (.54,.4);
		\draw[thick] (1.7,0)node{$\simeq$};
		\draw(0,1)node{$\bullet$};
	\end{tikzpicture}
	\begin{tikzpicture}[scale=1.25]
		\draw[ultra thick, fill=gray!20](0,0) circle (.1);
		\draw[red,thick](0,1)to[out=-135,in=60](-.8,0)to[out=-120,in=180](0,-1)to[out=0,in=-60](.8,0)to[out=120,in=-45](0,1);
		\draw[blue,thick,smooth](0,1)to[out=-115,in=90](-.3,0)to[out=-90,in=180](0,-.3)to[out=0,in=-90](.3,0)to[out=90,in=-10] (-.54,.4);
		\draw(0,1)node{$\bullet$};
	\end{tikzpicture}\\
	\begin{tikzpicture}[scale=1.25]
		\clip(-1.2,-1.2) rectangle (1.2,1.2);
		\draw[red,thick](0,1)to[out=-135,in=80](-.6,0)to[out=-100,in=180](0,-1)to[out=0,in=-80](.6,0)to[out=100,in=-45](0,1);
		\draw[blue,thick](0,0)to(0,-1.2);
		\draw(0,1)node{$\bullet$};
		\draw(0,0)node{$\bullet$};
	\end{tikzpicture}
	\caption{Equivalences of starting/ending segments}\label{fig:ex}
\end{figure}

\begin{notations}\label{not:per}
	Each almost permissible curve (resp. almost PPC) $\gamma$ gives rise to a unique permissible curve (resp. PPC), which is denoted by $[\gamma]$. Since starting and ending segments contribute nothing to the corresponding word, we can define $\M(\gamma)=\M([\gamma])$. So for any tagged almost permissible curve $(\gamma,\kappa)$, we can define $M(\gamma,\kappa)=M([\gamma],\kappa)$.
\end{notations}

By construction (cf. also \cite{BS}), for any PPC $\gamma$, any of $\gamma[1],[1]\gamma$ and $\rho(\gamma)$ is an almost PPC if it is not trivial. So we have the following.

\begin{lemma}\label{lem:ro}
	For any permissible curve $\gamma$, its rotation $\rho(\gamma)$ is almost permissible if it is not trivial.
\end{lemma}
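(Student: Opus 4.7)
The plan is a case analysis on the endpoints of $\gamma$. If both endpoints of $\gamma$ lie in $\PP$, then $\rho(\gamma)=\gamma$, so $\rho(\gamma)$ is itself permissible (hence almost permissible) and there is nothing to prove. The real content is the case when at least one endpoint of $\gamma$ lies in $\MM$, where $\rho$ drags that endpoint clockwise along the boundary to the next marked point. By symmetry it suffices to analyze one such endpoint, say $\gamma(1)\in\MM$, and verify that the modified ending segment of $\rho(\gamma)$ has one of the forms in Figure~\ref{fig:ex} and that, after replacing it by the equivalent segment shown there, the resulting curve is permissible.

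First I would enumerate the local picture of the ending segment of $\gamma$ near $\gamma(1)$ using condition (P1): the four shapes in Figure~\ref{fig:ro} exhaust the possibilities, distinguished by whether $\gamma(1)$ is adjacent to a boundary segment, a curve of $\TT$ whose endpoints are distinct, a self-folded curve cutting out a once-punctured monogon, or the puncture itself inside such a monogon. For each of these local shapes I would draw the effect of sliding $\gamma(1)$ clockwise to the next marked point: the new terminal segment is exactly one of the shapes in Figure~\ref{fig:ex}, and the dashed equivalences in Figure~\ref{fig:ex} give a homotopic representative whose terminal segment has again a shape from Figure~\ref{fig:ro}, now with respect to a curve of $\TT$ one step clockwise. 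This step is essentially a picture-by-picture check and is the routine part.

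It remains to see that the equivalent curve $[\rho(\gamma)]$ obtained after these local replacements is genuinely permissible, i.e.\ satisfies (P2), (T1) and (T2). Condition (P2) is preserved because the rotation only alters $\gamma$ in a neighborhood of an endpoint in $\MM$, and away from such a neighborhood the arc segments of $\rho(\gamma)$ in each region of $\TT$ are identical to those of $\gamma$; hence the word $\M([\rho(\gamma)])$ is a substring of a one-step shift of $\M(\gamma)$ and the consecutive-crossings condition transports across the rotation. For (T1), the key observation is that a self-intersection of $\rho(\gamma)$ cutting out a once-punctured monogon would pull back, under the inverse rotation, to such a configuration for $\gamma$ (or would force $\gamma$ to fail (P1) at an endpoint adjacent to a once-punctured monogon); since $\gamma$ is permissible, this cannot happen. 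Condition (T2) is vacuous unless both endpoints of $\rho(\gamma)$ are in $\PP$, which would require both endpoints of $\gamma$ already to be in $\PP$, a case already handled.

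The main obstacle is the (T1) verification: when $\gamma(1)$ sits at a self-folded curve of $\TT$ (the monogon pictures in Figures~\ref{fig:ro} and \ref{fig:ex}), the rotation can produce a terminal segment that winds around the puncture, and one must check that this winding does not conspire with the rest of $\gamma$ to cut out a once-punctured monogon by self-intersection. I would handle this by examining the admissible word $\M(\gamma)$ at its special end letter via Proposition~\ref{prop:bi3}: the failure of (T1) for $\rho(\gamma)$ would translate, via Lemma~\ref{lem:A1T1}, into a failure of condition (A1) for $\M(\rho(\gamma))$, which in turn would propagate back to (A1) for $\M(\gamma)$, contradicting the admissibility of $\M(\gamma)$ given by Proposition~\ref{prop:bi3}.
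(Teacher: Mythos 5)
Your overall architecture (local check of the end segments against Figures~\ref{fig:ro} and \ref{fig:ex}, then verification of (P2), (T1), (T2) for the permissible version) is reasonable, and the paper itself disposes of this lemma with little more than ``by construction''. Your handling of (P2) and (T2) is fine. The gap is in your treatment of (T1), which is exactly the delicate point: you assert that a once-punctured monogon cut out by a self-intersection of $[\rho(\gamma)]$ would ``pull back under the inverse rotation'' to one for $\gamma$, and, in word language, that a failure of (A1) for $\M(\rho(\gamma))$ would ``propagate back'' to a failure of (A1) for $\M(\gamma)$. Taken at face value this principle applies equally to the one-sided rotations $[1]\gamma$ and $\gamma[1]$, whose associated words also differ from $\M(\gamma)$ only at one end --- and there the conclusion is false: immediately after Lemma~\ref{lem:ro}, the paper exhibits in Figure~\ref{fig:curve and punc monogon} a permissible $\gamma$ for which $[[1]\gamma]$ cuts out a once-punctured monogon by a self-intersection at $\MM$. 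So the propagation is not automatic, and as stated your argument proves too much.

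The missing idea is to exploit that $\rho$ moves \emph{both} endpoints by the same clockwise shift. Condition (A1) at an occurrence of a special loop compares the two halves $(\m_{(0,i)})^{-1}$ and $\m_{(i,m+1)}$ of the word; geometrically these are the two strands of the curve running from the winding around the puncture out to the two ends, and (T1) can only fail when these strands stay parallel until their final segments, so that the comparison is decided by the end letters. Under a one-sided rotation only one strand moves, the two endpoints can collide, and the strict inequality in (A1) can degenerate to equality --- this is precisely the phenomenon of Figure~\ref{fig:curve and punc monogon}. Under $\rho$ both end letters are replaced by their successors simultaneously (Lemma~\ref{lem:suc} applied at both ends), so the two halves retain their relative order and the strict inequality, hence (A1), is preserved; equivalently, $\rho(\gamma)(0)=\rho(\gamma)(1)$ can occur only if already $\gamma(0)=\gamma(1)$, in which case $\gamma$ and $\rho(\gamma)$ are freely homotopic loops and enclose the same once-punctured monogons. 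Supplying this point is what turns your sketch into a proof.
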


However, this may not be true for $\gamma[1]$ and $[1]\gamma$. This is because the permissible version $[\gamma[1]]$ (resp. $[[1]\gamma]$) does possibly cut out a once-punctured monogon by a self-intersection at $\MM$, see Figure~\ref{fig:curve and punc monogon}. In such case, $[\gamma[1]]$ (resp. $[[1]\gamma]$) is the completion $\overline{\eta}$ of a permissible curve $\eta$ with $\eta(0)\in \PP$ and $\eta(1)\in \MM$. 

\begin{figure}[htpb]\centering
	\begin{tikzpicture}[xscale=1.8,yscale=1.5]
		\draw[ultra thick](-.5,-1)node{$\bullet$}to(1,-1)node{$\bullet$};
		\draw[ultra thick](-1,-.5)to(-1,1);
		\draw(0,-1)node[black]{$\bullet$}(.5,-1)node[black]{$\bullet$}(-1,0)node[black]{$\bullet$}(-1,.5)node[black]{$\bullet$};
		\draw[orange,thick,-<-=.5,>=stealth](.5,-1)to(.5,0)node[black]{$\bullet$};
		
		\draw[green,thick,->-=.4,>=stealth](.5,-1)to[out=60,in=-90](.8,0)to[out=90,in=0](.5,.5)to[out=180,in=90](.2,0)to[out=-90,in=120](.5,-1);
		\draw[green,thick,->-=.5,>=stealth](.5,-1)to[out=45,in=-90](1,0.2)to[out=90,in=0](.6,.9)to[out=180,in=90](0,-1);
		
		\draw[blue,thick,->-=.5,>=stealth](.5,-1)to[out=30,in=-90](1.2,.5)to[out=90,in=0](.5,1.3)to[out=180,in=90](-.5,-1);
		
		\draw[red,thick](1,-1)to[out=60,in=-90](1.5,.5)to[out=90,in=0](.5,1.7)to[out=180,in=60](-1,.5);
		
		\draw[red,thick](-1,0)to(-.5,-1) (-1,0)to(0,-1) (-1,.5)to[bend right=5](1,-1) (0,-1)to(-1,.5);
		\draw[red,thick](-1,.5)to[out=-5,in=75](.68,-.06)to[out=-115,in=-30](-1,.5);
		
		\draw(.6,-.5)node[orange]{$\eta$}(.5,1.4)node[blue]{$\gamma$}(.5,1.05)node[green]{$[1]\gamma$}(.55,.65)node[green]{$[[1]\gamma]$};
	\end{tikzpicture}
	\caption{The completion of a permissible curve as the permissible version of the rotation of a permissible curve at one end}
	\label{fig:curve and punc monogon}
\end{figure}

\begin{notations}\label{not:sum}
	For any completion $\overline{\eta}$ of a permissible curve $\eta$ with $\eta(0)\in \PP$ and $\eta(1)\in \MM$, we denote by $M(\overline{\eta},\emptyset)$ the decomposable module $M(\eta,\kappa_0)\oplus M(\eta,\kappa_1)$, where $\kappa_a(0)=a, a\in\{0,1\}$.
\end{notations}

\begin{definition}[Generalized tagged permissible curves]\label{def:gen-perm}
	A generalized permissible curve is either a permissible curve or a curve whose completion is in $\TT$. Denote by $\GPS$ the set of generalized permissible curves and $\TT^\ast=\GPS\setminus\PS$.
	
	A tagged generalized permissible curve is a pair $(\gamma,\kappa)$ with $\gamma$ a generalized curve and $\kappa:\{t\mid \gamma(t)\in\PP \}\to \{0,1\}$ a map. Denote by $\GPTS$ the set of tagged generalized permissible curves up to inverse and denote $\TT^\times=\GPTS\setminus\PTS$.
\end{definition}

The set $\TT^\ast$ can be obtained from $\TT$ by replacing the edge $\gamma$ of each once-punctured monogon by the only curve $\eta$ in the monogon whose completion is $\gamma$, see Figure~\ref{fig:rep}. 

\begin{figure}[htpb]\centering
	\begin{tikzpicture}[xscale=1,yscale=1.5]
		\draw[ultra thick]plot[smooth,tension=2](-.5,1)to(.5,1);
		\draw[red,thick](0,1)to[out=-155,in=70](-1,.2)to[out=-110,in=180](0,-.5)to[out=0,in=-70](1,.2)node[right]{$\gamma$}to[out=110,in=-25](0,1);
		\draw[blue,thick] (0,1)to(0,0) (0,.3)node[right]{$\eta$};
		\draw(0,1)node{$\bullet$};
		\draw[thick](0,0)node{$\bullet$};
	\end{tikzpicture}
	\caption{Replacing the edge $\gamma$ of a once-punctured monogon with $\eta$}
	\label{fig:rep}
\end{figure}

\begin{remark}\label{rmk:times}
	The set $\TT^\times$ is the tagged version of the admissible partial triangulation $\TT$ (cf. \cite{FST,ILF,QZ}). Recall from Remark~\ref{rmk:QZ} that the set $\{e_j\mid j\in Q^\TT_0\setminus Sp\}\cup\{e_i-\epsilon_i,\epsilon_i|i\in Sp\}$ is a complete set of primitive orthogonal idempotents. There is a bijection from this set to $\TT^\times$ which 
	\begin{itemize}
	\item sends $e_j$ to the tagged generalized permissible curve $(\gamma,\emptyset)$ such that $j$ is the vertex indexed by $\gamma$, and
	\item sends $e_i-\epsilon_i$ (resp. $\epsilon_i$) to the tagged generalized permissible curve $(\gamma,\kappa_a)$ (see Remark~\ref{rmk:underlying} for the notation $\kappa_a$) such that $i$ is the vertex indexed by the completion $\overline{\gamma}$ of $\gamma$ and $a=1$ (resp. $a=0$). 
	\end{itemize}
\end{remark}

\begin{lemma}\label{lem:suc2}
	Let $\gamma$ be a permissible curve. Then $\rho(\gamma)$ is trivial if and only if $\rho(\gamma)\in\TT^\ast$.
\end{lemma}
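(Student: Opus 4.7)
My plan is to handle the two implications separately. For $(\Leftarrow)$, I would note that every element of $\TT^\ast$ is either a curve of $\TT$ that is not an edge of a once-punctured monogon, or the arc $\eta$ lying in the interior of such a monogon. In the first case, non-crossing is built into the definition of the partial triangulation $\TT$; in the second, the arc $\eta$ is confined to the interior of a type (IV) region and therefore disjoint from every curve of $\TT$. Hence $\rho(\gamma)\in\TT^\ast$ forces $\rho(\gamma)$ to be trivial.

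For $(\Rightarrow)$, I would split on the endpoints of $\gamma$ using the explicit formulas for $\rho(\gamma)$ recorded just before the lemma. When $\gamma(0),\gamma(1)\in\PP$, one has $\rho(\gamma)=\gamma$, which is permissible and thus not trivial, so the statement is vacuous. When $\gamma(0)\in\PP$ and $\gamma(1)\in\MM$, a trivial $\rho(\gamma)=[1]\gamma$ is a puncture-to-marked-point curve lying inside a single region of $\TT$; this region must contain a puncture, and by the classification of regions it can only be a once-punctured monogon, in which the only arc (up to homotopy) connecting the puncture to the monogon vertex is exactly the canonical arc $\eta\in\TT^\ast\setminus\TT$ associated to that region. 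When both endpoints of $\gamma$ lie in $\MM$, $\rho(\gamma)=[1]\gamma[1]$ is a trivial curve with endpoints in $\MM$ lying in a single region $R$, and I would verify case by case on the type of $R$ that $\rho(\gamma)$ is homotopic to a side of $R$ belonging to $\TT^\ast$: regions of type (I) contain no non-trivial curve between two marked points that is not homotopic to an edge of $\TT$, and analogous statements handle the harmless subcases of types (III) and (IV).

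The principal obstacle lies in excluding, in this last case, the possibility that $\rho(\gamma)$ is a curve winding around an unmarked boundary component (for $R$ of type (II) or (III)) or around the puncture (for $R$ of type (IV) with both endpoints at the vertex), since each such winding is a bona fide non-trivial homotopy class in $\cs$. My plan is to reverse the rotation at each marked endpoint one step counterclockwise and analyze the resulting shape of $\gamma$: a winding $\rho(\gamma)$ would force $\gamma$ to traverse the $\TT$-arc bounding $R$ in a prescribed pattern, and matching this against the starting/ending forms allowed by (P1) together with (P2) applied to the first and last interior arc segments of $\gamma$ would either violate (P2) (consecutive crossings on non-adjacent sides of an adjacent region) or produce a self-intersection of $\gamma$ cutting out a once-punctured monogon, contradicting (T1). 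Once the winding cases are excluded, the only trivial outcome for $\rho(\gamma)$ is homotopy to a side of $R$ in $\TT^\ast$, which together with the previous subcases yields $\rho(\gamma)\in\TT^\ast$.
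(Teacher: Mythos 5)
Your overall route coincides with the paper's: the ``if'' direction by observing that elements of $\TT^\ast$ do not cross $\TT$, and the ``only if'' direction by splitting on which endpoints of $\gamma$ lie in $\PP$, locating $\rho(\gamma)$ inside a single region, and excluding the bad homotopy classes by rotating back and invoking (P2) and (T1) for $\gamma$. The puncture cases and the winding-around-a-hole/puncture cases are handled exactly as in the paper.

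There is, however, one concrete gap: your claim that ``regions of type (I) contain no non-trivial curve between two marked points that is not homotopic to an edge of $\TT$'' is false for $m$-gons with $m\geq 4$. A diagonal joining two non-adjacent vertices of such a region is a perfectly good curve in $\cs$, it crosses no arc of $\TT$ (so it is trivial in the sense of the lemma), and it does not lie in $\TT^\ast$. So type (I) is not a ``harmless'' subcase that follows from a uniqueness-of-homotopy-class statement; you must rule out the possibility that $\rho(\gamma)$ is such a diagonal. The paper does this with the same reverse-rotation technique you reserve for the winding cases: if $\rho(\gamma)$ is a diagonal, then $\gamma=[-1]\rho(\gamma)[-1]$ either crosses none of the sides of the region adjacent to the endpoints (when those sides are boundary segments), making $\gamma$ itself trivial and hence not permissible, or it crosses two sides without cutting out an angle of the neighbouring region, violating (P2). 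Your argument would go through once you apply your own rotation-reversal analysis to the type (I) diagonals as well; as written, that case is simply asserted away by an incorrect statement.
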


\begin{proof}
    The ``if" part is trivial. We show the ``only if'' part. If $\gamma(0),\gamma(1)\in \PP$, then $\rho(\gamma)=\gamma$ is not trivial, a contradiction. So at least one of $\gamma(0)$ and $\gamma(1)$ is in $\MM$. If $\gamma$ has exactly one endpoint in $\MM$, then so does $\rho(\gamma)$. Since $\rho(\gamma)$ is trivial, its completion is the edge of a once-punctured monogon of $\TT$, which implies $\rho(\gamma)\in\TT^\ast$. Now we consider the case $\gamma(0),\gamma(1)\in \MM$. Let $\Delta$ be the region of $\TT$ where $\rho(\gamma)$ lives. There are the following cases.
	\begin{itemize}
		\item $\Delta$ is a once-punctured monogon. Because $\gamma$ is not trivial, it must cut out a once-punctured monogon by its endpoints, which contradicts \ref{itm:p1} in Definition~\ref{def:permissible}. So this case does not occur.
		\item $\Delta$ is a monogon or digon with an unmarked boundary component in its interior. If $\rho(\gamma)$ has self-intersections in the interior of the surface, then one of the arc segments of $\gamma$ which are not end segments has self-intersections. This contradicts the condition \ref{itm:t2} in Definition~\ref{def:permissible}. So $\rho(\gamma)$ has no self-intersections in the interior of the surface. Hence if $\Delta$ is a monogon, $\rho(\gamma)$ has to be the edge of $\Delta$, which is in $\TT^\ast$. If $\Delta$ is a digon  and if $\rho(\gamma)$ has the following form
		\begin{center}
			\begin{tikzpicture}[xscale=2,yscale=1]
				\draw[red,thick,bend right=60](0,1)to(0,-1);
				\draw[red,thick,bend left=60](0,1)to(0,-1);
				\draw[ultra thick, fill=gray!20](0,0)circle(.1);
				\draw[green,thick,smooth](-.6,-1)to[out=60,in=-90](-.2,0)to[out=90,in=190](0,.3)to[out=-10,in=90](.2,0)to[out=-90,in=30](-.6,-1);
				\draw[blue,thick,smooth](0,-1)to[out=110,in=-90](-.3,0)to[out=90,in=190](0,.4)to[out=-10,in=90](.3,0)to[out=-90,in=70](0,-1);
				\draw[green,thick] (-.6,-.7)node{$\gamma$};
				\draw[blue](0,.6)node{$\rho(\gamma)$};
				\draw[ultra thick,bend left=10](-.6,-1)node{$\bullet$}to(0,-1);
				\draw[thick](0,1)node{$\bullet$}(0,-1)node{$\bullet$};
			\end{tikzpicture}	
		\end{center}
		then the curve $\gamma$ does not cut out an angle of $\Delta$, which contradicts the condition \ref{itm:t2} in Definition~\ref{def:permissible}. So $\rho(\gamma)$ has to be one edge of $\Delta$, which is in $\TT^\ast$.
		\item $\Delta$ is a polygon without a puncture nor an unmarked boundary component in its interior. Suppose that $\rho(\gamma)$ is not in $\TT^\ast$, then we are in one of the following situations
		\begin{center}
			\begin{tikzpicture}[xscale=1,yscale=1]
				\draw[ultra thick] (-1,.2)to(-1,-1) (1,1)to(1,-.2);
				\draw[red, thick, dashed, bend left=30] (-1,.2)to (1,1) (1,-.2)to(-1,-1);
				\draw (1,1)node{$\bullet$} (-1,-1)node{$\bullet$}; 
				\draw[blue,thick] (-1,-1)to(1,1) (.2,.3)node[above]{$\rho(\gamma)$};
				\draw[green,thick] (-1,.2)node[black]{$\bullet$}to (1,-.2)node[black]{$\bullet$} (.4,-.1)node[below]{$\gamma$};
			\end{tikzpicture}\qquad
			\begin{tikzpicture}[xscale=1,yscale=1]
				\draw[ultra thick] (-1,-1) to (-1.4,.3) (1,1) to (1.4,-.3);
				\draw[red, thick] (-1,1)node[black]{$\bullet$}to(-1,-1) (1,1)to(1,-1)node[black]{$\bullet$};
				\draw[red, thick, dashed, bend left=30] (-1,1)to (1,1) (1,-1)to(-1,-1);
				\draw (1,1)node{$\bullet$} (-1,-1)node{$\bullet$}; 
				\draw[blue,thick] (-1,-1)to(1,1) (.2,.3)node[above]{$\rho(\gamma)$};
				\draw[green,thick] (-1.4,.3)node[black]{$\bullet$} to (1.4,-.3)node[black]{$\bullet$} (.4,-.1)node[below]{$\gamma$};
			\end{tikzpicture}\qquad
		\end{center}
		where in the first situation, $\gamma$ is trivial while in the last situation, $\gamma$ is not permissible since it does not cut out an angle of $\Delta$ as required in \ref{itm:t2} in Definition~\ref{def:permissible}. So there is always a contradiction. Hence $\rho(\gamma)$ is in $\TT^\ast$.
	\end{itemize}
\end{proof}

\subsection{The Auslander-Reiten translation}\label{subsec:ar}

In this subsection, we show that the Auslander-Reiten translation can be realized as the tagged rotation.

Recall from Section~\ref{subsec:ad} that we have a linear order $>$ on the set 
\[\mathfrak{w}(i, \theta)=\{\m\mbox{ left inextensible word}|s(\m)=i, \sigma(\m)=\theta\},\ i\in Q_0^{sp},\theta\in\{+,-\}.\]
Denote by $[1]\m$ (resp. $[-1]\m$) the successor (resp. predecessor) of $\m$ with respect to this linear order, if it exists. So we have $[-1]\m>\m>[1]\m$. For a right inextensible word $\m$, noting that $\m^{-1}$ is left inextensible, define $\m[1]=([1]\m^{-1})^{-1}$ and $\m[-1]=([-1]\m^{-1})^{-1}$. Let $\m$ be an inextensible word. By \cite{G}, $[1](\m[1])=([1]\m)[1]$ (resp. $[-1](\m[-1])=([-1]\m)[-1]$) if both sides of the equality exist; denote by $[1]\m[1]$ (resp. $[-1]\m[-1]$) one (or both) of them. 

Let $\m$ be an admissible word. By \cite{QZ}, if the word $[1]\m$ exists but is not admissible, then $[1]\m=F(\n)$ or $F(\n)^{-1}$, where $\n$ is an admissible word which contains exactly one special end letter. In this case, we denote $M([1]\m,\k)=M(\n,\k_0)\oplus M(\n,\k_{1})$. 

We shall use the following description of irreducible morphisms and the Auslander-Reiten translation $\tau$ for skew-gentle algebras. Recall that for any admissible word $\m=\omega_m\cdots\omega_1\in\overline{\ad}$ with exactly one special end letter, we always assume that $\omega_1$ is the special end letter.

\begin{theorem}[\cite{G}]\label{thm:ref}
	For any $\m=\omega_m\cdots\omega_1\in\overline{\ad}$ and any one-dimensional $A_\m$-module $N$, the $A$-module $M(\m,N)$ is projective (resp. injective) if and only if one of the following holds.
	\begin{enumerate}
		\item Both $\omega_1$ and $\omega_m$ are not special, and $[1]\m[1]$ (resp. $[-1]\m[-1]$) does not exist.
		\item Only $\omega_1$ is special and $[1]\m$ (resp. $[-1]\m$) does not exist.
	\end{enumerate}
	When the $A$-module $M(\m,N)$ is not projective, we have	
	\[\tau M(\m,N)=\begin{cases}
		M([1]\m[1],\k)&\text{if both $\omega_1$ and $\omega_n$ are not special  and $N=\k$;}\\
		M([1]\m,\k_{1-a})&\text{if $\omega_1$ is special, $\omega_m$ is not special and $N=\k_a$;}\\
		M(\m,\k_{1-a,1-b})&\text{if both $\omega_1$ and $\omega_n$ are special and $N=\k_{a,b}$.}\\
	\end{cases}\] 
	Moreover, the middle term of the Auslander-Reiten sequence ending at $M(\m,N)$ is
	$$\begin{cases}
		M(\m[1],\k)\oplus M([1]\m,\k)&\text{if both $\omega_1$ and $\omega_n$ are not special,}\\
		M([1]F(\m),\k)&\text{if $\omega_1$ is special and $\omega_m$ is not special,}
	\end{cases}$$
	where, $M(\m[1],\k)$ (resp. $M([1]\m,\k)$) is taken to be a zero module if $\m[1]$ (resp. $[1]\m$) does not exist.
\end{theorem}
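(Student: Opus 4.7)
The plan is to establish Theorem~\ref{thm:ref} by generalizing the Butler--Ringel description of projective covers and Auslander--Reiten sequences for string modules over a gentle algebra to the skew-gentle setting, using the presentation $A = \k Q^{sp}/I^{sg}$ that differs from the associated gentle algebra $\k Q^{sp}/\<I^{sp}\>$ only in the specialization $\epsilon_i^2 = \epsilon_i$ for $i \in Sp$. The fact that the two algebras share the same $\k$-basis (Lemma~\ref{lem:f.d.}), and that their string/band combinatorics are both controlled by the set $\ad$ of admissible words (Proposition~\ref{prop:bi3}), is the principal technical lever.

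First I would pin down the indecomposable projectives. For $i \in Q_0^{sp} \setminus Sp$, the projective $P_i = e_i A$ is a string module whose basis is indexed by paths out of $i$ in $Q^{sp}$ avoiding $I^{sp}$; by the order theory of Section~\ref{subsec:ad} these paths assemble into a unique admissible word $\mathfrak{p}_i$ characterized by the non-existence of $[1]\mathfrak{p}_i[1]$. For $i \in Sp$, the idempotent decomposition $1 = \epsilon_i + (1 - \epsilon_i)$ splits $P_i$ into two string summands, each with exactly one special end letter and characterized by the non-existence of $[1]\mathfrak{p}_i$. This produces both statements in part (1) of the theorem, and the injective criterion is dual.

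Next, for a non-projective $M(\m, N)$ I would write a minimal projective presentation $P_1 \to P_0 \to M(\m, N) \to 0$ and compute $\tau M = D\,\mathrm{Tr}\, M$ from it. When $\omega_1$ and $\omega_m$ are both non-special and $N = \k$, the classical Butler--Ringel argument shows that the syzygy $\Omega M(\m, \k)$, modulo projective summands, splits as $M(\m[1], \k) \oplus M([1]\m, \k)$, reflecting the two independent hook extensions at the left and right ends. The identity $[1](\m[1]) = ([1]\m)[1]$ then forces $\Omega^2 M \cong M([1]\m[1], \k)$ modulo projectives, and dualising yields both $\tau M = M([1]\m[1], \k)$ and the claimed middle term.

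The main obstacle is the interaction of special loops with word extension. If only $\omega_1$ is special, the relation $\epsilon_{t(\omega_1)}^2 = \epsilon_{t(\omega_1)}$ forces an extra splitting in the projective cover of $M(\m, \k_a)$; tracking the scalar action of $\epsilon_{t(\omega_1)}$ through the projective presentation shows the eigenvalue flips from $a$ to $1 - a$ in the kernel, producing $\tau M(\m, \k_a) = M([1]\m, \k_{1 - a})$. When $[1]\m$ itself fails to be admissible, it must equal $F(\n)$ for a shorter admissible word $\n$ with a special end letter, and the convention $M([1]\m, \k) = M(\n, \k_0) \oplus M(\n, \k_1)$ records the two eigenspaces of the new idempotent in the middle term. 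When both $\omega_1$ and $\omega_m$ are special, $\m$ is nearly a band and the same analysis applied at both ends yields the double flip $M(\m, \k_{1 - a, 1 - b})$; the resulting Auslander--Reiten sequence lies at the bottom of a tube of rank $2$, consistent with Lemma~\ref{lem:band}. Almost-splitness of each constructed sequence then follows by dimension-counting against the morphism bases described in \cite{G}.
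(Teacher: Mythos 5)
The paper itself offers no proof of Theorem~\ref{thm:ref}: it is quoted verbatim as a known result on clannish/skew-gentle algebras from \cite{G} (together with \cite{GP,QZ} for the conventions on $[1]\m$ and $F(\n)$), so your attempt is necessarily a reconstruction from scratch rather than a variant of an argument in the text. The overall shape of your plan is reasonable: the idempotent splitting $e_i=\epsilon_i+(e_i-\epsilon_i)$ at special vertices does produce the two indecomposable projective summands, each corresponding to a word with one special end letter, and the degeneration of $[1]\m$ into $F(\n)$ matches the convention the paper records just before the theorem.

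The genuine gap is in the step that is supposed to produce $\tau M(\m,N)$. You assert that $\Omega M(\m,\k)$ splits, modulo projectives, as $M(\m[1],\k)\oplus M([1]\m,\k)$, and that consequently $\Omega^2 M\cong M([1]\m[1],\k)$ yields $\tau M$ ``after dualising''. Both assertions fail. The module $M(\m[1],\k)\oplus M([1]\m,\k)$ is the \emph{middle term} of the almost split sequence, not the first syzygy: already for the path algebra of $1\xrightarrow{a}2$ (a gentle algebra) and $M=S_1$ one has $\Omega S_1=S_2$ while the middle term is $P_1$. And $\tau=D\operatorname{Tr}$ is not $\Omega^2$ in any disguise: $\operatorname{Tr}M$ is the cokernel of $\Hom_A(P_0,A)\to\Hom_A(P_1,A)$, equivalently $\tau M=\ker(\nu P_1\to\nu P_0)$ for the Nakayama functor $\nu$; in the same example $\Omega^2 S_1=0$ while $\tau S_1=S_2$. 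So the pivotal identity $\tau M(\m,\k)=M([1]\m[1],\k)$ is asserted rather than derived, and the defect propagates to the special-loop cases, where the eigenvalue flip $a\mapsto 1-a$ is precisely the point that requires either an honest $D\operatorname{Tr}$ computation or a direct verification that the constructed canonical sequence is almost split (which is how Butler--Ringel and Gei{\ss}--de la Pe\~na actually argue). The closing appeal to dimension-counting against the morphism bases of \cite{G} could in principle certify almost-splitness, but at that point you are importing the machinery of the very paper whose theorem you set out to re-prove.
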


The rotation (at one end) of PPCs gives a realization of the successors of inextensible words.

\begin{lemma}\label{lem:suc}
	Let $\gamma$ be a PPC.
	\begin{enumerate}
		\item[(1)] If $\gamma(1)\in \MM$ and $[1]\gamma$ is not trivial, then $\M([1]\gamma)=[1]\M(\gamma)$. 
		\item[(2)] If $\gamma(0)\in \MM$ and $\gamma[1]$ is not trivial, then $\M(\gamma[1])=\M(\gamma)[1]$.
	\end{enumerate} 
\end{lemma}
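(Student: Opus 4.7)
The plan is to derive part (2) from part (1) by an inversion argument, and then to establish (1) by a local analysis at the endpoint $\gamma(1)$.

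For the reduction, I use that $\gamma^{-1}$ is again a PPC, that $\M(\gamma^{-1})=\M(\gamma)^{-1}$ by Proposition~\ref{prop:bis}(3), and that $(\gamma[1])^{-1}=[1](\gamma^{-1})$ directly from the definitions of the one-sided rotations. Combined with the definition $\m[1]=([1]\m^{-1})^{-1}$, (1) applied to $\gamma^{-1}$ gives
\[\M(\gamma[1])=\M\bigl(([1]\gamma^{-1})^{-1}\bigr)=\bigl([1]\M(\gamma^{-1})\bigr)^{-1}=\bigl([1]\M(\gamma)^{-1}\bigr)^{-1}=\M(\gamma)[1],\]
so (2) follows. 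One also checks that $\gamma[1]$ is trivial iff $[1]\gamma^{-1}$ is trivial, so the hypothesis transfers correctly.

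For (1), write $\M(\gamma)=\omega_m\cdots\omega_1$. Since $\gamma(1)\in\MM$, the last PAS $\gamma^m$ of $\gamma$ is non-interior and $\omega_m=z_{i,\theta}$ is a left end letter, where $i\in\TT$ shares the endpoint $\gamma(1)$. The rotation $[1]\gamma$, which is almost-PPC by Lemma~\ref{lem:ro} and non-trivial by hypothesis, agrees with $\gamma$ on some initial segment and then diverges. I will identify the point of divergence by tracing backwards from $\gamma(1)$: the new endpoint $p$ (the next marked point clockwise) lies in a region $\Delta$ of $\TT$, and $[1]\gamma$ is forced to unfold past the PASs of $\gamma$ that share the marked point $\gamma(1)$ until it can reach $p$ consistently with (P1), (P2). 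Concretely, the shared right part $\mathfrak{x}$ of $\M(\gamma)$ and $\M([1]\gamma)$ is the initial subword $\omega_k\cdots\omega_1$ where $k$ is the largest index such that the endpoint of the PAS $\gamma^k$ is still distinct from the boundary segment along which we rotate. I will then read off the new letters of $\M([1]\gamma)$ region by region, so that $\M([1]\gamma)=\mathfrak{z}v\mathfrak{x}$ for some letter $v$ and word $\mathfrak{z}$.

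The key point is to verify that at the divergence, the letter $u$ in $\M(\gamma)=\mathfrak{y}u\mathfrak{x}$ satisfies $u>v$ in $\hat Q(t(\mathfrak{x}),-\tau(\mathfrak{x}))$. This is a local check against the pictures in Figure~\ref{fig:order} via Lemma~\ref{lem:order2}: since moving $\gamma(1)$ clockwise sends the tail of $\gamma$ to its immediate clockwise neighbor at the shared marked point, the new PAS sits immediately to the right of the old one in the sense of Lemma~\ref{lem:order}, giving precisely $u>v$. To conclude $\M([1]\gamma)=[1]\M(\gamma)$ (i.e.\ that $[1]\gamma$ is the immediate successor), I will argue that any left inextensible word $\m'$ strictly between $\M([1]\gamma)$ and $\M(\gamma)$ would yield, via $\Gamma$, a PPC whose endpoint at a marked point lies strictly between $\gamma(1)$ and $([1]\gamma)(1)$ clockwise along the boundary, which is impossible because these are consecutive marked points.

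The main obstacle is case analysis on the types of regions one encounters near $\gamma(1)$: polygons (type $\aaa$), monogons with an unmarked boundary (type $\bb$), digons with an unmarked boundary (type $\cc$), and once-punctured monogons (type $\dd$). In types $\bb$ and $\dd$ the quiver carries a loop, and rotation may force $[1]\gamma$ to wind around or unwind past that loop, which changes the length of the word and introduces or removes the corresponding (possibly special) loop letter. Keeping track of $\sigma,\tau$ consistently through these windings, and ensuring that the modified tail of $[1]\gamma$ remains admissible in the sense of Definition~\ref{def:admissible word} rather than merely inextensible, is the technical heart of the argument; the picture in Figure~\ref{fig:order} and Lemma~\ref{lem:wd} will do the bookkeeping in each case.
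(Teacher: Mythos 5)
Your proposal takes essentially the same route as the paper's proof: part (2) is reduced to part (1) (the paper just says ``dually''; your inversion computation $\M(\gamma[1])=([1]\M(\gamma)^{-1})^{-1}=\M(\gamma)[1]$ is the precise form of that), the inequality $\M(\gamma)>\M([1]\gamma)$ is obtained by a local analysis of where the two curves separate after a common starting part (the paper's Figure~\ref{fig:forms} together with Lemma~\ref{lem:order2}), and immediacy is forced because $\gamma$, $[1]\gamma$ and the boundary segment $\delta$ joining $\gamma(1)$ to $[1]\gamma(1)$ bound a contractible triangle with no marked point between the two endpoints. Two small corrections. First, as written your orientation claim contradicts Lemma~\ref{lem:order}: if the \emph{new} PAS were to the right of the \emph{old} one you would get $v>u$, i.e.\ $\M([1]\gamma)>\M(\gamma)$, which is the wrong direction; in fact (cf.\ Figure~\ref{fig:forms}) it is the tail of $\gamma$ that stays to the right while $[1]\gamma$ peels off to the left, which is exactly what yields $u>v$ and hence $\M(\gamma)>\M([1]\gamma)$ as you conclude. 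Second, the admissibility bookkeeping you single out as the technical heart is not needed: the lemma lives entirely at the level of PPCs and inextensible words via the bijection of Proposition~\ref{prop:bis}, so conditions (A1)/(A2) and Definition~\ref{def:admissible word} play no role here, and the case analysis reduces to the finitely many separation configurations of Figure~\ref{fig:forms}.
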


\begin{proof}
	We only prove the first assertion since the second one can be proved dually. By construction, $\gamma$ and $[1]\gamma$ start at the same point, go through the same way at the beginning, and then separate in a region as one of the forms in Figure~\ref{fig:forms} (except the last two, where either $\gamma$ or $[1]\gamma$ is trivial), where $\delta$ is the boundary segment between $[1]\gamma(1)$ and $\gamma(1)$. 
	\begin{figure}[htpb]\centering
		\begin{tikzpicture}[xscale=1,yscale=1]
			\clip(-1.5,-1.5) rectangle (1.4,1.5);
			\draw[red, thick, bend left=10](0,1)to(0,-1);
			\draw[red, thick](0,-1)to(1,0);
			\draw[red,thick,dashed,bend left=10](0,1)to(1,0);
			\draw[green, thick, bend left=10,->-=.5,>=stealth](-1,0)to(1,0);
			\draw[blue, thick, bend left=10,->-=.5, >=stealth](-1,-.5)to(1,-.5);
			\draw[ultra thick,bend right=10] (1,0) to (1,-.5)node{$\bullet$};
			\draw[thick] (1,-.25)node[right]{$\delta$};
			\draw[green,thick](-1,0)node[above]{$[1]\gamma$};
			\draw[blue,thick](-1,-.5)node[below]{$\gamma$};
			\draw[red] (.6,-.6)to (1,0)to(.8,-.6);
			\draw[thick](0,1)node{$\bullet$}(0,-1)node{$\bullet$}(1,0)node{$\bullet$};
		\end{tikzpicture}
		\quad\quad
		\begin{tikzpicture}[xscale=1,yscale=1]
			\clip(-1.5,-1.5) rectangle (1.4,1.5);
			\draw[red, thick, bend left=10](0,1)to(0,-1);
			\draw[red, thick](0,1)to(1,0);
			\draw[red,thick,dashed,bend right=10](0,-1)to(1,0);
			\draw[blue, thick, bend right=10,->-=.5,>=stealth](-1,0)to(1,0);
			\draw[green, thick, bend right=10,->-=.5, >=stealth](-1,.5)to(1,.5);
			\draw[blue,thick](-1,0)node[below]{$\gamma$};
			\draw[green] (-1,.5)node[above]{$[1]\gamma$};
			\draw[ultra thick,bend left=10] (1,0) to (1,.5);
			\draw[thick] (1,.25)node[right]{$\delta$};
			\draw[red] (.6,.6)to (1,0)to(.8,.6);
			\draw[thick](0,1)node{$\bullet$}(0,-1)node{$\bullet$}(1,0)node{$\bullet$} (1,.5)node{$\bullet$};
		\end{tikzpicture}
		\qquad
		\begin{tikzpicture}[xscale=1,yscale=1]
			\clip(-1.5,-1.5) rectangle (1.4,1.5);
			\draw[ultra thick]plot[smooth,tension=1](-.5,1)to(.5,1);
			\draw[red,thick,bend right=60](0,1)to(0,-1);
			\draw[red,thick,bend left=60](0,1)to(0,-1);
			\draw[ultra thick, fill=gray!20](0,0)circle(.1);
			\draw[green,thick,->-=.5,>=stealth]plot[smooth,tension=1] coordinates {(-1,.5) (.2,.4) (.6,-1)};
			\draw[blue,thick,smooth,->-=.5,>=stealth](-1,.4)to[out=-5,in=170](0,.3)to[out=-10,in=90](.3,0)to[out=-90,in=70](0,-1);
			\draw[green,thick] (-1,.7)node{$[1]\gamma$};
			\draw[blue](-1,0)node{$\gamma$};
			\draw[thick] (.3,-1)node[below]{$\delta$};
			\draw[red] (.8,-.4)to (0,-1)to(.8,-.7);
			\draw[ultra thick,bend left=10](0,-1)to(.6,-1)node{$\bullet$};
			\draw[thick](0,1)node{$\bullet$}(0,-1)node{$\bullet$};
		\end{tikzpicture}
		\qquad
		\begin{tikzpicture}[xscale=1,yscale=1]
			\clip(-1.5,-1.5) rectangle (1.4,1.5);
			\draw (.3,1)node[above]{$\delta$};
			\draw[red,thick,bend right=60](0,1)to(0,-1);
			\draw[red,thick,bend left=60](0,1)to(0,-1);
			\draw[ultra thick, fill=gray!20](0,0)circle(.1);
			\draw[green,thick,smooth,->-=.5,>=stealth](-1,-.3)to[out=0,in=-170](0,-.2)to[out=10,in=-110](.3,0)to[out=80,in=-80](0,1);
			\draw[blue] (-1,-.6)node[below]{$\gamma$};
			\draw[green] (-1,-.2)node[above]{$[1]\gamma$};
			\draw[blue,thick,->-=.5,>=stealth]plot[smooth,tension=1] coordinates {(-1,-.5) (.2,-.4) (.6,1)};
			\draw[red] (.8,.4)to (0,1)to(.8,.7);
			\draw[ultra thick,bend right=10](0,1)to(.6,1)node{$\bullet$};
			\draw[ultra thick]plot[smooth,tension=1](-.5,-1)to(.5,-1);
			\draw[thick](0,1)node{$\bullet$}(0,-1)node{$\bullet$};
		\end{tikzpicture}\qquad
		\begin{tikzpicture}[xscale=1,yscale=1]
			\clip(-1.5,-1.5) rectangle (1.4,1.5);
			\draw (.3,1)node[above]{$\delta$};
			\draw[ultra thick, fill=gray!20](0,-.2) circle (.1);
			\draw[red,thick](0,1)to[out=-135,in=80](-.45,.1)to[out=-100,in=180](0,-.8)to[out=0,in=-80](.45,.1)to[out=100,in=-45](0,1);
			\draw[blue,thick,->-=.5,>=stealth](-1.2,-1)to[bend right=5](0,-1)to[out=10,in=-90](.6,0)to[out=90,in=0](0,.5)to[out=180,in=90](-.6,0)to[out=-90,in=180](.2,-1.1)to[out=0,in=-90](.8,-.2)to[out=90,in=-80](.6,1);
			\draw[green,thick,->-=.5,>=stealth](-1.2,-.5)to[bend right=5](0,-.5)to[out=10,in=-90](.3,-.2)to[out=90,in=0](0,.1)to[out=180,in=90](-.3,-.2)to[out=-90,in=180](0,-.4)to[out=0,in=-90](.2,-.2)to[out=90,in=-80](0,1);
			\draw[blue] (-1.2,-.5)node[below]{$\gamma$};
			\draw[green](-1.15,-.55)node[above]{$[1]\gamma$};
			\draw[red] (.9,.4)to (0,1)to(.9,.7);
			\draw[ultra thick,bend right=10](0,1)to(.6,1)node{$\bullet$};
			\draw(0,1)node{$\bullet$};
		\end{tikzpicture}
		\qquad
		\begin{tikzpicture}[xscale=1,yscale=1]
			\clip(-1.5,-1.5) rectangle (1.4,1.5);
			\draw (-.3,1)node[above]{$\delta$};
			\draw[ultra thick, fill=gray!20](0,-.2) circle (.1);
			\draw[red,thick](0,1)to[out=-135,in=80](-.45,.1)to[out=-100,in=180](0,-.8)to[out=0,in=-80](.45,.1)to[out=100,in=-45](0,1);
			\draw[green,thick,->-=.5,>=stealth](-1.2,.4)to[bend left=5](0,.4)to[out=-10,in=90](.6,0)to[out=-90,in=0](0,-1)to[out=180,in=-90](-.7,0)to[out=90,in=-100](-.6,1);
			\draw[blue,thick,->-=.5,>=stealth](-1.2,.1)to[bend left=5](0,.1)to[out=-10,in=90](.3,-.2)to[out=-90,in=0](0,-.5)to[out=180,in=-90](-.3,-.2)to[out=90,in=-100](0,1);
			\draw[green](-1.15,.6)node{$[1]\gamma$};
			\draw[blue] (-1.2,-.1)node{$\gamma$};
			\draw[red] (-.8,.6)to (0,1)to(-.8,.8);
			\draw[ultra thick,bend right=10] (-.6,1)node{$\bullet$}to(0,1);
			\draw(0,1)node{$\bullet$};
		\end{tikzpicture}\qquad
		\begin{tikzpicture}[xscale=1,yscale=1]
			\clip(-1.5,-1.5) rectangle (1.4,1.5);
			\draw (.3,1)node[above]{$\delta$};
			\draw[red,thick](0,1)to[out=-135,in=80](-.45,.1)to[out=-100,in=180](0,-1)to[out=0,in=-80](.45,.1)to[out=100,in=-45](0,1);
			\draw[blue,thick,->-=.7,>=stealth](0,0) to (.6,1);
			\draw[green,thick,->-=.5,>=stealth](0,0)to (0,1);
			\draw[blue] (.3,.5)node[right]{$\gamma$};
			\draw[green](-.4,.5)node{$[1]\gamma$};
			\draw[ultra thick,bend right=10](0,1)to(.6,1)node{$\bullet$};
			\draw(0,1)node{$\bullet$};
			\draw[thick](0,0)node{$\bullet$};
		\end{tikzpicture}
		\qquad
		\begin{tikzpicture}[xscale=1,yscale=1]
			\clip(-1.5,-1.5) rectangle (1.4,1.5);
			\draw (-.3,1)node[above]{$\delta$};
			\draw[red,thick](0,1)to[out=-135,in=80](-.45,.1)to[out=-100,in=180](0,-1)to[out=0,in=-80](.45,.1)to[out=100,in=-45](0,1);
			\draw[green,thick,->-=.7,>=stealth](0,0)to(-.6,1);
			\draw[blue,thick,->-=.5,>=stealth](0,0)to(0,1);
			\draw[green](-.3,.5)node[left]{$[1]\gamma$};
			\draw[blue] (-.1,.5)node[right]{$\gamma$};
			\draw[ultra thick,bend right=10] (-.6,1)node{$\bullet$}to(0,1);
			\draw(0,1)node{$\bullet$};
			\draw(0,0)node{$\bullet$};
		\end{tikzpicture}
		\caption{Rotation at one end}
		\label{fig:forms}
	\end{figure}
	Then by Lemma~\ref{lem:order2}, we have  $\M(\gamma)>\M([1]\gamma)$. Moreover, since $\gamma$, $[1]\gamma$ and $\delta$ enclose a contractible triangle, by Lemma~\ref{lem:order2} again, there is no PPC $\gamma'$ with $\gamma'(0)=\gamma(0)=[1]\gamma(0)$ and such that $\M(\gamma)>\M(\gamma')>\M([1]\gamma)$. Therefore, the bijection in Proposition~\ref{prop:bis} between PPCs and inextensible words implies that $\M([1]\gamma)$ is the successor of $\M(\gamma)$, i.e. $\M([1]\gamma)=[1]\M(\gamma)$.
\end{proof}
Now we are ready to interpret the Auslander-Reiten translation via the tagged rotation.

\begin{theorem}\label{thm:tau}
	Let $(\gamma,\kappa)$ be a tagged permissible curve. Then $M(\gamma,\kappa)$ is the projective (resp. injective) module at an idempotent $e$ if and only if the tagged generalized permissible curve $\rho(\gamma,\kappa)$ (resp. $\rho^{-1}(\gamma,\kappa)$) is in $\TT^\times$ corresponding to $e$ (see Remark~\ref{rmk:times} for the correspondence). When $M(\gamma,\kappa)$ is not projective, we have $$\tau M(\gamma,\kappa)=M(\rho(\gamma,\kappa)).$$
	Moreover, if $\gamma(0),\gamma(1)\in\MM$, the Auslander-Reiten sequence ending at $M(\gamma,\kappa)$ is
	$$0\to M([1]\gamma[1],\emptyset)\to M(\gamma[1],\emptyset)\oplus M([1]\gamma,\emptyset)\to M(\gamma,\emptyset)\to 0;$$
	if $\gamma(0)\in \PP$ and $\gamma(1)\in\MM$,
	the Auslander-Reiten sequence ending at $M(\gamma,\kappa)$ is
	$$0\to M([1]\gamma,\kappa')\to M([1]\overline{\gamma},\emptyset)\to M(\gamma,\kappa)\to 0$$
	where $\kappa'(0)=1-\kappa(0)$ and $\overline{\gamma}$ is the completion of $\gamma$ (see Definition~\ref{def:completion}).
\end{theorem}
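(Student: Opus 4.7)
The plan is to translate the statement through the bijection of Proposition~\ref{prop:bi3} and Construction~\ref{cons:mod}, which identifies a tagged permissible curve $(\gamma,\kappa)$ with a pair $(\m,N_\kappa)$ where $\m=\M(\gamma)\in\overline{\ad}$ and $N_\kappa$ is the one-dimensional $A_\m$-module specified by $\kappa$; under this dictionary the theorem becomes a direct consequence of the description of $\tau$ in Theorem~\ref{thm:ref}. The two geometric inputs I would import from the surface side are Lemma~\ref{lem:suc}, which realizes the word operators $[1]\m$ and $\m[1]$ as the rotations $[1]\gamma$ and $\gamma[1]$ at an $\MM$-endpoint of $\gamma$, and Lemma~\ref{lem:suc2}, which converts ``$\rho(\gamma)$ is trivial'' into ``$\rho(\gamma)\in\TT^\ast$''.

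I would then split the argument into three cases according to where the endpoints of $\gamma$ lie. If $\gamma(0),\gamma(1)\in\MM$, then $\m$ has no special end letters, $N_\kappa=\k$, $\rho(\gamma)=[1]\gamma[1]$, and two applications of Lemma~\ref{lem:suc} give $\M(\rho(\gamma))=[1]\m[1]$ whenever this is defined; Theorem~\ref{thm:ref}(1) together with Lemma~\ref{lem:suc2} then yields both the projectivity criterion and the identity $\tau M(\gamma,\emptyset)=M(\rho(\gamma),\emptyset)$. If $\gamma(0)\in\PP$ and $\gamma(1)\in\MM$, only $\omega_1$ is a special end letter, $N_\kappa=\k_{\kappa(0)}$, $\rho(\gamma)=[1]\gamma$, and one application of Lemma~\ref{lem:suc} combined with Theorem~\ref{thm:ref}(2) delivers the criterion and $\tau M(\gamma,\kappa)=M([1]\gamma,\kappa')$ with $\kappa'(0)=1-\kappa(0)$. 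Finally if $\gamma(0),\gamma(1)\in\PP$, both end letters of $\m$ are special so Theorem~\ref{thm:ref} forces $M(\m,\k_{a,b})$ to be never projective, which is consistent with $\rho(\gamma)=\gamma$ having both endpoints in $\PP$ and hence never lying in $\TT^\ast$; the tag-flip formula $\tau M(\m,\k_{a,b})=M(\m,\k_{1-a,1-b})$ then translates to $M(\gamma,\kappa')$ with $\kappa'=1-\kappa$, as required.

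For the Auslander-Reiten sequence descriptions I feed the middle-term formulas of Theorem~\ref{thm:ref} through the same dictionary. In the case $\gamma(0),\gamma(1)\in\MM$, the middle term $M(\m[1],\k)\oplus M([1]\m,\k)$ is identified with $M(\gamma[1],\emptyset)\oplus M([1]\gamma,\emptyset)$ by Lemma~\ref{lem:suc}, reading a summand as zero whenever the corresponding rotation is trivial. In the case $\gamma(0)\in\PP,\gamma(1)\in\MM$ the middle term is $M([1]F(\m),\k)$; here I would check that $F(\m)=\M(\overline{\gamma})$ by matching the explicit formula for $F(\m)$ preceding Definition~\ref{def:admissible word} (in the subcase $\omega_1$ special, $\omega_m$ not special) against the construction of $\overline{\gamma}$ in Figure~\ref{fig:completion of curve}, and then Lemma~\ref{lem:suc} applied to $\overline{\gamma}$ identifies $[1]F(\m)$ with $\M([1]\overline{\gamma})$, so the middle term becomes $M([1]\overline{\gamma},\emptyset)$.

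The main obstacle is reconciling the two decomposable-module conventions. Whenever $[1]\gamma$, $\gamma[1]$ or $[1]\overline{\gamma}$ straightens to the completion $\overline{\eta}$ of a permissible curve $\eta$ with $\eta(0)\in\PP$, the algebraic convention preceding Theorem~\ref{thm:ref} reads the corresponding $M([1]\m,\k)$ as $M(\n,\k_0)\oplus M(\n,\k_1)$ where $[1]\m\simeq F(\n)$, whereas the geometric convention introduced just after Figure~\ref{fig:curve and punc monogon} reads the corresponding $M([1]\gamma,\emptyset)$ as $M(\eta,\kappa_0)\oplus M(\eta,\kappa_1)$. The essential verification is the equality $\n=\M(\eta)$, which I would extract by unwinding Figure~\ref{fig:curve and punc monogon}: the once-punctured monogon created by the self-intersection of $[1]\gamma$ is exactly the monogon whose side is $\eta$, and the special loop inserted by the completion $F(\n)$ records precisely this monogon. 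Once this matching is made, all remaining cases reduce to symbolic manipulation via Proposition~\ref{prop:bi3} and Theorem~\ref{thm:ref}.
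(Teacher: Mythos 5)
Your proposal is correct and follows essentially the same route as the paper's proof: translate via Proposition~\ref{prop:bi3}, realize $[1]\m$, $\m[1]$ and $F(\m)$ geometrically through Lemmas~\ref{lem:suc} and \ref{lem:suc2}, split into the three endpoint cases, and read everything off Theorem~\ref{thm:ref}. Your explicit check that the two decomposable-module conventions agree (i.e.\ that $\n=\M(\eta)$ when a rotation straightens to a completion $\overline{\eta}$) is a point the paper leaves implicit, so that extra care is welcome rather than a deviation.
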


\begin{proof}
    Let $(\gamma',\kappa')$ be a tagged generalized permissible curve in $\TT$ and $e$ the corresponding idempotent. By Construction~\ref{cons:mod}, $M(\gamma,\kappa)=M(\M(\gamma),N_\kappa)$. By Remark~\ref{rmk:proj}, $M(\M(\gamma),N_\kappa)$ is projective at $e$ if and only if the following hold.
    \begin{enumerate}
        \item When $e=e_j$ for $j\in Q_0^{sp}\setminus Sp$, we have $\M(\gamma)=\x\y^{-1}$ with $\x$ and $\y$ the direct word in $\mathfrak{w}(j,+)$ and $\mathfrak{w}(j,-)$ respectively, whose length is maximal among those of direct words there, and $N_{\kappa}=\k$. Then we have $\gamma=\rho^{-1}(\gamma')$ and $\kappa=\kappa'=\emptyset$, see the left picture of Figure~\ref{fig:proj}. So $\rho(\gamma,\kappa)=(\gamma',\kappa')$.
        \item When $e=e_i-\epsilon_i$ (resp. $\epsilon_i$), we have $\M(\gamma)=\x z^{-1}_{i,\sigma(\epsilon_i)}$ with $\x$ the direct word in $\mathfrak{w}(i,-\sigma(\epsilon_i))$, whose length is maximal among those of direct words there, and $N_{\kappa}=\k_a$ with $a=0$ (resp. $a=1$). Note that $\kappa'=\kappa_a$ with $a=1$ (resp. $a=0$). Then $\gamma=\rho^{-1}(\gamma')$ and $\kappa\neq\kappa'$, see the right picture of Figure~\ref{fig:proj}. So $\rho(\gamma,\kappa)=(\gamma',\kappa')$.
    \end{enumerate}
    In all, in each case, we have $\rho(\gamma,\kappa)$ is in $\TT^\times$ corresponding to $e$. The assertion for injective modules can be proved dually.
    
    \begin{figure}[htpb]\centering
        \begin{tikzpicture}[xscale=2.8,yscale=1.3]
            \draw[ultra thick,bend right=10](0,1)node{$\bullet$}to(1,1)node{$\bullet$};
            \draw[ultra thick, bend left=10](0,-1)node{$\bullet$}to(1,-1)node{$\bullet$};
            \draw[red,thick,bend right=5](0,1)to(1,-1);
            \draw[red](.2,.2)node{$\gamma'$};
            \draw[red](.8,-.4)node{$j$};
            \draw[red,thick](1,-1)to(0,-.1) (1,-1)to(0,-.5) (0,1)to(1,.5) (0,1)to(1,.1);
            \draw[blue,thick](0,-1)to(1,1);
            \draw(0.65,0.05)node[blue]{$\gamma$};
		\end{tikzpicture}\qquad
		\begin{tikzpicture}[xscale=3,yscale=1.5]
            \draw(0,-.2)node{$\bullet$};
            \draw[red,thick](0,1)to[out=-115,in=80](-.3,.1)to[out=-100,in=180](0,-.8)to[out=0,in=-80](.3,.1)to[out=100,in=-65](0,1);
            \draw[red](0.4,-.3)node{$i$};
            \draw[blue,thick,bend right=10](0,-.2)to(.6,1);
            \draw[blue,thick](0,1)tonode[left]{$\gamma'$}(0,-.2);
            \draw[blue](.45,.3)node{$\gamma$}; 
            \draw[red] (.9,.4)to (0,1)to(.9,.7);
            \draw[ultra thick,bend right=10](0,1)to(.6,1)node{$\bullet$};
            \draw(0,1)node{$\bullet$};
		\end{tikzpicture}
		\caption{$M(\gamma,\kappa)$ is a projective module}
		\label{fig:proj}
	\end{figure}
	
	When $M(\gamma,\kappa)$ is not projective, denote $\rho(\gamma,\kappa)=(\rho(\gamma),\kappa')$. Then $\rho(\gamma)$ is not in $\TT$ and hence is not trivial by Lemma~\ref{lem:suc2}. There are the following three cases.
	\begin{itemize}
		\item If $\gamma(0),\gamma(1)\in\MM$, we have $\rho(\gamma)=[1]\gamma[1]$. We deduce that at least one of $\gamma[1]$ and $[1]\gamma$ is not trivial. This is because otherwise both $\gamma[1]$ and $[1]\gamma$ do not transversally  cross any curve in $\TT$ in the interior of $\surf$. Since $\gamma$ is permissible, there is a curve in $\TT$ crossing $\gamma$ in its interior. But this curve does not cross $\gamma[1]$ and $[1]\gamma$, so it has to be $\rho(\gamma)$, cf. Figure~\ref{fig:non-trivial}, a contradiction.
		
		\begin{figure}[htpb]\centering
			\begin{tikzpicture}[xscale=1,yscale=1]
				\draw[ultra thick] (-1,.2)to(-1,-1) (1,1)to(1,-.2);
				\draw[red, thick, bend left=30] (-1,.2)to (1,1) (1,-.2)to(-1,-1);
				\draw (1,1)node{$\bullet$} (-1,-1)node{$\bullet$} (-1,.8)node[red]{$\gamma[1]$} (1,-.8)node[red]{$[1]\gamma$}; 
				\draw[blue,thick] (-1,-1)to(1,1) (.2,.3)node[above]{$\rho(\gamma)$};
				\draw[green,thick] (-1,.2)node[black]{$\bullet$}to (1,-.2)node[black]{$\bullet$} (.4,-.1)node[below]{$\gamma$};
			\end{tikzpicture}
			\caption{Both $\gamma[1]$ and $[1]\gamma$ are trivial}
			\label{fig:non-trivial}
		\end{figure}
		Then by Lemma~\ref{lem:suc}, we have $\M(\gamma[1])=\M(\gamma)[1]$ (if $\gamma[1]$ is not trivial), $\M([1]\gamma)=[1]\M(\gamma)$ (if $[1]\gamma$ is not trivial) and  $\M(\rho(\gamma))=[1]\M(\gamma)[1]$. By Theorem~\ref{thm:ref}, we get the required Auslander-Reiten sequence.
		\item If $\gamma(0)\in \PP$ and $\gamma(1)\in \MM$, we have $\rho(\gamma)=[1]\gamma$ which is not trivial. So $[1]\overline{\gamma}$ is not trivial neither. By Lemma~\ref{lem:suc}, we have $\M([1]\gamma)=[1]\M(\gamma)$ and $\M([1]\overline{\gamma})=[1]\M(\overline\gamma)=[1]F(\M(\gamma))$, where the last equality is due to Proposition~\ref{prop:bi3}~(4). By Definition~\ref{def:ro} and Construction~\ref{cons:mod}, we have $N_{\kappa'}(x_0)=\kappa'(0)=1-\kappa(0)=1-N_{\kappa}(x_0)$. Hence by Theorem~\ref{thm:ref}, we get the required Auslander-Reiten sequence.
		\item If $\gamma(0),\gamma(1)\in\PP$, we have $\rho(\gamma)=\gamma$. By Definition~\ref{def:ro} and Construction~\ref{cons:mod}, we have  $N_{\kappa'}(x_t)=\kappa'(t)=1-\kappa(t)=1-N_{\kappa}(x_t)$ for any $t\in\{0,1\}$. Hence by Theorem~\ref{thm:ref}, we have $\tau M(\gamma,\kappa)=M(\rho(\gamma,\kappa))$.
	\end{itemize}
\end{proof}

\begin{remark}\label{rmk:AR}
	The module $M(\gamma[1],\emptyset)$ (resp. $M([1]\gamma,\emptyset)$) occurring in the middle term of the AR sequence in Theorem~\ref{thm:tau} is decomposable if and only if $\gamma[1]$ (resp. $[1]\gamma$) is non-trivial and its permissible version $[\gamma[1]]$ (resp. $[[1]\gamma]$) is the completion of a permissible curve $\eta$. In this case, we have that $M(\gamma[1],\emptyset)$ (resp. $M([1]\gamma,\emptyset)$) is $M(\eta,\kappa_0)\oplus M(\eta,\kappa_1)$, cf. Notations~\ref{not:per} and \ref{not:sum}. Hence the middle term of the AR sequence may contain more than two indecomposable summands.
\end{remark}

\section{Intersection-dimension formula}\label{sec:int-dim}

For any two tagged generalized permissible curves $(\gamma_1,\kappa_1),(\gamma_2,\kappa_2)$, whenever we consider their intersections, we always assume that $\gamma_1$ and $\gamma_2$ are in a minimal position, i.e., they are representatives in their homotopy classes such that their intersections are minimal. In what follows, denote by $\gamma_1\sim\gamma_2$ if $\gamma_1$ is homotopic to $\gamma_2$.

\begin{definition}[Intersection number]\label{def:tag int}
	Let $(\gamma_1,\kappa_1),(\gamma_2,\kappa_2)$ be 
	two tagged generalized permissible curves. The \emph{intersection number} between $(\gamma_1,\kappa_1)$ and $(\gamma_2,\kappa_2)$ is defined to be
	$$\Int((\gamma_1,\kappa_1),(\gamma_2,\kappa_2))=|\gamma_1\cap^\circ\gamma_2|+|\mathfrak{T}((\gamma_1,\kappa_1),(\gamma_2,\kappa_2))|,$$
	where
	$$\gamma_1\cap^\circ\gamma_2:=\{(t_1,t_2)\mid 0< t_1,t_2< 1,\ \gamma_1(t_1)=\gamma_2(t_2) \},$$
	is the set of \emph{interior} intersections between $\gamma_1$ and $\gamma_2$, and $\mathfrak{T}((\gamma_1,\kappa_1),(\gamma_2,\kappa_2))$ is the set of \emph{tagged intersections} between $(\gamma_1,\kappa_1)$ and $(\gamma_2,\kappa_2)$, i.e., $(t_1,t_2)\in\{0,1\}\times\{0,1\}$ satisfying $\gamma_1(t_1)=\gamma_2(t_2)\in\PP$ and the following conditions.
	\begin{enumerate}
		\item[(TI1)] $\kappa_1(t_1)\neq\kappa_2(t_2)$.
		\item[(TI2)] If $\gamma_1|_{t_1\rightarrow(1-t_1)}\sim\gamma_2|_{t_2\rightarrow(1-t_2)}$, where $\gamma_{0\to 1}=\gamma$ and $\gamma_{1\to 0}=\gamma^{-1}$, then $\gamma_1(1-t_1)=\gamma_2(1-t_2)\in \PP$ and $\kappa_1(1-t_1)\neq\kappa_2(1-t_2)$.
	\end{enumerate}
\end{definition}

\begin{example}
	Let $(\gamma_1,\kappa_1)$ and $(\gamma_2,\kappa_2)$ be two tagged generalized permissible curves such that $\gamma_1\nsim\gamma_2$, $\gamma_1\sim\gamma_2^{-1}$, $\gamma_1(0)=\gamma_2(0)=\gamma_1(1)=\gamma_2(1)\in\PP$ and $\kappa_1(0)=1,\kappa_1(1)=\kappa_2(0)=\kappa_2(1)$=0, see Figure~\ref{fig:exmint}. Then $(0,0)$ and $(0,1)$ satisfy the condition (TI1). Since $\gamma_1\nsim\gamma_2$, we have that $(0,0)$ also satisfies the condition (TI2). Since $\gamma_1\sim\gamma_2^{-1}$ and $\kappa_1(1)=\kappa_2(0)$, we have that $(0,1)$ does not satisfy the condition (TI2).
	
	\begin{figure}[htpb]\centering
		\begin{tikzpicture}[rotate=45]
			\draw[blue,->-=.45,>=stealth, thick](0,0).. controls +(45:2) and +(90:1) .. (2.4,0)
			.. controls +(-90:1) and +(-30:2) .. (0,0);
			\draw[blue](0.17,0.17)node{$\times$};
			\draw[](3,.7)node{$\gamma_2$}  (2.5,.5)node{$\gamma_1$};
			\draw[blue,-<-=.47,>=stealth, thick](0,0).. controls +(75:3) and +(90:1) .. (3,0)
			.. controls +(-90:1) and +(-40:3) .. (0,0);
			\draw[ultra thick,fill=gray!20] (1.4,.15) circle (.4);
			\draw(0,0)node{$\bullet$};
		\end{tikzpicture}
		\caption{Examples of intersections at a puncture}
		\label{fig:exmint}
	\end{figure}
\end{example}

The main result in this section is the following.

\begin{theorem}\label{thm:int-dim}
	Let $(\gamma_1,\kappa_1),(\gamma_2,\kappa_2)$ be two tagged permissible curves. Then we have
	\begin{equation}\label{eq:int-dim}
		\Int((\gamma_1,\kappa_1),(\gamma_2,\kappa_2))=\dim_\k\Hom(M_1,\tau M_2)+\dim_\k\Hom(M_2,\tau M_1)
	\end{equation}
	where $M_i=M(\gamma_i,\kappa_i)$, $i=1,2$.
\end{theorem}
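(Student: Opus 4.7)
The plan is to reduce the stated identity to an auxiliary one-sided interpretation of each summand on the right-hand side of~\eqref{eq:int-dim} as a ``black intersection number'' (the content of Theorem~\ref{thm:bint-dim}), and then to verify a purely combinatorial matching between that sum and the two-sided intersection number on the left.

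First I would carry out a reduction to the non-projective case. By Theorem~\ref{thm:tau}, if $M_i$ is projective then $\tau M_i=0$ and $\rho(\gamma_i,\kappa_i)\in\TT^\times$. A curve in $\TT^\times$ can be isotoped so as to have no interior crossings with $(\gamma_j,\kappa_j)$ in minimal position, and a direct check shows that the tagging it carries at its puncture endpoints forces the failure of condition (TI1) at every puncture shared with $(\gamma_j,\kappa_j)$. Hence the corresponding summand on the left vanishes together with the one on the right, and we may assume $\tau M_i=M(\rho(\gamma_i,\kappa_i))$ throughout.

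Next I would invoke Theorem~\ref{thm:bint-dim} to replace each summand on the right by a one-sided geometric count:
\[\dim_\k\Hom(M_1,\tau M_2)=\mathrm{BInt}((\gamma_1,\kappa_1),(\gamma_2,\kappa_2)),\]
and symmetrically for the other summand. Here $\mathrm{BInt}$ is the black intersection number of Definition~\ref{def:black}, counting crossings at which $\gamma_2$ approaches a distinguished (``black'') side of the $\TT$-arc crossed by $\gamma_1$, together with a tagged endpoint contribution at shared punctures. The proof of Theorem~\ref{thm:bint-dim} itself would proceed by adapting Gei{\ss}'s basis~\cite{G} of $\Hom(M(\gamma_1,\kappa_1),M(\rho(\gamma_2,\kappa_2)))$ by graph maps and pairing it bijectively with the designated crossings of $\gamma_1$ and $\rho(\gamma_2)$; the key compatibility is that $\rho$ flips the puncture tagging, so matching tagging values at a puncture in the pair $(\gamma_1,\rho(\gamma_2,\kappa_2))$ corresponds precisely to the mismatch condition (TI1) in the pair $((\gamma_1,\kappa_1),(\gamma_2,\kappa_2))$.

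With this in hand, the theorem reduces to the combinatorial identity
\[\Int((\gamma_1,\kappa_1),(\gamma_2,\kappa_2))=\mathrm{BInt}((\gamma_1,\kappa_1),(\gamma_2,\kappa_2))+\mathrm{BInt}((\gamma_2,\kappa_2),(\gamma_1,\kappa_1)).\]
For each interior crossing $(t_1,t_2)\in\gamma_1\cap^\circ\gamma_2$, the local configuration of $\gamma_1,\gamma_2$ relative to the $\TT$-arc through $\gamma_1(t_1)$ splits into four cases; these pair up so that the crossing contributes to exactly one of the two black intersection numbers and never to both. For each $(t_1,t_2)\in\mathfrak{T}$, condition (TI1) combined with the $\kappa$-flip under $\rho$ makes the tagging match from exactly one side in each direction, producing one contribution to each of the two summands on the right, while (TI2) is precisely the condition that prevents a homotopy-induced double count.

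The hard part will be the case in which both endpoints of $\gamma_i$ lie in $\PP$ for $i=1,2$, so that the $M_i$ sit at the bottoms of rank-$2$ tubes. Here $\rho$ leaves the underlying curves fixed and only flips tagging, so the matching must be analyzed entirely at the puncture endpoints; verifying that the joint contribution of the two matched endpoints to the two black intersection numbers agrees with the single $\mathfrak{T}$-contribution (and that (TI2) cancels precisely the terms it is designed to cancel) is the most delicate book-keeping. Once that case is settled, the cases in which at least one endpoint lies in $\MM$ follow by a direct case analysis extending the gentle-case argument of~\cite{BS}.
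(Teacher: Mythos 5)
Your proposal follows essentially the same route as the paper: split $\Int((\gamma_1,\kappa_1),(\gamma_2,\kappa_2))$ into the two one-sided ``black'' counts $\Int^\bullet((\gamma_1,\kappa_1),(\gamma_2,\kappa_2))+\Int^\bullet((\gamma_2,\kappa_2),(\gamma_1,\kappa_1))$ via a local case analysis of crossings, and prove the one-sided identity of Theorem~\ref{thm:bint-dim} by matching Gei{\ss}'s common-pair basis of $\Hom(M(\gamma_1,\kappa_1),M(\rho(\gamma_2,\kappa_2)))$ with the designated crossings, using that $\rho$ flips the tagging so that (TI1) and (TI2) become the nonvanishing conditions for $\Hom$ over the local idempotent algebras $\k[x]/(x^2-x)$ and $\k\langle x,y\rangle/(x^2-x,y^2-y)$. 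One step as written would fail: your reduction to the non-projective case asserts that a curve in $\TT^\times$ can be isotoped to have no interior crossings with $\gamma_j$ in minimal position. That is false --- a permissible curve crosses the arcs of $\TT$ in its interior by construction (this is precisely how the word $\M(\gamma_j)$ is read off). The statement you actually need, and the one the paper uses, is the converse-type observation that any black intersection \emph{from $\gamma_1$ to $\gamma_2$} forces $\rho(\gamma_2)$ to be nontrivial; hence when $M_2$ is projective (equivalently $\rho(\gamma_2,\kappa_2)\in\TT^\times$, by Theorem~\ref{thm:tau}) the count $\Int^\bullet((\gamma_1,\kappa_1),(\gamma_2,\kappa_2))$ vanishes together with $\dim_\k\Hom(M_1,0)$, and no separate preliminary reduction is needed. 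Two smaller calibration points: \cite{BS} contains no intersection-dimension formula to ``extend'' (the unpunctured case is a corollary here, not a prior result), and in the actual proof the doubly-punctured case is the easy one (there is at most one common pair with two special end letters), while the genuinely laborious part is the type I/II/III analysis identifying black normal intersections with the special-letter-free common pairs between $\M(\gamma_1)$ and $\M(\rho(\gamma_2))$.
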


\begin{remark}
	When $\surf$ has no unmarked boundary components and $\TT$ is a maximal admissible partial triangulation, the algebra $A$ is the endomorphism algebra of a cluster tilting object in the cluster category of $\surf$. In this case, the formula \eqref{eq:int-dim} is a direct consequence of \cite[Theorem~1.1~(iii)]{QZ} and \cite[Proposition~4.3~(c)]{AIR}.
\end{remark}

When $\surf$ has no punctures, i.e. $\PP=\emptyset$, $A$ is a gentle algebra. In this case, the permissible curves are the curves satisfying \ref{itm:t1} and \ref{itm:t2} in Definition~\ref{def:permissible}, since \ref{itm:p1} and \ref{itm:p2} hold automatically. For any permissible curve $\gamma$, we define $M(\gamma)=M(\gamma,\emptyset)$. Then we have the following int-dim formula for the model in \cite{BS}. 

\begin{corollary}\label{cor:gentle}
	When $A$ is a gentle algebra, for any two permissible curves $\gamma_1,\gamma_2$, we have
	\begin{equation}
		\Int(\gamma_1,\gamma_2)=\dim_\k\Hom(M(\gamma_1),\tau M(\gamma_2))+\dim_\k\Hom(M(\gamma_2),\tau M(\gamma_1))
	\end{equation}
	where $\Int(\gamma_1,\gamma_2)=|\{(t_1,t_2)\mid \gamma_1(t_1)=\gamma_2(t_2)\notin \MM \}|$.
\end{corollary}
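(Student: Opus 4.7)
The plan is to prove a finer, asymmetric version first and then sum to get the symmetric formula. Specifically, I would introduce a \emph{black intersection number} $\Int_b((\gamma_1,\kappa_1),(\gamma_2,\kappa_2))$ counting a distinguished subset of the (interior and tagged) intersections in the definition of $\Int$, with a local orientation rule at each crossing that selects one of the two ordered pairs $(M_1,\tau M_2)$ or $(M_2,\tau M_1)$. The goal of the asymmetric step is
\[
\dim_\k\Hom(M_1,\tau M_2) \;=\; \Int_b((\gamma_1,\kappa_1),(\gamma_2,\kappa_2)),
\]
after which Theorem~\ref{thm:int-dim} will follow from the combinatorial identity
\[
\Int((\gamma_1,\kappa_1),(\gamma_2,\kappa_2)) \;=\; \Int_b((\gamma_1,\kappa_1),(\gamma_2,\kappa_2)) + \Int_b((\gamma_2,\kappa_2),(\gamma_1,\kappa_1)),
\]
which amounts to checking that each interior crossing and each admissible tagged puncture intersection in $\mathfrak{T}$ is counted by exactly one of the two black counts. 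For interior crossings this is a local orientation dichotomy; for puncture intersections it is a parity argument using that the tagged rotation $\rho$ of Definition~\ref{def:ro} flips $\kappa$ at every puncture, so the tag-mismatch condition (TI1) plus the tail condition (TI2) translate into a clean parity statement under $\rho$.

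For the asymmetric identity I would use Theorem~\ref{thm:tau}, which gives $\tau M_2 = M(\rho(\gamma_2,\kappa_2))$, and then exhibit an explicit basis of $\Hom(M_1,M(\rho(\gamma_2,\kappa_2)))$ indexed by the black intersections. The tool here is the Gei{\ss}-type description (extending Crawley-Boevey and Schr{\"o}er in the gentle case) of a basis of Hom spaces between the string/band modules $M(\m,N)$ via \emph{graph maps}, i.e.\ admissible pairs of factor- and substring-factorisations of the defining words $\M(\gamma_1)$ and $\M(\rho(\gamma_2))$ together with compatibility data on the special-end-letter values prescribed by $N_{\kappa_1}$ and $N_{\rho(\kappa_2)}$. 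Using Proposition~\ref{prop:bi3} and Lemma~\ref{lem:bi}, each such graph map corresponds to a common maximal sub-PPCS, i.e.\ a shared arc segment of $\gamma_1$ and $\rho(\gamma_2)$, whose two endpoints are either on $\TT$ (giving an interior crossing) or on $\PP\cup\MM$ (giving an end-to-end matching). The final step of this part is to trace endpoints through $\rho$ and verify that shared arc segments of $\gamma_1$ and $\rho(\gamma_2)$ correspond bijectively to the black intersections of $\gamma_1$ and $\gamma_2$: interior crossings transport naturally, endpoints of $\gamma_2$ in $\MM$ get pushed one boundary step clockwise into position for a new segment match, and endpoints in $\PP$ stay put but acquire flipped tags, producing exactly the tag-parity condition built into $\Int_b$ at punctures.

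The main obstacle will be the bookkeeping at punctures and at unmarked boundary components. Near a puncture, each endpoint carries a special end letter in the word description of $\S\ref{subsec:words}$, and the ``choice'' encoded by $N_\kappa(x)\in\{0,1\}$ interacts subtly with which graph maps are nonzero; the condition (TI2) in Definition~\ref{def:tag int} is precisely what prevents two candidate graph maps at a mutual puncture endpoint from collapsing or being double-counted, and verifying this requires a careful case analysis of when the two tails beyond the puncture intersection are inverse-equivalent PPCSs. A second delicate point is the case $(\gamma_1,\kappa_1)=(\gamma_2,\kappa_2)$ and, more generally, the self-intersection contributions arising when $M_1\cong M_2$ lies in a tube of rank $2$: the condition $M_1\in\mathcal{S}$ together with Lemma~\ref{lem:tau} is used to control $\Hom(M_1,\tau M_1)$, and the admissibility conditions (A1), (A2) are what make the corresponding self-intersection count on the curve side finite and agree with the module side. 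Once these puncture/periodicity subtleties are dispatched, the gentle case $\PP=\emptyset$ reduces to the graph-map bijection already recorded in \cite{BS} and yields Corollary~\ref{cor:gentle} as a clean byproduct.
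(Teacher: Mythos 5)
Your proposal is correct and follows essentially the same route as the paper: you reconstruct the proof of Theorem~\ref{thm:int-dim} by introducing the black intersection number (the paper's $\Int^\bullet$ of Definition~\ref{def:black}), proving the asymmetric identity $\dim_\k\Hom(M_1,\tau M_2)=\Int^\bullet((\gamma_1,\kappa_1),(\gamma_2,\kappa_2))$ via $\tau M_2=M(\rho(\gamma_2,\kappa_2))$ and the Gei{\ss} Hom-basis (the paper's common pairs, Theorem~\ref{thm:dim formular}), and then sum the two black counts. Specializing to $\PP=\emptyset$, where the tagged contribution vanishes and $\Int$ reduces to the count of interior crossings, gives Corollary~\ref{cor:gentle} exactly as in the paper.
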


\subsection{Homomorphisms}\label{subsec:hom}

Let $\m=\omega_m\cdots\omega_1$ be a word. Recall from Notation~\ref{not:()} that for any integers $i,j$ with $0\leq i<j\leq m+1$, the subword of $\m$ between $i$ and $j$ is
$$\m_{(i,j)}=\begin{cases}
	\omega_{j-1}\cdots\omega_{i+1}&\text{if $i<j-1$,}\\
	1_{t(\omega_i)}&\text{if $i=j-1$.}
\end{cases}$$

\begin{definition}\label{def:common pair}
	Let $\m=\omega_m\cdots\omega_1$ and $\n=\nu_n\cdots\nu_1$ be two admissible words. A \emph{common pair} from $\m$ to $\n$ is a pair $((i,j),(h,l))$ of pairs of integers with $0\leq i< j\leq m+1$ and $0\leq h< l\leq n+1$ such that one of the following conditions holds:
	\begin{itemize}
		\item[(1)] $\m_{(i,j)}=\n_{(h,l)}$, $\omega_i^{-1}<\nu_h^{-1}$ and $\omega_{j}<\nu_{l}$;
		\item[(2)] $\m_{(i,j)}=(\n_{(h,l)})^{-1}$, $\omega^{-1}_i<\nu_{l}$ and $\omega_{j}<\nu_h^{-1}$,
	\end{itemize}
	where if an inequality contains at least one of $\omega_0$, $\omega_{m+1}$, $\nu_0$ and $\nu_{n+1}$ then it is assumed to hold automatically. Let $H^{\m,\n}$ be the set of common pairs from $\m$ to $\n$.
\end{definition}

\begin{remark}\label{rmk:factor-sub}
    Let $\m=\omega_m\cdots\omega_1$ be an admissible word. A subword $\m_{(i,j)}, 0\leq i<j\leq m+1$ of $\m$ is called 
    \begin{itemize}
        \item a \emph{factor string} if $\omega_j$ (resp. $\omega_i^{-1}$) is in $Q^{sp}_1\cup\{z_{k,\theta}|k\in Q^{sp}_0,\theta\in\{+,-\}\}$ when $j\leq m$ (resp. $1\leq i$), and
        \item a \emph{substring} if $\omega^{-1}_j$ (resp. $\omega_i$) is in $Q^{sp}_1\cup\{z_{k,\theta}|k\in Q^{sp}_0,\theta\in\{+,-\}\}$ when $j\leq m$ (resp. $1\leq i$).
    \end{itemize}
    Then a common pair $((i,j),(h,l))$ from $\m$ to $\n$ is nothing but a pair of a factor string $\m_{(i,j)}$ of $\m$ and a substring $\n_{(h,l)}$ of $\n$ such that $\m_{(i,j)}=\n_{(h,l)}$ or $\m_{(i,j)}=\n_{(h,l)}^{-1}$. See Figure~\ref{fig:common pair}.
	\begin{figure}
		$$\xymatrix@R=1em@C=5em{
			\m:\quad\cdot&\cdot\ar@{~}[l]_{\omega_m\cdots\omega_{j+1}}&&&\cdot&\cdot\ar@{~}[l]_{\omega_{i-1}\cdots\omega_1}\\
			&&\cdot\ar@{=}[d]\ar[ul]_{\omega_j}&\cdot\ar@{=}[d]\ar[ur]^{\omega_i}\ar@{~>}[l]_{\omega_{j-1}\cdots\omega_{i+1}}&&\\
			&&\cdot&\cdot\ar@{~}[l]^{\nu_{l-1}\cdots\nu_{h+1}}&&\\
			\n:\quad\cdot&\cdot\ar[ur]_{\nu_l}\ar@{~}[l]^{\nu_n\cdots\nu_{l+1}}&&&\cdot\ar[ul]^{\nu_h}&\cdot\ar@{~>}[l]^{\nu_{h-1}\cdots\nu_1}
		}$$
		\caption{Common pairs from $\m$ to $\n$}\label{fig:common pair}
	\end{figure}
\end{remark}

For each common pair $J=((i,j),(h,l))\in H^{\m,\n}$, denote by $A_J$ the $\k$-algebra generated by the indeterminates associated to the special end letters in $\m_{(i,j)}$ (or equivalently in $\n_{(h,l)}$). Then $A_J$ is a subalgebra of $A_\m$ and $A_\n$, which implies that any $A_\m$-module or $A_\n$-module can be regarded as an $A_J$-module. The following result is useful in this section.

\begin{theorem}[\cite{G}]\label{thm:dim formular}
	Let $\m, \n$ be two admissible words and $V, W$ be one-dimensional $A_\m$-module and $A_\n$-module, respectively. Then we have
	\[\dim_{\k}\emph{Hom}_{A}(M(\m, V), M(\n, W))=\sum_{J\in H^{\m,\n}}\dim_{\k}\emph{Hom}_{A_J}(V, W)\]
\end{theorem}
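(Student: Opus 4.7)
The plan is to follow the classical approach to Hom-spaces between string modules (as in Crawley--Boevey), adapted to the skew-gentle setting via the local algebras $A_J$, and to realize every homomorphism as a sum of ``graph maps'' attached to common pairs. Since the statement is cited from Gei{\ss} \cite{G}, the strategy is really to rederive his basis result with the current combinatorics.

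\emph{Step 1: construct morphisms from common pairs.} For each common pair $J=((i,j),(h,l))\in H^{\m,\n}$ I would build a linear map
\[\Phi_J : \Hom_{A_J}(V,W)\longrightarrow \Hom_{A}\bigl(M(\m,V),M(\n,W)\bigr).\]
The identification $\m_{(i,j)}=\n_{(h,l)}$ (or its inverse) gives a canonical identification between the basis vectors $z_{i+1},\dots,z_{j-1}$ of the submodule/quotient of $M(\m,V)$ ``cut out by the window'' and the basis vectors $z'_{h+1},\dots,z'_{l-1}$ of the analogous piece of $M(\n,W)$. Given $f\in\Hom_{A_J}(V,W)$, define $\Phi_J(f)$ to send each $z_r$ ($i<r<j$) to the corresponding $z'_{r-i+h}$ scaled by $f$ when a special end letter of the window is active (i.e. $i=0$ with $\omega_1$ special, or $j=m+1$ with $\omega_m$ special, and symmetrically at the other end), and to send all other $z_r$ to $0$. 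The boundary inequalities $\omega_i^{-1}<\nu_h^{-1}$ and $\omega_j<\nu_l$ are designed exactly so that the arrows ``leaving'' the window in $M(\m,V)$ either stay inside or land on basis vectors sent to $0$, which is the computation that shows $\Phi_J(f)$ is $A$-linear. The cases where a window endpoint coincides with a special end letter are precisely the ones in which the idempotent $\varepsilon_i$ lies in $A_J$, forcing the degree of freedom over $\k$ to be governed by $\Hom_{A_J}(V,W)$ rather than by $\k$ itself.

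\emph{Step 2: reduce an arbitrary homomorphism to window data.} Given $\phi\in\Hom_A(M(\m,V),M(\n,W))$, expand each $\phi(z_r)$ in the basis $\{z'_s\}$ of $M(\n,W)$. The $A$-linearity, together with the relations dictated by $(Q^{sp},I^{sg})$ and the fact that $\m$ and $\n$ are admissible (so proper powers and once-punctured monogons are excluded, invoking Lemmas~\ref{lem:A1T1} and \ref{lem:A2T2} implicitly), forces each nonzero coefficient $\lambda_{r,s}$ to propagate along a maximal common subword around $(r,s)$, terminating either at a true end letter or at a spot where the order inequalities $\omega_i^{-1}<\nu_h^{-1}$ and $\omega_j<\nu_l$ fail. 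Thus every $\phi$ decomposes as $\sum_{J} \Phi_J(f_J)$ for uniquely determined $f_J$. Verifying that the resulting $f_J$ is truly $A_J$-linear (not just $\k$-linear) comes from analyzing what $\phi$ must do on the generators of $A_J$: if the window abuts a special end letter, the loop $\varepsilon_\bullet$ acts as the scalar $V(x)$ on the $M(\m,V)$ side and $W(x)$ on the $M(\n,W)$ side, and $\phi$ intertwining $\varepsilon_\bullet$ forces $f_J$ to commute with $x$.

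\emph{Step 3: linear independence and counting.} The maps $\Phi_J(f)$ attached to distinct common pairs $J$ have disjoint ``supports'' in the basis of $M(\n,W)$ (in the sense that the windows distinguish them), giving linear independence across different $J$; within a fixed $J$, the map $\Phi_J$ is injective by construction. Combining with Step 2 yields
\[\Hom_A(M(\m,V),M(\n,W))\;=\;\bigoplus_{J\in H^{\m,\n}}\Phi_J\bigl(\Hom_{A_J}(V,W)\bigr),\]
and taking dimensions gives the stated formula.

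The main obstacle is Step 2, specifically the bookkeeping at window boundaries that meet special end letters: one must ensure the boundary conditions (1) and (2) in Definition~\ref{def:common pair} are exactly what is needed to kill ``boundary overflow'' in the image, and simultaneously that the residual freedom is captured by $\Hom_{A_J}(V,W)$ rather than $\Hom_\k(V,W)$. This is the skew-gentle refinement of the classical gentle-case argument, and it is where the two-dimensional algebras $A_\m\cong\k\langle x,y\rangle/(x-x^2,y-y^2)$ and the delta functions in \eqref{eq:1}--\eqref{eq:2} enter in an essential way.
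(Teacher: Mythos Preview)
The paper does not prove this theorem at all: it is stated with the attribution \cite{G} and used as a black box (see the sentence ``The following result is useful in this section'' immediately preceding it). So there is no ``paper's own proof'' to compare against; your proposal is an attempt to reconstruct Gei{\ss}'s argument, which goes beyond what the present paper does.

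As for the proposal itself, the overall shape---build graph maps $\Phi_J$ from common pairs, then show every homomorphism decomposes uniquely along them---is indeed the strategy in \cite{G}. Two points deserve tightening. First, in Step~1 the description ``scaled by $f$ when a special end letter of the window is active'' conflates two things: the scalar you need is simply the value of $f$ (a map between one-dimensional spaces), and the role of $A_J$ is not to introduce a scaling but to force $\Hom_{A_J}(V,W)=0$ when the idempotent values at the abutting special ends disagree; you should state this as a dichotomy (the window either contributes a one-dimensional space of graph maps or nothing) rather than as a scaling. Second, the linear-independence argument in Step~3 via ``disjoint supports'' is not literally correct: distinct common pairs can have overlapping windows, so the images of the $\Phi_J$ need not be supported on disjoint basis vectors of $M(\n,W)$. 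What one actually uses is a triangularity argument with respect to a suitable total order on the set of pairs $(r,s)$ indexing matrix entries, or equivalently an induction on the length of the common subword; Gei{\ss} organizes this via his $H$-sets and a filtration. Without this refinement, Step~3 as written has a gap.
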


\subsection{Proof of Theorem~\ref{thm:int-dim}}\label{subsec:4.2}

By taking suitable representatives in their homotopy classes, we may assume that any intersection in $\gamma_1\cap^\circ\gamma_2$ is not in a curve in $\TT$.

Let
$$\mathfrak{P}(\gamma_1,\gamma_2)=\{(t_1,t_2)|\gamma_1(t_1)=\gamma_2(t_2)\in \PP\}\subseteq \{0,1\}\times\{0,1\}$$
be the set of \emph{punctured intersections} between $\gamma_1$ and $\gamma_2$.
Note that $\mathfrak{T}((\gamma_1,\kappa_1),(\gamma_2,\kappa_2))$ is a subset of $\mathfrak{P}(\gamma_1,\gamma_2)$.

We denote by $\gamma_1^1,\cdots,\gamma_1^m$ (resp. $\gamma_2^1,\cdots,\gamma_2^n$) the arc segments of $\gamma_1$ (resp. $\gamma_2$) divided by $\TT$ in the order with respect to the orientation of $\gamma_1$ (resp. $\gamma_2$). Let $c$ be an intersection of $\gamma_1$ and $\gamma_2$. By our setting, it is an intersection of $\gamma_1^{c_1}$ and $\gamma_2^{c_2}$ in their interior, for some $1\leq c_1\leq m$ and $1\leq c_2\leq n$. Note that for each pair of arc segments $\gamma_1^u$ and $\gamma_2^v$ of $\gamma_1$ and $\gamma_2$, they have at most one intersection. 

\begin{lemma}\label{lem:types}
	Let $c$ be an intersection in $\gamma_1\cap^\circ\gamma_2$. Then it has one of the forms in Figure~\ref{fig:I}, \ref{fig:II} or \ref{fig:III}, where $\{a,b\}=\{1,2\}$.
\end{lemma}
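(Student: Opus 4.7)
The plan is to analyze the local picture near an intersection $c$. By our minimal-position setup, $c$ lies in the interior of some region $\Delta$ of $\TT$ (since $c$ is not on any curve in $\TT$), and $c$ is an interior point of two PASs $\gamma_1^{c_1}$ and $\gamma_2^{c_2}$ lying in $\Delta$. I would first classify $\Delta$ as one of the four types (I)--(IV) introduced in Definition~\ref{def:part tri} (the generic case of a polygon without distinguished features, a monogon or digon with an unmarked boundary component, or a once-punctured monogon), handling the case when $\Delta$ has none of these types by an analogous ad~hoc argument.

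Next, for each type of $\Delta$, I would enumerate the possible shapes of a single PAS in $\Delta$ using conditions (P1) and (P2). For a type~(I) polygon, every PAS is a chord cutting off an angle between two neighboring sides, or else an end segment landing at a marked point (by (P1)). For types~(II), (III) and (IV), a PAS in $\Delta$ is either such a chord or else spirals once around the unmarked boundary component or puncture before exiting; no PAS can wind twice since $\gamma_i$ is in minimal position with $\TT$. This produces a short explicit list of local pictures for PASs in each region.

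With this list in hand, I would determine when two such PASs can cross at an interior point $c$. For two chord-type PASs in the same region, the only way to intersect is to cut off overlapping angles, giving the ``standard'' intersection that I expect to be Figure~I. In regions with an unmarked boundary component (types~II, III), a chord-type PAS can cross a spiraling PAS, or two spiraling PASs can cross, and the minimal-position assumption together with (P2) forces each such crossing into one of a small number of configurations that I expect to be Figure~II. The analogous analysis in type~(IV) regions, with the puncture replacing the unmarked component and with conditions (T1), (T2) excluding the forbidden self-winding, gives Figure~III. In every case the labeling $\{a,b\}=\{1,2\}$ accounts for the freedom in which curve plays which role.

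The main obstacle is the case analysis in the spiraling regions (types~II--IV): I need to be confident that minimality of intersections, together with the permissibility conditions (P1), (P2), (T1), (T2), rules out any configuration that would introduce a bigon (which could be removed by homotopy, contradicting minimality) or a disallowed once-punctured monogon cut out by self-intersection. Once this bookkeeping is done, the enumeration is forced. A useful sanity check along the way is Lemma~\ref{lem:order2}, which translates the relative positions of PASs into the linear order on $\mathfrak{w}(i,\theta)$ and rules out ``crossings'' that would correspond to repeated letters in the associated words.
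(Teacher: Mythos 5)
Your overall strategy (a local analysis of the two arc segments through $c$ inside the region $\Delta$, driven by (P1) and (P2)) is the right kind of argument, but your organizing case split does not match what Figures~\ref{fig:I}--\ref{fig:III} actually encode, and this leads to a concrete omission. The paper's proof splits according to how many of the two crossing arc segments $\gamma_1^{c_1},\gamma_2^{c_2}$ are \emph{end segments} (in the sense of condition (P1)): if neither is, (P2) forces both to cut out angles of $\Delta$, which are either equal or neighboring --- that is Figure~\ref{fig:I}; if exactly one is an end segment, it has one of the forms of Figure~\ref{fig:ro} and the other must cross it while cutting out an angle --- that is Figure~\ref{fig:II}; if both are end segments they must share the same form among the first three of Figure~\ref{fig:ro} (the fourth cannot produce an interior crossing) --- that is Figure~\ref{fig:III}. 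In particular each of Figures~\ref{fig:II} and \ref{fig:III} contains a picture taking place in an ordinary polygon (the first picture in each), and type~I intersections can occur in any region.

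Your decomposition by region type (I)--(IV) mispredicts this: you assign Figure~\ref{fig:II} to regions with unmarked boundary components and Figure~\ref{fig:III} to once-punctured monogons, and in the polygon case you only analyze chord--chord crossings (``the only way to intersect is to cut off overlapping angles, giving Figure~I''). This silently drops the crossings in a polygon between an interior segment and an end segment terminating at a marked point, and between two such end segments --- precisely the first pictures of Figures~\ref{fig:II} and \ref{fig:III}. Since the lemma asserts an exhaustive list, these cases must be accounted for. Two smaller points: conditions (T1), (T2) and bigon-removal via minimality are not what constrains the local picture here --- (P1) already prescribes the exact shapes of end segments (Figure~\ref{fig:ro}), including the single winding around an unmarked boundary component or puncture, and (P2) handles interior segments; and Lemma~\ref{lem:order2} is used elsewhere (e.g.\ in Lemma~\ref{lem:h0}) to read off common pairs from relative positions, not to rule out crossings in this lemma.
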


\begin{figure}
	\begin{tikzpicture}[scale=1.5]
        \begin{scope}[xscale=.8]
		\draw (0,0)node{$\bullet$};
		\draw[red,thick] (1,-1)to (0,0)to(-1,-1);
		\draw[blue,thick] (-1,-.2)node[left]{$\gamma_b$}to(1,-.8); \draw[blue,thick] (1,-.2)to(-1,-.8)node[left]{$\gamma_a$}  (0,-.5)node[below]{$c$} (0,-.5)node{$\bullet$};
        \end{scope}
	\begin{scope}[shift={(2.5,-.5)},rotate=-90,xscale=.7,yscale=1.3]
		\draw (-.7,0)node{$\bullet$} (.7,0)node{$\bullet$};
		\draw[red,thick] (.7,-1)to (.7,0) to (-.7,0)to(-.7,-1);
		\draw[blue,thick] (-.7,.2)to(1,-.8)node[left]{$\gamma_a$}; \draw[blue,thick](.7,.2)to(-1,-.8)node[left]{$\gamma_b$} (0,-.23)node[below]{$c$} (0,-.23)node{$\bullet$};
	\end{scope}
        \begin{scope}[shift={(4.5,0)},xscale=.8]
    	\draw (0,0)node{$\bullet$};
    	\draw[red,thick] (1,-1)to (0,0)to(-1,-1);
    	\draw[blue,thick] (-1,-.2)node[left]{$\gamma_b$}to(1,-1)node[black]{$\bullet$}; 
    	\draw[blue,thick] (1,-.2)to(-1,-.8)node[left]{$\gamma_a$}  (-.14,-.56)node[below]{$c$} (-.14,-.56)node{$\bullet$};
    \end{scope}
    \begin{scope}[shift={(7,-.5)},rotate=-90,xscale=.7,yscale=1.3]
    	\draw (-.7,0)node{$\bullet$} (.7,0)node{$\bullet$};
    	\draw[red,thick] (.7,-1)to (.7,0) to (-.7,0)to(-.7,-1);
    	\draw[blue,thick] (-.7,.2)to(.7,-1)node[black]{$\bullet$} (.3,-.7)node[left]{$\gamma_a$}; \draw[blue,thick](.7,.2)to(-1,-.8)node[left]{$\gamma_b$} (-.12,-.3)node[below]{$c$} (-.12,-.3)node{$\bullet$};
    \end{scope}
    \end{tikzpicture}
	\caption{Intersections of type I}\label{fig:I}
\end{figure}

\begin{figure}
	\begin{tikzpicture}[xscale=1.5,yscale=1.5]
		\clip(-1.3,-.7) rectangle (1,1.1);
		\draw[red, thick, bend left=10](0,1)to(-.5,-.1);
		\draw[red, thick, bend left=10](-.5,-.1) to (.5,-.5);
		\draw[red, thick](0,1)to(.9,.4);
		\draw[red,thick,dashed,bend right=10](.5,-.5)to(.9,.4);
		\draw[blue,thick,bend right=10](-.8,.8)node[left]{$\gamma_a$}to(.6,.8);
		\draw[blue, thick, bend left=30] (0,1) to (0,-.4)node[below]{$\gamma_b$};
		\draw[blue] (.25,.65)node{$c$} (.12,.73)node{$\bullet$};
		\draw[thick](0,1)node{$\bullet$}(-.5,-.1)node{$\bullet$}(.9,.4)node{$\bullet$}(.5,-.5)node{$\bullet$};
	\end{tikzpicture}\qquad
	\begin{tikzpicture}[xscale=1.5,yscale=1.5]
		\clip(-1.3,-.7) rectangle (1,1.1);
		\draw[ultra thick, fill=gray!20](0,.2) circle (.1);
		\draw[red,thick](0,1)to[out=-145,in=90](-.6,.2)to[out=-90,in=145](0,-.6);
		\draw[red,thick](0,1)to[out=-35,in=90](.6,.2)to[out=-90,in=35](0,-.6);
		\draw[blue,thick,smooth](-1,.6)node[below]{$\gamma_b$}to[out=-5,in=170](0,.5)to[out=-10,in=90](.3,.2)to[out=-90,in=70](0,-.6);
		\draw[blue,thick,bend left=10](-.8,-.4)node[left]{$\gamma_a$}to(.8,-.4);
		\draw[blue] (0.05,-.2)node{$c$} (.12,-.33)node{$\bullet$};
		\draw[thick](0,1)node{$\bullet$}(0,-.6)node{$\bullet$};
	\end{tikzpicture}\qquad
	\begin{tikzpicture}[xscale=1.5,yscale=1.5]
		\clip(-1.3,-.7) rectangle (1,1.1);
		\draw[ultra thick, fill=gray!20](0,0) circle (.1);
		\draw[red,thick](0,1)to[out=-135,in=60](-.7,0.3)to[out=-120,in=180](0,-.6)to[out=0,in=-60](.7,0.3)to[out=120,in=-45](0,1);
		\draw[blue,thick](-1,.3)node[below]{$\gamma_b$}to[bend left=5](0,.3)to[out=-10,in=90](.3,0)to[out=-90,in=0](0,-.3)to[out=180,in=-90](-.3,0)to[out=90,in=-110](0,1);
		\draw[blue,thick,bend right=10](-.8,.8)node[left]{$\gamma_a$}to(.6,.8);
		\draw[blue] (0,.6)node{$c$} (-.11,.72)node{$\bullet$};
		\draw(0,1)node{$\bullet$};
	\end{tikzpicture}
	\caption{Intersections of type II}\label{fig:II}
\end{figure}

\begin{figure}
	\begin{tikzpicture}[xscale=1.5,yscale=1.5]
		\clip(-1.3,-.7) rectangle (1,1.1);
		\draw[red, thick, bend left=10](0,1)to(-.5,-.1);
		\draw[red, thick, bend left=10](-.5,-.1) to (.5,-.5);
		\draw[red, thick](0,1)to(.9,.4);
		\draw[red,thick,dashed,bend right=10](.5,-.5)to(.9,.4);
		\draw[blue, thick, bend right=10](-.8,.4)node[left]{$\gamma_a$}to(.9,.4);
		\draw[blue, thick, bend left=30] (0,1) to (0,-.4)node[below]{$\gamma_b$};
		\draw[blue] (.3,.2)node{$c$} (.2,.3)node{$\bullet$};
		\draw[thick](0,1)node{$\bullet$}(-.5,-.1)node{$\bullet$}(.9,.4)node{$\bullet$}(.5,-.5)node{$\bullet$};
	\end{tikzpicture}\qquad
	\begin{tikzpicture}[xscale=1.5,yscale=1.5]
		\clip(-1.3,-.7) rectangle (1,1.1);
		\draw[ultra thick, fill=gray!20](0,.2) circle (.1);
		\draw[red,thick](0,1)to[out=-145,in=90](-.6,.2)to[out=-90,in=145](0,-.6);
		\draw[red,thick](0,1)to[out=-35,in=90](.6,.2)to[out=-90,in=35](0,-.6);
		\draw[blue,thick,smooth](-1,.6)node[below]{$\gamma_b$}to[out=-5,in=170](0,.5)to[out=-10,in=90](.3,.2)to[out=-90,in=70](0,-.6);
		\draw[blue,thick,smooth](.9,-.2)node[above]{$\gamma_a$}to[out=180-5,in=180+170](0,-.1)to[out=180-10,in=180+90](-.3,.2)to[out=180-90,in=180+70](0,1);
		\draw[blue] (.1,-.2)node{$c$} (.23,-.15)node{$\bullet$};
		\draw[thick](0,1)node{$\bullet$}(0,-.6)node{$\bullet$};
	\end{tikzpicture}\qquad
	\begin{tikzpicture}[xscale=1.5,yscale=1.5]
		\clip(-1.3,-.7) rectangle (1,1.1);
		\draw[ultra thick, fill=gray!20](0,0) circle (.1);
		\draw[red,thick](0,1)to[out=-135,in=60](-.7,0.3)to[out=-120,in=180](0,-.6)to[out=0,in=-60](.7,0.3)to[out=120,in=-45](0,1);
		\draw[blue,thick](-1,.3)node[above]{$\gamma_b$}to[bend left=5](0,.3)to[out=-10,in=90](.3,0)to[out=-90,in=0](0,-.3)to[out=180,in=-90](-.3,0)to[out=90,in=-110](0,1);
		\draw[blue,thick](-1,.4)node[below]{$\gamma_a$}to[bend left=5](0,.4)to[out=-10,in=90](.4,0)to[out=-90,in=0](0,-.4)to[out=180,in=-90](-.4,0)to[out=90,in=-120](0,1);
		\draw[blue] (-.47,0.23)node{$c$} (-.36,.16)node[above]{$\bullet$};
		\draw(0,1)node{$\bullet$};
	\end{tikzpicture}
	\caption{Intersections of type III}\label{fig:III}
\end{figure}

\begin{proof}
	By our assumption, the intersection $c$ lives in the interior of a region $\Delta$ of $\TT$. By \ref{itm:t2} in Definition~\ref{def:permissible}, $\gamma_1^{c_1}$ and $\gamma_2^{c_2}$ cut out angles from $\Delta$. There are the following cases.
	\begin{itemize}
		\item Both $\gamma_1^{c_1}$ and $\gamma_2^{c_2}$ have no self-intersections, and they share a common curve in $\TT$. By \ref{itm:t1} in Definition~\ref{def:permissible}, two end segments in the same region of $\TT$ that cross the same side are homotopic to each other, so at least one of $\gamma_1^{c_1}$ and $\gamma_2^{c_2}$ is not an end segment. Hence depending on whether $\gamma_1^{c_1}$ and $\gamma_2^{c_2}$ cut the same angle and whether one of $\gamma_1^{c_1}$ and $\gamma_2^{c_2}$ is an end segment, there are the four subcases shown in Figure~\ref{fig:I}, where $\{a,b\}=\{1,2\}$. 
		\item If both $\gamma_1^{c_1}$ and $\gamma_2^{c_2}$ have no self-intersections, after the intersection $c$, then they do not share a common curve in $\TT$. If both $\gamma_1$ and $\gamma_2$ are not end segments, then we are in one of the first two situations in Figure~\ref{fig:I}, a contradiction. Hence we have the following two subcases.
        \begin{itemize}
            \item  If exactly one of $\gamma_1^{c_1}$ and $\gamma_2^{c_2}$ is an end segment, denote by $\gamma_b$ the end one and $\gamma_a$ the other one. Then $\gamma_b$ has one of the forms in Figure~\ref{fig:ro}. Since $\gamma_a$ need to cross $\gamma_b$, the only possible relative positions between $\gamma_a$ and $\gamma_b$ are shown in the cases in Figure~\ref{fig:II}, respectively.
            \item If both of $\gamma_1^{c_1}$ and $\gamma_2^{c_2}$ are end segments, then they have the same form in one of the first three cases in Figure~\ref{fig:ro} (because in the fourth case, they will not cross in their interiors). So we are in one of the situations in Figure~\ref{fig:III}.
        \end{itemize}
        \end{itemize}
\end{proof}

The intersections $c$ in the cases in Figures~\ref{fig:I}, \ref{fig:II} and \ref{fig:III} are called of types I, II and III, respectively. To distinguish the maps from $M(\gamma_1,\kappa_1)$ to $\tau M(\gamma_2,\kappa_2)$ and those from $M(\gamma_2,\kappa_2)$ to $\tau M(\gamma_1,\kappa_1)$, we introduce the following notion.

\begin{definition}[Black intersection]\label{def:black}
	An interior intersection $c$ between $\gamma_1$ and $\gamma_2$ is called a \emph{black} interior intersection from $\gamma_1$ to $\gamma_2$ if it has one of the forms in Figures~\ref{fig:I}, \ref{fig:II} and \ref{fig:III}, where $a=1$ and $b=2$. We denote by $\Int^\bullet(\gamma_1,\gamma_2)$ the number of black interior intersections from $\gamma_1$ to $\gamma_2$.
	
	A punctured intersection $(t_1,t_2)$ between $\gamma_1$ and $\gamma_2$ is called a \emph{black} punctured intersection from $\gamma_1$ to $\gamma_2$ if $\gamma_1|_{t_1\rightarrow(1-t_1)}$ is left to $\gamma_2|_{t_2\rightarrow(1-t_2)}$ (see Figure~\ref{fig:black}). We denote by $\mathfrak{P}^\bullet(\gamma_1,\gamma_2)$ the set of black punctured intersections from $\gamma_1$ to $\gamma_2$.
	\begin{figure}[htpb]
		\begin{tikzpicture}[xscale=1.5,yscale=1.5,rotate=90]
			\draw(0,1)node{$\bullet$};
			\draw(0,0)node{$\bullet$};
			\draw[red,thick](0,1)to[out=-125,in=60](-.4,0.3)to[out=-120,in=180](0,-.5)to[out=0,in=-60](.4,0.3)to[out=120,in=-55](0,1);
			\draw[blue,thick] (-.5,-.5)node[right]{$\gamma_2$}to (0,0)to(.5,-.5)node[right]{$\gamma_1$};
		\end{tikzpicture}
		\caption{A black punctured intersection from $\gamma_1$ to $\gamma_2$}\label{fig:black}
	\end{figure}
	
	A tagged intersection between $(\gamma_1,\kappa_1)$ and $(\gamma_2,\kappa_2)$ is called a \emph{black} tagged intersection from $(\gamma_1,\kappa_1)$ to $(\gamma_2,\kappa_2)$ if it is a black punctured intersection from $\gamma_1$ to $\gamma_2$. We denote by $\mathfrak{T}^\bullet((\gamma_1,\kappa_1),(\gamma_2,\kappa_2))$ the set of black tagged intersections from $(\gamma_1,\kappa_1)$ to $(\gamma_2,\kappa_2)$.
	
	We call the number $$\Int^\bullet((\gamma_1,\kappa_1),(\gamma_2,\kappa_2)):=\Int^\bullet(\gamma_1,\gamma_2)+|\mathfrak{T}^\bullet((\gamma_1,\kappa_1),(\gamma_2,\kappa_2))|$$
	the \emph{black intersection number} from $(\gamma_1,\kappa_1)$ to $(\gamma_2,\kappa_2)$.
\end{definition}

\begin{remark}
    By the assumption that $\gamma_1$ and $\gamma_2$ are in a minimal position 
 and the assumption that any intersection in $\gamma_1\cap^\circ\gamma_2$ is not in a curve in $\TT$, an intersection that has one of the forms in Figure~\ref{fig:I} can move to other regions. But its type is always I and it is always a black intersection from $\gamma_a$ to $\gamma_b$. So the notion of types and black intersections is well-defined. The other types of intersections cannot be moved since the two curves do not cross the same curve in $\TT$ after that intersection.
\end{remark}

\begin{remark}
	In each of the last two cases in Figure~\ref{fig:III}, there is one more intersection between $\gamma_a$ and $\gamma_b$, different from $c$, which is a black intersection from $\gamma_b$ to $\gamma_a$ by definition.
\end{remark}

By Lemma~\ref{lem:types}, an interior intersection between $\gamma_1$ and $\gamma_2$ is either a black intersection from $\gamma_1$ to $\gamma_2$ or a black intersection from $\gamma_2$ to $\gamma_1$. So we have
$$|\gamma_1\cap^\circ\gamma_2|=\Int^\bullet(\gamma_1,\gamma_2)+\Int^\bullet(\gamma_2,\gamma_1).$$
By the definitions of black punctured intersections and black tagged intersections, we also have the disjoint unions
$$\mathfrak{P}(\gamma_1,\gamma_2)=\mathfrak{P}^\bullet(\gamma_1,\gamma_2)\cup\mathfrak{P}^\bullet(\gamma_2,\gamma_1)$$
and
$$\mathfrak{T}((\gamma_1,\kappa_1),(\gamma_2,\kappa_2))=\mathfrak{T}^\bullet((\gamma_1,\kappa_1),(\gamma_2,\kappa_2))\cup\mathfrak{T}^\bullet((\gamma_2,\kappa_2),(\gamma_1,\kappa_1)).$$
So we have
$$\Int((\gamma_1,\kappa_1),(\gamma_2,\kappa_2))=\Int^\bullet((\gamma_1,\kappa_1),(\gamma_2,\kappa_2))+\Int^\bullet((\gamma_2,\kappa_2),(\gamma_1,\kappa_1)).$$ Hence to show Theorem~\ref{thm:int-dim}, it suffices to show the following result.

\begin{theorem}\label{thm:bint-dim}
	Let $(\gamma_1,\kappa_1),(\gamma_2,\kappa_2)\in\PTS$. Then we have	
	\begin{equation}\label{eq:b-tau}
		\Int^\bullet((\gamma_1,\kappa_1),(\gamma_2,\kappa_2))=\dim_\k\Hom(M(\gamma_1,\kappa_1),\tau M(\gamma_2,\kappa_2)).
	\end{equation}
\end{theorem}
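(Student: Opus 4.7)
The plan is to reduce the black intersection-dimension formula to the dimension formula of Theorem~\ref{thm:dim formular} via the geometric description of $\tau$ in Theorem~\ref{thm:tau}, and then to construct an explicit bijection between black intersections (normal and tagged) and the pieces of the Hom basis indexed by common pairs. First I would dispatch the degenerate case in which $M_2:=M(\gamma_2,\kappa_2)$ is projective: by Theorem~\ref{thm:tau} this means $\rho(\gamma_2,\kappa_2)\in \TT^\times$, so $\rho(\gamma_2)$ is either trivial or the image of an edge of $\TT$, and a direct inspection of Figures~\ref{fig:I}--\ref{fig:III} shows no black intersection from $\gamma_1$ to $\gamma_2$ can occur in either situation; both sides of \eqref{eq:b-tau} vanish.

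Next, assuming $M_2$ is non-projective, Theorem~\ref{thm:tau} gives $\tau M_2=M(\gamma_2',\kappa_2')$ with $(\gamma_2',\kappa_2')=\rho(\gamma_2,\kappa_2)$. Applying Theorem~\ref{thm:dim formular} to $\m=\M(\gamma_1)$ and $\n=\M(\gamma_2')$ rewrites the right-hand side of \eqref{eq:b-tau} as
$$\dim_\k\Hom(M(\gamma_1,\kappa_1),M(\gamma_2',\kappa_2'))=\sum_{J\in H^{\M(\gamma_1),\M(\gamma_2')}}\dim_\k\Hom_{A_J}(N_{\kappa_1},N_{\kappa_2'}).$$
The core step is to construct a bijection $\Phi$ between the disjoint union of black intersections and black tagged intersections from $(\gamma_1,\kappa_1)$ to $(\gamma_2,\kappa_2)$ and the set of pairs $(J,\phi)$ with $J\in H^{\M(\gamma_1),\M(\gamma_2')}$ and $\phi$ a basis vector of $\Hom_{A_J}(N_{\kappa_1},N_{\kappa_2'})$.

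To build $\Phi$ on normal black intersections, I would take a black intersection $c$ of type I, II or III as in Lemma~\ref{lem:types} and read off the maximal sequence of PASs along which $\gamma_1$ and $\gamma_2'$ travel \emph{in parallel} through a neighbourhood of $c$; after rotating $\gamma_2$ by $\rho$, the local pictures in Figures~\ref{fig:I}--\ref{fig:III} become configurations in which $\gamma_1$ and $\gamma_2'$ share a common arc, and Lemma~\ref{lem:order2} identifies the two separation patterns at each end of this common arc with the two alternatives $\omega_i^{-1}<\nu_h^{-1}$ / $\omega_j<\nu_l$ (or the swapped inequalities) of Definition~\ref{def:common pair}. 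The black (as opposed to non-black) character of $c$ is exactly what orients $\gamma_1$ to the correct side of $\gamma_2'$ to produce $J\in H^{\M(\gamma_1),\M(\gamma_2')}$ rather than to $H^{\M(\gamma_2'),\M(\gamma_1)}$. For a black tagged intersection at a puncture $\gamma_1(t_1)=\gamma_2(t_2)\in\PP$, the rotation negates $\kappa_2$, so (TI1) becomes $N_{\kappa_1}(x)=N_{\kappa_2'}(x)$ for the indeterminate $x$ attached to the corresponding special end letter, and (TI2) encodes the analogous equality at the opposite special end (if any); by \eqref{eq:1}--\eqref{eq:2} these are precisely the conditions for the associated basis vector of $\Hom_{A_J}(N_{\kappa_1},N_{\kappa_2'})$ to be nonzero.

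Well-definedness and injectivity of $\Phi$ follow because the common PAS subword determines the intersection point up to homotopy. The main obstacle will be surjectivity together with the endpoint bookkeeping: given $J=((i,j),(h,l))$ satisfying the inequalities of Definition~\ref{def:common pair}, I must recover a black intersection of $\gamma_1$ and $\gamma_2$ from the common parallel segment described by $\m_{(i,j)}=\n_{(h,l)}^{\pm1}$. When $0<i$ and $j<m+1$, the strict inequalities force a transverse separation at both ends, yielding a type~I or type~III normal black intersection. When $i=0$ or $j=m+1$, the corresponding end lies in $\MM\cup\PP$; here one must analyse how $\rho$ shortens, extends, or preserves the end of $\M(\gamma_2)$ relative to $\M(\gamma_2')$, and show that the resulting configuration is either a type~II/III normal black intersection in a region abutting the boundary/puncture, or a black tagged intersection at a puncture, with the $A_J$-module Hom recording exactly the tagging freedom. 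Tracking this shift through the four clauses of Definition~\ref{def:common pair} and matching the result against the case lists of Figures~\ref{fig:I}--\ref{fig:III} is the most delicate bookkeeping, but after it is completed the equality \eqref{eq:b-tau} follows by summing $\dim_\k\Hom_{A_J}(N_{\kappa_1},N_{\kappa_2'})$ over the fibres of $\Phi$.
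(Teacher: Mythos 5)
Your plan follows essentially the same route as the paper: reduce the right-hand side via Theorem~\ref{thm:tau} and Theorem~\ref{thm:dim formular} to a sum over common pairs from $\M(\gamma_1)$ to $\M(\rho(\gamma_2))$, then match black normal intersections with common pairs containing no special end letters and black tagged intersections with the remaining common pairs whose $A_J$-Hom is nonzero (the paper organizes this as Lemmas~\ref{lem:h0} and \ref{lem:h1} plus the $H_2^{\m,\n'}$ case, where your single bijection $\Phi$ to pairs $(J,\phi)$ encodes the same split). The details you flag as delicate (surjectivity and the endpoint bookkeeping for end letters) are exactly where the paper's case analysis in Lemma~\ref{lem:h0} does its work, so the proposal is correct in outline and not genuinely different.
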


\begin{remark}\label{rmk:inj}
    The formula \eqref{eq:b-tau} interprets the dimension of Hom between two indecomposable modules in $\mathcal{S}$ via the intersection number between the corresponding tagged curves, except the second module is injective. However, this exceptional case can be included in the following way. By Theorem~\ref{thm:tau}, all indecomposable injective modules are    
    $M(\rho(\gamma,\kappa))$ for $(\gamma,\kappa)\in \TT^\times$. We claim
    \begin{equation}\label{eq:inj}
    \dim_\k\Hom(M(\gamma_1,\kappa_1), M(\rho(\gamma,\kappa)))=\Int((\gamma_1,\kappa_1),(\gamma,\kappa))
    \end{equation}
    holds for any $(\gamma_1,\kappa_1)\in\PTS$ and any $(\gamma,\kappa)\in \TT^\times$. 

    To show \eqref{eq:inj}, let $e$ be the idempotent associated to $(\gamma,\kappa)$ (see Remark~\ref{rmk:times}) and $S$ the corresponding simple module. Let $s$ be the number of $S$ appearing in the composition series of $M(\gamma_1,\kappa_1)$. Then $s=\dim_\k\Hom(M(\gamma_1,\kappa_1), M(\rho(\gamma,\kappa)))$. Recall from Construction~\ref{cons:mod} that we have $M(\gamma_1,\kappa_1)=M(\M(\gamma_1),N_{\kappa_1})$. 
    \begin{enumerate}
        \item If $e=e_j$ for some $j\in Q_0^{sp}\setminus Sp$, by Construction~\ref{construct:M(m,N)}, $s$ equals the number of times of the word $\M(\gamma_1)$ passing through the vertex $j$. By the construction of $\M(\gamma_1)$ (see Section~\ref{subsec:preper}), this number is equal to $|\gamma_1\cap^\circ\gamma|=\Int((\gamma_1,\kappa_1),(\gamma,\kappa))$, where the last equality is due to that in this case, both of the endpoints of $\gamma$ are in $\MM$.
        \item If $e=e_i-\epsilon_i$ (resp. $\epsilon_i$) for some $i\in Sp$, by Construction~\ref{construct:M(m,N)}, $s=s_1+s_2$, where $s_1$ is the number of $\epsilon_i$ and $\epsilon_i^{-1}$ appearing in $\M(\gamma_1)$, and $s_2$ is the number of special end letters of $\M(\gamma_1)$ whose value under $N_{\kappa_1}$ is $0$ (resp. $1$). By the construction of $\M(\gamma_1)$ (see Section~\ref{subsec:preper}), $s_1=|\gamma_1\cap^\circ\gamma|$ and $s_2=|\mathfrak{T}((\gamma_1,\kappa_1),(\gamma,\kappa))|$. Hence $s=\Int((\gamma_1,\kappa_1),(\gamma,\kappa))$ as required.
    \end{enumerate}
\end{remark}

\subsection{Proof of Theorem~\ref{thm:bint-dim}}\label{subsec:4.3}

Let $\m=\omega_m\cdots\omega_1$ and $\n=\nu_n\cdots\nu_1$ be the words $\M(\gamma_1)$ and $\M(\gamma_2)$, respectively. By our setting, $\rho(\gamma_2,\kappa_2)=(\rho(\gamma_2),\kappa_2')$, where $\kappa_2'(t)=1-\kappa_2(t)$ for any $t$ with $\gamma_2(t)\in\PP$. If there is at least one black interior or punctured intersection from $\gamma_1$ to $\gamma_2$, then it follows from the cases in Figures~\ref{fig:I}, \ref{fig:II}, \ref{fig:III} and \ref{fig:black} that $\rho(\gamma_2)$ is not trivial and $\M(\gamma_2)$ and $\M(\rho(\gamma_2))$ share a common subword. So we can write $\M(\rho(\gamma_2))=\n'=\nu'_s\cdots\nu'_r$ with integers $s>r$ (where $r$ is probably non-positive or bigger than 1) such that $\n'_{(p,q)}=\n_{(p,q)}$, $\nu_p\neq\nu'_p$ and $\nu_q\neq\nu'_q$ for some integers $p<q$. 

Let $H^{\m,\n'}_l$, $l=0,1,2$, be the set of common pairs from $\m$ to $\n'$ which contains exactly $l$ special end letters. Then $H^{\m,\n'}=H^{\m,\n'}_0\cup H^{\m,\n'}_1\cup H^{\m,\n'}_2$.

\begin{lemma}\label{lem:h0}
	There is a bijection from the set of black interior intersections from $\gamma_1$ to $\gamma_2$ to the set $H^{\m,\n'}_0$.
\end{lemma}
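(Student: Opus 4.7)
The plan is to construct an explicit bijection $\Phi$ from the set of black normal intersections from $\gamma_1$ to $\gamma_2$ to $H^{\m,\n'}_0$, built from the geometric interpretation of letters provided by Proposition~\ref{prop:bi} and the order-via-position dictionary of Lemma~\ref{lem:order}. Roughly, at each such intersection the curves $\gamma_1$ and $\rho(\gamma_2)$ (not $\gamma_2$ itself) run along a common sequence of PASs, and $\Phi$ records the indices where this common sequence sits inside $\m$ and $\n'$.

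First I would define $\Phi$ constructively. Given a black normal intersection $c$ between $\gamma_1^{c_1}$ and $\gamma_2^{c_2}$, inspection of the local configurations in Figures~\ref{fig:I}, \ref{fig:II}, \ref{fig:III} (with $a=1,b=2$, since blackness is oriented from $\gamma_1$ to $\gamma_2$) shows that, just off the intersection, $\gamma_1$ and $\rho(\gamma_2)$ enter one and the same region along arc segments that match. The use of $\rho(\gamma_2)$ rather than $\gamma_2$ is crucial here: the tagged rotation is precisely the operation that swaps, at a marked endpoint, which of the two sides of the crossing becomes the ``aligned'' side. Extending this local agreement maximally in both directions produces integers $0\le i<j\le m+1$ and $0\le h<l\le n+1$ for which either $\m_{(i,j)}=\n'_{(h,l)}$ or $\m_{(i,j)}=(\n'_{(h,l)})^{-1}$, and I set $\Phi(c)=((i,j),(h,l))$.

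Second I would verify the ordering conditions and the absence of special end letters. The strict inequalities $\omega_i^{-1}<\nu'_h{}^{\pm 1}$ and $\omega_j<\nu'_l{}^{\pm 1}$ demanded by Definition~\ref{def:common pair} follow from Lemma~\ref{lem:order}: at the two ends of the common subsegment the two curves separate inside a single region, and the black orientation of $c$ forces $\gamma_1$ to lie on the prescribed side of $\rho(\gamma_2)$, which by Lemma~\ref{lem:order} translates exactly into the required strict inequalities between the adjacent letters. Moreover, because $c$ is a normal intersection, neither endpoint of its common subsegment can lie in $\PP$; by Proposition~\ref{prop:bi}(1)(iii) this means that none of the four boundary letters $\omega_i,\omega_j,\nu'_h,\nu'_l$ is special, so $\Phi(c)\in H^{\m,\n'}_0$.

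Third I would check bijectivity. Injectivity is immediate, since the intersection $c$ can be recovered as the unique transversal crossing of the PAS of $\gamma_1$ labelled by $\omega_i$ (or $\omega_j$) and the PAS of $\rho(\gamma_2)$ labelled by $\nu'_h$ (or $\nu'_l$), after transporting back along $\rho^{-1}$. For surjectivity, given $((i,j),(h,l))\in H^{\m,\n'}_0$, I would use the bijection $\Gamma(-)$ of Proposition~\ref{prop:bis} to realize $\m_{(i,j)}=\n'_{(h,l)}{}^{\pm 1}$ as a common PPCS shared by $\gamma_1$ and $\rho(\gamma_2)$; the inequalities on $\omega_i,\omega_j,\nu'_h,\nu'_l$ together with Lemma~\ref{lem:order} force $\gamma_1$ and $\rho(\gamma_2)$ to cross once at the boundary of this common segment in one of the configurations of Figures~\ref{fig:I}--\ref{fig:III}, and rotating back to $\gamma_2$ yields a black normal intersection from $\gamma_1$ to $\gamma_2$ whose image under $\Phi$ is the given common pair.

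The main obstacle is the thorough case analysis: each of the three intersection types splits into subcases according to the ambient region (ordinary polygon, digon/monogon with an unmarked boundary component, once-punctured monogon) and according to whether each of $\gamma_1^{c_1},\gamma_2^{c_2}$ is an end segment, and within each subcase one must carefully track how the rotation $\rho$ reshapes the end letters of $\n$ to produce $\n'$, to be sure that the maximal common part genuinely lives in $\m$ and $\n'$ rather than in $\m$ and $\n$. A delicate point is ruling out that the common subsegment accidentally terminates at a special end letter; this is exactly where one uses that $c$ is normal rather than punctured, and where one needs the precise description of how the end segments of $\rho(\gamma_2)$ interact with once-punctured monogons.
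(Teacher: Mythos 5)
Your proposal is correct and follows essentially the same route as the paper's proof: both associate to each black normal intersection the maximal common part shared by $\gamma_1$ and $\rho(\gamma_2)$, invoke Lemma~\ref{lem:order} at the two separation points to obtain the strict inequalities in Definition~\ref{def:common pair}, observe that the common part contains no punctures so the resulting common pair lies in $H^{\m,\n'}_0$, and invert the construction for surjectivity. The only cosmetic difference is that the paper organizes the inverse map by whether $(h,l)$ meets the indices $(p,q)$ where $\n$ and $\n'$ differ (yielding type I versus types II/III), which your case analysis subsumes.
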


\begin{proof}
    For any black intersection $c$ from $\gamma_1$ to $\gamma_2$, we have  $t(\omega_{c_1})=t(\nu'_{c_2})$. This is because for the case that $c$ is of type I (see Figure~\ref{fig:I} where $a=1$ and $b=2$), we have $t(\omega_{c_1})=t(\nu_{c_2})$ and  $t(\nu_{c_2})=t(\nu'_{c_2})$, where the second equality is due to that $\rho(\gamma_2)$ is between $\gamma_1$ and $\gamma_2$; for the case that $c$ is of type II or III (see Figures~\ref{fig:II} and \ref{fig:III} where $a=1$ and $b=2$), we have $c_2=n$ (i.e. $\gamma_2^{c_2}$ is the ending arc segment of $\gamma_2$) and $t(\nu'_n)=t(\omega_{c_1})$. Then $\gamma_1$ and $\rho(\gamma_2)$ share a common part containing the ending points of $\gamma_1^{c_1}$ and ${\rho(\gamma_2)}^{c_2}$, and separate (on both sides) that $\gamma_1$ is to the left of $\rho(\gamma_2)$, see Figures~\ref{fig:II'} and \ref{fig:III'}. So by Lemma~\ref{lem:order}, we get a common pair from $\m$ to $\n'$. Note that, because in each case, the common part does not contain punctures, the induced common pair does not contain special end letters.
	
    Conversely, for any common pair $((i,j),(h,l))$ from $\m$ to $\n'$ which does not contain any special end letters, it gives a common part of $\gamma_1$ and $\rho(\gamma_2)$ which does not contain punctures, with separating (on both sides) that $\gamma_1$ is same with or to the left of $\rho(\gamma_2)$. There are two cases.
    \begin{itemize}
        \item $(h,l)$ intersects $(p,q)$, then this gives a common part between $\gamma_1$ and $\gamma_2$ with separating  $\gamma_1$ is to the left of $\gamma_2$ because $\gamma_2$ is to the right of $\rho(\gamma_2)$, which induces a black intersection from $\gamma_1$ to $\gamma_2$ of type I.
        \item $(h,l)$ does not intersect $(p,q)$. Without loss of generality, assume $(h,l)$ is included in  $[q,s+1)$, which implies that $h=q$ by checking the cases in Figure~\ref{fig:forms}. Then we are in one of the situations in Figures~\ref{fig:II} and \ref{fig:III} where $a=1$ and $b=2$, that is, we get a black intersection from $\gamma_1$ to $\gamma_2$ of type II or III.
    \end{itemize}
    By construction, the above two maps are mutually inverse. Thus, we finish the proof.
\end{proof}

For any $J\in H_0^{\m,\n'}$, since it does not contain any special end letters, we have $A_J= \k$. Then $N_{\kappa_1}=\k=N_{\kappa_2'}$ as $A_J$-modules. So $\dim_{\k}\Hom_{A_J}(N_{\kappa_1},N_{\kappa_2'})=1$. Hence, by Lemma~\ref{lem:h0}, we have
\begin{equation}\label{eq:h0}
    \Int^\bullet(\gamma_1,\gamma_2)=\sum_{J\in H_0^{\m,\n'}}\dim_{\k}\Hom_{A_J}(N_{\kappa_1},N_{\kappa_2'}).
\end{equation}

\begin{figure}
	\begin{tikzpicture}[xscale=1.5,yscale=1.5]
		\clip(-1.3,-.7) rectangle (1,1.1);
		\draw[red, thick, bend left=10](0,1)to(-.5,-.1);
		\draw[red, thick, bend left=10](-.5,-.1) to (.5,-.5);
		\draw[red, thick](0,1)to(.9,.4);
		\draw[red,thick,dashed,bend right=10](.5,-.5)to(.9,.4);
		\draw[blue,thick,bend right=10](-.8,.8)node[left]{$\gamma_1$}to(.6,.8);
		\draw[blue, thick, bend left=30] (0,1) to (0,-.4)node[below]{$\gamma_2$};
		\draw[green, thick] (-.6,1)to[out=-45,in=90](0,.2)to[out=-90,in=60](-.2,-.4)node[left]{$\rho(\gamma_2)$};
		\draw[blue] (.25,.65)node{$c$} (.12,.73)node{$\bullet$};
		\draw[ultra thick, bend left=10] (0,1)to(-.6,1)node{$\bullet$};
		\draw[thick](0,1)node{$\bullet$}(-.5,-.1)node{$\bullet$}(.9,.4)node{$\bullet$}(.5,-.5)node{$\bullet$};
	\end{tikzpicture}\qquad
	\begin{tikzpicture}[xscale=1.5,yscale=1.5]
		\clip(-1.3,-.7) rectangle (1,1.1);
		\draw[ultra thick, fill=gray!20](0,.2) circle (.1);
		\draw[red,thick](0,1)to[out=-145,in=90](-.6,.2)to[out=-90,in=145](0,-.6);
		\draw[red,thick](0,1)to[out=-35,in=90](.6,.2)to[out=-90,in=35](0,-.6);
		\draw[blue,thick,smooth](-1,.6)node[below]{$\gamma_2$}to[out=-5,in=170](0,.5)to[out=-10,in=90](.3,.2)to[out=-90,in=70](0,-.6);
		\draw[blue,thick,bend left=10](-.8,-.4)node[left]{$\gamma_1$}to(.8,-.4);
		\draw[blue] (0.05,-.2)node{$c$} (.12,-.33)node{$\bullet$};
		\draw[green,thick](-1,.6)node[above]{$\rho(\gamma_2)$};
		\draw[green,thick](-1,.7)to[out=0,in=160](.2,.6)to[out=-20,in=100](.6,-.6);
		\draw[ultra thick,bend left=10](0,-.6)to(.6,-.6)node{$\bullet$};
		\draw[thick](0,1)node{$\bullet$}(0,-.6)node{$\bullet$};
	\end{tikzpicture}\qquad
	\begin{tikzpicture}[xscale=1.5,yscale=1.5]
		\clip(-1.3,-.7) rectangle (1,1.1);
		\draw[ultra thick, fill=gray!20](0,0) circle (.1);
		\draw[red,thick](0,1)to[out=-135,in=60](-.7,0.3)to[out=-120,in=180](0,-.6)to[out=0,in=-60](.7,0.3)to[out=120,in=-45](0,1);
		\draw[blue,thick](-1,.3)node[below]{$\gamma_2$}to[bend left=5](0,.3)to[out=-10,in=90](.3,0)to[out=-90,in=0](0,-.3)to[out=180,in=-90](-.3,0)to[out=90,in=-110](0,1);
		\draw[blue,thick,bend right=10](-.8,.8)node[left]{$\gamma_1$}to(.6,.8);
		\draw[blue] (0,.6)node{$c$} (-.11,.72)node{$\bullet$};
		\draw[green](-1,.3)node[above]{$\rho(\gamma_2)$};
		\draw[green,thick](-1,.4)to[bend left=5](0,.4)to[out=-10,in=90](.4,0)to[out=-90,in=0](0,-.4)to[out=180,in=-90](-.4,0)to[out=90,in=-70](-.6,1);
		\draw[ultra thick,bend right=10] (-.6,1)node{$\bullet$}to(0,1);
		\draw(0,1)node{$\bullet$};
	\end{tikzpicture}
	\caption{Common parts from $\gamma_1$ to $\rho(\gamma_2)$ via intersections $c$ of type II}\label{fig:II'}
\end{figure}

\begin{figure}
	\begin{tikzpicture}[xscale=1.5,yscale=1.5]
		\clip(-1.3,-.7) rectangle (1,1.1);
		\draw[red, thick, bend left=10](0,1)to(-.5,-.1);
		\draw[red, thick, bend left=10](-.5,-.1) to (.5,-.5);
		\draw[red, thick](0,1)to(.9,.4);
		\draw[red,thick,dashed,bend right=10](.5,-.5)to(.9,.4);
		\draw[blue, thick, bend right=10](-.8,.4)node[left]{$\gamma_1$}to(.9,.4);
		\draw[blue, thick, bend left=30] (0,1) to (0,-.4)node[below]{$\gamma_2$};
		\draw[green, thick] (-.6,1)to[out=-45,in=90](0,.2)to[out=-90,in=60](-.2,-.4)node[left]{$\rho(\gamma_2)$};
		\draw[blue] (.3,.2)node{$c$} (.2,.3)node{$\bullet$};
		\draw[ultra thick, bend left=10] (0,1)to(-.6,1)node{$\bullet$};
		\draw[thick](0,1)node{$\bullet$}(-.5,-.1)node{$\bullet$}(.9,.4)node{$\bullet$}(.5,-.5)node{$\bullet$};
	\end{tikzpicture}\qquad
	\begin{tikzpicture}[xscale=1.5,yscale=1.5]
		\clip(-1.3,-.7) rectangle (1,1.1);
		\draw[ultra thick, fill=gray!20](0,.2) circle (.1);
		\draw[red,thick](0,1)to[out=-145,in=90](-.6,.2)to[out=-90,in=145](0,-.6);
		\draw[red,thick](0,1)to[out=-35,in=90](.6,.2)to[out=-90,in=35](0,-.6);
		\draw[blue,thick,smooth](-1,.6)node[below]{$\gamma_2$}to[out=-5,in=170](0,.5)to[out=-10,in=90](.3,.2)to[out=-90,in=70](0,-.6);
		\draw[blue,thick,smooth](.9,-.2)node[above]{$\gamma_1$}to[out=180-5,in=180+170](0,-.1)to[out=180-10,in=180+90](-.3,.2)to[out=180-90,in=180+70](0,1);
		\draw[blue] (.1,-.2)node{$c$} (.23,-.15)node{$\bullet$};
		\draw[green,thick](-1,.6)node[above]{$\rho(\gamma_2)$};
		\draw[green,thick](-1,.7)to[out=0,in=160](.2,.6)to[out=-20,in=100](.6,-.6);
		\draw[ultra thick,bend left=10](0,-.6)to(.6,-.6)node{$\bullet$};
		\draw[thick](0,1)node{$\bullet$}(0,-.6)node{$\bullet$};
	\end{tikzpicture}\qquad
	\begin{tikzpicture}[xscale=1.5,yscale=1.5]
		\clip(-1.3,-.7) rectangle (1,1.1);
		\draw[ultra thick, fill=gray!20](0,0) circle (.1);
		\draw[red,thick](0,1)to[out=-135,in=60](-.7,0.3)to[out=-120,in=180](0,-.6)to[out=0,in=-60](.7,0.3)to[out=120,in=-45](0,1);
		\draw[blue,thick](-1,.3)node[above]{$\gamma_2$}to[bend left=5](0,.3)to[out=-10,in=90](.3,0)to[out=-90,in=0](0,-.3)to[out=180,in=-90](-.3,0)to[out=90,in=-110](0,1);
		\draw[blue,thick](-1,.4)node[below]{$\gamma_1$}to[bend left=5](0,.4)to[out=-10,in=90](.4,0)to[out=-90,in=0](0,-.4)to[out=180,in=-90](-.4,0)to[out=90,in=-120](0,1);
		\draw[blue] (-.47,0.23)node{$c$} (-.36,.16)node[above]{$\bullet$};
		\draw[green](-1,.6)node[above]{$\rho(\gamma_2)$};
		\draw[green,thick](-1.2,.6)to[bend left=5](0,.6)to[out=-10,in=90](.6,0)to[out=-90,in=0](0,-.5)to[out=180,in=-90](-.6,0)to[out=90,in=-100](-.6,1);
		\draw[ultra thick,bend right=10] (-.6,1)node{$\bullet$}to(0,1);
		\draw(0,1)node{$\bullet$};
	\end{tikzpicture}
	\caption{Common parts from $\gamma_1$ to $\rho(\gamma_2)$ via intersections $c$ of type III}\label{fig:III'}
\end{figure}

Let
$$\mathfrak{P}^\bullet_1(\gamma_1,\gamma_2)=\{(t_1,t_2)\in\mathfrak{P}^\bullet(\gamma_1,\gamma_2)\mid \gamma_1|_{t_1\rightarrow(1-t_1)}\nsim\gamma_2|_{t_2\rightarrow(1-t_2)}\},$$
and
$$\mathfrak{T}^\bullet_1((\gamma_1,\kappa_1),(\gamma_2,\kappa_2))=\{(t_1,t_2)\in\mathfrak{P}^\bullet_1(\gamma_1,\gamma_2)\mid \kappa_1(t_1)\neq\kappa_2(t_2) \}.$$

\begin{lemma}\label{lem:h1}
    There is a bijection between $\mathfrak{P}^\bullet_1(\gamma_1,\gamma_2)$ and $H_1^{\m,\n'}$.
\end{lemma}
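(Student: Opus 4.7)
My plan is to mirror the proof of Lemma~\ref{lem:h0}, replacing \emph{interior} common parts between $\gamma_1$ and $\rho(\gamma_2)$ by common parts that extend to exactly one special end letter at a shared puncture.

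For the forward map $\mathfrak{P}^\bullet_1 \to H^{\m,\n'}_1$, given $(t_1,t_2) \in \mathfrak{P}^\bullet_1$ and $p = \gamma_1(t_1) = \gamma_2(t_2)$, I will use that the tagged rotation fixes $\PP$-endpoints and the PAS adjacent to them, so $\rho(\gamma_2)$ also terminates at $p$ with the same neighbouring letter as $\gamma_2$; after the reindexing $\n' = \nu'_s\cdots\nu'_r$, this letter appears as $\nu'_r$ (or $\nu'_s$, depending on orientation) and can be compared directly with the special end letter of $\m$ at $p$. The "black" hypothesis that $\gamma_1|_{t_1\to 1-t_1}$ is strictly left of $\gamma_2|_{t_2\to 1-t_2}$ at $p$, pushed through Lemma~\ref{lem:order2}, will say exactly that $\m$ and $\n'$ share an initial segment starting with that special end letter and then satisfy a strict inequality of the form appearing in Definition~\ref{def:common pair} at the separation point. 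The non-homotopy hypothesis $\gamma_1|_{t_1\to 1-t_1} \nsim \gamma_2|_{t_2\to 1-t_2}$ will keep the common part from reaching a second puncture, so the resulting common pair $J$ contains exactly one special end letter, i.e.\ $J \in H^{\m,\n'}_1$.

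The backward map will reverse this. Given $J = ((i,j),(h,l)) \in H^{\m,\n'}_1$, exactly one boundary of $J$ reaches a special end letter, and Proposition~\ref{prop:bi3} will identify that letter with a puncture $p$ that is a common endpoint of $\gamma_1$ and $\gamma_2$, producing $(t_1,t_2)$. The Definition~\ref{def:common pair} inequalities at the opposite (interior) end of the common part, interpreted via Lemma~\ref{lem:order2}, will place $\gamma_1$ strictly left of $\rho(\gamma_2)$ and hence of $\gamma_2$, giving $(t_1,t_2) \in \mathfrak{P}^\bullet$; the absence of a second special end letter in $J$ will then prevent the common stretch from reaching a second shared puncture, forcing $\gamma_1|_{t_1\to 1-t_1} \nsim \gamma_2|_{t_2\to 1-t_2}$ and hence $(t_1,t_2) \in \mathfrak{P}^\bullet_1$. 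The two maps are mutually inverse by construction, because both encode the same maximal shared initial stretch of $\gamma_1$ and $\rho(\gamma_2)$.

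The main obstacle will be the bookkeeping: $(t_1,t_2) \in \{0,1\}^2$ yields up to four sub-cases, which interact with the type-(1)/type-(2) dichotomy of Definition~\ref{def:common pair} (according to whether $\gamma_1$ and $\rho(\gamma_2)$ traverse the common part in the same or opposite orientation) and with the four possible corner positions $i=0$, $j=m+1$, $h=r-1$, $l=s+1$ at which a special end letter can sit. In each sub-case I will need to verify that the "black/left" condition at $p$ translates precisely into one of the strict inequalities $\omega_i^{-1} < \nu_h^{-1}$, $\omega_j < \nu_l$, $\omega_i^{-1} < \nu_l$, or $\omega_j < \nu_h^{-1}$, matching the correct entry of Definition~\ref{def:common pair}; in particular, ensuring that the tagged-rotation identity $\nu'_r = \nu_1$ (and its analogue at the other end) on the special-letter side behaves correctly under the inverse in type (2) will be the most delicate point.
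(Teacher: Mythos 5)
Your plan is correct and follows essentially the same route as the paper, whose entire proof of this lemma is ``This directly follows from Lemma~\ref{lem:order}'': both arguments match a black punctured intersection at a shared puncture with the common pair given by the maximal shared initial stretch of $\m$ and $\n'$ beginning at the (unique, $\rho$-invariant) special end letter, translating the ``left of'' condition into the order inequalities via Lemma~\ref{lem:order2} and using the non-homotopy hypothesis to exclude a second special end letter. Your sketch is in fact more explicit about the case bookkeeping than the paper is, and none of the sub-cases you flag causes a problem.
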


\begin{proof}
    This directly follows from Lemma~\ref{lem:order}.
\end{proof}

For any $(t_1,t_2)\in \mathfrak{P}^\bullet_1(\gamma_1,\gamma_2)$, denote by $J$ the corresponding common pair in $H_1^{\m,\n'}$. Then $A_{J}=\k[x]/(x^2-x)$ and $N_{\kappa_1}=\k_{\kappa_1(t_1)}$ and $N_{\kappa_2'}=\k_{1-\kappa_2(t_2)}$ as $A_J$-modules. Using the formula \eqref{eq:1}, we have
$$\dim_\k\Hom_{A_J}(N_{\kappa_1},N_{\kappa_2'})=\begin{cases}
	1&\text{if }(t_1,t_2)\in\mathfrak{T}_1^\bullet((\gamma_1,\kappa_1),(\gamma_2,\kappa_2)),\\
	0&\text{otherwise.}
\end{cases}$$
Hence, by Lemma~\ref{lem:h1}, we have
\begin{equation}\label{eq:h1}
	|\mathfrak{T}_1^\bullet((\gamma_1,\kappa_1),(\gamma_2,\kappa_2))|=\sum_{J\in H_1^{\m,\n'}}\dim_\k\Hom_{A_J}(N_{\kappa_1},N_{\kappa_2'}).
\end{equation}

Let
$$\mathfrak{P}_2(\gamma_1,\gamma_2)=\{(t_1,t_2)\in\mathfrak{P}(\gamma_1,\gamma_2)\mid \gamma_1|_{t_1\rightarrow(1-t_1)}\sim\gamma_2|_{t_2\rightarrow(1-t_2)},\gamma_1(1-t_1)=\gamma_2(1-t_2)\in \PP\},$$
and
$$\mathfrak{P}^\bullet_2(\gamma_1,\gamma_2)=\mathfrak{P}_2(\gamma_1,\gamma_2)\cap\mathfrak{P}^\bullet(\gamma_1,\gamma_2).$$
Note that for each $(t_1,t_2)\in\mathfrak{P}_2$, $(1-t_1,1-t_2)$ is also in $\mathfrak{P}_2$, with one of which being black from $\gamma_1$ to $\gamma_2$ while the other one being black from $\gamma_2$ to $\gamma_1$. So if the set $\mathfrak{P}_2$ is not empty, it contains exactly one black punctured intersection, say $(t_1,t_2)$, from $\gamma_1$ to $\gamma_2$. In this case, we have only one common pair $J$ in $H_2^{\m,\n'}$ (which is the one standing for the identity between $\m$ or its inverse and $\n=\n'$). So we have $A_J=\k\<x,y\>/(x^2-x,y^2-y)$ and $N_{\kappa_1}=\k_{\kappa_1(t_1),\kappa_1(1-t_1)}$ and $N_{\kappa_2'}=\k_{1-\kappa_2(t_2),1-\kappa_2(1-t_2)}$ as $A_J$-modules. Using formula~\eqref{eq:2}, we have 
$$\dim_\k\Hom_{A_J}(N_{\kappa_1},N_{\kappa_2'})=\begin{cases}
	1&\text{if }(t_1,t_2)\in\mathfrak{T}_2^\bullet((\gamma_1,\kappa_1),(\gamma_2,\kappa_2)),\\
	0&\text{otherwise,}
\end{cases}$$
where 
$$\mathfrak{T}_2^\bullet((\gamma_1,\kappa_1),(\gamma_2,\kappa_2)):=\{(t_1,t_2)\in\mathfrak{P}_2^\bullet(\gamma_1,\gamma_2)\mid \kappa_1(t_1)\neq\kappa_2(t_2),\kappa_1(1-t_1)\neq\kappa_2(1-t_2) \}$$
is the set of the black tagged intersections from $(\gamma_1,\kappa_1)$ to $(\gamma_2,\kappa_2)$. Hence, we have
\begin{equation}\label{eq:h2}
	|\mathfrak{T}_2^\bullet((\gamma_1,\kappa_1),(\gamma_2,\kappa_2))|=\sum_{J\in H_2^{\m,\n'}}\dim_\k\Hom_{A_J}(N_{\kappa_1},N_{\kappa_2'})
\end{equation}
In the case that $\mathfrak{P}_2$ is empty, there are no common pairs in $H_2^{\m,\n'}$. So we also have equation~\eqref{eq:h2}.

By definition, we have
$$|\mathfrak{T}^\bullet((\gamma_1,\kappa_1),(\gamma_2,\kappa_2))|=|\mathfrak{T}^\bullet_1((\gamma_1,\kappa_1),(\gamma_2,\kappa_2))|+|\mathfrak{T}^\bullet_2((\gamma_1,\kappa_1),(\gamma_2,\kappa_2))|;$$
by Theorem~\ref{thm:dim formular}, we have
$$\dim_\k\Hom_A(M(\m,N_{\kappa_1}),M(\n',N_{\kappa_2'}))=\sum\limits_{J\in H^{\m,\n'}}\dim_\k\Hom_{A_J}(N_{\kappa_1},N_{\kappa_2'});$$
and by Theorem~\ref{thm:tau}, we have
$$M(\rho(\gamma_2,\kappa_2))=\tau M(\gamma_2,\kappa_2).$$
Combining these with \eqref{eq:h0}, \eqref{eq:h1} and \eqref{eq:h2}, we have
$$\begin{array}{rcl}
	\Int^\bullet((\gamma_1,\kappa_1),(\gamma_2,\kappa_2))&=&\Int^\bullet(\gamma_1,\gamma_2)+|\mathfrak{T}^\bullet((\gamma_1,\kappa_1),(\gamma_2,\kappa_2))|\\
	&=&\Int^\bullet(\gamma_1,\gamma_2)+|\mathfrak{T}^\bullet_1((\gamma_1,\kappa_1),(\gamma_2,\kappa_2))|+|\mathfrak{T}^\bullet_2((\gamma_1,\kappa_1),(\gamma_2,\kappa_2))|\\
	&=&\sum\limits_{J\in H^{\m,\n'}}\dim_\k\Hom_{A_J}(N_{\kappa_1},N_{\kappa_2'})\\
	&=&\dim_\k\Hom_A(M(\m,N_{\kappa_1}),M(\n',N_{\kappa_2'}))\\
	&=&\dim_\k\Hom_A(M(\gamma_1,\kappa_1),M(\rho(\gamma_2,\kappa_2)))\\
	&=&\dim_\k\Hom_A(M(\gamma_1,\kappa_1),\tau M(\gamma_2,\kappa_2))
\end{array}$$
This finishes the proof of Theorem~\ref{thm:bint-dim} and hence finishes the proof of Theorem~\ref{thm:int-dim}.

\section{Tau-tilting theory for skew-gentle algebras}

We recall some definitions and notations of $\tau$-tilting theory from \cite{AIR}. For a finite dimensional basic $\k$-algebra $\Lambda$, a $\Lambda$-module $M$ is called \emph{$\tau$-rigid} provided
$$\Hom_\Lambda(M,\tau M)=0.$$
A $\tau$-rigid module $M$ is called \emph{$\tau$-tilting} if $M$ is $\tau$-rigid and $|M|=|\Lambda|$, where $|M|$ denotes the number of non-isomorphic indecomposable direct summands of $M$. A $\tau$-rigid module $M$ is called \emph{support $\tau$-tilting} if there exists an idempotent $e$ of $\Lambda$ such that $M$ is a $\tau$-tilting $(\Lambda/\<e\>)$-module. A $\Lambda$-module $M$ is called \emph{rigid} if $\Ext^1_\Lambda(M,M)=0$. By the Auslander-Reiten formula, any $\tau$-rigid module is rigid.

A pair $(M,P)$ of a $\Lambda$-module $M$ and a projective $\Lambda$-module $P$ is called a \emph{$\tau$-rigid pair} if $M$ is $\tau$-rigid and $\Hom_\Lambda(P,M)=0$. A $\tau$-rigid pair $(M,P)$ is called a \emph{support $\tau$-tilting pair} if $(M,P)$ is $\tau$-rigid and $|M|+|P|=|\Lambda|$. As usual, we say that $(M,P)$ is basic if $M$ and $P$ are basic. We shall use the following two results.

\begin{lemma}[{\cite[Corollary~2.13]{AIR}}]\label{lem:air1}
    A $\tau$-rigid pair is support $\tau$-tilting if and only if it is maximal with respect to the $\tau$-rigid pair property.
\end{lemma}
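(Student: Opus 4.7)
The plan is to prove both implications using two foundational facts from $\tau$-tilting theory: the dimension bound $|M|+|P|\leq|\Lambda|$ for any basic $\tau$-rigid pair $(M,P)$, and the Bongartz-completion theorem for $\tau$-rigid modules.

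For the ``only if'' direction, I would suppose $(M,P)$ is support $\tau$-tilting, so $|M|+|P|=|\Lambda|$. If a $\tau$-rigid pair $(M',P')$ strictly enlarges $(M,P)$ (meaning $M$ and $P$ are respective direct summands of $M'$ and $P'$, with at least one extra indecomposable summand overall), then $|M'|+|P'|>|\Lambda|$, contradicting the bound; hence $(M,P)$ is maximal. This direction is essentially a counting argument and requires no further structural input beyond the inequality.

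For the ``if'' direction, I would suppose $(M,P)$ is maximal and let $e\in\Lambda$ be the idempotent with $\operatorname{add}(e\Lambda)=\operatorname{add}(P)$, setting $\bar\Lambda:=\Lambda/\langle e\rangle$. The vanishing $\Hom_\Lambda(P,M)=0$ forces $M$ to be (supported on) a $\bar\Lambda$-module, and $\tau$-rigidity is preserved under passage to $\bar\Lambda$. If $M$ failed to be $\tau$-tilting over $\bar\Lambda$, i.e.\ $|M|<|\bar\Lambda|=|\Lambda|-|P|$, then Bongartz completion over $\bar\Lambda$ would produce an indecomposable $\tau$-rigid $\bar\Lambda$-module $N\notin\operatorname{add}(M)$ with $M\oplus N$ still $\tau$-rigid; viewing $(M\oplus N,P)$ back over $\Lambda$ would give a strict enlargement of $(M,P)$, contradicting maximality. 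Therefore $|M|+|P|=|\Lambda|$ and $(M,P)$ is support $\tau$-tilting.

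The main obstacle is the establishment of the two auxiliary inputs. The inequality $|M|+|P|\leq|\Lambda|$ is typically obtained either via the linear independence of the associated $g$-vectors in the Grothendieck group of the bounded homotopy category of projectives, or by an Auslander--Reiten-duality computation with $\Hom_\Lambda(M,\tau M)$; both require genuine work. The Bongartz completion in the $\tau$-rigid setting, in turn, relies on the interplay between $\tau$-rigid modules and functorially finite torsion classes, and is the technical heart of \cite{AIR}. Once these two facts are granted, the equivalence in the lemma follows formally from the counting argument sketched above.
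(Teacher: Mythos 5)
The paper offers no proof of this lemma—it is imported verbatim from \cite[Corollary~2.13]{AIR}—so the only meaningful comparison is with the argument in that reference, which your proposal correctly reconstructs: the ``only if'' direction from the bound $|M|+|P|\leq|\Lambda|$ for basic $\tau$-rigid pairs, and the ``if'' direction from Bongartz completion over $\Lambda/\langle e\rangle$. Your sketch is sound as written; the one step worth flagging explicitly (beyond the two inputs you already name) is the compatibility of $\tau$-rigidity under the passage from $\Lambda$ to $\Lambda/\langle e\rangle$, which is not automatic from the definitions but is likewise established in \cite{AIR} and is needed both to view $M$ as a $\tau$-rigid $\Lambda/\langle e\rangle$-module and to pull the Bongartz complement back to a $\tau$-rigid pair over $\Lambda$.
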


\begin{lemma}[{\cite[Proposition~2.3]{AIR}}]\label{lem:air2}
    A $\Lambda$-module $M$ is support $\tau$-tilting if and only if there exists a projective $\Lambda$-module $P$ such that $(M,P)$ is a support $\tau$-tilting pair.
\end{lemma}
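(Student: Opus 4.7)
My plan is to prove both directions by identifying the support datum (the idempotent $e$ cutting $\Lambda$ down to $\Lambda/\langle e\rangle$) with the projective module $P$ appearing in a support $\tau$-tilting pair. The key dictionary is the bijection between (isomorphism classes of) basic projective $\Lambda$-modules and idempotents $e\in\Lambda$ up to conjugation, via $P=e\Lambda$, together with the observations that $\Hom_\Lambda(e\Lambda,M)\cong Me$ and that $Me=0$ is equivalent to $M$ being a $\Lambda/\langle e\rangle$-module.

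For the forward direction, assume $M$ is support $\tau$-tilting, so by definition there is an idempotent $e$ with $M$ a $\tau$-tilting $(\Lambda/\langle e\rangle)$-module. I would set $P=e\Lambda$ and verify the three conditions in the definition of a support $\tau$-tilting pair: (i) $M$ is $\tau$-rigid over $\Lambda$, which I would deduce from $\tau$-rigidity over the quotient by comparing minimal projective presentations in $\mod\Lambda$ and $\mod(\Lambda/\langle e\rangle)$ (the crucial point being that applying $-\otimes_\Lambda \Lambda/\langle e\rangle$ to a minimal $\Lambda$-projective presentation of $M$ yields a minimal $(\Lambda/\langle e\rangle)$-projective presentation, so $\tau_{\Lambda/\langle e\rangle}M$ is the largest quotient of $\tau_\Lambda M$ annihilated by $e$, and any map $M\to\tau_\Lambda M$ has image annihilated by $e$ since $M$ is); (ii) $\Hom_\Lambda(P,M)=Me=0$ since $M$ is a $(\Lambda/\langle e\rangle)$-module; (iii) $|M|+|P|=|\Lambda/\langle e\rangle|+|e\Lambda|=|\Lambda|$.

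For the reverse direction, given a support $\tau$-tilting pair $(M,P)$ with $P$ basic projective, write $P=e\Lambda$ for a basic idempotent $e$. From $\Hom_\Lambda(P,M)=Me=0$ I conclude that $M$ is a $(\Lambda/\langle e\rangle)$-module. The counting identity $|M|+|P|=|\Lambda|$ then forces $|M|=|\Lambda/\langle e\rangle|$, and the comparison of minimal projective presentations again shows that $M$ is $\tau$-rigid over $\Lambda/\langle e\rangle$; hence $M$ is $\tau$-tilting over $\Lambda/\langle e\rangle$, and so $M$ is support $\tau$-tilting in the sense of the definition.

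The main obstacle is the compatibility of $\tau$ with the quotient by $\langle e\rangle$, i.e.\ checking that $\tau$-rigidity is equivalent in the two categories when $Me=0$. I would handle this by the projective-presentation description $\tau M=\ker(\nu p_1)/\operatorname{im}(\nu p_0)$ (where $p_1\to p_0\to M\to 0$ is a minimal presentation and $\nu$ is the Nakayama functor) and use that $\Hom_\Lambda(M,\tau_\Lambda M)$ vanishes iff $\Hom_{\Lambda/\langle e\rangle}(M,\tau_{\Lambda/\langle e\rangle}M)$ vanishes whenever $M$ lives in $\mod(\Lambda/\langle e\rangle)$; once this is in hand, the rest is bookkeeping with idempotents and dimension counts.
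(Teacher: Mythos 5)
The paper does not prove this statement at all: it is imported verbatim as \cite[Proposition~2.3]{AIR}, so you are supplying an argument where the authors supply only a citation. Your reconstruction follows the standard route (identify $P=e\Lambda$ with the support idempotent, use $\Hom_\Lambda(e\Lambda,M)\cong Me$, count summands, and reduce everything to the compatibility of $\tau$-rigidity with the quotient $\Lambda/\langle e\rangle$), and the overall structure is sound. Two remarks. First, there is a sign error in the key step: from a minimal presentation one gets that $\tau_{\Lambda/\langle e\rangle}M$ is the largest \emph{submodule} of $\tau_\Lambda M$ annihilated by $\langle e\rangle$ (namely $\Hom_\Lambda(\Lambda/\langle e\rangle,\tau_\Lambda M)$), not the largest quotient; your very next clause (``any map $M\to\tau_\Lambda M$ has image annihilated by $e$, hence factors through\ldots'') only makes sense with the submodule version, so the slip is harmless but should be corrected, as it yields the identification $\Hom_\Lambda(M,\tau_\Lambda M)=\Hom_{\Lambda/\langle e\rangle}(M,\tau_{\Lambda/\langle e\rangle}M)$ that you need. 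Second, the parenthetical assertion that $-\otimes_\Lambda(\Lambda/\langle e\rangle)$ carries a minimal projective presentation of $M$ (with $Me=0$) to a minimal one over the quotient is true but not free: one must check that $P_0\langle e\rangle=P_0e\Lambda$ lies in $\Omega M$ and that $P_0e\Lambda e_i\subseteq(\Omega M)\operatorname{rad}\Lambda$ for $i$ outside the support of $e$, so that no top of $\Omega M$ supported away from $e$ gets absorbed. Finally, note that with this paper's definition the forward direction is cheaper than you make it: ``support $\tau$-tilting'' already presupposes that $M$ is $\tau$-rigid over $\Lambda$, so the presentation-comparison is genuinely needed only in the reverse direction.
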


Now we apply our results in the previous sections to study the $\tau$-tilting theory for the skew-gentle algebra $A$. We denote by
\begin{itemize}
    \item $\ir A$: the set of isoclasses of indecomposable $\tau$-rigid $A$-modules;
    \item $\tr A$: the set of isoclasses of basic $\tau$-rigid $A$-modules;
    \item $\st A$: the set of isoclasses of basic support $\tau$-tilting $A$-modules;
    \item $\trp A$: the set of isoclasses of basic $\tau$-rigid pairs;
    \item $\stp A$: the set of isoclasses of basic support $\tau$-tilting pairs.
\end{itemize}

Recall that $\PTS$ denotes the set of tagged permissible curves (up to inverse) and $\GPTS$ denotes the set of tagged generalized permissible curves (up to inverse). The set $\TT^\times=\GPTS\setminus\PTS$ is the tagged version of the admissible partial triangulation $\TT$.

\begin{proposition}\label{prop:tau-rigid}
    There is a bijection 
    $$\begin{array}{ccc}
        \{(\gamma,\kappa)\in \PTS\mid \Int((\gamma,\kappa),(\gamma,\kappa))=0 \}&\to&\ir A\\(\gamma,\kappa)&\mapsto&M(\gamma,\kappa)
    \end{array}$$
\end{proposition}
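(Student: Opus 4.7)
The plan is to combine three tools already in place: the bijection $M:\PTS\to\mathcal{S}$ of Theorem~\ref{thm:curve and mod}, the intersection-dimension formula of Theorem~\ref{thm:int-dim} (specialized to the diagonal case, which is allowed since that theorem is stated for $\gamma_1,\gamma_2$ not necessarily distinct), and the ``recognition'' Lemma~\ref{lem:tau} which tells us that $\tau$-rigid indecomposables automatically lie in $\mathcal{S}$.

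First I would check that the map is well-defined. Given $(\gamma,\kappa)\in\PTS$ with $\Int((\gamma,\kappa),(\gamma,\kappa))=0$, set $M=M(\gamma,\kappa)$ and apply Theorem~\ref{thm:int-dim} with both arguments equal to $(\gamma,\kappa)$:
$$0=\Int((\gamma,\kappa),(\gamma,\kappa))=2\dim_\k\Hom(M,\tau M),$$
so $M$ is $\tau$-rigid. Since $M\in\mathcal{S}$ by Theorem~\ref{thm:curve and mod}, $M$ is indecomposable, and hence $M\in\ir A$.

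Injectivity is inherited directly from the restriction of the bijection $M:\PTS\to\mathcal{S}$ in Theorem~\ref{thm:curve and mod}. For surjectivity, take $N\in\ir A$. Then by definition $\Hom(N,\tau N)=0$, so Lemma~\ref{lem:tau} gives $N\in\mathcal{S}$. Using the bijection of Theorem~\ref{thm:curve and mod} in the reverse direction, there is a unique $(\gamma,\kappa)\in\PTS$ with $M(\gamma,\kappa)\cong N$. Applying the intersection-dimension formula again,
$$\Int((\gamma,\kappa),(\gamma,\kappa))=2\dim_\k\Hom(N,\tau N)=0,$$
so $(\gamma,\kappa)$ lies in the prescribed subset.

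The argument is almost purely formal once the previous theorems are available, so there is no serious obstacle; the only point that deserves attention is to confirm that Theorem~\ref{thm:int-dim} covers the diagonal case $\gamma_1=\gamma_2$, $\kappa_1=\kappa_2$, which is exactly the parenthetical ``not necessarily distinct'' in its statement. All remaining verifications (indecomposability, well-definedness of $M$, bijectivity of $M$ on $\PTS$) have been set up in Sections~2--4 and need only be invoked.
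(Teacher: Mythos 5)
Your proof is correct and follows exactly the paper's own (very terse) argument: the paper's proof is a one-liner citing precisely Lemma~\ref{lem:tau}, the bijection of Theorem~\ref{thm:curve and mod}, and the formula of Theorem~\ref{thm:int-dim}, which are the three ingredients you combine. Your write-up just spells out the routine well-definedness, injectivity, and surjectivity checks that the paper leaves implicit.
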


\begin{proof}
    By Lemma~\ref{lem:band}, the set $\mathcal{S}$ consists of all of string modules and the band modules which are on the bottom of tubes of rank 2, and any indecomposable module which is not in $\mathcal{S}$ is either in a homogeneous tube or in the $r$-th level, $r\geq 2$, of a tube of rank 2. In particular, any indecomposable $\tau$-rigid $A$-module is in $\mathcal{S}$. Hence we get the required bijection by the bijection in Theorem~\ref{thm:curve and mod} and the formula in Theorem~\ref{thm:int-dim}.
\end{proof}

Denote by $\operatorname{indproj}(A)$ the set of isoclasses of indecomposable projective $A$-modules. By Theorem~\ref{thm:tau}, we have the following.

\begin{corollary}\label{cor:proj}
    The map $M$ gives a bijection
    $$\begin{array}{ccc}
        \{(\gamma,\kappa)\in \PTS\mid \rho(\gamma,\kappa)\in\TT^\times \}&\to&\operatorname{indproj}(A).
        \end{array}$$
\end{corollary}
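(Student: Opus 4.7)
The plan is to deduce Corollary~\ref{cor:proj} by assembling three ingredients already in hand: Proposition~\ref{prop:tau-rigid}, which provides the bijection between self-nonintersecting tagged permissible curves and indecomposable $\tau$-rigid modules; Theorem~\ref{thm:tau}, which characterizes projectivity of $M(\gamma,\kappa)$ by the condition $\rho(\gamma,\kappa)\in\TT^\times$; and the elementary fact that every indecomposable projective $A$-module $P$ is $\tau$-rigid (since $\tau P=0$ implies $\Hom(P,\tau P)=0$).

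First I would check that the subset $\{(\gamma,\kappa)\in\PTS\mid \rho(\gamma,\kappa)\in\TT^\times\}$ sits inside the domain of Proposition~\ref{prop:tau-rigid}. Indeed, if $\rho(\gamma,\kappa)\in\TT^\times$, then Theorem~\ref{thm:tau} tells us $M(\gamma,\kappa)$ is projective, so $\tau M(\gamma,\kappa)=0$, so $\Hom(M(\gamma,\kappa),\tau M(\gamma,\kappa))=0$. Applying Theorem~\ref{thm:int-dim} with $(\gamma_1,\kappa_1)=(\gamma_2,\kappa_2)=(\gamma,\kappa)$ yields $\Int((\gamma,\kappa),(\gamma,\kappa))=0$, placing $(\gamma,\kappa)$ in the set parametrizing $\ir A$.

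Next I would argue injectivity and the target. Injectivity of $M$ on $\{(\gamma,\kappa)\mid \rho(\gamma,\kappa)\in\TT^\times\}$ is inherited from the injectivity asserted in Proposition~\ref{prop:tau-rigid}. For any such $(\gamma,\kappa)$, Theorem~\ref{thm:tau} says $M(\gamma,\kappa)$ is (indecomposable and) projective, so the restricted map lands in $\operatorname{indproj}(A)$.

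For surjectivity, given $P\in\operatorname{indproj}(A)$, the $\tau$-rigidity of $P$ together with Proposition~\ref{prop:tau-rigid} produces a (unique) tagged permissible curve $(\gamma,\kappa)$ with $\Int((\gamma,\kappa),(\gamma,\kappa))=0$ and $M(\gamma,\kappa)\cong P$. Since $P$ is projective, the ``only if'' direction of Theorem~\ref{thm:tau} forces $\rho(\gamma,\kappa)\in\TT^\times$, completing the bijection. There is no serious obstacle in this argument; it is a direct assembly of previously proved results, and the only thing requiring a brief verification is that the condition $\rho(\gamma,\kappa)\in\TT^\times$ indeed implies the self-noncrossing condition of Proposition~\ref{prop:tau-rigid}, which is handled by the trivial vanishing $\tau P=0$.
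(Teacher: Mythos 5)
Your proposal is correct and follows essentially the same route as the paper, which deduces the corollary directly from Proposition~\ref{prop:tau-rigid}, Theorem~\ref{thm:tau}, and the $\tau$-rigidity of indecomposable projectives; you merely spell out the bookkeeping (using Theorem~\ref{thm:int-dim} to see that $\rho(\gamma,\kappa)\in\TT^\times$ forces $\Int((\gamma,\kappa),(\gamma,\kappa))=0$) that the paper leaves implicit.
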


\begin{definition}\label{def:t.p.d}
    A collection $R$ of tagged generalized permissible curves in $\GPTS$ is called a \emph{partial generalized dissection} of $\surf$ (with respect to $\TT$) if 
    $$\Int((\gamma_1,\kappa_1),(\gamma_2,\kappa_2))=0$$
    for any $(\gamma_1,\kappa_1)$ and $(\gamma_2,\kappa_2)$ in $R$. Denote by $\PTGPD(\surf)$ the set of partial generalized dissections of $\surf$. Let $R\in\PTGPD(\surf)$.
    \begin{enumerate}
        \item We call $R$ a \emph{generalized dissection} of $\surf$ if it is maximal in $\PTGPD(\surf)$. Denote by $\TGPD(\surf)$ the set of generalized dissections of $\surf$.
        \item We call $R$ a \emph{partial dissection} of $\surf$ if $R\subseteq\PTS$. Denote by $\PTPD(\surf)$ the set of partial dissections of $\surf$.
        \item We call $R$ a \emph{dissection} of $\surf$ if $R\subseteq\PTS$ and $|R|=|\TT^\times|$, where $|R|$ denotes the number of elements in $R$. Denote by $\TPD(\surf)$ the set of dissections of $\surf$.
    \end{enumerate}
\end{definition}

\begin{proposition}\label{prop:tau-rigid2}
    There is a bijection
    $$\begin{array}{ccc}
        \PTPD(\surf)&\to&\tr A\\
        R&\mapsto& \bigoplus\limits_{(\gamma,\kappa)\in R}M(\gamma,\kappa)
    \end{array}$$
    which restricts to a bijection from $\TPD(\surf)$ to $\ttt A$.
\end{proposition}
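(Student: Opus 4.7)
The plan is to combine Proposition~\ref{prop:tau-rigid}, the intersection-dimension formula (Theorem~\ref{thm:int-dim}), and the bijection from Theorem~\ref{thm:curve and mod}, and then read off the $\tau$-tilting case by a cardinality count. First I would check that the map $R\mapsto M_R:=\bigoplus_{(\gamma,\kappa)\in R}M(\gamma,\kappa)$ lands in $\tr A$. For each $(\gamma,\kappa)\in R$, taking $(\gamma_1,\kappa_1)=(\gamma_2,\kappa_2)$ in the definition of $\PTPD(\surf)$ gives $\Int((\gamma,\kappa),(\gamma,\kappa))=0$, so each $M(\gamma,\kappa)$ is indecomposable $\tau$-rigid by Proposition~\ref{prop:tau-rigid}. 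Distinct elements of $R$ yield non-isomorphic summands by the injectivity of $M$ in Theorem~\ref{thm:curve and mod}, so $M_R$ is basic. For pairwise $\tau$-rigidity, Theorem~\ref{thm:int-dim} applied to any two curves in $R$ reads
\[
0=\Int((\gamma_1,\kappa_1),(\gamma_2,\kappa_2))=\dim_\k\Hom(M_1,\tau M_2)+\dim_\k\Hom(M_2,\tau M_1),
\]
which forces both Hom spaces to vanish, so $\Hom(M_R,\tau M_R)=0$.

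Next I would prove injectivity and surjectivity of the assignment. Injectivity is immediate from Krull--Schmidt together with the injectivity of $M$ from Theorem~\ref{thm:curve and mod}. For surjectivity, start with $M\in\tr A$ and decompose $M=\bigoplus_{k=1}^{r}M_k$ into pairwise non-isomorphic indecomposable summands; each $M_k$ is $\tau$-rigid because $\Hom(M_k,\tau M_k)$ is a direct summand of $\Hom(M,\tau M)=0$, so by Proposition~\ref{prop:tau-rigid} there is a unique $(\gamma_k,\kappa_k)\in\PTS$ with vanishing self-intersection and $M_k\cong M(\gamma_k,\kappa_k)$. Set $R=\{(\gamma_k,\kappa_k)\}_{k=1}^{r}$. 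The $\tau$-rigidity of $M$ forces $\dim\Hom(M_i,\tau M_j)=0=\dim\Hom(M_j,\tau M_i)$ for all $i,j$, and then Theorem~\ref{thm:int-dim} read from right to left yields $\Int((\gamma_i,\kappa_i),(\gamma_j,\kappa_j))=0$. Hence $R\in\PTPD(\surf)$ with $M_R\cong M$.

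For the restriction to $\TPD(\surf)\leftrightarrow\ttt A$, I would use that a basic $\tau$-rigid module is $\tau$-tilting if and only if its number of indecomposable summands equals $|A|$. Since $|M_R|=|R|$, the problem reduces to the identity $|A|=|\TT^\times|$. This follows by splitting the idempotent $e_i=\epsilon_i+(e_i-\epsilon_i)$ at each special vertex $i\in Sp$, which gives $|A|=|Q^{sp}_0|+|Sp|=|\TT|+|\TT^{sp}|=|\TT\setminus\TT^{sp}|+2|\TT^{sp}|$, where $\TT^{sp}\subseteq\TT$ denotes the curves cutting out once-punctured monogons; on the other side, $\TT^\times$ contributes one tagged element for each curve in $\TT\setminus\TT^{sp}$ (no puncture endpoints to tag) and two for each curve in $\TT^{sp}$ (which, passing to $\TT^\ast$, acquires a puncture endpoint), giving the same count. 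The main obstacle I anticipate is precisely this bookkeeping step, together with confirming that the preimages $(\gamma_k,\kappa_k)$ produced in the surjectivity argument are genuine tagged permissible curves in $\PTS$ (rather than elements of the larger $\GPTS$), which is exactly the content of Proposition~\ref{prop:tau-rigid}.
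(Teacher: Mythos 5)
Your proposal is correct and follows essentially the same route as the paper: the first bijection is obtained by combining Proposition~\ref{prop:tau-rigid} with the intersection--dimension formula of Theorem~\ref{thm:int-dim}, and the restriction to $\TPD(\surf)\leftrightarrow\ttt A$ reduces to the counting identity $|\TT^\times|=|Q_0|+|Sp|=|A|$, which you verify by the same idempotent-splitting and tagging bookkeeping the paper relies on.
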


\begin{proof}
    The first bijection follows from Proposition~\ref{prop:tau-rigid} and the formula in Theorem~\ref{thm:int-dim}. The second bijection follows from the first one and $|\TT^\times|=|Q_0|+|Sp|=|A|$.
\end{proof}

\begin{theorem}\label{thm:tau-rigid}
	There is a bijection
	$$\begin{array}{ccc}
		\PTGPD(\surf)&\to&\trp A\\
		R&\mapsto& \left(\bigoplus\limits_{(\gamma,\kappa)\in R\setminus\TT^\times}M(\gamma,\kappa),\bigoplus\limits_{\rho(\gamma,\kappa)\in R\cap\TT^\times}M(\gamma,\kappa)\right)
	\end{array}$$
	which restricts to a bijection from $\TGPD(\surf)$ to $\stp A$.
\end{theorem}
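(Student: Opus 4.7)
The plan is to split any $R\in\PTGPD(\surf)$ as $R=R_M\sqcup R_P$ with $R_M=R\cap\PTS$ and $R_P=R\cap\TT^\times$, and to construct the pair $(M,P)$ componentwise. By Proposition~\ref{prop:tau-rigid2}, I will use that $M:=\bigoplus_{(\gamma,\kappa)\in R_M} M(\gamma,\kappa)$ is basic $\tau$-rigid; and by Corollary~\ref{cor:proj}, I will set $P:=\bigoplus_{(\gamma,\kappa)\in R_P} M(\rho^{-1}(\gamma,\kappa))$, which is then a basic projective. Basicness of $(M,P)$ follows from the injectivity of the curve-module correspondences in Theorem~\ref{thm:curve and mod} and Corollary~\ref{cor:proj}, so the key remaining condition for $(M,P)\in\trp A$ is $\Hom_A(P,M)=0$.

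The hard part will be to relate intersections involving $\TT^\times$-curves to Hom spaces into the associated projective. The formula I would establish is: for $(\gamma,\kappa)\in\TT^\times$ and $(\gamma',\kappa')\in\PTS$,
\[\Int((\gamma,\kappa),(\gamma',\kappa'))=\dim_\k\Hom_A(M(\rho^{-1}(\gamma,\kappa)),M(\gamma',\kappa')).\]
When $\gamma$ is an ordinary arc in $\TT$, both sides count crossings of $\gamma'$ with $\gamma$: the right side via $\Hom_A(e_iA,N)\cong N_i$ with $\dim N_i$ read off from Construction~\ref{construct:M(m,N)}, and the left side since $\gamma$ has no puncture endpoints and hence contributes no tagged intersections. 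For $(\eta_p,\kappa_a)\in\TT^\times$, the projective $M(\rho^{-1}(\eta_p,\kappa_a))$ is one of the two indecomposable summands of $P_i=e_iA$ at $i=v(\gamma_p)$ coming from the orthogonal idempotent decomposition $e_i=\epsilon_i+(e_i-\epsilon_i)$, and Hom into it picks out one eigenspace of $\epsilon_i$ acting on $M(\gamma',\kappa')_i$. My plan is to verify the identity in this case by a local analysis inside the once-punctured monogon, mirroring the proof of Theorem~\ref{thm:bint-dim}, matching the eigenspace dimension with $|\eta_p\cap^\circ\gamma'|$ together with the tagged intersections at $p$ that survive conditions (TI1) and (TI2). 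This local verification is the main obstacle of the proof.

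Granting this formula, $\Hom(P,M)=0$ follows immediately from $\Int=0$ on $R_P\times R_M$, giving the forward map. For the inverse, I would decompose any $(M,P)\in\trp A$ into indecomposables: each summand of $M$ is $\tau$-rigid hence in $\mathcal{S}$ by Lemma~\ref{lem:tau}, corresponding via Theorem~\ref{thm:curve and mod} to a unique element of $\PTS$; each summand of $P$ corresponds via Corollary~\ref{cor:proj} to a unique element of $\TT^\times$. The union $R$ of these curves has no intersections within $R_M$ (by $\tau$-rigidity and Theorem~\ref{thm:int-dim}), none within $R_P$ (verified directly, noting that the only candidate tagged intersections between distinct elements of $\TT^\times$ fail condition (TI2)), and none across $R_P\times R_M$ (by the extended formula together with $\Hom(P,M)=0$), so $R\in\PTGPD(\surf)$. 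Both constructions are inclusion-preserving, so by Lemma~\ref{lem:air1} the bijection restricts to $\TGPD(\surf)\leftrightarrow\stp A$, from which the desired bijection to $\st A$ is then read off via Lemma~\ref{lem:air2}.
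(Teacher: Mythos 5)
Your proposal is correct and follows essentially the same route as the paper: decompose $R$ into its $\PTS$-part and $\TT^\times$-part, handle them via Proposition~\ref{prop:tau-rigid2} and Corollary~\ref{cor:proj} respectively, reduce $\Hom_A(P,M)=0$ to the vanishing of intersections with curves in $\TT^\times$, and obtain the restriction to $\TGPD(\surf)\leftrightarrow\stp A$ from maximality via Lemma~\ref{lem:air1}. The only difference is that you propose to prove a full dimension formula $\Int=\dim_\k\Hom$ for the projective summands, whereas the paper gets by with the weaker qualitative statement that $\Int((\gamma,\kappa),\rho(\gamma',\kappa'))=0$ iff the simple top of $M(\gamma',\kappa')$ does not occur in the composition series of $M(\gamma,\kappa)$, which is all that is needed.
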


\begin{proof}
    Let $(\gamma,\kappa), (\gamma',\kappa')\in \PTS$ be such that $\rho(\gamma',\kappa')\in\TT^\times$. By the construction of $M(\gamma,\kappa)$ (Construction~\ref{cons:mod}), $\Int((\gamma,\kappa),\rho(\gamma',\kappa'))=0$ if and only if the simple top of the projective module $M(\gamma',\kappa')$ does not occur in the composition series of $M(\gamma,\kappa)$. This is equivalent to $\Hom_A(M(\gamma',\kappa'),M(\gamma,\kappa))=0$. So by Corollary~\ref{cor:proj} and Proposition~\ref{prop:tau-rigid2}, we get the first bijection. The second bijection follows from the first one and Lemma~\ref{lem:air1}.
\end{proof}

We have the following immediate consequences.

\begin{corollary}\label{cor:1}
    Let $R\in\PTGPD(\surf)$. Then $R\in\TGPD(\surf)$ if and only if $|R|=|\TT^\times|$. In particular, we have $\TT^\times\in\TGPD(\surf)$ and $\TPD(\surf)\subset\TGPD(\surf)$.
\end{corollary}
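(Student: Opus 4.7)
The plan is to reduce the statement to the analogous characterization of support $\tau$-tilting pairs on the algebraic side via the bijection supplied by Theorem~\ref{thm:tau-rigid}, and then unpack the two ``in particular'' claims as direct consequences.

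First I would observe that the bijection in Theorem~\ref{thm:tau-rigid} sends $R\in\PTGPD(\surf)$ to a basic $\tau$-rigid pair $(M_R,P_R)$ with
\[|M_R|+|P_R|=|R\cap\PTS|+|R\cap\TT^\times|=|R|,\]
since $M(\gamma,\kappa)=0$ precisely when $(\gamma,\kappa)\in\TT^\times$ (the curves in $\TT^\times$ are trivial with respect to $\TT$) and the indecomposable projectives counted in $P_R$ are in bijection with $R\cap\TT^\times$ via the tagged rotation (Corollary~\ref{cor:proj}). Moreover, under this bijection $\TGPD(\surf)$ corresponds to $\stp A$, so by definition $R\in\TGPD(\surf)$ iff $(M_R,P_R)\in\stp A$.

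Next I would combine Lemma~\ref{lem:air1} with the standard equality $|M|+|P|=|A|=|\TT^\times|$ for any support $\tau$-tilting pair (which was already used in the proof of Proposition~\ref{prop:tau-rigid2}). Concretely, if $R\in\TGPD(\surf)$ then $(M_R,P_R)$ is support $\tau$-tilting, hence $|R|=|M_R|+|P_R|=|\TT^\times|$. Conversely, if $R\in\PTGPD(\surf)$ satisfies $|R|=|\TT^\times|$, then $(M_R,P_R)$ is a basic $\tau$-rigid pair with $|M_R|+|P_R|=|A|$, so it is support $\tau$-tilting, which gives $R\in\TGPD(\surf)$ via Theorem~\ref{thm:tau-rigid} again. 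This establishes the main equivalence.

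For the two remaining assertions, I would argue directly. The set $\TT^\times$ is in $\PTGPD(\surf)$: any two distinct curves of the admissible partial triangulation $\TT$ do not cross in the interior of $S$, and the tagging on $\TT^\times$ is constructed so that for the only possible punctured intersections arising from the tagged version of $\TT$, condition (TI1) in Definition~\ref{def:tag int} fails. Since trivially $|\TT^\times|=|\TT^\times|$, the main equivalence gives $\TT^\times\in\TGPD(\surf)$. For the inclusion $\TPD(\surf)\subseteq\TGPD(\surf)$, any $R\in\TPD(\surf)$ is by definition in $\PTGPD(\surf)$ with $|R|=|\TT^\times|$, so the main equivalence applies.

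I do not foresee a serious obstacle here: the only point requiring a little care is the bookkeeping $|M_R|+|P_R|=|R|$, which rests on the fact that the module and projective parts of the image pair are indexed respectively by $R\cap\PTS$ and (via $\rho$) by $R\cap\TT^\times$, and that these are disjoint with union $R$. Once this is verified, the rest is a formal translation through the bijections of Theorem~\ref{thm:tau-rigid} together with Lemma~\ref{lem:air1}.
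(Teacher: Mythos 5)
Your proof is correct and follows essentially the same route as the paper, which simply invokes Theorem~\ref{thm:tau-rigid} together with $|\TT^\times|=|A|$; your bookkeeping $|M_R|+|P_R|=|R\cap\PTS|+|R\cap\TT^\times|=|R|$ is exactly the point the paper leaves implicit. One small slip: for the two taggings $(\eta,\kappa_0)$ and $(\eta,\kappa_1)$ of a curve $\eta\in\TT^\ast$ ending at a puncture, it is condition (TI2), not (TI1), that fails (the taggings do differ at the puncture, but the other endpoint lies in $\MM$), though the conclusion $\Int=0$ and hence $\TT^\times\in\PTGPD(\surf)$ still holds.
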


\begin{proof}
    This follows from Theorem~\ref{thm:tau-rigid} and  $|\TT^\times|=|A|$. 
\end{proof}

\begin{corollary}\label{cor:supp}
	There is a bijection
	$$\begin{array}{ccc}
		\TGPD(\surf)&\to&\st A\\
		R&\mapsto&\bigoplus\limits_{(\gamma,\kappa)\in R}M([\gamma],\kappa)
	\end{array}$$
\end{corollary}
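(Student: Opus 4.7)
The plan is to deduce this corollary directly from Theorem~\ref{thm:tau-rigid} by forgetting the projective component of the support $\tau$-tilting pair. Theorem~\ref{thm:tau-rigid} already supplies a bijection $\Phi\colon\TGPD(\surf)\to\stp A$ sending $R$ to the basic pair $(M_R,P_R)$ whose first component is $M_R=\bigoplus_{(\gamma,\kappa)\in R}M(\gamma,\kappa)$. Because any $\gamma\in\TT^\ast$ is trivial with respect to $\TT$ (it lies inside a once-punctured monogon of $\TT$ and crosses no curve of $\TT$), we have $M(\gamma,\kappa)=0$ for such $\gamma$, and hence $M_R$ coincides with $\bigoplus_{(\gamma,\kappa)\in R}M([\gamma],\kappa)$ under the convention $M([\gamma],\kappa)=0$ when $\gamma$ is trivial.

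It therefore suffices to show that the first-projection $\pi\colon\stp A\to\st A$, $(M,P)\mapsto M$, is a bijection. Surjectivity is exactly Lemma~\ref{lem:air2}. For injectivity, let $(M,P)$ and $(M,P')$ be two basic support $\tau$-tilting pairs with the same first component, and let $P''$ be any indecomposable direct summand of $P'$. Since $\Hom_A(P',M)=0$, we have $\Hom_A(P'',M)=0$, so the pair $(M,P\oplus P'')$ is again $\tau$-rigid. If $P''$ were not already a summand of $P$, then $P\oplus P''$ would be basic and $(M,P\oplus P'')$ would be a basic $\tau$-rigid pair properly enlarging $(M,P)$, contradicting the maximality of $(M,P)$ supplied by Lemma~\ref{lem:air1}. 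Hence every indecomposable summand of $P'$ is a summand of $P$; by symmetry the reverse inclusion also holds, and since $P$ and $P'$ are basic we conclude $P\cong P'$.

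Composing $\pi\circ\Phi$ then yields the claimed bijection $\TGPD(\surf)\to\st A$, $R\mapsto \bigoplus_{(\gamma,\kappa)\in R}M([\gamma],\kappa)$. I do not expect a serious obstacle here: once Theorem~\ref{thm:tau-rigid} is established, the corollary is essentially a restatement obtained by discarding the projective complement, and the only substantive verification is the unique determination of $P$ from $M$ in a basic support $\tau$-tilting pair, which is the standard AIR fact sketched above.
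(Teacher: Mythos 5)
Your proposal is correct and follows essentially the same route as the paper, which simply cites Theorem~\ref{thm:tau-rigid} and Lemma~\ref{lem:air2}; you merely make explicit the injectivity of the projection $(M,P)\mapsto M$ (the standard AIR fact that $P$ is determined by $M$), which the paper leaves implicit. One small imprecision: not every $\gamma\in\TT^\ast$ lies inside a once-punctured monogon (most are just curves of $\TT$), but all are indeed trivial with respect to $\TT$, so the conclusion $M(\gamma,\kappa)=0$ stands.
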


\begin{proof}
	This follows from Theorem~\ref{thm:tau-rigid} and Lemma~\ref{lem:air2}. 
\end{proof}

\section{An example}\label{sec:example}

Let $\surf$ be a punctured marked surface as shown in the left picture of Figure~\ref{fig:exm}, where an admissible partial triangulation $\TT$ is given. The four types $\aaa$-$\dd$ of regions all appear in $\TT$. The associated quiver $Q^\TT$ is shown in the right picture of Figure~\ref{fig:exm} with the relation set $R^\TT=\{\epsilon_1^2-\epsilon_1,\epsilon_5^2,ab,ba,ed,hc\}$. The corresponding skew-tiling algebra $A=\Lambda^\TT$ is a skew-gentle algebra from the skew-gentle triple $(Q,Sp,I)$ with $Q=Q^\TT\setminus\{\varepsilon_1\}$, $Sp=\{1\}$ and $I=R^\TT\setminus\{\epsilon_1^2-\epsilon_1\}$.

\begin{figure}[htpb]
	\begin{tikzpicture}[scale=.9]
		\draw[ultra thick] (0,0) circle (3);
		\draw[ultra thick,fill=gray!20] (0,0) circle (.5);
		\draw[ultra thick,fill=gray!20](-70:1.7) circle (.2);
		\draw[ultra thick,fill=gray!20](90:1.5) circle (.2);
		\draw[red,thick] (0,-.5)to[out=-170,in=140](60:-2)to[out=-40,in=-100](0,-.5); 
		\draw[red,thick]
		(0,-.5)to[out=-170,in=90](-140:2)to[out=-90,in=-180](-90:2.5)to[out=0,in=-90](-40:2)to[out=90,in=-10](0,-.5);
		\draw[red,thick] (0,-.5)to[out=180,in=-90](160:2.5)to[out=90,in=180](90:2.5)to[out=0,in=140](30:3);
		\draw[red,thick] (0,-.5)to[bend right=10](3,0);
		\draw[red,thick] (0,.5)to[out=30,in=-90](60:1.5)to[out=90,in=0](90:2)to[out=180,in=90](120:1.5)to[out=-90,in=150](0,.5);
		\draw[red,thick,bend left=15] (30:3)to(0,.5);
		\draw[red,thick] (3,0)to[out=160,in=-10](0,.5);
		\draw[red] (0,-2.4)node[below]{$2$} (-.3,-1.7)node{$1$} (-2.2,-.1)node{$3$} (25:2.5)node{$4$} (-1,1.3)node{$5$} (2,.5)node{$6$}
		(2,-.5)node{$7$};
		\draw[blue,->-=.7,>=stealth] (33:-1.5)to[bend right=10](49:-1.5)node[left]{$b$};
		\draw[blue,->-=.7,>=stealth] (50:-1)to[bend right=10](-98:1) (-98:1.05)node[left]{$\epsilon_1$};
		\draw[blue,->-=.7,>=stealth] (-98:1)to[bend right=10](-39:1) (-70:1.1)node{$a$};
		\draw[blue,->-=.7,>=stealth] (10:-1.5)to[bend right=10](33:-1.5) (24:-1.5)node[left]{$c$};
		\draw[blue,->-=.7,>=stealth] (41:2.84)to[bend right=10](29:2.6) (37:2.4)node{$d$};
		\draw[blue,->-=.7,>=stealth] (31:1.5)to[bend right=10](60:1.5) (39:1.7)node{$e$};
		\draw[blue,->-=.7,>=stealth] (57:1)to[bend right=10](123:1) (90:1.05)node{$\epsilon_5$};
		\draw[blue,->-=.7,>=stealth] (12:1.5)to[bend right=10](31:1.5) (21:1.5)node[right]{$f$};
		\draw[blue,->-=.7,>=stealth] (6:2.3)to[bend right=10](-6:2.3) (0:2.3)node[left]{$g$};
		\draw[blue,->-=.7,>=stealth] (-39:1.95)to[bend right=10](-13:1.7) (-26:1.8)node[right]{$h$};
		\draw (0,-.5)node{$\bullet$} (0,.5)node{$\bullet$} (30:3)node{$\bullet$} (3,0)node{$\bullet$}
		(65:-1.4)node{$\bullet$};
	\end{tikzpicture}
	\begin{tikzpicture}[scale=.4]
		\draw(-7,4)node{$Q^\TT:$};
		\draw(0,0)node{$\xymatrix{
				\ar@(lu,ld)[]_{\epsilon_1}1\ar@/^.5pc/[r]^{a}&2\ar@/^.5pc/[l]^{b}\ar[rd]_h&3\ar[l]_c\ar[r]^d&4\ar[r]^e&5\ar@(ru,rd)[]^{\epsilon_5}\\
				&&7&6\ar[u]_f\ar[l]^g
			}$};
		\draw(0,-5)node{$R^\TT=\{\epsilon_1^2-\epsilon_1,\epsilon_5^2,ab,ba,ed,hc\}$};
		\draw(0,-6)node{$ $};
	\end{tikzpicture}
	\caption{An example of an admissible partial triangulation of a punctured marked surface and the associated quiver with relation}\label{fig:exm}
\end{figure}

To show a tagged permissible curve $(\gamma,\kappa)$ in a picture, as in \cite{FST,QZ},  we use the symbol $\times$ on one end of $\gamma$ to denote the value of $\kappa$ on this end is $1$. The (solid) tagged curves $\gamma^\times_i=(\gamma_i,\kappa_i), i=1,2,3$ shown in the left picture of Figure~\ref{fig:exm2} are all tagged permissible curves, whose tagged rotations $\rho(\gamma^\times_i), i=1,2,3$ are shown in the right picture of Figure~\ref{fig:exm2}.

\begin{figure}[htpb]
	\begin{tikzpicture}[scale=.9]
		\draw[ultra thick] (0,0) circle (3);
		\draw[ultra thick,fill=gray!20] (0,0) circle (.5);
		\draw[ultra thick,fill=gray!20](-70:1.7) circle (.2);
		\draw[ultra thick,fill=gray!20](90:1.5) circle (.2);
		\draw[red,thick,dashed] (0,-.5)to[out=-170,in=140](60:-2)to[out=-40,in=-100](0,-.5); 
		\draw[red,thick,dashed](0,-.5)to[out=-170,in=90](-140:2)to[out=-90,in=-180](-90:2.5)to[out=0,in=-90](-40:2)to[out=90,in=-10](0,-.5);
		\draw[red,thick,dashed] (0,-.5)to[out=180,in=-90](160:2.5)to[out=90,in=180](90:2.5)to[out=0,in=150](30:3);
		\draw[red,thick,dashed] (0,-.5)to[bend right=10](3,0);
		\draw[red,thick,dashed] (0,.5)to[out=30,in=-90](60:1.5)to[out=90,in=0](90:2)to[out=180,in=90](120:1.5)to[out=-90,in=150](0,.5);
		\draw[red,thick,dashed,bend left=15] (30:3)to(0,.5);
		\draw[red,thick,dashed] (3,0)to[out=160,in=-10](0,.5);
		\draw[blue, thick] (0,-.5)to[out=0,in=-90](0.7,0)to[out=90,in=-30](0,1)to[out=150,in=-90](-0.4,1.5)to[out=90,in=180](0,1.9)to[out=0,in=140](.8,1)to[out=-40,in=150](3,0);
		\draw[blue] (2.2,.7)node{$\gamma^\times_1$};
		\draw[blue,thick] (3,0)[out=-100,in=20]to(-60:2.5)[out=-160,in=-60]to(65:-1.4);
		\draw[blue] (2.2,-1.1)node{$\gamma^\times_2$} (69:-1.47)node[rotate=-10]{\bf $+$};
		\draw[blue,thick] (65:-1.4)[out=120,in=210]to (110:2.2)[out=30,in=135]to(60:2.1)[out=-45,in=55]to(1.1,0)[out=-125,in=20]to(65:-1.4);
		\draw[blue] (-1.6,1)node{$\gamma^\times_3$};
		\draw (0,-.5)node{$\bullet$} (0,.5)node{$\bullet$} (30:3)node{$\bullet$} (3,0)node{$\bullet$}
		(65:-1.4)node{$\bullet$};
	\end{tikzpicture}\qquad
	\begin{tikzpicture}[scale=.9]
		\draw[ultra thick] (0,0) circle (3);
		\draw[ultra thick,fill=gray!20] (0,0) circle (.5);
		\draw[ultra thick,fill=gray!20](-70:1.7) circle (.2);
		\draw[ultra thick,fill=gray!20](90:1.5) circle (.2);
		\draw[red,thick,dashed] (0,-.5)to[out=-170,in=140](60:-2)to[out=-40,in=-100](0,-.5); 
		\draw[red,thick,dashed](0,-.5)to[out=-170,in=90](-140:2)to[out=-90,in=-180](-90:2.5)to[out=0,in=-90](-40:2)to[out=90,in=-10](0,-.5);
		\draw[red,thick,dashed] (0,-.5)to[out=180,in=-90](160:2.5)to[out=90,in=180](90:2.5)to[out=0,in=150](30:3);
		\draw[red,thick,dashed] (0,-.5)to[bend right=10](3,0);
		\draw[red,thick,dashed] (0,.5)to[out=30,in=-90](60:1.5)to[out=90,in=0](90:2)to[out=180,in=90](120:1.5)to[out=-90,in=150](0,.5);
		\draw[red,thick,dashed,bend left=15] (30:3)to(0,.5);
		\draw[red,thick,dashed] (3,0)to[out=160,in=-10](0,.5);
		\draw[blue, thick]
		(0,.5)to[out=170,in=85](-.8,0)to[out=-95,in=185](0,-.75)to[out=5,in=-90](0.7,0)to[out=90,in=-30](0,1)to[out=150,in=-90](-0.4,1.5)to[out=90,in=180](0,1.9)to[out=0,in=170](30:3);
		\draw[blue] (2,1.3)node{$\rho(\gamma^\times_1)$};
		\draw[blue,thick] (30:3)[out=-80,in=20]to(-60:2.5)[out=-160,in=-60]to(65:-1.4);
		\draw[blue] (2.1,-.7)node{$\rho(\gamma^\times_2)$};
		\draw[blue,thick] (65:-1.4)[out=120,in=210]to (110:2.3)[out=30,in=135]to(60:2.2)[out=-45,in=55]to(1.1,0)[out=-125,in=20]to(65:-1.4);
		\draw[blue] (-1.82,1)node{$\rho(\gamma^\times_3)$} (57:-1.28)node[rotate=20]{\bf $\times$} (70:-1.27)node[rotate=20]{\bf $\times$};
		\draw (0,-.5)node{$\bullet$} (0,.5)node{$\bullet$} (30:3)node{$\bullet$} (3,0)node{$\bullet$}
		(65:-1.4)node{$\bullet$};
	\end{tikzpicture}
	\caption{Examples of tagged permissible curves and their tagged rotations}\label{fig:exm2}
\end{figure}

By Construction~\ref{cons:mod}, the indecomposable representations $M(\gamma^\times_1)$, $M(\gamma^\times_2)$,  $M(\gamma^\times_3)$, $M(\rho(\gamma^\times_1))$, $M(\rho(\gamma^\times_2))$ and $M(\rho(\gamma^\times_3))$ are, respectively, the following.

$$\begin{array}{rl}
	M(\gamma^\times_1):&\qquad \xymatrix@R=1em{
		\ar@(lu,ld)[]_{0}0\ar@/^.5pc/[r]^{0}&0\ar@/^.5pc/[l]^{0}\ar[rd]_0&0\ar[l]_0\ar[r]^0&\k^2\ar[r]^{\left(\begin{smallmatrix}
				1&0\\0&1
			\end{smallmatrix}\right)}&\k^2\ar@(ru,rd)[]^{\left(\begin{smallmatrix}
				0&0\\1&0
			\end{smallmatrix}\right)}\\
		&&0&\k\ar[u]_{\left(\begin{smallmatrix}
				1\\0
			\end{smallmatrix}\right)}\ar[l]^0
	}\\
	M(\gamma^\times_2):&\qquad \xymatrix@R=1em{
		\ar@(lu,ld)[]_{1}\k\ar@/^.5pc/[r]^{0}&\k\ar@/^.5pc/[l]^{1}\ar[rd]_0&0\ar[l]_0\ar[r]^0&0\ar[r]^0&0\ar@(ru,rd)[]^{0}\\
		&&0&0\ar[u]_0\ar[l]^0
	}\\
	M(\gamma^\times_3):&\xymatrix@R=1em{
		\ar@(lu,ld)[]_{\left(\begin{smallmatrix}
				0&0\\0&0
			\end{smallmatrix}\right)}\k^2\ar@/^.5pc/[r]^{\left(\begin{smallmatrix}
				0&0\\0&1
			\end{smallmatrix}\right)}&\k^2\ar@/^.5pc/[l]^{\left(\begin{smallmatrix}
				1&0\\0&0
			\end{smallmatrix}\right)}\ar[rd]_{\left(\begin{smallmatrix}
				0&1
			\end{smallmatrix}\right)}&\k\ar[l]_{\left(\begin{smallmatrix}
				1\\0
			\end{smallmatrix}\right)}\ar[r]^1&\k\ar[r]^0&0\ar@(ru,rd)[]^{0}\\
		&&\k&\k\ar[u]_1\ar[l]^1
	}\\
	M(\rho(\gamma^\times_1)):&\xymatrix@R=1em{
		\ar@(lu,ld)[]_{\left(\begin{smallmatrix}
				0&0\\1&1
			\end{smallmatrix}\right)}\k^2\ar@/^.5pc/[r]^{\left(\begin{smallmatrix}
				0&0\\0&1
			\end{smallmatrix}\right)}&\k^2\ar@/^.5pc/[l]^{\left(\begin{smallmatrix}
				1&0\\0&0
			\end{smallmatrix}\right)}\ar[rd]_{\left(\begin{smallmatrix}
				0&1
			\end{smallmatrix}\right)}&\k\ar[l]_{\left(\begin{smallmatrix}
				1\\0
			\end{smallmatrix}\right)}\ar[r]^0&\k\ar[r]^{\left(\begin{smallmatrix}
				1\\0
			\end{smallmatrix}\right)}&\k^2\ar@(ru,rd)[]^{\left(\begin{smallmatrix}
				0&0\\1&0
			\end{smallmatrix}\right)}\\
		&&\k&\k\ar[u]_1\ar[l]^1
	}\\
	M(\rho(\gamma^\times_2)):&\qquad \xymatrix@R=1em{
		\ar@(lu,ld)[]_{0}\k\ar@/^.5pc/[r]^{0}&\k\ar@/^.5pc/[l]^{1}\ar[rd]_1&0\ar[l]_0\ar[r]^0&0\ar[r]^0&0\ar@(ru,rd)[]^{0}\\
		&&\k&\k\ar[u]_0\ar[l]^1
	}\\
	M(\rho(\gamma^\times_3)):&\xymatrix@R=1em{
		\ar@(lu,ld)[]_{\left(\begin{smallmatrix}
				1&0\\0&1
			\end{smallmatrix}\right)}\k^2\ar@/^.5pc/[r]^{\left(\begin{smallmatrix}
				0&0\\0&1
			\end{smallmatrix}\right)}&\k^2\ar@/^.5pc/[l]^{\left(\begin{smallmatrix}
				1&0\\0&0
			\end{smallmatrix}\right)}\ar[rd]_{\left(\begin{smallmatrix}
				0&1
			\end{smallmatrix}\right)}&\k\ar[l]_{\left(\begin{smallmatrix}
				1\\0
			\end{smallmatrix}\right)}\ar[r]^1&\k\ar[r]^0&0\ar@(ru,rd)[]^{0}\\
		&&\k&\k\ar[u]_1\ar[l]^1
	}
\end{array}$$
By Theorem~\ref{thm:tau}, we have $M(\rho(\gamma_i^\times))=\tau M(\gamma_i^\times)$, $i=1,2,3$, and the Auslander-Reiten sequence ending at $M(\gamma_2^\times)$ is
$$0\to M(\rho(\gamma_2^\times))\to M(\eta^\times)\to M(\gamma_2^\times)\to 0$$
where $\eta$ is shown in Figure~\ref{fig:exm4}.
\begin{figure}[htpb]
	\begin{tikzpicture}[scale=.9]
		\draw[ultra thick] (0,0) circle (3);
		\draw[ultra thick,fill=gray!20] (0,0) circle (.5);
		\draw[ultra thick,fill=gray!20](-70:1.7) circle (.2);
		\draw[ultra thick,fill=gray!20](90:1.5) circle (.2);
		\draw[red,thick,dashed] (0,-.5)to[out=-170,in=140](60:-2)to[out=-40,in=-100](0,-.5); 
		\draw[red,thick,dashed](0,-.5)to[out=-170,in=90](-140:2)to[out=-90,in=-180](-90:2.5)to[out=0,in=-90](-40:2)to[out=90,in=-10](0,-.5);
		\draw[red,thick,dashed] (0,-.5)to[out=180,in=-90](160:2.5)to[out=90,in=180](90:2.5)to[out=0,in=150](30:3);
		\draw[red,thick,dashed] (0,-.5)to[bend right=10](3,0);
		\draw[red,thick,dashed] (0,.5)to[out=30,in=-90](60:1.5)to[out=90,in=0](90:2)to[out=180,in=90](120:1.5)to[out=-90,in=150](0,.5);
		\draw[red,thick,dashed,bend left=15] (30:3)to(0,.5);
		\draw[red,thick,dashed] (3,0)to[out=160,in=-10](0,.5);
		\draw[blue, thick] (3,0)to[out=-100,in=0](.5,-2.3)to[out=180,in=-90](-.9,-1.4)to[out=90,in=180](-.6,-1)to[out=0,in=120](.2,-1.7)to[out=-60,in=-110](30:3);
		\draw[blue] (2.4,-1)node{$\eta^\times$};
		\draw (0,-.5)node{$\bullet$} (0,.5)node{$\bullet$} (30:3)node{$\bullet$} (3,0)node{$\bullet$}
		(65:-1.4)node{$\bullet$};
	\end{tikzpicture}
	\caption{An example of the middle term of an Auslander-Reiten sequence}\label{fig:exm4}
\end{figure}

The intersection numbers $\Int(\gamma^\times_1,\gamma^\times_3)=1$, $\Int(\gamma^\times_2,\gamma^\times_3)=2$, and $\Int(\gamma^\times_3,\gamma^\times_3)=0$. So by Theorem~\ref{thm:int-dim}, we have 
$$\dim_\k\Hom_A(M(\gamma_1^\times),\tau M(\gamma_3^\times))+\dim_\k\Hom_A(M(\gamma_3^\times),\tau M(\gamma_1^\times))=1,$$ $$\dim_\k\Hom_A(M(\gamma_2^\times),\tau M(\gamma_3^\times))+\dim_\k\Hom_A(M(\gamma_3^\times),\tau M(\gamma_2^\times))=2,$$
$$\dim_\k\Hom_A(M(\gamma_3^\times),\tau M(\gamma_3^\times))+\dim_\k\Hom_A(M(\gamma_3^\times),\tau M(\gamma_3^\times))=0.$$
In particular, $M(\gamma_3^\times)$ is an indecomposable $\tau$-rigid $A$-module.

The black intersection numbers are $\Int^\bullet(\gamma^\times_1,\gamma^\times_3)=1$, $\Int^\bullet(\gamma^\times_2,\gamma^\times_3)=1$ and $\Int^\bullet(\gamma^\times_3,\gamma^\times_2)=1$. So by Theorem~\ref{thm:bint-dim}, we have
$$\dim_\k\Hom_A(M(\gamma_1^\times),\tau M(\gamma_3^\times))=1,$$
$$\dim_\k\Hom_A(M(\gamma_2^\times),\tau M(\gamma_3^\times))=1,$$
$$\dim_\k\Hom_A(M(\gamma_3^\times),\tau M(\gamma_2^\times))=1.$$

Finally, consider the collection $R$ of tagged generalized permissible curves shown as solid curves in Figure~\ref{fig:exm3}. Because for any two general tagged permissible curves, their intersection number is $0$, we have that $R$ is a partial generalized dissection of $\surf$. Note that $|R|=8=|Q^\TT_0|+|Sp|=|A|=|\TT^\times|$. So by Corollary~\ref{cor:1}, $R$ is a generalized dissection of $\surf$. Hence by Corollary~\ref{cor:supp}, we have that $\bigoplus_{\gamma^\times\in R}M(\gamma^\times)$ is a basic support $\tau$-tilting $A$-module, but not a basic $\tau$-tilting module (because $R\cap\TT^\times\neq\emptyset$).

\begin{figure}[htpb]
	\begin{tikzpicture}[scale=.9]
		\draw[ultra thick] (0,0) circle (3);
		\draw[ultra thick,fill=gray!20] (0,0) circle (.5);
		\draw[ultra thick,fill=gray!20](-70:1.7) circle (.2);
		\draw[ultra thick,fill=gray!20](90:1.5) circle (.2);
		\draw[red,thick,dashed] (0,-.5)to[out=-170,in=140](60:-2)to[out=-40,in=-100](0,-.5); 
		\draw[red,thick,dashed](0,-.5)to[out=-170,in=90](-140:2)to[out=-90,in=-180](-90:2.5)to[out=0,in=-90](-40:2)to[out=90,in=-10](0,-.5);
		\draw[red,thick,dashed] (0,-.5)to[out=180,in=-90](160:2.5)to[out=90,in=180](90:2.5)to[out=0,in=150](30:3);
		\draw[red,thick,dashed] (0,-.5)to[bend right=10](3,0);
		\draw[red,thick,dashed] (0,.5)to[out=30,in=-90](60:1.5)to[out=90,in=0](90:2)to[out=180,in=90](120:1.5)to[out=-90,in=150](0,.5);
		\draw[red,thick,dashed,bend left=15] (30:3)to(0,.5);
		\draw[red,thick,dashed] (3,0)to[out=160,in=-10](0,.5);
		\draw[blue,thick] (3,0)[out=-100,in=20]to(-60:2.5)[out=-160,in=-60]to(65:-1.4);
		\draw[blue,thick] (65:-1.4)[out=120,in=210]to (110:2.2)[out=30,in=135]to(60:2.1)[out=-45,in=55]to(1.1,0)[out=-125,in=20]to(65:-1.4);
		\draw[blue,thick] (3,0)[bend left=20]to (65:-1.4);
		\draw[blue,thick] (65:-1.4)to[out=10,in=-90](30:3);
		\draw[blue,thick] (0,.5)to[out=30,in=0](0,1.85)to[out=180,in=150](0,.5);
		\draw[blue,thick] (0,-.5)to(65:-1.4);
		\draw[blue,thick] (30:3)to[out=150,in=0](90:2.4)to[out=180,in=90](-1.8,.8)to[out=-90,in=150](65:-1.4);
		\draw[blue,thick] (0,.5)to[out=-10,in=90] (.7,0)to[out=-90,in=35](65:-1.4);
		\draw (0,-.5)node{$\bullet$} (0,.5)node{$\bullet$} (30:3)node{$\bullet$} (3,0)node{$\bullet$}
		(65:-1.4)node{$\bullet$};
	\end{tikzpicture}
	\caption{An example of a generalized dissection}\label{fig:exm3}
\end{figure}

\end{document}